\numberwithin{equation}{section}
\newcommand{\sss}{\scriptscriptstyle}
\renewcommand{\P}{\mathbb{P}}
\newcommand{\N}{\mathbb{N}}
\newcommand{\R}{\mathbb{R}}
\newcommand{\eps}{\epsilon}
\newcommand{\M}{\mathcal{M}}
\newcommand{\F}{\mathcal{F}}
\newcommand{\la}{\langle}
\newcommand{\ra}{\rangle}
\newcommand{\B}{\mathcal{B}}
\newcommand{\ext}{\mathcal{E}}
\newcommand{\extinction}{\mathcal{E}_{{\rm fin}}}
\newcommand{\extinguishing}{\mathcal{E}_{{\rm lim}}}
\newcommand{\bbP}{{\bf{P}}}
\newcommand{\bbF}{{\boldsymbol{\mathcal{F}}}}
\newcommand{\mglX}{W_{\infty}^{\phi}(X)}
\newcommand{\mglZ}{W_{\infty}^{\phi/w}(Z)}
\newcommand{\tphi}{\widetilde{\phi}}
\newcommand{\bbeta}{\bar{\beta}}
\newcommand{\cem}{\dag}
\newcommand{\tree}{\mathcal{T}}
\newcommand{\alongbb}{\mathfrak{a}}
\newcommand{\bbpb}{\mathfrak{b}}
\newcommand{\ZGen}{G}
\newcommand{\n}{{\bf{n}}}
\newcommand{\m}{{\bf{m}}}
\newcommand{\D}{\mathcal{D}} %set of Poisson points
\newcommand{\simplefct}{\mathcal{S}}
\newcommand{\T}{\mathbb{T}} %sequence of times for convergence
\newcommand{\locbdpos}{lbp}
\newcommand{\wtw}{\widetilde{w}}
\newcommand{\res}{\kappa}
\newcommand{\OUp}{\gamma}
\newcommand{\exPa}{\vartheta}
\newcommand{\pint}[1]{\la #1\ra}
\newcommand{\bigpint}[1]{\big\la #1\big\ra}
\newcommand{\Bigpint}[1]{\Big\la #1\Big\ra}
\newcommand{\twonorm}[1]{\| #1 \|}
\newcommand{\supnorm}[1]{\| #1\|_{\infty}}
\newcommand{\supTnorm}[1]{\| #1\|_{\infty,T}}
\newtheorem{theorem}{Theorem}[section]
\newtheorem{lemma}[theorem]{Lemma}
\newtheorem{corollary}[theorem]{Corollary}
\newtheorem{proposition}[theorem]{Proposition}
\theoremstyle{definition}
\newtheorem{example}[theorem]{Example}
\newtheorem{remark}[theorem]{Remark}
\newtheorem{ass}{Assumption}
\newtheorem{notation}[theorem]{Notation}
\newenvironment{customassump}[1]
  {\innercustomassump}
  {\endinnercustomassump}
\title{ Spines, skeletons and the
Strong Law of Large Numbers\\
for superdiffusions}
\author{Maren Eckhoff\thanks{Department of Mathematical Sciences, University of Bath, Bath BA2 7AY, United Kingdom. Email: {\tt m.eckhoff@bath.ac.uk, a.kyprianou@bath.ac.uk}}
\and
Andreas E. Kyprianou$^*$
\and
Matthias Winkel\thanks{Department of Statistics, University of Oxford, Oxford OX1 3TG, United Kingdom. Email: {\tt winkel@stats.ox.ac.uk}}}
\begin{document}
\maketitle

\begin{abstract}
Consider a supercritical superdiffusion $(X_t)_{t \ge 0}$ on a domain $D \subseteq \R^d$ with branching mechanism
\[
(x,z)\mapsto -\beta(x) z+\alpha(x) z^2 + \int_{(0,\infty)} \big(e^{-yz}-1+yz\big) \, \Pi(x,dy).
\]
The skeleton decomposition provides a pathwise description of the process in terms of immigration along a branching particle diffusion. We use this decomposition to derive the Strong Law of Large Numbers (SLLN) for a wide class of superdiffusions from the corresponding result for branching particle diffusions. That is, we show that for suitable test functions $f$ and starting measures $\mu$,\vspace{-0.15cm}
\[
\frac{\la f,X_t\ra}{P_{\mu}[\la f,X_t\ra]} \to W_{\infty} \qquad P_{\mu}\text{-almost surely as $t\to\infty$}, 
\]
where $W_{\infty}$ is a finite, non-deterministic random variable characterised as a martingale limit. Our method is based on skeleton and spine techniques and offers structural insights into the driving force behind the SLLN for superdiffusions. The result covers many of the key examples of interest and, in particular, proves a conjecture by Fleischmann and Swart \cite{FleSwa03} for the super-Wright-Fisher diffusion.

\medskip

\noindent{\bf{Key words:}} Superdiffusion, measure-valued diffusion, skeleton decomposition, spine decomposition, Strong Law of Large Numbers, additive and multiplicative martingales, almost sure limit theorem

\smallskip

\noindent{\bf{Mathematics Subject Classification (2000):}} Primary 60J68; Secondary 60J80, 60F15

\end{abstract}

\section{Introduction}\label{intro}

The asymptotic behaviour of the total mass assigned to a compact set by a superprocess was first characterised by Pinsky \cite{Pin96} at the level of the first moment. Motivated by this study,
Engl\"ander and Turaev \cite{EngTur02} proved weak convergence of the ratio between the total mass in a compact set and its expectation. Others have further improved the mode of convergence; specifically, several authors conjectured an almost sure convergence result for a wide class of superprocesses \cite{Eng07,EngWin06,FleSwa03,LiuRenSon13}. However, up to now it has not been possible to deal with many of the classical examples of interest. In the existing literature, for almost sure convergence, either motion and branching mechanism have to obey restrictive conditions \cite{CheRenWan08} or the domain is assumed to be of finite Lebesgue measure \cite{LiuRenSon13}. In this article, we make a significant step towards closing the gap and establish the Strong Law of Large Numbers (SLLN) for a wide class of superdiffusions on arbitrary domains. In particular, we prove a conjecture by Fleischmann and Swart for the super-Wright-Fisher diffusion.

Methodologically, previous articles concerned with almost sure limit behaviour of superprocesses relied on Fourier analysis, functional analytic arguments or used the martingale formulation for superprocesses combined with stochastic analysis. We take a different approach. The core of our proof is the skeleton decomposition that represents the superprocess as an immigration process along a branching particle process, called the skeleton, where immigration occurs in a Poissonian way along the space-time trajectories and at the branch points of the skeleton. The skeleton may be interpreted as immortal particles that determine the long-term behaviour of the process.
We exploit this fact and carry the SLLN from the skeleton over to the superprocess. Apart from the result itself, this approach provides insights into the driving force behind the law of large numbers for superprocesses.

A more detailed literature review and discussion of the ideas of proof is deferred to Sections~\ref{sec:literature} and \ref{sec:pf_idea}. 
Before, we introduce the model in Section~\ref{sec:model}, our assumptions are stated in Section~\ref{sec:assumptions} and the main results are collected in Section~\ref{sec:results}.

\subsection{Model and notation}\label{sec:model}

Let $d \in \N$ and let $D \subseteq \R^d$ be a nonempty domain. For $k \in \N_0$, $\eta >0$, we write $C^{k,\eta}(D)$ for the space of real-valued functions on $D$, whose $k$-th order partial derivatives are locally $\eta$-H\"older continuous, $C^{\eta}(D):=C^{0,\eta}(D)$.
We denote by $\B(D)$ the Borel $\sigma$-algebra on $D$. The notation $B \subset\subset D$ means that $B \in \B(D)$ is bounded and there is an open set $B_1$ such that $B\subseteq B_1 \subseteq \overline{B}_1 \subseteq D$. 
The Lebesgue measure on $\B(D)$ is denoted by $\ell$; the set of finite (and compactly supported) measures on $\B(D)$ is denoted by $\M_f(D)$ (and  
$\M_c(D)$ resp.). When $\mu$ is a measure on $\B(D)$ and $f\colon D \to \R$ measurable, let $\la f,\mu \ra:=\int_D f(x) \,\mu(dx)$, whenever the right-hand side makes sense. 
If $\mu$ has a density $\rho$ with respect to $\ell$, we write $\pint{ f,\rho}=\pint{ f,\mu}$.
For any metric space $E$, we denote by $p(E)$ and $b(E)$ the sets of Borel measurable and, respectively, nonnegative and bounded functions on $E$, and let $bp(E)=b(E) \cap p(E)$.
\smallskip

Let $(\xi=(\xi_t)_{t\ge 0}; (\P_x)_{x \in D} )$ be a diffusion process on $D$ with generator
\[
L(x)=\frac{1}{2}\nabla \cdot a(x)\nabla +b(x) \cdot \nabla \qquad \text{on }D.
\]
The diffusion matrix $a\colon D \to \R^{d \times d}$ takes values in the set of symmetric, positive definite matrices. Moreover, all components of $a$ and $b\colon D \to \R^d$ belong to $C^{1,\eta}(D)$ for some $\eta \in (0,1]$ (the parameter $\eta$ remains fixed throughout the article). In other words, $\xi$ denotes the unique solution to the generalized martingale problem associated with $L$ on $D\cup\{\cem\}$, the one-point compactification of $D$ with cemetery state $\cem$, cf.\ Chapter I in \cite{Pin95}. We write $\tau_D=\inf\{t \ge 0\colon \xi_t \not\in D\}$.

Let $\beta \in C^{\eta}(D)$ be bounded and
\begin{equation}\label{eq:psiDef}
\psi_0(x,z):=\alpha(x)z^2 +\int_{(0,\infty)}\big(e^{-z y}-1+ zy\big) \,\Pi(x,dy),
\end{equation}
where $\alpha \in bp(D)$ and $\Pi$ is a kernel from $D$ to $(0,\infty)$ such that $x \mapsto \int_{(0,\infty)}(y \land y^2) \,\Pi(x,dy)$ belongs to $bp(D)$. The function $\psi_{\beta}(x,z):= -\beta(x) z+\psi_0(x,z)$ is called the \em branching mechanism\em. If $\Pi\equiv 0$, we say that the branching mechanism is quadratic. 
In Section~\ref{sec:quadratic}, we explain that our results carry over to a class of quadratic branching mechanisms with unbounded $\alpha$ and $\beta$.
\smallskip

The main process of interest in this article is the \em $(L,\psi_{\beta};D)$-superdiffusion\em, which we denote by $X=(X_t)_{t \ge 0}$. Its distribution is denoted by $P_{\mu}$ if the process is started in $\mu \in \M_f(D)$. That is, $X$ is a $\M_f(D)$-valued time-homogeneous Markov process such that for all $\mu \in \M_f(D)$, $f \in bp(D)$ and $t\ge 0$,
\begin{equation}\label{eq:logLaplace}
P_{\mu}[e^{-\la f,X_t\ra}]=e^{-\la u_f(\cdot,t), \mu\ra},
\end{equation}
where $u_f$ is the unique nonnegative solution to the {\it{mild equation}}
\begin{equation}\label{eq:mild}
u(x,t)=S_tf(x)-\int_0^{t} S_s[\psi_0(\cdot,u(\cdot,t-s))](x)\, ds\quad \text{for all }(x,t) \in D\times [0,\infty).
\end{equation}
Here $S_tg(x):=\P_x[e^{\int_0^{t} \beta(\xi_s) \, ds} g(\xi_{t}) \mathbbm{1}_{\{t <\tau_D\}}]$ for all $g\in p(D)$, i.e.\ $(S_t)_{t \ge 0}$ denotes the semigroup of the differential operator $L+\beta$. Every function $g$ on $D$ is automatically extended to $D \cup \{\cem\}$ by $g(\cem):=0$. Hence, 
\[
S_tg(x)=\P_x\big[e^{\int_0^{t} \beta(\xi_s) \, ds} g(\xi_{t})\big].
\]
We refer to $\xi$ as the \em underlying motion \em or just the \em motion \em in the \em space \em $D$. 
Informally, the $\M_f(D)$-valued process $X=(X_t)_{t\ge 0}$ describes a cloud of infinitesimal particles independently evolving according to the motion $\xi$ and branching in a spatially dependent way according to the branching mechanism $\psi_{\beta}$. The existence of the superprocess $X$ is guaranteed by \cite{Dyn93,Fit88} and it satisfies the branching property (see (1.1) in \cite{Fit88} for a definition). By Theorem~3.1 in \cite{Dyn93a} or Theorem~2.11 in \cite{Fit88}, there is a version of $X$ such that $t \mapsto \la f,X_t\ra$ is almost surely right-continuous for all continuous $f \in bp(D)$. We will always work with this version.
In most texts the mild equation \eqref{eq:mild} is written in a slightly different form: instead of $(S_t)_{t\ge 0}$, the semigroup of $L$ is used and $\psi_0$ is replaced by $\psi_{\beta}$. Using Feynman-Kac arguments (see Lemma~\ref{lem:semi}(i) in the appendix) and Gronwall's lemma one easily checks that (for bounded $\beta$) the two equations are equivalent.

The main goal of this article is to determine the large-time behaviour of
\begin{equation}\label{eq:LLNratio}
\frac{\la f,X_t\ra}{P_{\mu}[\la f,X_t\ra]}
\end{equation}
for suitable test functions $f$ and starting measures $\mu$. We say that $X$ satisfies the \emph{Strong Law of Large Numbers} (SLLN) if, for all test functions $f\in C_c^+(D)$, $f\not= \mathbf{0}$, the ratio in \eqref{eq:LLNratio} converges to a finite, non-deterministic random variable which is independent of $f$. Here $C_c^+(D)$ denotes the space of nonnegative, continuous functions of compact support, and $\mathbf{0}$ is the constant function with value $0$.

\subsection{Statement of assumptions}\label{sec:assumptions}

Historically, superprocesses have been investigated using tools from the field of analysis starting from the mild equation~\eqref{eq:mild} or the corresponding partial differential equation, and by exploiting a characterisation of the superprocess as a high-density limit of branching particle processes.
A more probabilistic view on supercritical superprocesses is offered by the \emph{skeleton decomposition}. This, 
by now classical, 
cf. \cite{EvaOCo94,EngPin99,DuqWin07,BerFonMar08,BerKypMur11,KypPerRen13pre}, decomposition has been studied under a variety of names. 
It provides a pathwise representation of the superprocess as an immigration process along a supercritical branching particle process, that we call the \emph{skeleton}.
The skeleton captures the global behaviour of the superprocess and its discrete nature makes it much more tractable than the superprocess itself. We exploit these facts to establish the SLLN for superdiffusions. Specifically, our fundamental aim it to show that the SLLN for superdiffusions follows as soon as an appropriate SLLN holds for its skeleton. Given the existing knowledge for branching particle processes, this will lead us to a large class of superprocesses for which the SLLN can be stated.

Classically, the skeleton was constructed using the event $\extinction=\{\exists t\ge 0\colon X_t(D)=0\}$ of extinction after finite time to guide the branching particle process into regions where extinction of the superprocess is unlikely. The key property of $\extinction$ exploited in the skeleton decomposition is that the function $x\mapsto w(x)=-\log P_{\delta_x}(\extinction)$ gives rise to the multiplicative martingale $((e^{-\pint{w,X_t}})_{t\ge 0};P_{\mu})$. In the more general setup of the present article, we assume only the existence of such a \emph{martingale function} $w$.

\begin{ass}[Skeleton Assumption]\label{as:skeleton}
There exists a function $w \in p(D)$ with $w(x)>0$ for all $x\in D$,
\begin{align}
&\sup_{x \in B} w(x) <\infty\qquad \qquad\quad\;\;\, \text{for all }  B \subset\subset D, \label{eq:wlocbd}\\
&P_{\mu}\big[e^{-\pint{w,X_t}}\big]= e^{-\pint{ w,\mu}} \qquad \text{for all } \mu \in \M_c(D).\label{eq:mgFct}
\end{align}
\end{ass}

The function $w$ allows us to define the skeleton as a branching particle diffusion $Z$, where the spatial movement of each particle is equal in distribution to $(\xi=(\xi_t)_{t\ge 0};(\P_x^w)_{x\in D})$ with
\begin{equation}\label{eq:Pwdef}
\frac{d\P_x^w}{d\P_x}\bigg|_{\sigma(\xi_s\colon s \in [0,t])}=\frac{w(\xi_t)}{w(x)}\exp\Big(-\int_0^t \frac{\psi_{\beta}(\xi_s,w(\xi_s))}{w(\xi_s)}\, ds\Big) \qquad \text{on }\,\{t <\tau_D\} \text{ for all }t\ge 0.
\end{equation}
We will see in Lemma~\ref{lem:PwWellDef} that $\P_x^w$ is well-defined. Each particle dies at spatially dependent rate $q \in p(D)$ and is replaced by a random number of offspring with distribution $(p_k(x))_{k \ge 2}$, where $x$ is the location of its death. The branching rate $q$ and the offspring distribution $(p_k)_{k \ge 2}$ are uniquely identified by
\begin{equation}\label{eq:Generator}
G(x,s):=q(x) \sum_{k=2}^{\infty} p_k(x) (s^k-s) = \frac{1}{w(x)}\Big(\psi_0\big(x,w(x)(1-s)\big) - (1-s)\psi_0\big(x,w(x)\big)\Big)
\end{equation}
for all $s\in [0,1]$ and $x\in D$. The fact that $q$ and $(p_k)_{k \ge 2}$ are well-defined by \eqref{eq:Generator} is contained in Theorem~\ref{thm:KPR} below. In Section~\ref{sec:skeleton}, we define $Z$ on a rich probability space with probability measures $\bbP_{\mu}$, $\mu \in \M_f(D)$, where the initial configuration of $Z$ under $\bbP_{\mu}$ is given by a Poisson random measure with intensity $w(x)\mu(dx)$. 

As noted earlier, we are interested in the situation where the skeleton itself satisfies a SLLN. There is a substantial body of literature available that analyses the long-term behaviour of branching particle diffusions. 
To delimit the regime we want to study, we make two regularity assumptions. A detailed discussion of all assumptions can be found in Section~\ref{sec:properties}.

The first condition ensures that the semigroup $(S_t)_{t \ge 0}$ of $L+\beta$ grows precisely exponentially on compactly supported, continuous functions.

\begin{ass}[Criticality Assumption] \label{as:criticality}
The second order differential operator $L+\beta$ has positive \emph{generalised principal eigenvalue}
\begin{equation}\label{eq:lambdacdef}
\lambda_c:=\lambda_c(L+\beta):=\inf\big\{\lambda \in \R\colon \exists u \in C^{2,\eta}(D), u>0, (L+\beta -\lambda)u =0\big\}>0.
\end{equation}
Moreover, we assume that the operator $L+\beta - \lambda_c$ is \emph{critical}, that is, it does not possess a Green's function but there exists $\phi \in C^{2,\eta}(D)$, $\phi>0$, such that $(L+\beta-\lambda_c)\phi=0$. In this case, $\phi$ is unique up to constant multiples and is called the \emph{ground state}. With $L+\beta-\lambda_c$ also its formal adjoint is critical (cf.\ Pinsky \cite{Pin95}) and the corresponding ground state is denoted by $\tphi$. We further assume that $L+\beta -\lambda_c$ is \emph{product $L^1$-critical}, i.e.\ $\pint{ \phi,\tphi}<\infty$, and we normalize to obtain $\la \phi,\tphi\ra=1$. 
\end{ass}

We show in Corollary~\ref{cor:mg} below that under Assumptions~\ref{as:skeleton} and \ref{as:criticality} the process 
\[
W_t^{\phi/w}(Z)=e^{-\lambda_c t} \la \phi/w,Z_t\ra, \quad t \ge 0,
\]
is a nonnegative $\bbP_{\mu}$-martingale for all $\mu \in \M_f^{\phi}(D):=\{ \mu \in M_f(D)\colon \la \phi,\mu\ra<\infty\}$ and $(W_t^{\phi/w}(Z))_{t\ge 0}$ has an almost sure limit. To have the notation everywhere, we define $\mglZ:=\liminf_{t \to \infty}W_t^{\phi/w}(Z)$.

Our second regularity assumption consists essentially of moment conditions.

\begin{ass}[Moment Assumption]\label{as:moment1}
There exists $p\in (1,2]$ such that
\begin{align}
& \sup_{x \in D} \phi(x) \alpha(x) <\infty\label{alpha_cond1}\\
&\sup_{x \in D} \phi(x) \int_{(0,1]} y^2\, \Pi(x, dy) <\infty\label{as:PiLowerTail1}\\
&\sup_{x \in D} \phi(x)^{p-1} \int_{(1,\infty)} y^p\, \Pi(x,dy)<\infty,\label{as:PiUpperTail1}\\
&\pint{\phi^{p-1}, \phi\tphi} <\infty,\label{phi_p_cond1}\\
&\Bigpint{\int_{(1,\infty)} y^2 e^{-w(\cdot) y} \, \Pi(\cdot,dy), \phi \tphi}<\infty \label{cond:lat_cont1}.
\end{align}
\end{ass}

The parameter $p$ remains fixed throughout the article. Assumption~\ref{as:moment1} is satisfied, for example, when $\phi$ is bounded and $\sup_{x \in D} \int_{(1,\infty)} y^2 \, \Pi(x,dy)<\infty$. 
These second moment conditions appeared in the literature (cf.\ Section~\ref{sec:moments}) and we will see several examples in Section~\ref{sec:examples}.
However, our results are valid under the weaker conditions of Assumption~\ref{as:moment1}. In Sections~\ref{sec:moments} and \ref{sec:quadratic}, we explain that in the case of a quadratic branching mechanism only \eqref{alpha_cond1} is needed.

The SLLN has been proved for a large class of branching particle diffusions. Where it has not been established, yet, we assume a SLLN for the skeleton $Z$. It will be sufficient to assume convergence along lattice times.

\begin{ass}[Strong Law Assumption]\label{as:SLLN}
For all $\mu \in \M_c(D)$, $\delta>0$ and continuous $f\in p(D)$ with $fw/\phi$ bounded,
\[
\lim_{n \to \infty} e^{-\lambda_cn\delta}\pint{ f ,Z_{n \delta}}=\pint{ f,w\tphi } W_{\infty}^{\phi/w}(Z) \qquad \bbP_{\mu}\text{-almost surely}.
\]
\end{ass}
At first, Assumption~\ref{as:SLLN} may look like a strong assumption. However, given Assumptions~\ref{as:skeleton}--\ref{as:moment1}, the SLLN for the skeleton has been proved under two additional conditions. The first condition controls the spread of the support of the skeleton when started from a single particle; the second condition is a uniformity assumption on the convergence of an associated ergodic motion (the ``spine'') to its stationary distribution. See Theorem~\ref{thm:SLLN_skeleton} for details. These conditions hold for a wide class of processes and we demonstrate this for several key examples in Section~\ref{sec:examples}.

\subsection{Statement of the main results}\label{sec:results}

Before we state the SLLN for superdiffusion $X$, we relate the limiting random variable of \eqref{eq:LLNratio} to the limit that appears in Assumption~\ref{as:SLLN}. In Corollary~\ref{cor:mg} below we show that under Assumption~\ref{as:criticality}, the process
\[
W_t^{\phi}(X)=e^{-\lambda_c t} \la \phi,X_t\ra, \quad t \ge 0,
\]
is a nonnegative $P_{\mu}$-martingale for all $\mu \in \M_f^{\phi}(D)=\{ \mu \in \M_f(D)\colon \la \phi,\mu\ra <\infty\}$ and $(W_t^{\phi}(X))_{t\ge 0}$ has an almost sure limit. To have the notation everywhere, we define $\mglX:=\liminf_{t \to \infty} W_t^{\phi}(X)$.

\begin{proposition}\label{pro:mglim} Suppose Assumptions~\ref{as:skeleton}, \ref{as:criticality}, \eqref{alpha_cond1}--\eqref{as:PiUpperTail1} hold. For all $\mu \in \M_f^{\phi}(D)$, the martingales $(W_t^{\phi}(X))_{t \ge 0}$ and $(W_t^{\phi/w}(Z))_{t \ge 0}$ are bounded in $L^p(\bbP_{\mu})$ and
\begin{equation}\label{eq:mglimAgree}
\mglX=\mglZ \qquad \bbP_{\mu} \text{-almost surely.}
\end{equation}
\end{proposition}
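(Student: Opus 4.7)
The plan is to treat the two claims separately: first the uniform $L^p$-boundedness of the two martingales, then the almost sure identification of their limits via the skeleton decomposition. For the skeleton martingale $W_t^{\phi/w}(Z)$, I would apply the standard spine/many-to-few technique for branching particle diffusions. After the martingale change of measure induced by $W_t^{\phi/w}(Z)$, a distinguished spine particle's motion corresponds (in law) to the $\phi$-transform of $\xi$, and its fission events are governed by the mechanism determined by~\eqref{eq:Generator}. The $p$-th moment of $W_t^{\phi/w}(Z)$ then reduces to an additive functional along the spine whose increments involve $\phi^{p-1}$-weighted moments of the offspring distribution. Via the identity~\eqref{eq:Generator} relating $(q,p_k)$ to $(\alpha,\Pi)$, these are controlled by~\eqref{alpha_cond1}--\eqref{as:PiUpperTail1}, and a Gronwall-type bound closes $\sup_t \bbP_\mu[W_t^{\phi/w}(Z)^p] < \infty$, crucially exploiting $p\in(1,2]$ so as to avoid needing full second moments. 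For $W_t^\phi(X)$ I would transfer the bound from $Z$ to $X$ using the skeleton representation: $X$ decomposes pathwise as an initial extinction-conditioned residual superdiffusion together with continuous Poissonian immigration along the trajectories of $Z$ and discrete immigration at the branch points of $Z$. Each immigrated piece projected onto $\phi$ has bounded mean per unit immigrated mass, so a Jensen-type argument transfers the uniform $L^p$-bound.

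For the limit identification $\mglX = \mglZ$, the plan is to first establish a conditional first-moment identity of the form
\[
\bbP_\mu\!\left[ W_t^\phi(X) \,\Big|\, \mathcal{F}_t^Z \right] = W_t^{\phi/w}(Z) + \eta_t,
\]
where $\mathcal{F}_t^Z$ denotes the history of $Z$ up to time $t$ and $\eta_t$ accounts for the $e^{-\lambda_c t}\phi$-projected mean of the residual extinction-conditioned mass not yet accounted for by the skeleton. A direct computation using~\eqref{eq:Pwdef}, the ground-state relation $(L+\beta-\lambda_c)\phi=0$, and the explicit intensities of the Poissonian immigration processes should yield this identity along with $\eta_t \to 0$ as $t\to\infty$, since the residual is conditioned on extinction and $\langle\phi,\mu\rangle<\infty$. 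To upgrade the conditional expectation to an almost sure identity, I would then show that the conditional $p$-th centred moment of $W_t^\phi(X)$ given the skeleton vanishes as $t\to\infty$: conditionally on $Z$, the immigration contributions are mutually independent, so a Burkholder-type inequality bounds these fluctuations by an integral of $e^{-p\lambda_c s}$ against the branching-mechanism moments over the space-time history of $Z$. Combined with the uniform $L^p$-bound from the first step, this forces $W_t^\phi(X) - W_t^{\phi/w}(Z) \to 0$ in $L^p(\bbP_\mu)$, which delivers almost sure equality of the $\liminf$'s.

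The main obstacle will be the conditional $p$-th centred moment estimate for the immigration fluctuations: since the proposition is deliberately posed under the weaker hypotheses \eqref{alpha_cond1}--\eqref{as:PiUpperTail1} and without the stronger second-moment condition~\eqref{phi_p_cond1}, a pure $L^2$-variance argument is unavailable. I expect to need to truncate the immigration clumps into small clumps (controlled by \eqref{alpha_cond1} and \eqref{as:PiLowerTail1} in an $L^2$ sense) and large clumps (controlled by \eqref{as:PiUpperTail1} in an $L^p$ sense), together with an interpolation that combines the two estimates with the correct exponential decay rate in $t$. A secondary technical point is that, since $Z_0$ is Poissonian with intensity $w(x)\mu(dx)$, one needs $\mu \in \M_f^\phi(D)$ together with the local boundedness~\eqref{eq:wlocbd} of $w$ to ensure $W_0^{\phi/w}(Z) \in L^p(\bbP_\mu)$, providing the initial condition for the Gronwall iteration.
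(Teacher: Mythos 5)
There are two genuine problems with your plan, both of which the paper's actual argument is specifically designed to avoid. First, your transfer of the $L^p$-bound from $Z$ to $X$ goes in the wrong direction. The skeleton decomposition writes $X_t=X_t^*+I_t$, and conditionally on the skeleton the $p$-th moment of $\la\phi,I_t\ra$ is \emph{not} controlled by the conditional mean plus a Jensen step: conditional Jensen shrinks $L^p$-norms when you pass \emph{to} a conditional expectation, so it lets you deduce a bound for $\bbP_\mu[\,\cdot\,|\bbF_t]$ from one for the full process, never the reverse. Going from $Z$ to $X$ forces you to add back the centred fluctuations of the Poissonian immigration, which is exactly the hard estimate. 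The paper instead proves the $L^p$-bound for $W^\phi_t(X)$ \emph{directly}, via a spine decomposition of $X$ itself under the change of measure $Q_\mu$ induced by $W^\phi_t(X)$ (Theorem~\ref{thm:Lp}); conditions \eqref{alpha_cond1}--\eqref{as:PiUpperTail1} enter there through Campbell's formula for the immigration along the spine. The bound for $W^{\phi/w}_t(Z)$ is then obtained for free at the end, by conditional Jensen, once the limit identification is in place.

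Second, and more seriously, the fluctuation estimate you flag as the ``main obstacle'' is not merely technical: the conditional centred $p$-th moment bound (the paper's Proposition~\ref{pro:Lpsum}) genuinely requires the extra hypothesis \eqref{phi_p_cond1}, via $\P^{\phi}_{\phi\mu}[\phi(\xi_t)^{p-1}]\to\pint{\phi^{p-1},\phi\tphi}$, and \eqref{phi_p_cond1} is deliberately \emph{not} assumed in this proposition. Your truncation/interpolation plan would have to manufacture that control out of \eqref{alpha_cond1}--\eqref{as:PiUpperTail1} alone, and there is no indication it can. The paper sidesteps fluctuations entirely: applying Proposition~\ref{pro:bb_mean} to $f=\phi$ and conditioning the \emph{future} value $W^{\phi}_{s+t}(X)$ on $\bbF_t$, the invariance $e^{-\lambda_c s}S_s\phi=\phi$ makes the skeleton term exactly $W_t^{\phi/w}(Z)$ with no error, while the two remaining terms vanish as $s\to\infty$ because $e^{-\lambda_c s}S_s^*\phi/\phi\to 0$ (Lemma~\ref{lem:ext_lim}, which uses positive recurrence of the spine --- this is where a real argument is needed, not just ``the residual is conditioned on extinction''). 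Uniform integrability from the $L^p$-bound then gives $\bbP_\mu[\mglX\,|\,\bbF_t]=W_t^{\phi/w}(Z)$ exactly, so $(W_t^{\phi/w}(Z))_{t\ge0}$ is a closed martingale and $\mglZ=\bbP_\mu[\mglX\,|\,\bbF_\infty]=\mglX$ by $\bbF_\infty$-measurability --- no second-moment, Burkholder, or truncation argument is required. You should restructure your proof around this tower-property identity rather than around an $L^p$-convergence of $W_t^\phi(X)-W_t^{\phi/w}(Z)$.
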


Our main theorem is the following.

\begin{theorem}\label{thm:SLLN}
Suppose Assumptions 1--4 hold. For every $\mu \in \M_f^{\phi}(D)$, there exists a measurable set $\Omega_0$ such that $P_{\mu}(\Omega_0)=1$ and, on $\Omega_0$, for all $\ell$-almost everywhere continuous functions $f \in p(D)$ with $f/\phi$ bounded,
\begin{equation}\label{eq:SLLN}
\lim_{t \to \infty} e^{-\lambda_ct}\la f,X_t\ra=\la f,\tphi\ra \mglX.
\end{equation}
The convergence in \eqref{eq:SLLN} also holds in $L^1(P_{\mu})$. In particular, $P_{\mu}[\mglX]=\la \phi,\mu \ra$.
\end{theorem}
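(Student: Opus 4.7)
The plan is to transfer the SLLN from the skeleton $Z$ to the superdiffusion $X$ via the skeleton decomposition. The limit is already identified by Proposition~\ref{pro:mglim}: on the enlarged probability space carrying both $X$ and $Z$ under $\bbP_\mu$, we have $\mglX = \mglZ$ $\bbP_\mu$-a.s.\ and both martingales are $L^p$-bounded. Since $\bbP_\mu$ extends $P_\mu$, it suffices to prove \eqref{eq:SLLN} $\bbP_\mu$-almost surely on this enlarged space; the $L^1(P_\mu)$ convergence together with $P_\mu[\mglX]=\la\phi,\mu\ra$ will then follow from the $L^p$-bound via uniform integrability.

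The skeleton decomposition realises $X$ under $\bbP_\mu$ as $X = X^\dag + I(Z)$, where $X^\dag$ is an extinguishing $(L,\psi_\beta;D)$-superprocess started from $\mu$ (with $\la w, X_t^\dag\ra\to 0$), independent of $Z$, and $I(Z)$ is the immigration process along the skeleton: continuous Poissonian immigration along skeleton trajectories at rate proportional to $\alpha w$, jump immigrations driven by $\Pi$, and discrete immigrations at the branch points of $Z$; each immigrated clump then evolves as an independent extinguishing superprocess. For the extinguishing term, the extinguishing property together with mean-semigroup estimates readily gives $e^{-\lambda_c t}\la f, X_t^\dag\ra \to 0$ $\bbP_\mu$-a.s.\ whenever $f/\phi$ is bounded, so all of the growth in $\la f, X_t\ra$ is carried by $I(Z)$.

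For the immigration term I would first work along a lattice $t = n\delta$ and condition on the skeleton $\sigma$-algebra $\sigma(Z)$. Using Feynman--Kac manipulations with the $h$-transform \eqref{eq:Pwdef}, the conditional first moment $\bbP_\mu[\la f, I_t\ra\mid\sigma(Z)]$ can be expressed, up to an asymptotically negligible correction, as $e^{\lambda_c t}\la g_t, Z_{t-k\delta}\ra$ for a fixed $k$ and functions $g_t$ converging locally uniformly to a constant multiple of $w\phi^{-1}$, the constant being determined by the projection of $S_s$ onto $\tphi$ so as to match $\la f,\tphi\ra$. Applying Assumption~\ref{as:SLLN} to the sequence $g_t$ then delivers the correct limit $\la f,\tphi\ra\mglZ$ for the conditional expectation along the lattice.

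The main technical obstacle is the control of the conditional fluctuation $\la f, I_{n\delta}\ra - \bbP_\mu[\la f, I_{n\delta}\ra\mid\sigma(Z)]$: given $Z$ this is a sum of independent compensated contributions, and the moment hypotheses \eqref{alpha_cond1}--\eqref{cond:lat_cont1} of Assumption~\ref{as:moment1} provide uniform control of their conditional $L^p$-norms, so a conditional $L^p$-inequality combined with a Borel--Cantelli argument forces $e^{-\lambda_c n\delta}$ times this fluctuation to vanish $\bbP_\mu$-a.s. Passage from the lattice $n\delta$ to arbitrary $t$ will then be an $\epsilon$-squeezing argument exploiting short-time moment estimates on $S_\delta$ with $\delta\downarrow 0$; the extension from $C_c^+(D)$ to all $\ell$-a.e.\ continuous $f$ with $f/\phi$ bounded follows by sandwiching with multiples of $\phi$, whose limit $\la \phi,\tphi\ra\mglX$ is finite by the already established convergence for $\phi$, and a single full-measure set $\Omega_0$ is obtained by intersecting the individual good sets over a countable dense family of test functions.
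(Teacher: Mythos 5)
Your plan for the lattice-time part coincides with the paper's: decompose $X_{s+n\delta}$ via the skeleton, apply Assumption~\ref{as:SLLN} to the conditional expectation $\sum_{i=1}^{N_{n\delta}}\bbP_{\bullet,\delta_{\xi_i(n\delta)}}[\la f,I_s\ra]=\la (S_sf-S_s^*f)/w,Z_{n\delta}\ra$, kill the fluctuations by a conditional $L^p$-inequality plus Borel--Cantelli (this is Proposition~\ref{pro:Lpsum} and Proposition~\ref{pro:BC_immi}), and reduce to one-sided $\liminf$ bounds so that the nonnegative $X^*$ and pre-$t$ immigration terms can simply be dropped. That much is sound and is essentially Sections~\ref{sec:reduction}--\ref{sec:cond_exp}.

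The genuine gap is the passage from lattice to continuous time, which you dispatch as "an $\epsilon$-squeezing argument exploiting short-time moment estimates on $S_\delta$ with $\delta\downarrow 0$." No such squeezing is available here: for a superprocess there is no pathwise monotonicity or finite-particle continuity that controls $\sup_{t\in[n\delta,(n+1)\delta]}|e^{-\lambda_c t}\la \phi\mathbbm{1}_B,X_t\ra-e^{-\lambda_c n\delta}\la \phi\mathbbm{1}_B,X_{n\delta}\ra|$ for indicator-type test functions, and the paper explicitly flags this as the point where the Asmussen--Hering scheme breaks down for measure-valued processes. The actual proof (Section~\ref{lattice_to_cont_sec}) first proves continuous-time convergence for the \emph{smoothed} test functions $\phi\,\res U^{\res}f$: the quantity $e^{-\lambda_c(n+1)\delta}\la S_{(n+1)\delta-t}[\phi g],X_t\ra$ is a martingale on $[n\delta,(n+1)\delta]$, so Doob's $L^p$-inequality plus a second-moment bound on the $X^*$-part controls the oscillation between lattice points --- and it is exactly here that condition \eqref{cond:lat_cont1} (equivalently \eqref{as:lat_con_Lower}--\eqref{as:lat_con_Upper}) is used, not in the lattice-time fluctuation estimate where you invoke it. Removing the resolvent then requires a separate argument (Proposition~\ref{pro:SLLN_indic}): bounding $\mathbbm{1}_{B^c}\P^{\phi}_\cdot[e^{-\res\sigma_{B_\eps}}]$ by $\res U^{\res}(\cdot,B_\eps')/\Phi(\res,\eps)$ and showing $\Phi(\res,\eps)$ stays bounded away from zero as $\res\to\infty$ via hitting-time estimates for the diffusion. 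Without supplying an argument of this type (or some other mechanism controlling the process between lattice times), your proof establishes only the SLLN along lattice times, i.e.\ Theorem~\ref{SLLN_lattice}, not Theorem~\ref{thm:SLLN}. A secondary, repairable point: your claim that $e^{-\lambda_c t}\la f,X_t^{\dag}\ra\to 0$ almost surely along \emph{continuous} time does not follow "readily" from mean estimates; the paper avoids needing it by working only with $\liminf$ lower bounds.
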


Even though our main interest is almost sure convergence, Theorem~\ref{thm:SLLN} also implies new results for convergence in probability; see the examples in Section~\ref{sec:examples}.
We record the following corollary of Theorem~\ref{thm:SLLN} to present the result in possibly more familiar terms.

\begin{corollary}
Suppose Assumptions 1--4 hold. In the vague topology, $e^{-\lambda_c t} X_t \to \mglX \tphi \ell$ $P_{\mu}$-almost surely as $t \to \infty$. If, in addition, $\phi$ is bounded away from zero, then the convergence holds in the weak topology $P_{\mu}$-almost surely.
\end{corollary}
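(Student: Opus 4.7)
The plan is to deduce both statements directly from Theorem~\ref{thm:SLLN} by verifying that the positive and negative parts of the relevant test functions lie in the admissible class $\{f\in p(D)\colon f/\phi\text{ bounded and }f\text{ is }\ell\text{-a.e.\ continuous}\}$, and then exploiting the fact that the exceptional $P_\mu$-null set provided by the theorem does \emph{not} depend on the test function. That test-function-free null set is the whole point: it is what lets us promote pointwise convergence of $e^{-\lambda_c t}\la f, X_t\ra$ for each individual $f$ to vague (and, under the extra hypothesis, weak) convergence of the random measures $e^{-\lambda_c t} X_t$.

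Fix $\mu\in\M_f^\phi(D)$ and let $\Omega_0$ be the almost sure event supplied by Theorem~\ref{thm:SLLN}. For the vague convergence, pick $f\in C_c(D)$ and write $f=f^+-f^-$. Both $f^\pm$ are nonnegative, continuous on $D$, and supported in the compact set $K=\operatorname{supp}(f)$. Since $\phi\in C^{2,\eta}(D)$ is strictly positive, $c_K:=\inf_K\phi>0$, so $f^\pm/\phi\le\|f\|_\infty/c_K$ on $K$ and vanishes off $K$; in particular $f^\pm/\phi$ is bounded. Applying Theorem~\ref{thm:SLLN} on $\Omega_0$ to $f^+$ and $f^-$ and subtracting gives, $P_\mu$-a.s.,
\[
e^{-\lambda_c t}\la f, X_t\ra \;\longrightarrow\; \la f,\tphi\ra\, W_\infty^\phi(X) \;=\; W_\infty^\phi(X)\,\la f,\tphi\ell\ra,
\]
which is precisely vague convergence of $e^{-\lambda_c t}X_t$ to $W_\infty^\phi(X)\tphi\ell$.

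For the weak-convergence statement, assume in addition that $c:=\inf_D\phi>0$. First, the candidate limit is a finite measure: $\tphi\ell(D)=\la 1,\tphi\ra\le c^{-1}\la\phi,\tphi\ra=c^{-1}$. Next, given any $f\in C_b(D)$, the decomposition $f=f^+-f^-$ again produces nonnegative continuous functions with $f^\pm/\phi\le\|f\|_\infty/c$, so Theorem~\ref{thm:SLLN} (on the same $\Omega_0$) yields the desired pointwise convergence. Taking $f\equiv 1$ (admissible because $1/\phi\le c^{-1}$) shows that the total masses $e^{-\lambda_c t}X_t(D)$ also converge almost surely to $\la 1,\tphi\ra W_\infty^\phi(X)$, which, combined with convergence against every $f\in C_b(D)$, is weak convergence of finite measures on $D$.

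The argument is essentially bookkeeping rather than hard analysis; the only point that needs attention, and which Theorem~\ref{thm:SLLN} has been deliberately stated to handle, is the simultaneity of the null set across all test functions. Once that is in hand, both conclusions fall out by simple positive/negative decomposition and the elementary observation that $\phi$ is bounded below on compact sets (always) and globally (under the stronger hypothesis).
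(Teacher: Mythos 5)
Your argument is correct and is exactly the intended one: the paper states this corollary without proof, treating it as immediate from Theorem~\ref{thm:SLLN} via the test-function-independent null set $\Omega_0$, the positive/negative decomposition, and the fact that $\phi$ is bounded below on compacts (resp.\ globally). Nothing further is needed.
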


Finally, we present the SLLN as announced in \eqref{eq:LLNratio}. This makes the comparison between $\la f,X_t\ra$ and its mean explicit.

\begin{corollary}\label{cor:SLLN}
Suppose that Assumptions 1--4 hold. For all $\mu \in \M_f^{\phi}(D)$, $\mu \not \equiv 0$, $f \in C_c^+(D)$, $f\not=\mathbf{0}$,
\[
\lim_{t \to \infty} \frac{\la f,X_t\ra}{P_{\mu}[\la f,X_t\ra]} = \frac{1}{\la \phi ,\mu\ra} \mglX \qquad P_{\mu}\text{-almost surely and in }L^1(P_{\mu}).
\]
\end{corollary}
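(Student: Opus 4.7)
\medskip

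\noindent\textbf{Proof plan for Corollary~\ref{cor:SLLN}.} The strategy is to deduce the corollary directly from Theorem~\ref{thm:SLLN} by rewriting the ratio in \eqref{eq:LLNratio} as a quotient of two factors, each of which we understand: the numerator normalized by $e^{\lambda_c t}$ converges almost surely and in $L^1(P_\mu)$ to $\la f,\tphi\ra W_\infty^\phi(X)$; the denominator, normalized by the same factor, is deterministic and converges to $\la f,\tphi\ra \la \phi,\mu\ra$.

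First I would verify that the hypotheses of Theorem~\ref{thm:SLLN} apply to $f\in C_c^+(D)$. Since $f$ is continuous everywhere it is in particular $\ell$-a.e.\ continuous, and because $\phi\in C^{2,\eta}(D)$ with $\phi>0$ is bounded away from zero on the compact support of $f$, the quotient $f/\phi$ is bounded. Thus Theorem~\ref{thm:SLLN} yields
\begin{equation*}
e^{-\lambda_c t}\la f,X_t\ra \to \la f,\tphi\ra W_\infty^\phi(X) \qquad P_\mu\text{-a.s.\ and in } L^1(P_\mu).
\end{equation*}

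Next, I would handle the denominator by taking $P_\mu$-expectations of the $L^1$ convergence above. This gives
\begin{equation*}
e^{-\lambda_c t}P_\mu[\la f,X_t\ra] \longrightarrow \la f,\tphi\ra\, P_\mu[W_\infty^\phi(X)] = \la f,\tphi\ra \la \phi,\mu\ra,
\end{equation*}
where the final equality is the identity $P_\mu[W_\infty^\phi(X)]=\la\phi,\mu\ra$ recorded in Theorem~\ref{thm:SLLN}. Since $f\in C_c^+(D)$ with $f\not=\mathbf{0}$ and $\tphi>0$, one has $\la f,\tphi\ra\in(0,\infty)$; since $\mu\not\equiv 0$ and $\mu\in\M_f^\phi(D)$, one has $\la\phi,\mu\ra\in(0,\infty)$. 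Hence the limit of the (deterministic, positive) denominator is strictly positive, and the ratio
\begin{equation*}
\frac{\la f,X_t\ra}{P_\mu[\la f,X_t\ra]}=\frac{e^{-\lambda_c t}\la f,X_t\ra}{e^{-\lambda_c t}P_\mu[\la f,X_t\ra]}
\end{equation*}
converges $P_\mu$-almost surely to $W_\infty^\phi(X)/\la\phi,\mu\ra$. Convergence in $L^1(P_\mu)$ follows from the $L^1$ convergence of the numerator after division by the deterministic sequence $e^{-\lambda_c t}P_\mu[\la f,X_t\ra]$, whose reciprocal is eventually bounded.

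There is no real obstacle here: all the work sits in Theorem~\ref{thm:SLLN} and in the identification $P_\mu[W_\infty^\phi(X)]=\la\phi,\mu\ra$ (which is equivalent to the martingale $W_t^\phi(X)$ being uniformly integrable under $P_\mu$, guaranteed by Proposition~\ref{pro:mglim}). The only minor point to be careful about is ensuring the normalizing constants are nonzero so that the division is legitimate; this is what the hypotheses $f\not=\mathbf{0}$ and $\mu\not\equiv 0$ provide.
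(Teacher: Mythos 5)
Your proof is correct, and its overall shape matches the paper's: divide numerator and denominator by $e^{\lambda_c t}$, apply Theorem~\ref{thm:SLLN} to the numerator, and identify the limit of the deterministic denominator as $\la f,\tphi\ra\la\phi,\mu\ra>0$. The one place where you diverge is in how that denominator limit is obtained. The paper computes it directly from the Many-to-One formula \eqref{eq:mtoX}, $e^{-\lambda_c t}P_\mu[\la f,X_t\ra]=\la \P_\cdot^{\phi}[(f/\phi)(\xi_t)],\phi\mu\ra$, and then invokes the ergodicity of the spine \eqref{eq:ergodic} to pass to $\la f/\phi,\phi\tphi\ra\la\phi,\mu\ra$; this is a purely first-moment computation that does not use the stochastic convergence at all. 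You instead take expectations in the $L^1(P_\mu)$ statement of Theorem~\ref{thm:SLLN} and use $P_\mu[\mglX]=\la\phi,\mu\ra$. Both are legitimate; the paper's route is self-contained at the level of moments and ergodic theory of $(\xi;\P^\phi)$, while yours recycles the $L^1$ convergence already proved and so requires no further appeal to the spine. Your remaining points (boundedness of $f/\phi$ on the compact support, strict positivity of $\la f,\tphi\ra$ and $\la\phi,\mu\ra$, and the eventual boundedness of the reciprocal of the denominator for the $L^1$ step) are all handled correctly.
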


The Weak Law of Large Numbers (WLLN), and even the $L^1$-convergence in \eqref{eq:SLLN}, can be obtained without assuming the SLLN for the skeleton as the next theorem reveals.

\begin{theorem}\label{thm:L1}
 Suppose Assumptions~\ref{as:skeleton}, \ref{as:criticality}, \eqref{alpha_cond1}--\eqref{phi_p_cond1} hold. For all $\mu \in \M_f^{\phi}(D)$ and $f \in p(D)$ with $f/\phi$ bounded, the convergence in \eqref{eq:SLLN} holds in $L^1(P_{\mu})$.
\end{theorem}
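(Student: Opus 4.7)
The plan is to deduce $L^1$-convergence from uniform integrability together with convergence in probability, via Vitali's theorem. Uniform integrability is immediate: since $f/\phi$ is bounded, $|e^{-\lambda_c t}\langle f,X_t\rangle|\le \|f/\phi\|_\infty W_t^\phi(X)$, and $(W_t^\phi(X))_{t\ge 0}$ is bounded in $L^p(P_\mu)$ by Proposition~\ref{pro:mglim}. Consequently the family $\{e^{-\lambda_c t}\langle f,X_t\rangle\}_{t\ge 0}$ is $L^p$-bounded, hence uniformly integrable.

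For convergence in probability I would split $e^{-\lambda_c t}\langle f,X_t\rangle = M_{s,t} + \Delta_{s,t}$ at a fixed time $s>0$, where
\[
M_{s,t}:=P_\mu\big[e^{-\lambda_c t}\langle f,X_t\rangle\,\big|\,\mathcal{F}_s\big]=e^{-\lambda_c s}\bigpint{e^{-\lambda_c (t-s)}S_{t-s}f,\,X_s}
\]
by the Markov property. Criticality and product $L^1$-criticality of $L+\beta-\lambda_c$ (Assumption~\ref{as:criticality}) deliver the ratio-limit semigroup asymptotics $e^{-\lambda_c r}S_r f \to \langle f,\tphi\rangle\phi$ pointwise, while $S_r\phi = e^{\lambda_c r}\phi$ combined with $f\le \|f/\phi\|_\infty\phi$ gives the uniform domination $e^{-\lambda_c r}S_r f \le \|f/\phi\|_\infty\phi$. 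Dominated convergence inside the integral against $X_s(dx)$ then yields $M_{s,t}\to \langle f,\tphi\rangle W_s^\phi(X)$ as $t\to\infty$, and the $L^p$-convergence of the $\phi$-martingale (from Proposition~\ref{pro:mglim}) gives $W_s^\phi(X)\to \mglX$ as $s\to\infty$. Thus $\lim_{s\to\infty}\lim_{t\to\infty}M_{s,t}=\langle f,\tphi\rangle \mglX$ identifies the target.

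It remains to estimate the fluctuation $\Delta_{s,t}$. Conditionally on $\mathcal{F}_s$, by the branching property $X_t$ decomposes as a sum of independent copies of the superdiffusion started from the atoms of $X_s$, so $\Delta_{s,t}$ is a centred sum of such independent contributions. A Dynkin-type expansion of the $p$-th central moment of $\langle e^{-\lambda_c(t-s)}S_{t-s}f,X_{t-s}\rangle$ under $P_{\delta_x}$, combined with the uniform bound $e^{-\lambda_c r}S_r f\le C\phi$ and the moment conditions \eqref{alpha_cond1}--\eqref{as:PiUpperTail1}, produces a pointwise estimate of order $C\,e^{-\lambda_c(p-1)s}\phi(x)^p$, which upon integrating in $\mu$ (via the first-moment formula together with \eqref{phi_p_cond1}) yields $P_\mu[|\Delta_{s,t}|^p]\to 0$ as $s\to\infty$ uniformly in $t\ge s$. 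Combined with the previous paragraph this gives convergence in probability and hence the desired $L^1$-convergence.

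The principal obstacle is the quantitative $p$-th moment control on $\Delta_{s,t}$ in the regime $p\in(1,2]$, where a direct variance computation is unavailable; one has to invoke the $L^p$-sublinearity of $\psi_0(x,\cdot)$ together with a Rosenthal-type moment bound adapted to the branching structure, and the conditions of Assumption~\ref{as:moment1} appear precisely so that the resulting expression is finite. The pointwise semigroup asymptotics $e^{-\lambda_c r}S_r f\to \langle f,\tphi\rangle\phi$ are a classical consequence of product-$L^1$ criticality (cf.\ Pinsky \cite{Pin95}), which I would import rather than re-derive.
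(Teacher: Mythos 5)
Your overall architecture --- uniform integrability from $L^p$-boundedness of $(W_t^{\phi}(X))_{t\ge0}$, plus convergence in probability via an Asmussen--Hering split at an intermediate time $s$ --- is a legitimate route and differs from the paper, which never passes through convergence in probability: it decomposes $e^{-\lambda_c(s+t)}\pint{f,X_{s+t}}-\pint{f,\tphi}\mglX$ directly into four pieces ($\Xi_1,\dots,\Xi_4$ in Section~\ref{sec:L1Conv}) using the skeleton and shows each tends to zero in $L^1$. Your uniform integrability step and your treatment of $M_{s,t}$ are correct: $e^{-\lambda_c r}S_rf=\phi\,\P_{\cdot}^{\phi}[(f/\phi)(\xi_r)]\to\pint{f,\tphi}\phi$ pointwise with domination by $\supnorm{f/\phi}\,\phi$ is exactly \eqref{eq:ergodic} for bounded test functions.

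The gap is the fluctuation estimate on $\Delta_{s,t}$, which is precisely the part that is nontrivial for $p\in(1,2)$. First, conditionally on $\F_s$ the process $X_t$ is \emph{not} a sum of independent copies started ``from the atoms of $X_s$'': $X_s$ is in general non-atomic. The correct independent decomposition is the Poisson cluster representation with intensity $\int X_s(dx)\,\N_x(\cdot)$, and a von Bahr--Esseen/Rosenthal bound for the compensated Poisson integral then requires $p$-th moments of $\pint{\phi,X_r}$ under the $\sigma$-finite measures $\N_x$, suitably controlled in $r$. Neither these moments nor your ``Dynkin-type expansion of the $p$-th central moment'' are available off the shelf for non-integer $p$; producing them would essentially amount to redoing the spine decomposition of Section~\ref{sec:spine} under $\N_x$, i.e., the hardest estimate of the paper in a harder setting. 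The paper avoids this entirely: the skeleton decomposition writes $X_{s+t}=X_{s+t}^*+I_s^{*,t}+\sum_{i=1}^{N_t}I_s^{i,t}$, where the sum over skeleton particles is a genuine finite sum of conditionally independent terms, so the inequality $P[|\sum_i Y_i|^p]\le 2^p\sum_i P[|Y_i|^p]$ for centred independent $Y_i$ applies and each summand is controlled by the spine bound \eqref{eq:EWtoq} (Proposition~\ref{pro:Lpsum}); the non-discrete remainder $X_{s+t}^*+I_s^{*,t}$ is disposed of at the level of \emph{first} moments only, using the decay $e^{-\lambda_c r}S_r^*\phi/\phi\to0$ of Lemma~\ref{lem:ext_lim}, so no centred $L^p$ bound for the superprocess itself is ever needed. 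Two further corrections: \eqref{phi_p_cond1} enters not ``upon integrating in $\mu$'' but through the ergodic average $\P_{\phi\mu}^{\phi}[\phi(\xi_s)^{p-1}]\to\pint{\phi^{p-1},\phi\tphi}$ (Lemma~\ref{lem:mg_moments}), and since $\phi^{p-1}$ may be unbounded, that convergence requires first reducing to $\mu\in\M_c(D)$ via the branching property (Lemma~\ref{lem:muComp}(ii)), a reduction your argument omits.
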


\subsection{Literature review}\label{sec:literature}

Terminology in the literature is not always consistent, so let us clarify that we refer to branching particle processes and superprocesses as branching diffusions and superdiffusions, respectively, if the underlying motion is a diffusion. Similar wording is used for other classes of underlying motions.

The limit theory of supercritical branching processes has been studied since the 1960s when sharp statements were established for classical finite type processes \cite{KesSti66,Ath68}. The first result for branching diffusions was due to Watanabe \cite{Wat67} in 1967, who proved an almost sure convergence result for branching Brownian motion and certain one-dimensional motions. The key ingredient to the proof was Fourier analysis, a technique recently used by Wang \cite{Wan10} and Kouritzin and Ren \cite{KouRen14} to establish the SLLN for super-Brownian motion. Super-Brownian motion on $\R^d$ with a spatially independent branching mechanism does not fall into the framework of the current article since $L+\beta-\lambda_c$ is not product $L^1$-critical in that case. Rather, $\phi=\tphi=\mathbf{1}$, where $\mathbf{1}$ denotes the constant function with value $1$, and $e^{-\lambda_c t}P_{\mu}[\la f,X_t\ra]$ converges to zero for all $f\in C_c^+(D)$.  
The missing scaling factor is $t^{d/2}$ and $P_{\mu}[\la f,X_t\ra] \sim (2\pi t)^{-d/2} e^{\lambda_c t} \la f,\mathbf{1}\ra \mu(\R^d)$ for $\mu\in\M_c(\R^d)$. Wang's \cite{Wan10} SLLN for super-Brownian motion takes the form 
\[
\lim_{t \to \infty} \frac{\la f,X_t\ra}{P_{\mu}[\la f,X_t\ra]} =\frac{\mglX}{\mu(\R^d)} \quad P_{\mu}\text{-almost surely, with martingale limit }W_\infty^\phi(X)=\lim_{t\rightarrow\infty}e^{-\lambda_c t}\la\mathbf{1},X_t\ra,
\]
for all nontrivial nonnegative continuous functions with compact support, and for $\mu =\delta_x$, $x \in \R^d$. 
Watanabe's argument is thought to be incomplete because the regularity for his argument is not proven, see \cite{Wan10}. Biggins \cite{Big92} developed a method to show uniform convergence of martingales for branching random walks. Wang combined these arguments with the compact support property of super-Brownian motion started from $\mu \in \M_c(D)$.
Kouritzin and Ren \cite{KouRen14} proved the SLLN for super-stable processes of index $\alpha \in (0,2]$ with spatially independent quadratic branching mechanism. The correct scaling factor in this case is $t^{d/\alpha} e^{-\lambda_c t}$. The authors allow any finite start measure with finite mean and a class of continuous test functions that decrease sufficiently fast at infinity.
Fourier-analytic methods were also used by Grummt and Kolb \cite{GruKol13} to prove the SLLN for the two-dimensional super-Brownian motion with a single point source (see \cite{FleMue04} for the definition and a proof of existence of this process).
Earlier, Engl\"ander \cite{Eng09} established convergence in probability for a class of superdiffusions that do not necessarily satisfy Assumption~\ref{as:criticality} using a time-dependent $h$-transform developed in \cite{EngWin06}.\smallskip

In the product $L^1$-critical case, the dominant method to prove almost sure limit theorems is due to Asmussen and Hering \cite{AsmHer76} (Kaplan and Asmussen use a similar method in \cite{KapAsm76}).
The main idea is as follows. For $s,t \ge 0$, write $\F_t=\sigma(X_r\colon r \le t)$ and
\begin{align*}
e^{-\lambda_c(s+t)} \la f,X_{s+t}\ra &= e^{-\lambda_c t} P_{\mu}[e^{-\lambda_c s} \la f,X_{s+t}\ra|\F_t] + \Big(e^{-\lambda_c(s+t)} \la f,X_{s+t}\ra - e^{-\lambda_c t} P_{\mu}[e^{-\lambda_c s} \la f,X_{s+t}\ra|\F_t]\Big)\\
&= \text{CE}_f(s,t)+\text{D}_f(s,t).
\end{align*}
Here CE stands for "conditional expectation" and D for "difference". The first step is to show $\text{D}_f(s,t) \to 0$ as $t \to \infty$. This is usually done via a Borel-Cantelli argument and therefore requires a restriction to lattice times $t=n\delta$. 
The second step is to show that $\text{CE}_f(s,t)$ behaves like the desired limit for $s$ and $t$ large. This is the hardest part of the proof and usually causes most of the assumptions. The third and last step is to extend the result from lattice to continuous time. 

Asmussen and Hering control $\text{CE}_f(s,t)$ for branching particle processes by a uniform Perron-Frobenius condition on the semigroup $(S_t)_{t \ge 0}$. Passage to continuous time is obtained under additional continuity assumptions on process and test functions. Recently, their method was generalized by Engl\"ander et al.\ \cite{EngHarKyp10} to show the SLLN for a class of branching diffusions on arbitrary domains. The authors control $\text{CE}_{f}(s,t)$ by an assumption that restricts the speed at which particles spread in space and a condition on the rate at which a certain ergodic motion (the ``spine'') converges to its stationary distribution. 
\smallskip

While Asmussen and Hering's idea for the proof of SLLNs along lattice times is rather robust and (under certain assumptions) feasible also for superprocesses, the argument used for the transition from lattice to continuous time relies heavily on the finite number of particles in the branching diffusion.
\smallskip

A new approach to almost sure limit theorems for branching processes was introduced by Chen and Shiozawa \cite{CheShi07} in the setup of branching symmetric Hunt processes. Amongst other assumptions, a spectral gap condition was used to obtain a Poincar\'{e} inequality which constitutes the main ingredient in the proof along lattice times. For the transition to continuous times the argument from Asmussen and Hering was adapted. Chen et al.\ \cite{CheRenWan08} proved the first SLLN for superprocesses and relied on the same Poincar\'{e} inequality and functional analytic methods for the result along lattice times. For the transition to continuous time, Perkins' It\^o formula for superprocesses \cite{Per02} was used. Even though their proven SLLN holds on the full domain $\R^d$, the assumptions on motion and branching mechanism are restrictive in the following way: the motion has to be symmetric (and in the diffusive case must have a uniformly elliptic generator) and the coefficients of the branching mechanism have to satisfy a strict Kato class condition.

The idea to use stochastic analysis was brought much further by Liu et al.\ \cite{LiuRenSon13}. The authors gave a proof which is based entirely on the martingale problem for superprocesses and decomposed the process into three martingale measures. Moreover, they introduced a new technique for the transition from lattice to continuous times based on the resolvent operator and estimates for the hitting probabilities of diffusions. 
The proof by Liu et al.\ follows again the three steps of Asmussen and Hering. To control the conditional expectation $\text{CE}_{f}(s,t)$, they assume that the transition density of the underlying motion is intrinsically ultracontractive and that the domain $D$ is of finite Lebesgue measure. This assumption excludes most of the classical examples, see Section~\ref{sec:examples}. 

To complete our review, we mention that the first Law of Large Numbers for superdiffusions was 
proven by Engl\"ander and Turaev \cite{EngTur02} on the domain $D=\R^d$. 
The authors use analytic tools from the theory of dynamical systems, in particular properties of invariant curves, to show the convergence in distribution. Besides classical superdiffusions, the $1$-dimensional super-Brownian motion with a single point source is studied.

\subsection{Outline of the proof of Theorem~\ref{thm:SLLN}} \label{sec:pf_idea}

The key to our argument is the skeleton decomposition for the supercritical superprocess $X$. Intuitively, this representation result states that the superprocess is a cloud of subcritical superdiffusive mass immigrating off a supercritical branching diffusion, the skeleton, which governs the large-time behaviour of $X$. It is important to note that we use the skeleton to make a connection between the asymptotic behaviour of a branching diffusion and of the superdiffusion, and we do not use any classical  approximation of the superprocess by branching particle systems in a high density limit regime.
\smallskip

Broadly speaking, our proof of Theorem~\ref{thm:SLLN} follows the three steps of Asmussen and Hering outlined in Section~\ref{sec:literature}. However, instead of the full process $X$ we consider only the immigration occurring after time $t$ in the decomposition into conditional expectation $\text{CE}_f$ and difference $\text{D}_f$. This immigration is a subprocess of $X$ and we show that the stated convergence for the full process follows when the subprocess converges to the claimed limit.
\smallskip

Using the tree structure of the skeleton, we can split the immigration that occurs after time $t$ according to the different branches of the skeleton at time $t$. This fact constitutes the main connection between skeleton and immigration that allows us to prove results about the asymptotic behaviour of the immigration process.
To analyse the conditional expectation $\text{CE}_f$ for the immigration after time $t$, we use the SLLN for the skeleton. After exponential rescaling, the immigration along different branches up to a fixed time $s$ is of constant order and the SLLN for the skeleton describes the asymptotic behaviour for large $t$. Taking the observed time frame $s$ to infinity then adjusts only the constants.
To replace the limiting random variable $\mglZ$, coming from the SLLN for the skeleton, by $\mglX$, we can, as it turns out, reverse the order in which these limits are taken. Taking first the observed time horizon $s$ to infinity for test function $\phi$, we recover the martingale for the skeleton as a consequence of the same invariance property of $\phi$ that makes $(W_t^{\phi}(X))_{t \ge 0}$ a martingale.
\smallskip

The analysis of $\text{D}_f$ for the immigration after time $t$ is fairly standard and for the transition from lattice to continuous times we adapt the argument by Liu et al.\ \cite{LiuRenSon13} relying again on the skeleton decomposition. 
The moment estimates needed for our analysis are obtained using a spine decomposition for the superprocess.

\subsection{Overview}
The outline of the paper is as follows. 
We start in Section~\ref{sec:skeleton} with an analysis of the skeleton assumption (Assumption~\ref{as:skeleton}) and give a detailed description of the skeleton decomposition. 
In the remainder of Section~\ref{sec:properties}, we discuss further basic properties of superprocesses and our other three main assumptions and we compare them to conditions that appeared in the literature.
In Section~\ref{sec:spine}, we give a spine decomposition for the superprocess $X$ and prove that the martingale $(W_t^{\phi}(X))_{t \ge 0}$ is bounded in $L^p$. 

The proofs of the main results are collected in Section~\ref{sec:main}. First, in Section~\ref{sec:reduction}, we reduce the SLLN to a statement that focuses on the main technical difficulty. 
In Section~\ref{mg_limits}, we show that the martingale limits for superprocess and skeleton agree and, in Section~\ref{sec:L1Conv}, we prove the WLLN stated in Theorem~\ref{thm:L1}. 
The asymptotic behaviour of the immigration process is studied in Section~\ref{sec:cond_exp} and the SLLN along lattice times is established.
The transition from lattice to continuous times is performed in Section~\ref{lattice_to_cont_sec} and we conclude our main results. 

In Section~\ref{sec:examples}, we provide several examples to illustrate our results. 
Spatially independent branching mechanisms are discussed in Section~\ref{sec:nonspatial}; quadratic branching mechanisms are considered in Section~\ref{sec:quadratic}. 
In Section~\ref{sec:bd_domain}, we study the super-Wright-Fisher diffusion and prove a conjecture by Fleischmann and Swart \cite{FleSwa03}.

Some minor statements needed along the way are proven in the appendix: Section~\ref{sec:FeynKac} contains Feynman-Kac-type arguments and Section~\ref{sec:beta_unb} discusses a generalised version of the mild equation \eqref{eq:mild} for $\beta$ bounded only from above.

\section{Preliminaries}

This section is split into two parts. In the first part, we discuss our four main assumptions, and in the second part, we prove that the martingale $(W_t^{\phi}(X))_{t \ge 0}$ converges in $L^p$.

\subsection{Basic properties}\label{sec:properties}

\subsubsection{Skeleton decomposition}\label{sec:skeleton}

In this section, we work under Assumption~\ref{as:skeleton}. 
The skeleton decomposition for supercritical superprocesses offers a pathwise description of the superprocess in terms of a supercritical branching particle process dressed with an immigration process. 
Heuristically, one can think of the skeleton as the prolific individuals of the branching process, i.e.\ individuals belonging to infinite lines of descent. 
%The function $w$ marks regions that those prolific individuals should avoid with a small value. 
The function $w$ assigns a small value to regions that prolific individuals
should avoid.
If $w(x)=-\log P_{\delta_x}(\ext)$ for some event $\ext$, then the skeleton particles avoid the behaviour specified by $\ext$.
Classical examples are the event of extinction in finite time
$\extinction=\{\exists t\ge 0 \colon \pint{\mathbf{1},X_t}=0\}$, \cite{EvaOCo94,EngPin99}, and the event of weak extinction $\extinguishing=\{\lim_{t\to \infty} \pint{\mathbf{1},X_t}=0\}$, \cite{BerFonMar08,BerKypMur11}.

For all $f\in p(D)$, we let $\widetilde{f}(x,t)=f(x)$ for all $(x,t) \in D\times [0,\infty)$. 
Dynkin \cite{Dyn93} derives the superprocess $X$ from \emph{exit measures} that describe the evolution of mass not only in time but also in space. 
He showed that for any domain $B \subseteq D$ and $t \ge 0$, there exists a random, finite measure $\widetilde{X}_t^B$ on $D\times [0,\infty)$ such that for all  $\mu \in \M_f(D)$ and $f\in bp(D)$,
\begin{equation}\label{eq:logLapExit}
P_{\mu}\big[e^{-\pint{\widetilde{f},\widetilde{X}_t^B}}\big]=e^{-\pint{\widetilde{u}_f^B(\cdot,t),\mu}},
\end{equation}
where $\widetilde{u}_f^B$ is the unique, nonnegative solution to the integral equation
\begin{equation}\label{eq:mildExit}
u(x,t)=\P_x\big[f(\xi_{t \land \tau_B})\big] - \P_x\Big[\int_0^{t \land \tau_B} \psi_{\beta}(\xi_s, u(\xi_s,t-s)) \, ds\Big] \quad \text{for all }(x,t) \in D\times [0,\infty),
\end{equation}
and $\tau_B=\inf\{t \ge 0\colon \xi_t \not\in B\}$. For $f \in p(D)$, there exists a sequence of functions $f_k \in bp(D)$ such that $f_k \uparrow f$ pointwise. 
By \eqref{eq:logLapExit}, $\widetilde{u}_{f_k}^B(x,t)$ is monotonically increasing in $k$ and we denote the limit by $\widetilde{u}_{f}^B(x,t)\in [0,\infty]$. 
With this notation, the monotone convergence theorem implies that \eqref{eq:logLapExit} is valid for all $f\in p(D)$. The same argument shows that \eqref{eq:mild} holds for all $f\in p(D)$ and \eqref{eq:mgFct} implies $u_w=w$. Hence, \eqref{eq:mgFct} holds for all $\mu \in \M_f(D)$.
The superprocess $X_t$ is obtained as a projection of $\widetilde{X}_t^D$ restricted to $D \times \{t\}$. 
Writing $\wtw(x,t)=w(x)$ for $(x,t) \in D\times [0,\infty)$, the Markov property (cf.\ Theorem I.1.3 \cite{Dyn93}) and \eqref{eq:mgFct} yield for all $\mu \in \M_f(D)$,
\begin{equation}\label{eq:mgPropw}
P_{\mu}\big[e^{-\pint{\widetilde{w},\widetilde{X}_t^B}}\big]=P_{\mu}\big[e^{-\pint{w,X_t}}\big]=e^{-\pint{w,\mu}}
\end{equation}
and comparing to \eqref{eq:logLapExit}, we deduce that $\widetilde{u}_{w}^B=\wtw$. Now let $B \subset \subset D$. 
If the support of $\mu$, $\text{supp}(\mu)$, is a subset of $B$, then 
 $\widetilde{X}_t^B$ is supported on the boundary of $B \times [0,t)$; if $\text{supp}(\mu) \subseteq D\setminus B$, then $\widetilde{X}_t^B=\mu$ almost surely (cf.\ Theorem~I.1.2 in \cite{Dyn93}). 
In particular, \eqref{eq:wlocbd} implies that $\widetilde{w}$ in $\pint{\wtw,\widetilde{X}_t^B}$ can be interpreted as a bounded function and we combine \eqref{eq:mgPropw} and \eqref{eq:mildExit} to obtain
\begin{equation}\label{eq:mildw}
w(x)=\P_x\big[w(\xi_{t \land \tau_B})\big] - \P_x\Big[\int_0^{t \land \tau_B} \psi_{\beta}(\xi_s, w(\xi_s)) \, ds\Big] \qquad \text{ for all } (x,t) \in \overline{B}\times [0,\infty).
\end{equation}
Since $w$ is bounded on $\overline{B}$, the continuity of the diffusion $\xi$ yields that $w$ is continuous on $B$ (see the argument in the last paragraph of page 708 in \cite{EngPin99}). Because $B$ was arbitrary, we conclude:

\begin{lemma}\label{lem:wcont}
The martingale function $w$ is continuous on $D$.
\end{lemma}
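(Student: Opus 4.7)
The plan is to read continuity of $w$ off the integral identity~\eqref{eq:mildw} by transferring continuity from the underlying diffusion $\xi$. Fix an arbitrary $x_0 \in D$ and choose $B \subset\subset D$ with $x_0$ in the interior of $B$. By Assumption~\ref{as:skeleton}, together with $\beta \in C^{\eta}(D)$ bounded, $\alpha \in bp(D)$, and $x \mapsto \int_{(0,\infty)}(y\wedge y^2)\,\Pi(x,dy) \in bp(D)$, both $w$ and the composite function $\psi_{\beta}(\cdot,w(\cdot))$ are bounded on $\overline{B}$. Hence the two terms on the right-hand side of~\eqref{eq:mildw} are expectations of uniformly bounded random variables, which puts us in the setting of a dominated/bounded convergence argument.

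Next I would exploit the continuity of $(\xi; \P_x)$ in the initial condition, which is guaranteed by $a,b\in C^{1,\eta}(D)$ and the well-posedness of the martingale problem (cf.\ Chapter~I in \cite{Pin95}). For any sequence $x_n\to x_0$ in $B$, the laws on path space of $(\xi_{s\wedge\tau_B})_{s\le t}$ under $\P_{x_n}$ converge weakly to the corresponding law under $\P_{x_0}$; by Skorokhod representation one may couple so that the paths converge uniformly on $[0,t]$ almost surely. Because $x_0$ lies in the interior of $B$ and $L$ is non-degenerate, standard properties of first exit times imply $t\wedge\tau_B$ is a.s.\ continuous in the path under $\P_{x_0}$, so $t\wedge\tau_B^{x_n}\to t\wedge\tau_B^{x_0}$ and $\xi_{t\wedge\tau_B}^{x_n}\to \xi_{t\wedge\tau_B}^{x_0}$ almost surely under this coupling.

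The main obstacle is precisely the step from almost sure convergence of the stopped paths to convergence of $\P_{x_n}[w(\xi_{t\wedge\tau_B})]$ and of the integral term to their values at $x_0$: both integrands involve $w$ composed with $\xi$, and $w$ is exactly the object whose continuity we are trying to establish. I would circumvent this by a nested-domain trick. Choose open sets with smooth boundaries $B' \subset\subset B \subset\subset D$ with $x_0 \in B'$, apply~\eqref{eq:mildw} with $B$ replaced by $B'$, and observe that the exit distribution of $\xi$ from $B'$ is absolutely continuous with respect to surface measure on $\partial B'$ (standard for uniformly elliptic diffusions with smooth boundary). Since $w$ is bounded measurable on $\partial B' \subset B$ (by~\eqref{eq:wlocbd}), weak convergence of the exit distributions together with the boundedness of $\psi_{\beta}(\cdot,w(\cdot))$ on $\overline{B'}$ suffices to pass to the limit in both terms, yielding $w(x_n)\to w(x_0)$. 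Alternatively, I would reinterpret~\eqref{eq:mildw} as a Feynman–Kac representation of a bounded mild solution to the semilinear elliptic equation $(L+\beta)w=\psi_0(\cdot,w)$ on $B$ and appeal to interior Schauder regularity for uniformly elliptic operators with $C^{1,\eta}$ coefficients and bounded right-hand side to conclude that $w$ is locally Hölder continuous on $B$.

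Since $x_0\in D$ was arbitrary, continuity on all of $D$ follows. This is exactly the strategy the authors point to in the last paragraph of page~708 of~\cite{EngPin99}, and the bulk of the technical effort lies in justifying the passage to the limit at the exit position $\xi_{t\wedge\tau_{B'}}$, as described above.
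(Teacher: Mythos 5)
Your strategy coincides with the paper's: the proof given there consists precisely of the identity \eqref{eq:mildw}, the local boundedness of $w$ and of $\psi_{\beta}(\cdot,w(\cdot))$ on $\overline{B}$, and a deferral to the argument in the last paragraph of p.~708 of \cite{EngPin99}, which is what you reconstruct. The one point to tighten is that weak convergence of the laws of $\xi_{t\land\tau_{B'}}$ under $\P_{x_n}$ does not by itself permit passing to the limit against the merely bounded measurable integrand $w$ — you need that these laws (including the interior part carried on $\{t<\tau_{B'}\}$, which your surface-measure discussion omits) and the occupation measures have densities depending continuously, or at least in $L^1$, on the starting point, or else you should fall back on your alternative route via interior elliptic regularity for $Lw=\psi_{\beta}(\cdot,w)$ with bounded right-hand side.
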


Lemma~\ref{lem:semi}(i) in the appendix shows that \eqref{eq:mildw} can be transformed into
\[
w(x)=\P_x\Big[w(\xi_{t \land \tau_B}) \exp\Big(-\int_0^{t \land \tau_B} \frac{\psi_{\beta}(\xi_s,w(\xi_s))}{w(\xi_s)} \, ds\Big)\Big] \quad \text{for all }  (x,t) \in \overline{B}\times [0,\infty).
\]
Hence, for any domain $B \subset \subset D$, $x\in B$,
\begin{equation}\label{eq:PwBmg}
w(\xi_{t \land \tau_B}) \exp\Big(-\int_0^{t \land \tau_B} \frac{\psi_{\beta}(\xi_s,w(\xi_s))}{w(\xi_s)} \, ds\Big), \quad t \ge 0,\quad \text{is a }\P_x\text{-martingale.}
\end{equation}
Since every nonnegative local martingale is a supermartingale, we conclude that for all $x\in D$,
\[
\frac{w(\xi_t)}{w(x)}\exp\Big(-\int_0^t \frac{\psi_{\beta}(\xi_s,w(\xi_s))}{w(\xi_s)} \, ds\Big),\quad t\ge 0, \quad \text{is a }\P_x\text{-supermartingale.}
\]
In particular, we deduce the following lemma.

\begin{lemma}\label{lem:PwWellDef}
For every $x \in D$, $\P_x^w$ is a well-defined (sub-)probability measure and $(\xi=(\xi_t)_{t \ge 0}; (\P_x^w)_{x \in D})$ is a (possibly non-conservative) Markov process, which we consider as a Markov process in $D\cup\{\cem\}$.
\end{lemma}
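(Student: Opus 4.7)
The plan is to recognise \eqref{eq:Pwdef} as a standard Girsanov / $h$-transform change of measure, with the key input being the localised martingale property recorded in \eqref{eq:PwBmg}. Write
\[
M_t := \frac{w(\xi_t)}{w(x)} \exp\Big(-\int_0^t \frac{\psi_{\beta}(\xi_s, w(\xi_s))}{w(\xi_s)} \, ds\Big)
\]
for the candidate density on $\{t<\tau_D\}$ and choose an exhausting sequence $B_n \subset\subset D$ with $\tau_{B_n}\uparrow\tau_D$ $\P_x$-almost surely. By \eqref{eq:PwBmg}, the stopped process $(M_{t\wedge\tau_{B_n}})_{t\ge 0}$ is a true $\P_x$-martingale starting at $1$, so the prescription $d\P_x^{w,n}/d\P_x = M_{t\wedge\tau_{B_n}}$ on $\sigma(\xi_s\colon s\in [0,t])$ defines a genuine probability measure for each $n$ and $t$, and the family is consistent in $t$.

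Since $\tau_{B_n}\le\tau_{B_{n+1}}$, the densities $M_{t\wedge\tau_{B_n}}$ and $M_{t\wedge\tau_{B_{n+1}}}$ agree on $\sigma(\xi_{s\wedge\tau_{B_n}}\colon s\ge 0)$, and $\P_x^{w,n}$ is the restriction of $\P_x^{w,n+1}$ to this smaller $\sigma$-algebra. A Kolmogorov / Ionescu--Tulcea extension argument on the canonical path space of $\xi$ on $D\cup\{\cem\}$ (as in \cite{Pin95}) then yields a unique measure $\P_x^w$ on $\sigma(\xi_s\colon s<\tau_D)$ satisfying \eqref{eq:Pwdef}. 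Its total mass on $\{t<\tau_D\}$ equals $\P_x[M_t]\le 1$ by the supermartingale property established earlier, so in general $\P_x^w$ is only a sub-probability; the deficit is naturally interpreted as the chance that the path has been sent to the cemetery $\cem$ by time $t$, which extends $\P_x^w$ to an honest probability measure on the enlarged space $D\cup\{\cem\}$.

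For the Markov property, the key observation is the multiplicative decomposition
\[
M_{s+t} = M_s \cdot (M_t \circ \theta_s) \qquad \text{on }\{s+t<\tau_D\},
\]
where $\theta_s$ is the time-shift operator; this is immediate from the telescoping $w(\xi_{s+t})/w(x)=(w(\xi_s)/w(x))(w(\xi_{s+t})/w(\xi_s))$ and additivity of the exponential integral. Combined with the Markov property of $(\xi,\P_x)$, for bounded $\sigma(\xi_r\colon r\le s)$-measurable $F$ and bounded $\sigma(\xi_r\colon r\le t)$-measurable $G$,
\[
\P_x^w[F\cdot(G\circ\theta_s)] = \P_x[M_s F\cdot \P_{\xi_s}[M_t G]] = \P_x^w[F\cdot \P_{\xi_s}^w[G]],
\]
which is exactly the required Markov property for the family $(\P_x^w)_{x\in D}$.

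I expect the main technical obstacle to be the extension step: one has to fix the correct path space on $D\cup\{\cem\}$, verify compatibility of the measures $\P_x^{w,n}$ along the nested stopping times $\tau_{B_n}$, and identify the deficit $1-\P_x[M_t]$ with the event that the path reaches $\cem$ before time $t$ under $\P_x^w$. Once this bookkeeping is in place, both well-definedness and the Markov property follow from the standard $h$-transform template, and conservativeness is lost precisely when $(M_t)_{t\ge 0}$ fails to be a $\P_x$-martingale (rather than merely a supermartingale), consistent with the ``(possibly non-conservative)'' qualification in the statement.
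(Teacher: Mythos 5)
Your argument is correct and follows essentially the same route as the paper, which derives the stopped martingale property \eqref{eq:PwBmg} from the mild equation for $w$ via Feynman--Kac, passes to the supermartingale property of the unstopped density, and then treats the construction of the sub-probability $\P_x^w$ and the Markov property as standard; you have merely made explicit the localisation, extension and multiplicative-functional steps that the paper leaves implicit. One cosmetic point: the stopped densities $M_{t\wedge\tau_{B_n}}$ and $M_{t\wedge\tau_{B_{n+1}}}$ do not agree pathwise on $\sigma(\xi_{s\wedge\tau_{B_n}}\colon s\ge 0)$ --- rather, optional stopping for the bounded martingale $(M_{s\wedge\tau_{B_{n+1}}})_{s\le t}$ gives $\P_x\big[M_{t\wedge\tau_{B_{n+1}}}\,\big|\,\sigma(\xi_{s\wedge\tau_{B_n}}\colon s\le t)\big]=M_{t\wedge\tau_{B_n}}$, which is what actually yields the consistency of the measures $\P_x^{w,n}$.
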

\noindent If $w$ is bounded, the argument leading to \eqref{eq:PwBmg} is valid for $B=D$ and $(\xi;\P^w)$ is conservative.
\smallskip

To give a description of the skeleton decomposition, we use the martingale function $w$ to construct an auxiliary subcritical $\M_f(D)$-valued Markov process. Let for all $x\in D$, $z \ge 0$, and $f\in p(D)$, $\Pi^*(x,dy):=e^{-w(x) y} \,\Pi(x,dy)$,
\begin{align}
\beta^*(x)&:=\beta(x)-2\alpha(x) w(x)-\int_{(0,\infty)} (1-e^{-w(x)y}) y \, \Pi(x,dy),\notag \\
\psi_0^*(x,z)&:=\alpha(x)z^2+\int_{(0,\infty)} (e^{-yz}-1+yz)\, \Pi^*(x,dy),\notag \\
S_t^*f(x)&:=\P_x\big[e^{\int_0^t \beta^*(\xi_s) \, ds} f(\xi_t)\big].\label{eq:defSstar}
\end{align}
Since $\beta^*(x) \le \beta(x)$ for all $x\in D$, $\beta^*$ is bounded from above. 
However, it is not clear whether $\beta^*$ is bounded from below. Hence, the branching mechanism $\psi_{\beta^*}^*(x,z)=-\beta^*(x)z+\psi_0^*(x,z)$ does not satisfy the assumptions from Section~\ref{sec:model}. 
The following lemma shows that the mild equation corresponding to $(L,\psi_{\beta^*}^*;D)$ still has a unique solution. 

\begin{lemma}\label{lem:mildStar}
For all $f \in bp(D)$, there exists a unique solution $u_f^*\in p(D\times [0,\infty))$ to
\begin{equation}\label{eq:mild_star}
u(x,t)=S_t^*f(x)-\int_0^t S_s^*[\psi_0^*(\cdot, u(\cdot,t-s)) ](x) \,ds\quad \text{for all }(x,t) \in D \times [0,\infty).
\end{equation}
\end{lemma}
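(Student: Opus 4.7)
The statement is a standard existence/uniqueness result for a nonlinear mild equation; the only novelty is that $\beta^*$ need not be bounded from below. My plan is Picard iteration on a short time interval $[0,T]$, extended to $[0,\infty)$ by the Markov structure of $(S_t^*)_{t\ge 0}$.

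The two preliminary observations on which everything hinges are as follows. Since $\alpha \ge 0$, $w > 0$, and $y(1-e^{-w(x)y}) \ge 0$, the function $\beta^*$ is bounded above by $\supnorm{\beta}$, so $S_t^* g(x) \le e^{\supnorm{\beta}t}\supnorm{g}$ for every $g \in bp(D)$; in particular $S_t^*$ is uniformly bounded on every slab $D \times [0,T]$, even though it need not be bounded below. Moreover, $\psi_0^*(x,\cdot)$ inherits from $\psi_0$ the structural features that it is nondecreasing and convex on $[0,\infty)$ with $\psi_0^*(x,0)=0$, and, since $\alpha \in bp(D)$ and the kernel $\Pi^*$ is dominated by $\Pi$, it satisfies a local Lipschitz bound $|\psi_0^*(x,z_1)-\psi_0^*(x,z_2)| \le L_K |z_1-z_2|$ on $[0,K]$ adequate to close the fixed-point argument below.

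For uniqueness, any nonnegative solution $u$ of \eqref{eq:mild_star} automatically satisfies $0 \le u(x,t) \le S_t^* f(x) \le K_T := e^{\supnorm{\beta}T}\supnorm{f}$ on $D \times [0,T]$, because the subtracted integral in \eqref{eq:mild_star} is nonnegative. Setting $g(t):=\supnorm{(u-v)(\cdot,t)}$ for two solutions $u,v$, the Lipschitz estimate on $[0,K_T]$ combined with the semigroup bound gives $g(t) \le C_T \int_0^t g(r)\,dr$ on $[0,T]$, and Gronwall's lemma forces $g \equiv 0$. Since $T$ was arbitrary, uniqueness holds on $[0,\infty)$.

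For existence, I would fix $T > 0$ small enough that the associated contraction constant $L_{K_T} \cdot e^{\supnorm{\beta}T} \cdot T$ is strictly less than $1$, and apply Banach's fixed point theorem to the operator $\mathcal{T}$ given by the right-hand side of \eqref{eq:mild_star} (extending $\psi_0^*(x,z)$ as $\psi_0^*(x, z\vee 0)$ so that $\mathcal{T}$ is defined on all of $b(D \times [0,T])$) on the closed ball of radius $K_T$. Nonnegativity of the resulting fixed point follows from the parallel monotone iteration started at $u_0 \equiv 0$: since $\psi_0^*(x,\cdot)$ is nondecreasing and $\psi_0^*(x,0)=0$, the iterates $\mathcal{T}^n(0)$ stay in $[0, S^*_\cdot f]$, and their even/odd subsequences bracket the fixed point. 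The solution is extended to $[0,\infty)$ by restarting the contraction on $[kT,(k+1)T]$ with initial data $u_f^*(\cdot,kT)$. The main obstacle is precisely the absence of a lower bound on $\beta^*$, which invalidates the standard two-sided semigroup estimates used for \eqref{eq:mild}; accommodating this one-sided control is the content of the generalised mild equation theory developed in Section~\ref{sec:beta_unb} in the appendix.
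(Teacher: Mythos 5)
Your uniqueness argument coincides with the paper's (Lemma~\ref{lem:mild_uni}): the a priori bound $0\le u\le S_t^*f$ coming from $\psi_0^*\ge 0$, the local Lipschitz property \eqref{eq:psi0_Lip} of $z\mapsto\psi_0^*(x,z)$, and Gronwall. For existence, however, the paper takes a genuinely different route. Instead of a Banach contraction on a short interval, it fixes an \emph{arbitrary} horizon $T$, chooses $k\ge\mathcal{L}(c)$ where $c$ bounds all candidate solutions on $[0,T]$, and replaces the Picard map by $F_ku(x,t)=e^{-kt}S_t^*f(x)+\int_0^te^{-ks}S_s^*[ku(\cdot,t-s)-\psi_0^*(\cdot,u(\cdot,t-s))](x)\,ds$. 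Since $z\mapsto kz-\psi_0^*(x,z)$ is nondecreasing on $[0,c]$, the operator $F_k$ is isotone, satisfies $F_k\mathbf{0}\ge\mathbf{0}$ and $F_kv\le v$ for the explicit supersolution $v(x,t)=e^{\bbeta t}\supnorm{f}$, so the iterates started from $\mathbf{0}$ increase to a fixed point in $[0,v]$ on all of $[0,T]$ in one pass; Lemma~\ref{lem:semi}(i) then converts a fixed point of $F_k$ back into a solution of \eqref{eq:mild_star}. This buys three things: no smallness condition on $T$ and hence no continuation step; nonnegativity for free, without your antitone even/odd bracketing; and, because the scheme is monotone, the comparison statements Lemmas~\ref{lem:mild_mono} and~\ref{lem:mild_exit_mono} and thereby Lemma~\ref{lem:mono}(i) — which is why the paper actually treats the more general equation \eqref{eq:mild2} with a source term $g$.

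The one step of yours that fails as written is the continuation. Your contraction constant on $[0,T]$ is of order $\mathcal{L}(K_T)e^{\bbeta T}T$ with $K_T=e^{\bbeta T}\supnorm{f}$, and $\mathcal{L}(c)$ is not bounded in $c$ (it grows essentially linearly, from the $\alpha z^2$ term and the finite-variation part of $\Pi^*$). When $\bbeta>0$ the data at the $k$-th restart is only controlled by $e^{\bbeta kT}\supnorm{f}$, so a \emph{fixed} step length $T$ ceases to be a contraction after finitely many stages. You must either shrink the steps and then verify that the cumulative existence time diverges — it does, because the local existence time from data of size $M$ is of order $1/\mathcal{L}(M)$ and the a priori bound $u\le e^{\bbeta t}\supnorm{f}$ excludes finite-time blow-up — or simply run the paper's global monotone iteration. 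Two smaller points to make explicit: the restart needs the semigroup identity $S_{s+r}^*=S_s^*S_r^*$ to see that the concatenated function solves \eqref{eq:mild_star} on the full interval, and your bracketing needs the base case $\mathcal{T}(S^*f)\ge 0$, which follows on the short interval from $\int_0^tS_s^*[\psi_0^*(\cdot,S_{t-s}^*f)]\,ds\le\mathcal{L}(K_T)\,t\,S_t^*f$. All of this is repairable, but the scheme of Lemmas~\ref{lem:mild_uni}--\ref{lem:mild_exist} sidesteps it entirely.
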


The proof of Lemma~\ref{lem:mildStar} is deferred to Appendix~\ref{sec:beta_unb} (Lemmas~\ref{lem:mild_uni} and \ref{lem:mild_exist}).
Since $w$ is locally bounded according to \eqref{eq:wlocbd}, $\beta^*$ is locally bounded and for every domain $B \subset \subset D$, $(L,\psi_{\beta^*}^*;B)$ satisfies the assumptions of Section~\ref{sec:model}, where the motion is killed at the boundary of $B$. 
An $(L,\psi_{\beta^*}^*;D)$-superprocess $X^*$ can be obtained as a distributional limit of $(L,\psi_{\beta^*}^*;B)$-superprocesses using an increasing sequence of compactly embedded domains to approximate $D$ (see the argument of Lemma~A2 and Theorem~A1 in \cite{EngPin99} or before Corollary~6.2 in \cite{KypPerRen13pre} and our Lemma~\ref{lem:mild_exit_mono}). If $w(x)=-\log P_{\delta_x}(\ext)$ for a tail event $\ext$ with $P_{\mu}(\ext)=e^{-\pint{w,\mu}}$ for all $\mu \in \M_f(D)$, then $X^*$ can be obtained from $X$ by conditioning on $\ext$, i.e., the distribution of $X_t^*$ is given by $P_{\mu}(X_t \in \cdot\, |\, \ext)$, cf.\ \cite{EvaOCo94,EngPin99,BerKypMur11,KypPerRen13pre}.
\medskip

The following theorem is a concise
version of the skeleton decomposition at the level of detail that is
useful to us. It is based on a result from Kyprianou et al.\ \cite{KypPerRen13pre}. We denote by $\M_a^{{\rm loc}}$ the set of locally finite integer-valued measures on $\B(D)$. 

\begin{theorem}[{\bf{Kyprianou et al.~\cite{KypPerRen13pre}}}]\label{thm:KPR}
There exists a probability space with probability measures $\bbP_{\mu,\nu}$, $\mu \in \M_f(D)$, $\nu \in \M_a^{{\rm loc}}(D)$, that carries the following processes:
\begin{itemize}
\item[{\rm (i)}] $(Z=(Z_t)_{t \ge 0}; \bbP_{\mu,\nu})$ is a branching diffusion with motion $(\xi;\P^w)$ defined in \eqref{eq:Pwdef}, and branching rate $q$ and offspring distribution $(p_k)_{k \ge 2}$ defined by \eqref{eq:Generator} and $\bbP_{\mu,\nu}(Z_0 =\nu)=1$. 
\item[{\rm (ii)}] $(X^*=(X_t^*)_{t \ge 0};\bbP_{\mu,\nu})$ is a $\M_f(D)$-valued time-homogeneous Markov process such that for every $\mu \in \M_f(D)$, $f\in bp(D)$ and $t\ge 0$,
\[
\bbP_{\mu,\nu}\big[e^{-\pint{f,X_t^*}}\big]=e^{-\pint{u_f^*(\cdot,t),\mu}},
\]
where $u_f^*$ is the unique solution to \eqref{eq:mild_star}. Moreover, $X^*$ is independent of $Z$ under $\bbP_{\mu,\nu}$. 
\item[{\rm (iii)}] $(I=(I_t)_{t\ge 0};\bbP_{\mu,\nu})$ is a $\M_f(D)$-valued process such that
\begin{itemize}
\item[{\rm (a)}] $\bbP_{\mu,\sum_i \delta_{x_i}}[e^{-\la f, I_t\ra}]\!=\!\prod_i \bbP_{\mu, \delta_{x_i}}\big[e^{-\la f,I_t\ra}\big]
$ for all $\mu \in \M_f(D), x_i \in D, f \in p(D)$.
Moreover, $\bbP_{\mu,\nu}(I \in \cdot)$ does not depend on $\mu$, $\bbP_{\mu,\nu}(I_0=0)=1$, and, under $\bbP_{\mu,\nu}$, $(Z,I)$ is independent of $X^*$. 
\item[{\rm (b)}] $((X,Z):=(X^*+I,Z);\bbP_{\mu,\nu})$ is a Markov process. 
\item[{\rm (c)}]  $(X=X^*+I;\bbP_{\mu})$ is equal in distribution to $(X;P_{\mu})$, where $\bbP_{\mu}$ denotes the measure $\bbP_{\mu,\nu}$ with $\nu$ replaced by a Poisson random measure with intensity $w(x)\mu(dx)$.
\item[{\rm (d)}] Under $\bbP_{\mu}$, conditionally given $X_t$, the measure $Z_t$ is a Poisson random measure with intensity $w(x) X_t(dx)$.
\end{itemize}
\end{itemize}
\end{theorem}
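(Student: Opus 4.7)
My plan is to follow the skeleton decomposition template of \cite{KypPerRen13pre}: check that \eqref{eq:Generator} defines a bona fide spatially dependent branching mechanism, construct $Z$, $X^*$ and $I$ on a common probability space, and verify via log-Laplace functionals that $X^* + I$ has the law of $X$.

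First I would substitute \eqref{eq:psiDef} into the right-hand side of \eqref{eq:Generator} and expand $e^{-y w(x)(1-s)} = e^{-y w(x)}\sum_{k\ge 0}(y w(x) s)^k/k!$ to obtain
\[
G(x,s) = \alpha(x) w(x)(s^2 - s) + \sum_{k \ge 2}(s^k - s)\,\frac{1}{w(x)}\int_{(0,\infty)}\frac{e^{-y w(x)}(y w(x))^k}{k!}\,\Pi(x,dy).
\]
The integrability hypothesis on $\Pi$ ensures the series converges; its coefficients are nonnegative and Borel in $x$, and setting $q(x)$ to the total rate with normalised weights $p_k(x)$ produces the desired branching data in $p(D)$. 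Combining this with the (sub-)Markov motion $(\xi; \P^w)$ from Lemma~\ref{lem:PwWellDef}, $Z$ is built by the standard branching diffusion recipe. Independently, the $(L, \psi_{\beta^*}^*; D)$-superprocess $X^*$ is constructed by localising to compactly embedded domains $B \subset\subset D$, where $\beta^*$ is locally bounded and the classical theory of Section~\ref{sec:model} applies, and then passing to a monotone limit as sketched around Lemma~\ref{lem:mildStar}.

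Next, $I$ is defined as a Poissonian dressing of $Z$: along every skeleton path segment, run a Poisson point process of \emph{continuous immigrations} sourced from the canonical excursion measure of $X^*$ and an independent Poisson point process of \emph{jump immigrations} whose points $y\delta_x$ at position $x$ have intensity involving $\Pi(x,dy)$ and $w(x)$; at every branch point of $Z$, add a further immigration whose initial mass depends on the offspring count via the size-biased Poisson weights $(yw(x))^k e^{-yw(x)}/k!$. The three intensities are chosen so that, together with $X^*$, the sum $X^* + I$ reproduces the log-Laplace functional of $X$. The Poissonian construction gives (iii)(a) and independence of $X^*$ and $(Z, I)$.

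The heart of the argument is to verify $(X^* + I; \bbP_\mu) \stackrel{d}{=} (X; P_\mu)$. By the independence just established, the Poissonisation of $Z_0$ with intensity $w\mu$, and (iii)(a), this reduces to the single functional identity
\[
u_f(x,t) = u_f^*(x,t) + w(x)\bigl(1 - e^{-v_f(x,t)}\bigr), \qquad v_f(x,t) := -\log \bbP_{\delta_x}\bigl[e^{-\la f, I_t\ra}\bigr].
\]
Setting $\hat u$ equal to the right-hand side, the plan is to show $\hat u$ solves the mild equation \eqref{eq:mild} by combining \eqref{eq:mild_star} with the Laplace functional equation for $v_f$ read off the $Z$-dressed Poisson construction. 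Uniqueness of the mild solution then forces $\hat u = u_f$, which gives (iii)(c); the Markov property (iii)(b) is transported from the joint construction, and (iii)(d) follows by applying standard Poisson thinning to the genealogy of $Z$ under $\bbP_\mu$. The main obstacle is exactly this functional identity: the three immigration mechanisms in the construction of $I$ must cancel precisely against $\psi_\beta - \psi_{\beta^*}^*$ evaluated on the appropriate functional of $u_f^*$, and this cancellation closes only because $w$ is a martingale function (i.e.\ \eqref{eq:mgFct}), which ties together the definitions of $\beta^*$, $\Pi^*$, and the branching data $(q, (p_k))$.
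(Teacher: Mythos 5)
Your proposal reconstructs the skeleton decomposition from first principles, essentially retracing the original construction of Kyprianou et al.\ \cite{KypPerRen13pre}; the paper instead proves Theorem~\ref{thm:KPR} by \emph{citation plus adaptation}: it invokes Corollary~6.2 of \cite{KypPerRen13pre} and only verifies the three points where the present setting is more general, namely (a) that the argument there never uses the specific choice $w(x)=-\log P_{\delta_x}(\extinction)$ after $Z$ and $X^*$ are defined, so any $w$ satisfying Assumption~\ref{as:skeleton} works; (b) that the hypothesis that $w$ be locally bounded away from zero is automatic here because $w$ is continuous (Lemma~\ref{lem:wcont}); and (c) that the comparison principle used in \cite{KypPerRen13pre} to get monotonicity of $\widetilde{u}_f^B$ in the domain $B$ is replaced by Lemmas~\ref{lem:semi}(i) and~\ref{lem:mild_exit_mono}, avoiding extra regularity conditions on the motion. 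Your route is self-contained and your preliminary computations are sound --- the expansion of \eqref{eq:Generator} into $\alpha(x)w(x)(s^2-s)+\sum_{k\ge 2}(s^k-s)\,w(x)^{-1}\int e^{-yw(x)}(yw(x))^k/k!\,\Pi(x,dy)$ is exactly how $q$ and $(p_k)_{k\ge 2}$ are identified, and the localisation of $X^*$ to $B\subset\subset D$ matches the paper's discussion around Lemma~\ref{lem:mildStar} --- but it buys completeness at the price of having to actually carry out the functional identity $u_f=u_f^*+w(1-e^{-v_f})$ and the Markov/thinning arguments for (iii)(b)--(d), which you only outline and which constitute the bulk of the (long) proof in \cite{KypPerRen13pre}. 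The paper's route is much shorter but its real content is precisely the three adaptation points above; if you pursue your direct construction, make sure the monotone-limit definition of $X^*$ on all of $D$ is justified by the domain-monotonicity of \eqref{eq:mildExit}, since that is the step that fails without either the cited comparison principle or the paper's Lemma~\ref{lem:mild_exit_mono}.
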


We call the probability space from Theorem~\ref{thm:KPR} the \emph{skeleton space}. The process $I$ is called \emph{immigration process} or simply \emph{immigration}. 
As the processes $(X;\bbP_{\mu})$ on the skeleton space and $(X;P_{\mu})$ on the generic space have the same distribution, we may, without loss of generality, work on the skeleton space whenever it is convenient. 
Since the distributions of $X^*$ and $I$ under $\bbP_{\mu,\nu}$ do not depend on $\nu$ and $\mu$, respectively, we sometimes write $\bbP_{\mu, \bullet}$ or $\bbP_{\bullet, \nu}$.

Kyprianou et al.\ \cite{KypPerRen13pre} identify the immigration process explicitly. 
We need only the properties listed in Theorem~\ref{thm:KPR} but, for definiteness, we now give a full characterisation of the immigration process.
\smallskip

Dynkin and Kuznetsov \cite{DynKuz04} showed that on the canonical space of measure-valued c\`{a}dl\`{a}g functions $\mathbb{D}([0,\infty),\M_f(D))$ for every $x \in D$ there is a unique measure $\N_x$ such that for all $f \in bp(D)$, $t \ge 0$, 
\begin{equation}\label{eq:Nmeasure_def}
-\log P_{\delta_x}[e^{-\pint{f,X_t}}]=\N_x[1-e^{-\la f, X_t\ra}].
\end{equation}
We denote the $\N_x$-measures corresponding to the superprocess $X^*$ by $\N_x^*$.

To describe the immigration processes, we use the classical Ulam-Harris notation to uniquely refer to individuals in the genealogical tree $\tree$ of $Z$ (see for example page 290 in \cite{HarHar09}).
For each individual $u \in \tree$, we write $b_u$ and $d_u$ for its birth and death times, respectively, and $\{z_u(r)\colon r\in [b_u,d_u]\}$ for its spatial trajectory. 
The skeleton space carries the following processes:

\begin{itemize}
\item[{\rm (iii.1)}] $(\alongbb;\bbP_{\mu,\nu})$ is a random measure, such that conditional on $Z$, $\alongbb$ is a Poisson random measure that issues, for every $u \in \tree$, $\M_f(D)$-valued processes $X^{\alongbb,u,r}=(X_t^{\alongbb,u,r})_{t \ge 0}$ along the space-time trajectory $\{(z_u(r),r)\colon r \in (b_u,d_u]\}$ with rate 
\[
dr \times \Big( 2 \alpha(z_u(r)) d\N_{z_u(r)}^*+ \int_{(0,\infty)} \Pi(z_u(r),dy) \, ye^{-w(z_u(r)) y} \times dP_{y \delta_{z_u(r)}}^*\Big), 
\]
where $P_{\mu}^*$ denotes the distribution of $X^*$ started in $\mu$.
Since at most countable many processes $X^{\alongbb,u,r}$ are not equal to the constant zero measure, immigration at time $t$ that occurred in the form of processes $X^{\alongbb,u,r}$ until time $t$ can be written as
\[
I_t^{\alongbb}=\sum_{u \in \tree}\sum_{b_u <r \le d_u \land t} X_{t-r}^{\alongbb,u,r}.
\]
The processes $(X^{\alongbb,u,r}\colon u \in \tree, b_u < r\le d_u)$ are independent given $Z$ and independent of $X^*$.
\item[{\rm (iii.2)}] $(\bbpb;\bbP_{\mu,\nu})$ is a random measure, such that conditional on $Z$, $\bbpb$ issues, for every $u \in \tree$, at space-time point $(z_u(d_u),d_u)$ process $X^{\bbpb,u}$ with law $P_{Y_u \delta_{z_u(d_u)}}^*$.
Given that $u$ is replaced by $k$ particles at its death time $d_u$, the independent random variable $Y_u$ is distributed according to the measure
\[
\frac{1}{q(x)w(x)p_k(x)} \Big(\alpha(x)w(x)^2 \delta_0(dy)\mathbbm{1}_{\{k=2\}}+w(x)^k \frac{y^k}{k!} e^{-w(x)y} \, \Pi(x,dy)\Big) \Big|_{x=z_u(d_u)}.
\]
The immigration at time $t$ that occurred in the form of processes $X^{\bbpb,u}$ until time $t$ is denoted by
\[
I_t^{\bbpb}=\sum_{u \in \tree}\mathbbm{1}_{\{d_u \le t\}} X_{t-d_u}^{\bbpb,u}.
\]
The processes $(X^{\bbpb,u}\colon u \in \tree)$ are independent of $X^*$ and, given $Z$, are mutually independent and independent of $\alongbb$. 
\end{itemize}
The full immigration process is given by $I=I^{\alongbb}+I^{\bbpb}$. 

\begin{proof}[Proof of Theorem~\ref{thm:KPR}]
Theorem~\ref{thm:KPR} generalises Corollary~6.2 in \cite{KypPerRen13pre} in three ways. 
First, the authors choose $w(x)=-\log P_{\delta_x}(\extinction)$ but after defining $Z$ and $X^*$ this choice is not used anymore and their argument goes through without any changes for a general martingale function $w$ satisfying Assumption~\ref{as:skeleton}. Second, the authors assume that $w$ is locally bounded away from zero. 
Since $w$ is continuous by Lemma~\ref{lem:wcont}, this condition is automatically satisfied. 
Finally, Kyprianou et al.\ enforce additional regularity conditions on the underlying motion to use a comparison principle from the literature in the proof of their Lemma~6.1 (see also their Footnote~1). 
The comparison principle allows them to conclude that the solution $\widetilde{u}_f^B$ to \eqref{eq:mildExit} is pointwise increasing in the domain $B$ when the support of $f$ is a subset of $B$. 
Lemmas~\ref{lem:semi}(i) and~\ref{lem:mild_exit_mono} show that this monotonicity  holds in our more general setup, too.
\end{proof}

We introduce notation to refer to the different parts of the skeleton decomposition.

\begin{notation}[Notation for $Z$]\label{not:Z}
For $t \ge 0$, we write $Z_t=\sum_{i=1}^{N_t} \delta_{\xi_i(t)}$, where $N_t$ denotes the number of skeleton particles at time $t$ and $(\xi_i(t)\colon i=1,\ldots,N_t)$ their (conveniently ordered) locations.  
Given $Z_0$, $(Z^{i,0}\colon i=1,\ldots, N_0)$ denote the independent subtrees of the skeleton obtained by splitting $Z$ according to the ancestor at time $0$ and the Markov property implies that $Z^{i,0}$ follows the same distribution as $(Z;\bbP_{\bullet, \delta_{\xi_i(0)}})$, $i=1,\ldots, N_0$.
Under $\bbP_{\mu}$ with $\mu \in \M_c(D)$, $N_0=\pint{\mathbf{1},Z_0}$ is a Poisson random variable with mean $\pint{w,\mu}$.
\end{notation}

For $t\ge 0$, let $\bbF_t$ denote the $\sigma$-algebra generated by the processes $X^*$, $Z$ and $I$ up to time $t$.
Using the characterisation of the immigration process from Theorem~\ref{thm:KPR}, we obtain for all $\mu \in \M_f(D)$, $\nu \in \M_a^{{\rm loc}}(D)$, $f\in p(D)$ and $s,t\ge 0$,
\begin{equation}\label{eq:bbDistr}
\begin{split}
\bbP_{\mu,\nu}\big[e^{-\la f, X_{s+t}\ra}\big|\bbF_t\big] &\overset{\text{(b)}}{=} \bbP_{X_t,Z_t}\big[e^{-\la f,X_s\ra}\big]= \bbP_{X_t,Z_t}\big[e^{-\la f,X_s^*+I_s\ra}\big]\\
&\overset{\rm{(a)}}{=}\bbP_{X_t,\bullet}\big[e^{-\la f,X_s^*\ra}\big]\prod_{i=1}^{N_t}\bbP_{\bullet,\delta_{\xi_i(t)}}\big[e^{-\la f,I_s\ra}\big] \qquad \bbP_{\mu,\nu}\text{-almost surely}.
\end{split}
\end{equation}
Since under $\bbP_{\mu}$ and given $X_t$, $Z_t$ is a Poisson random measure with intensity $w(x) X_t(dx)$ by (d), \eqref{eq:bbDistr} holds $\bbP_{\mu}$-almost surely when $\bbP_{\mu,\nu}$ on the left-hand side is replace by $\bbP_{\mu}$. 
To make use of this identity, we split the immigration process according to the immigration that occurred before time $t$ and the immigration that occurred along different branches of $Z$ after time $t$.

\begin{notation}[Notation for $I$]\label{not:I}
For $t\ge 0$, denote by $I_s^{*,t}$ the immigration at time $s+t$ that occurred along the skeleton before time $t$; $I^{*,t}=(I_s^{*,t})_{s\ge 0}$.
In addition, for $i\in \{1,\ldots, N_t\}$, let $I_s^{i,t}$ denote the immigration at time $s+t$ that occurred along the subtree of the skeleton rooted at the $i$-th particle at time $t$ with location $\xi_i(t)$; $I^{i,t}=(I_s^{i,t})_{s\ge 0}$. We have
\begin{equation}\label{eq:bb}
X_{s+t}=X_{s+t}^*+ I_s^{*,t}+ \sum_{i=1}^{N_t} I_s^{i,t} \qquad \text{for all }s,t\ge 0.
\end{equation}
According to \eqref{eq:bbDistr} and by the Markov property, given $\bbF_t$, $(X_{s+t}^*+I_s^{*,t})_{s\ge 0}$ follows the same distribution as $(X^*,\bbP_{X_t})$ and $I^{i,t}$ follows the same distribution as $(I;\bbP_{\bullet, \delta_{\xi_i(t)}})$, $i=1,\ldots, N_t$. Moreover, given $\bbF_t$, the processes $(I^{i,t}\colon i=1,\ldots, N_t)$ are independent and independent of $I^{*,t}$.
\end{notation}

We end this section with a note on terminology.
Several different phrases have been used in the literature to refer to the skeleton decomposition. Evans and O'Connell \cite{EvaOCo94} proved the first skeleton decomposition for supercritical superprocesses in the case of a conservative motion (not necessarily a diffusion) and a quadratic, spatially independent branching mechanism with $\alpha,\beta \in (0,\infty)$, and call the result ``representation theorem''. 
Their study was motivated by the ``immortal particle representation'' derived by Evans \cite{Eva93} for critical superprocesses conditioned on non-extinction. 
This representation is in terms of a single ``immortal particle'' that throws off pieces of mass. Evans' article is part of a cluster of papers that study conditioned superprocesses. Salisbury and Verzani \cite{SalVer99} condition the exit measure of a super-Brownian motion to hit $n$ fixed, distinct points on the boundary of a bounded smooth domain. 
The authors show that the resulting process can be described as the sum of a tree with $n$ leaves that throws off mass in a Poissonian way and of a copy of the unconditioned process, and call this decomposition ``backbone representation''. 
In a follow-up article \cite{SalVer00} they consider different conditionings and derive an ``immortal particle description'' where the guiding object is a tree with possibly infinitely many branches that they call ``backbone'' or ``branching backbone''. 
Salisbury and Sezer \cite{SalSez13} describe the super-Brownian motion conditioned on boundary statistics in terms of a ``branching backbone'' or ``branching backbone system''. 
Etheridge and Williams \cite{EthWil03} represent a critical superprocess with infinite variance conditioned to survive until a fixed time as immigration along a Poisson number of ``immortal trees''. 
An overview of decompositions of conditioned superprocesses was offered by Etheridge \cite{Eth00} using the names ``skeleton'' and ``immortal skeleton''. 
Back in our setup of supercritical superprocesses, Engl\"ander and Pinsky \cite{EngPin99} speak about a ``decomposition with immigration'' and Fleischmann and Swart \cite{FleSwa04} construct a ``trimmed tree''. 
For the analysis of continuous-state branching processes, Duquesne and Winkel \cite{DuqWin07} find a ``Galton-Watson forest''.
In the corresponding superprocess setup, Berestycki et al.\ \cite{BerKypMur11} identify the ``prolific backbone'' and call the representation itself a ``backbone decomposition''. 
The latter phrase has been used several times since \cite{KypRen12,KypPerRen13pre,Mil13pre,RenSonZha13pre}. 

We decided to use the term ``skeleton decomposition''  for the following reasons. Since the words ``backbone'' and ``spine'' are used interchangeably
in spoken English,
using these two words to mean different things might cause confusion. 
Furthermore, spine/backbone describes one key, supporting element of an object and does not branch. 
In contrast, a skeleton carries the entire structure and determines the main features of an object. This is the correct intuition for the spine decomposition and the skeleton decomposition of branching processes as well as the distinction between them.

\subsubsection{Product $L^1$-criticality}\label{sec:ProL1}

The first two moments of the superprocess can be expressed in terms of the underlying motion and the branching mechanism. That is, (see for instance Proposition 2.7 in \cite{Fit88}) for all $\mu \in \M_f(D)$ and $f \in bp(D)$,
\begin{align}
P_{\mu}[\la f,X_t\ra]&=\la S_tf,\mu\ra,\label{eq:opp}\\
\text{Var}_{\mu}\big(\la f,X_t\ra\big)&=\int_0^t \big\la S_s\Big[ \big(2 \alpha +\int_{(0,\infty)} y^2 \, \Pi(\cdot, dy)\big) (S_{t-s}f)^2\Big] , \mu \big\ra \,ds.\label{eq:variance}
\end{align}
Here $\text{Var}_{\mu}(\la f,X_t\ra)$ denotes the variance of $\la f,X_t\ra$ under $P_{\mu}$. By the monotone convergence theorem, the boundedness of $f$ in \eqref{eq:opp} is unnecessary and \eqref{eq:variance} holds for $f \in p(D)$ as soon as $\la S_tf,\mu\ra<\infty$.
Similarly, under Assumption~\ref{as:skeleton} and for $\mu \in \M_f(D)$, $f \in bp(D)$, the first two moments of $\pint{f,X_t^*}$ (see the discussion around Lemma~\ref{lem:mildStar} for the definitions) can be expressed as,
\begin{align}
&\bbP_{\mu}[\la f,X_t^*\ra]=\la S_t^*f,\mu\ra ,\label{eq:opp_star}\\
&{\rm{\bf{Var}}}_{\mu}\big(\la f,X_t^*\ra\big)=\int_0^t \big\la S_s^*\Big[ \big(2 \alpha +\int_{(0,\infty)} y^2 \, \Pi^*(\cdot, dy)\big) (S^*_{t-s}f)^2\Big] , \mu \big\ra \,ds.\label{eq:variance_star}
\end{align}

The main purpose of this section is to discuss Assumption~\ref{as:criticality}, that enforces conditions on the operator $L+\beta$ and consequently on its semigroup $(S_t)_{t\ge 0}$ which is the expectation semigroup of $X$ by \eqref{eq:opp}. 
Throughout the section, we suppose that Assumptions~\ref{as:skeleton} and \ref{as:criticality} hold. Key features of the local behaviour of the superdiffusion $X$ are determined by the generalised principal eigenvalue $\lambda_c=\lambda_c(L+\beta)$. 
If $\alpha$ and $\Pi$ are sufficiently smooth and $\lambda_c\le 0$, then the superdiffusion exhibits \emph{weak local extinction}, i.e.\ the total mass assigned to a compact set by the superprocess tends to zero.
For quadratic branching mechanisms this was shown by Pinsky \cite[Theorem 6]{Pin96}; for general branching mechanisms the proof of Theorem~3(i) in \cite{EngKyp04} gives the result. 
This is the reason to assume $\lambda_c>0$.
\smallskip

The assumption of product $L^1$-criticality restricts this article to the situation where the expectation semigroup $(S_t)_{t \ge 0}$ scales precisely exponentially on compactly supported, continuous functions. 
In general, writing $S_tf(x)=e^{\lambda_c t} \omega_{f,x}(t)$, the limit $\omega_{f,x}:=\lim_{t \to \infty}\omega_{f,x}(t)$ exists for all $f\in C_c^+(D)$, $x\in D$. 
Product $L^1$-criticality is equivalent to $\omega_{f,x}
>0$ for all $f\not=\mathbf{0}$. The alternative is $\omega_{f,x}=0$ for all $f$ and $x$ (cf.\ Theorem~7 in \cite{Pin96} and Appendix~A in \cite{EngWin06}). Some of the relevant literature for this regime was discussed in Section~\ref{sec:literature}. 
The notion of product $L^1$-criticality comes from the criticality theory of second order elliptic operators. 
See Appendix B of \cite{EngPin99} for a good summary and Chapter~4 in \cite{Pin95} for a comprehensive treatment.
\smallskip

By Theorem~4.8.6 in \cite{Pin95}, criticality implies that the ground state $\phi$ is an invariant function of $e^{-\lambda_c t}S_t$, that is $e^{-\lambda_c t}S_t\phi =\phi$, and we define a conservative diffusion $(\xi=(\xi_t)_{t\ge 0};(\P_x^{\phi})_{x \in D})$ by
\begin{equation}\label{eq:Sphi}
\frac{d\P_x^{\phi}}{d\P_x}\bigg|_{\sigma(\xi_s\colon s \le t)} =\frac{\phi(\xi_t)}{\phi(x)} e^{\int_0^t (\beta(\xi_s)-\lambda_c) \, ds} \quad \text{on }\{t<\tau_D\}, \quad \P_x^{\phi}[g(\xi_t)]=\phi(x)^{-1} e^{-\lambda_c t}S_t[\phi g](x), 
\end{equation}
for all $x \in D$, $t \ge 0$. Product $L^1$-criticality is equivalent to $(\xi=(\xi_t)_{t\ge 0}; (\P_x^{\phi})_{x \in D})$ being a positive recurrent diffusion with stationary distribution $\phi(x) \tphi(x) \, dx$ and we call it the \emph{ergodic motion} or the \emph{spine} (as we explain in Section~\ref{sec:spine}). In particular, see Theorems~4.3.3 and~4.8.6 in \cite{Pin95},
\begin{equation}\label{eq:statDistrSphi}
\la \P_{\cdot}^{\phi}[g(\xi_t)],\phi \tphi\ra=\la g,\phi\tphi\ra \qquad \text{ for all }g \in p(D),
\end{equation}
and for every probability measure $\pi$ on $D$ and $g \in bp(D)$,
\begin{equation}\label{eq:ergodic}
\pint{ \P_{\cdot}^{\phi}[g(\xi_t)], \pi} \to \la g,\phi \tphi\ra \qquad \text{as } t \to \infty.
\end{equation}
If, in addition, the initial distribution $\pi$ is of compact support, then \eqref{eq:ergodic} holds for all $g \in p(D)$ with $\pint{g,\phi \tphi}<\infty$. 
For $g$ bounded, \eqref{eq:ergodic} follows from Theorem 4.9.9 in \cite{Pin95} and the dominated convergence theorem. 
If the support of $\pi$, $\text{supp}(\pi)$, is compactly embedded in $D$, choose a domain $B\subset \subset D$ with $\text{supp}(\mu) \subseteq B$. 
There exists a constant $C>0$ such that 
\begin{equation}\label{eq:density_bd}
p^{\phi}(x,y,t) \le C \phi(y) \tphi(y) \qquad \text{for all }x\in B, y \in D, t>1, 
\end{equation}
where $p^{\phi}(x,y,t)$ denotes the transition density of $(\xi,\P^{\phi})$ and $\lim_{t \to \infty} p^{\phi}(x,y,t)=\phi(y) \tphi(y)$ for every $x,y \in D$ (cf.\ Pinchover \cite[(2.12) and Theorem~1.3(ii)]{Pin13}). 
Hence, \eqref{eq:ergodic} for $\pi \in \M_c(D)$ and $g \in p(D)$ with $\pint{g,\phi \tphi}<\infty$ follows from the dominated convergence theorem.

\begin{lemma}[Many-to-One Lemma for $X$ and $Z$]\label{lem:mto}
For all $\mu \in \M_f(D)$, $\nu \in \M_a^{{\rm loc}}(D)$ and $g\in p(D)$,
\begin{align}
&e^{-\lambda_c t} P_{\mu}[\la \phi g,X_t\ra]= \la \P_{\cdot}^{\phi}[g(\xi_t)] ,\phi\mu\ra, \label{eq:mtoX}\\
&e^{-\lambda_c t} \bbP_{\bullet,\nu}\Big[\Big\la \frac{\phi}{w}g,Z_t\Big\ra\Big]=\Bigpint{ \P_{\cdot}^{\phi}[g(\xi_t)], \frac{\phi}{w} \nu }\label{eq:oppZ}\\
&e^{-\lambda_c t} \bbP_{\mu}\Big[\Big\la \frac{\phi}{w}g,Z_t\Big\ra\Big]=\big\la \P_{\cdot}^{\phi}[g(\xi_t)],\phi \mu \big\ra.\label{eq:mtoZ}
\end{align} 
\end{lemma}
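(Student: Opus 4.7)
\medskip

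\noindent\textbf{Proof plan.}
The three identities are linked: (i) is a direct consequence of the $\phi$-transform, (ii) is the analogue for $Z$ once we have computed its first-moment semigroup, and (iii) follows from (ii) by Campbell's formula.

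\emph{Step 1 (identity (i)).} The first moment formula \eqref{eq:opp}, extended from $bp(D)$ to $p(D)$ by monotone convergence, gives
\[
P_{\mu}[\la \phi g,X_t\ra]=\la S_t(\phi g),\mu\ra.
\]
The definition of $\P^{\phi}$ in \eqref{eq:Sphi} can be rewritten as $S_t(\phi g)(x)=e^{\lambda_c t}\phi(x)\,\P_x^{\phi}[g(\xi_t)]$. Substituting gives (i) after multiplying by $e^{-\lambda_c t}$.

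\emph{Step 2 (first-moment semigroup of $Z$).} The main technical content is a many-to-one formula for the branching diffusion $Z$: for $x\in D$, $f\in p(D)$ and $t\ge0$,
\[
\bbP_{\bullet,\delta_x}[\la f,Z_t\ra]=\frac{1}{w(x)}\,S_t[wf](x).
\]
By the standard many-to-one identity for branching Markov processes, the left-hand side equals $\P_x^w\big[\exp\!\int_0^{t}q(\xi_s)(m(\xi_s)-1)\,ds\;f(\xi_t)\big]$, where $m(x)=\sum_k k p_k(x)$ is the mean offspring number. Differentiating the generating function identity \eqref{eq:Generator} at $s=1$ and using $\partial_z\psi_0(x,0)=0$ from \eqref{eq:psiDef} yields
\[
q(x)(m(x)-1)=\partial_s G(x,s)\big|_{s=1}=\frac{\psi_0(x,w(x))}{w(x)}.
\]
Inserting the Radon-Nikodym derivative \eqref{eq:Pwdef} and using $\psi_{\beta}(x,z)=-\beta(x)z+\psi_0(x,z)$, the two exponential factors combine to $e^{\int_0^{t}\beta(\xi_s)\,ds}$, giving
\[
\bbP_{\bullet,\delta_x}[\la f,Z_t\ra]=\P_x\Big[\frac{w(\xi_t)}{w(x)}e^{\int_0^{t}\beta(\xi_s)\,ds}f(\xi_t)\Big]=\frac{1}{w(x)}S_t[wf](x).
\]
This is the main obstacle; the rest is bookkeeping. (Unboundedness of $w$ or $\beta^*$ is not an issue here because the identity reduces to the semigroup $S_t$ of $L+\beta$ with bounded $\beta$; extension from bounded $f$ to $f\in p(D)$ is by monotone convergence, with $f$ extended by $0$ at the cemetery state.)

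\emph{Step 3 (identity (ii)).} Writing $\nu=\sum_i\delta_{x_i}$, the branching property gives $\bbP_{\bullet,\nu}[\la h,Z_t\ra]=\sum_i\bbP_{\bullet,\delta_{x_i}}[\la h,Z_t\ra]$. Applying Step 2 with $h=(\phi/w)g$, so $wh=\phi g$, and then \eqref{eq:Sphi} as in Step 1,
\[
\bbP_{\bullet,\delta_{x_i}}\!\Big[\Big\la\frac{\phi}{w}g,Z_t\Big\ra\Big]=\frac{S_t[\phi g](x_i)}{w(x_i)}=\frac{\phi(x_i)}{w(x_i)}e^{\lambda_c t}\P_{x_i}^{\phi}[g(\xi_t)].
\]
Summing over $i$ and multiplying by $e^{-\lambda_c t}$ yields (ii).

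\emph{Step 4 (identity (iii)).} Under $\bbP_{\mu}$, the initial configuration $Z_0$ is a Poisson random measure with intensity $w(x)\mu(dx)$, independent of the evolution mechanism. Conditioning on $Z_0$ and applying Step 2 together with Campbell's formula,
\[
\bbP_{\mu}\Big[\Big\la\frac{\phi}{w}g,Z_t\Big\ra\Big]=\int_D\frac{1}{w(x)}S_t[\phi g](x)\,w(x)\mu(dx)=\la S_t[\phi g],\mu\ra,
\]
and the $\phi$-transform again converts this into $e^{\lambda_c t}\la\P_{\cdot}^{\phi}[g(\xi_t)],\phi\mu\ra$, giving (iii).
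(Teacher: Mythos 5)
Your proposal is correct and follows essentially the same route as the paper: identity (i) via \eqref{eq:opp} and \eqref{eq:Sphi}, identity (ii) via the first-moment (many-to-one) formula for the branching diffusion combined with the computation $q(m-1)=\partial_s G(\cdot,s)|_{s=1}=\psi_0(\cdot,w)/w$ and the cancellation of the exponential weights in \eqref{eq:Pwdef} to recover $S_t$, and identity (iii) by Campbell's formula for the Poisson initial configuration. No gaps.
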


\begin{proof}
Identity \eqref{eq:mtoX} follows immediately from \eqref{eq:opp} and \eqref{eq:Sphi}. For \eqref{eq:oppZ}, notice that by \eqref{eq:Generator} the local growth rate of $Z$ is given by
\[
\beta^Z(x):=q(x) \Big(\sum_{k=2}^{\infty} kp_k(x)-1\Big)=\partial_s \ZGen (x,s)\big|_{s=1}= \frac{\psi_0(x,w(x))}{w(x)} \qquad \text{for all }x\in D
\]
and, using the definition of $\P_x^w$ in \eqref{eq:Pwdef}, we obtain for all $x\in D$,
\begin{align*}
\P_x^w\Big[e^{\int_0^t \beta^Z(\xi_s) \, ds}\frac{\phi(\xi_t)}{w(\xi_t)}g(\xi_t)\Big]&=w(x)^{-1}\P_x\Big[\exp\Big(\int_0^t \big(\beta^Z(\xi_s)-\frac{\psi_{\beta}(\xi_s,w(\xi_s))}{w(\xi_s)}\big)\, ds\Big) \phi(\xi_t)g(\xi_t) \Big]\\
&=w(x)^{-1}S_t[\phi g](x)
\overset{\eqref{eq:Sphi}}{=}\frac{\phi(x)}{w(x)} e^{\lambda_c t}\P_x^{\phi}[g(\xi_t)].
\end{align*}
Hence, the first moment formula for branching diffusions (see for example Theorem 8.5 in \cite{HarHar09}) yields
\[
\bbP_{\bullet,\nu}\Big[e^{-\lambda_c t} \Big\la \frac{\phi}{w} g,Z_t\Big\ra \Big]=e^{-\lambda_c t} \Big\la \P_{\cdot}^w\Big[ e^{\int_0^t \beta^Z(\xi_s) \, ds} \frac{\phi(\xi_t)}{w(\xi_t)} g(\xi_t)\Big], \nu\Big\ra=\Bigpint{ \P_{\cdot}^{\phi}[g(\xi_t)], \frac{\phi}{w} \nu }.
\]
Since under $\bbP_{\mu}$, initial configuration of $Z$ is given by a Poisson random measure with intensity $w(x)\mu(dx)$, \eqref{eq:mtoZ} follows from \eqref{eq:oppZ}.
\end{proof}

We record the following consequence of Lemma~\ref{lem:mto}.
\begin{corollary}\label{cor:mg}
For all $\mu \in \M_f^{\phi}(D)$, $((W_t^{\phi}(X))_{t \ge 0};P_{\mu})$ and $((W_t^{\phi/w}(Z))_{t \ge 0};\bbP_{\mu})$ are martingales with 
\[
P_{\mu}\big[W_t^{\phi}(X)\big]=\bbP_{\mu}\big[W_t^{\phi/w}(Z)\big]=\pint{\phi,\mu} \quad \text{ for all }t\ge 0.
\]
\end{corollary}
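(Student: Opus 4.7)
The plan is to deduce both martingale properties directly from Lemma~\ref{lem:mto} applied with $g\equiv\mathbf{1}$, combined with the Markov property of $X$ and the branching Markov property of $Z$. I do not expect any real obstacle here: the content of the corollary is essentially that $\phi$ is an eigenfunction of the mean semigroup of $X$ with eigenvalue $e^{\lambda_c t}$, and the many-to-one formula makes the analogous statement for $Z$ transparent.

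To handle integrability and the stated expectation, I would take $g\equiv \mathbf{1}$ in \eqref{eq:mtoX}. Since $(\xi;\P^\phi)$ is conservative by Assumption~\ref{as:criticality}, $\P_x^\phi[\mathbf{1}(\xi_t)]=1$, so $P_\mu[W_t^\phi(X)] = \pint{\mathbf{1}, \phi\mu} = \pint{\phi, \mu}$, which is finite by the assumption $\mu \in \M_f^\phi(D)$. The same substitution in \eqref{eq:mtoZ} yields $\bbP_\mu[W_t^{\phi/w}(Z)] = \pint{\phi,\mu}$. In particular, both processes are nonnegative and in $L^1$, and the stated expectations hold.

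For the martingale property of $W^\phi(X)$, I would let $\F_t := \sigma(X_r : r\le t)$, use the Markov property of $X$ and the first-moment formula \eqref{eq:opp} to write
\[
P_\mu\big[\pint{\phi, X_{t+s}}\,\big|\, \F_t\big] = \pint{S_s\phi, X_t},
\]
and then invoke $S_s\phi = e^{\lambda_c s}\phi$ (the eigenfunction identity for the ground state, noted just before \eqref{eq:Sphi}) to conclude that $P_\mu[W_{t+s}^\phi(X)\mid \F_t] = W_t^\phi(X)$. The argument for $W^{\phi/w}(Z)$ is entirely parallel: using the Markov property of the branching diffusion $Z$ together with \eqref{eq:oppZ} applied with $\nu = Z_t$ and $g\equiv\mathbf{1}$, I obtain
\[
\bbP_\mu\big[\pint{\phi/w, Z_{t+s}}\,\big|\, \bbF_t\big] = e^{\lambda_c s}\pint{\phi/w, Z_t},
\]
whence the martingale identity follows after dividing through by $e^{\lambda_c(t+s)}$.
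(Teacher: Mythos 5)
Your proposal is correct and follows essentially the same route as the paper: the expectation identity from the Many-to-One Lemma with $g\equiv\mathbf{1}$ and conservativity of $(\xi;\P^{\phi})$, then the Markov property of $X$ (resp.\ $Z$) combined with \eqref{eq:mtoX} (resp.\ \eqref{eq:oppZ}) and the eigenfunction identity $e^{-\lambda_c s}S_s\phi=\phi$. The only cosmetic difference is that the paper first states the martingale property of $W^{\phi/w}(Z)$ under $\bbP_{\bullet,\nu}$ for deterministic $\nu$ with $\pint{\phi/w,\nu}<\infty$ and then Poissonises the initial configuration, whereas you condition directly on $\bbF_t$; both are fine.
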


\begin{proof}
Since $(\xi,\P_x^{\phi})$ is conservative, the formula for the expectations follows immediately from \eqref{eq:mtoX} and \eqref{eq:mtoZ}. The Markov property of $X$ combined with \eqref{eq:mtoX} gives the claim for $X$. 
The Markov property of $Z$ and \eqref{eq:oppZ} imply that $(W_t^{\phi/w}(Z))_{t\ge 0}$ is a $\bbP_{\bullet,\nu}$-martingale for all $\nu \in \M_a^{{\rm loc}}(D)$ with $\pint{\phi/w,\nu}<\infty$.
Replacing $\nu$ by a Poisson random measure with intensity $w(x)\mu(dx)$ concludes the proof.
\end{proof}

Let $\mu \in \M_f^{\phi}(D)$, $\mu \not\equiv 0$. After dividing the right-hand side of \eqref{eq:mtoX} by $\la \phi, \mu\ra$, the expression can be interpreted as the expectation of $g(\xi_t)$, where $\xi$ is the ergodic motion with start point randomised according to $\frac{\phi \mu}{\la \phi,\mu\ra}$. With this motivation, we define for all measurable sets $A$,
\begin{equation}\label{eq:Pphimu}
\P_{\phi\mu}^{\phi}(A):=\frac{1}{\la \phi,\mu\ra} \big\la \P_{\cdot}^{\phi}(A), \phi\mu \big\ra.
\end{equation}

We end this section with a remark for the case that the superprocess is deterministic.

\begin{remark}\label{rem:detCase}
If $\ell(\{x \in D\colon \alpha(x)+\Pi(x,(0,\infty))>0\})=0$, then \eqref{eq:variance} implies that $\pint{f,X_t}=\pint{S_tf,\mu}$ for all $t\ge 0$, $P_{\mu}$-almost surely, for all continuous $f\in bp(D)$. Hence, Assumption~\ref{as:skeleton} cannot be satisfied. 
However, under Assumption~\ref{as:criticality}, the conclusion of Theorem~\ref{thm:SLLN} still holds since for all $\mu \in \M_f^{\phi}(D)$ and $\ell$-almost everywhere continuous $f \in p(D)$ with $f/\phi$ bounded, a standard approximation, \eqref{eq:Sphi} and \eqref{eq:ergodic} imply that $P_{\mu}$-almost surely, as $t \to \infty$,
\[
e^{-\lambda_c t} \la f,X_t\ra=e^{-\lambda_c t}\pint{S_tf,\mu}=\la \P_{\cdot}^{\phi}[f(\xi_t)/\phi(\xi_t)],\phi\mu\ra \to \la f/\phi,\phi\tphi\ra \la \phi,\mu\ra=\pint{f,\tphi}\mglX.
\] 
\end{remark}

\subsubsection{Moment conditions}\label{sec:moments}

In this section, we discuss Assumption~\ref{as:moment1} and compare it to the conditions used in the literature. 
We work under Assumptions~\ref{as:skeleton} and \ref{as:criticality}. 
While Assumption~\ref{as:moment1} seems to be the most useful set of conditions, we prove our results under the following weaker moment assumption.

\begin{customassump}{3'}\label{as:moment}
There exists $p\in (1,2]$, $\varphi_1, \varphi_2\in p(D)$, $\sigma_1,\sigma_2,\sigma_3 \in [p,2]$ and $j_1,j_2 \in \{0,1\}$ such that,
\begin{align}
& \sup_{x \in D} \phi(x)^{\sigma_1-1} \alpha(x) <\infty \label{alpha_cond}\\
&\sup_{x \in D} \phi(x)^{\sigma_2-1} \int_{(0,\varphi_1(x)]} y^{\sigma_2}\, \Pi(x, dy) <\infty,\label{as:PiLowerTail}\\
&\sup_{x \in D} \phi(x)^{\sigma_3-1} \int_{(\varphi_1(x),\infty)} y^{\sigma_3}\, \Pi(x,dy)<\infty,\label{as:PiUpperTail}\\
&\pint{\phi^{p-1},\phi \tphi}<\infty,\label{phi_p_cond}\\
&\Bigpint{ \phi^{j_1} \int_{(0,\varphi_2(\cdot)]} y^2 e^{-w(\cdot)y} \, \Pi(\cdot,dy),\phi \tphi}<\infty\label{as:lat_con_Lower},\\
&\Bigpint{ \phi^{j_2} \int_{(\varphi_2(\cdot),\infty)} y^2 e^{-w(\cdot)y} \, \Pi(\cdot,dy),\phi \tphi}<\infty\label{as:lat_con_Upper}.
\end{align}
\end{customassump}

Assumption~\ref{as:moment1} is the special case $\varphi_1=\varphi_2=\mathbf{1}$, $\sigma_1=\sigma_2=2$, $\sigma_3=p$ and $j_1=j_2=0$ of Assumption~\ref{as:moment}. 
Notice that with this choice, Condition~\eqref{as:lat_con_Lower} trivially holds since $\pint{\phi,\tphi}<\infty$ and $x \mapsto \int_{(0,1]} y^2 \, \Pi(x,dy)$ is a bounded function by the model assumptions in Section~\ref{sec:model}. 
Therefore, the following theorem generalises Proposition~\ref{pro:mglim} and Theorems~\ref{thm:SLLN} and \ref{thm:L1}.

\begin{theorem}\label{thm:SLLN_gen}
Suppose Assumptions~\ref{as:skeleton}, \ref{as:criticality}, \eqref{alpha_cond}--\eqref{as:PiUpperTail} hold and $\mu \in \M_f^{\phi}(D)$. 
\begin{itemize}
\item[{\rm (i)}] The martingales $(W_t^{\phi}(X))_{t\ge 0}$ and $(W_t^{\phi/w}(Z))_{t\ge 0}$ are bounded in $L^p(\bbP_{\mu})$ and $\mglX=\mglZ$ $\bbP_{\mu}$-almost surely.
\item[{\rm (ii)}]
Suppose that, in addition, \eqref{phi_p_cond} holds. For all $f\in p(D)$ with $f/\phi$ bounded, we have in $L^1(P_{\mu})$
\begin{equation}\label{eq:SLLN2}
\lim_{t\to \infty} e^{-\lambda_c t} \pint{f,X_t} = \pint{f,\tphi} \mglX.
\end{equation}
\item[{\rm (iii)}]
If, in addition, Assumptions~\ref{as:moment} and \ref{as:SLLN} hold, then there exists a measurable set $\Omega_0$ with $P_{\mu}(\Omega_0)=1$ and, on $\Omega_0$, for all $\ell$-almost everywhere continuous functions $f \in p(D)$ with $f/\phi$ bounded, the convergence in \eqref{eq:SLLN2} holds.
\end{itemize}
\end{theorem}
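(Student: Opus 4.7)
The plan is to handle the three parts in turn, systematically using the skeleton decomposition of Theorem~\ref{thm:KPR} to transfer asymptotic questions from the superdiffusion $X$ to the branching particle diffusion $Z$ and its immigration $I$.

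For part~(i), the first objective is $L^p(\bbP_\mu)$-boundedness of $(W_t^\phi(X))_{t\ge 0}$. I would obtain this via a spine decomposition of $X$ to be developed in Section~\ref{sec:spine}: size-biasing $P_\mu$ by $W_t^\phi(X)$ produces a law under which the superprocess acquires an immortal trajectory $\xi$ evolving under $\P^\phi$, with continuous and discrete immigration along its space-time path. Assumptions~\eqref{alpha_cond} and~\eqref{as:PiUpperTail} control the $p$-th moments of the immigrated chunks while positive recurrence of $(\xi,\P^\phi)$ averages their contribution along the spine, yielding $\sup_{t\ge 0} \bbP_\mu[W_t^\phi(X)^p]<\infty$; the analogous spine argument for $Z$ handles $(W_t^{\phi/w}(Z))_{t\ge 0}$. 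The identification $\mglX=\mglZ$ $\bbP_\mu$-a.s.\ rests on Theorem~\ref{thm:KPR}(iii)(d): conditional on $X_t$, the measure $Z_t$ is Poisson with intensity $w(x)X_t(dx)$, so $\la\phi/w, Z_t\ra$ has conditional mean $\la\phi,X_t\ra$. An explicit conditional variance computation combined with $L^p$-boundedness and a Borel--Cantelli argument along $t=n\delta$ then concludes.

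For part~(ii), under the additional assumption~\eqref{phi_p_cond}, the many-to-one identity~\eqref{eq:opp} combined with ergodicity~\eqref{eq:ergodic} of the spine diffusion gives
\[
e^{-\lambda_c t}P_\mu[\la f,X_t\ra] \to \la f,\tphi\ra\la \phi,\mu\ra = P_\mu\big[\la f,\tphi\ra\mglX\big].
\]
The $L^1(P_\mu)$-convergence in~\eqref{eq:SLLN2} then reduces to showing that the difference $e^{-\lambda_c t}\la f,X_t\ra - \la f,\tphi\ra W_t^\phi(X)$ vanishes in $L^1$. This will follow from a first/second-moment computation using~\eqref{eq:variance}, whose uniform bound in $t$ is underpinned precisely by~\eqref{phi_p_cond}; the $L^1$-convergence $W_t^\phi(X)\to\mglX$ then follows from part~(i).

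For part~(iii), I would adopt the Asmussen--Hering strategy of Section~\ref{sec:pf_idea}. Using~\eqref{eq:bb} together with Theorem~\ref{thm:KPR}(iii), decompose $e^{-\lambda_c(s+t)}\la f,X_{s+t}\ra$ into a conditional-expectation term $\mathrm{CE}_f(s,t)$ and a difference $\mathrm{D}_f(s,t)$. A variance bound based on the immigration formulas of Section~\ref{sec:skeleton}, drawing on~\eqref{as:lat_con_Lower}--\eqref{as:lat_con_Upper}, will yield summability of $\bbP_\mu[\mathrm{D}_f(n\delta,n\delta)^2]$ in $n$, so Borel--Cantelli delivers $\mathrm{D}_f(n\delta,n\delta)\to 0$ a.s. For $\mathrm{CE}_f(s,t)$, I would rewrite the conditional expectation as a weighted sum over the skeleton particles at time $t$; Assumption~\ref{as:SLLN} then produces, as $t\to\infty$, a limit proportional to $\mglZ$, after which letting $s\to\infty$ via the invariance $e^{-\lambda_c s}S_s\phi=\phi$ and~\eqref{eq:ergodic} recovers the factor $\la f,\tphi\ra$. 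Part~(i) identifies $\mglZ$ with $\mglX$, and the passage from lattice to continuous times is achieved by the resolvent and hitting-probability technique of Liu, Ren and Song~\cite{LiuRenSon13}, adapted to the skeleton setup. The step I expect to be most delicate is the handling of $\mathrm{CE}_f(s,t)$ in part~(iii): the skeleton SLLN must be applied at stage $t$ to a family of test functions indexed by $s$, and the two limits $t\to\infty$ then $s\to\infty$ must be arranged so that the final answer is $\la f,\tphi\ra\mglX$ rather than some $s$-dependent functional.
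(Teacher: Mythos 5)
Your high-level architecture (spine decomposition for the $L^p$ bound, skeleton decomposition plus an Asmussen--Hering splitting plus the resolvent technique of \cite{LiuRenSon13}) is the paper's, but two of your key steps would not go through under the stated hypotheses. First, in part (i) you propose to identify $\mglX=\mglZ$ from Theorem~\ref{thm:KPR}(iii)(d) by a conditional variance computation: given $X_t$, the variance of $\la \phi/w,Z_t\ra$ is $\la \phi^2/w,X_t\ra$, and nothing in Assumptions~\ref{as:skeleton}, \ref{as:criticality}, \eqref{alpha_cond}--\eqref{as:PiUpperTail} controls $\phi/w$ globally ($w$ is only assumed positive and locally bounded, so $\phi^2/w$ need not be integrable against $\phi\tphi$ nor even have finite expectation under $S_t$); the Chebyshev/Borel--Cantelli step is therefore unavailable. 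The paper goes the other way around: Proposition~\ref{pro:bb_mean} expresses $\bbP_{\mu}[W_{s+t}^{\phi}(X)|\bbF_t]$ as $e^{-\lambda_c(s+t)}\big(\la S_s^*\phi,X_t\ra+\la S_s\phi/w,Z_t\ra-\la S_s^*\phi/w,Z_t\ra\big)$, and Lemma~\ref{lem:ext_lim} (positive recurrence of the spine forces $e^{-\lambda_c s}S_s^*\phi/\phi\to 0$) kills the $S_s^*$ terms as $s\to\infty$, yielding $W_t^{\phi/w}(Z)=\bbP_{\mu}[\mglX|\bbF_t]$; this simultaneously gives the $L^p$ bound for $(W_t^{\phi/w}(Z))_{t\ge0}$ by Jensen, with no separate spine argument for $Z$.

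Second, and more seriously, parts (ii) and (iii) of your plan rest on second moments: a variance computation via \eqref{eq:variance} for (ii), and summability of $\bbP_{\mu}[\mathrm{D}_f(n\delta,n\delta)^2]$ for (iii). When $\sigma_3=p<2$ in \eqref{as:PiUpperTail}, the measure $\Pi$ need not have second moments, so \eqref{eq:variance} can be identically $+\infty$ and both steps fail; handling exactly this regime is one of the paper's main points. The paper's substitute is the $L^p$ estimate of Proposition~\ref{pro:Lpsum}: Biggins' inequality for the conditionally independent increments $\la f,I_s^{i,t}\ra-\bbP_{\bullet,\delta_{\xi_i(t)}}[\la f,I_s\ra]$, the bound $\bbP_{\bullet,\delta_x}[\la f,I_s\ra^p]\le w(x)^{-1}\bbP_{\delta_x}[\la f,X_s\ra^p]$ of Lemma~\ref{lem:bb_meas}, and the spine bound \eqref{eq:EWtoq}; this is where \eqref{phi_p_cond} actually enters (through Lemma~\ref{lem:mg_moments}), not as a uniform bound on \eqref{eq:variance}. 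Genuine second moments are used only for the $X^*$-part in the lattice-to-continuous step, where they are finite because $\Pi^*(x,dy)=e^{-w(x)y}\Pi(x,dy)$ is exponentially tilted and \eqref{as:lat_con_Lower}--\eqref{as:lat_con_Upper} apply. Finally, part (iii) asserts a single null set $\Omega_0$ valid for all admissible $f$ and allows general $\mu\in\M_f^{\phi}(D)$; you would still need the reductions of Section~\ref{sec:reduction} (restriction to $\mu\in\M_c(D)$, to lower bounds for $\phi\mathbbm{1}_B$, and the Chen--Shiozawa argument of Lemma~\ref{lem:omega0}), which your proposal does not address.
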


The first three moment conditions, \eqref{alpha_cond1}--\eqref{as:PiUpperTail1} or \eqref{alpha_cond}--\eqref{as:PiUpperTail},
are used to guarantee that the martingale $(W_t^{\phi}(X))_{t \ge 0}$ is bounded in $L^p$ (see Theorem~\ref{thm:Lp} below). 
To the best of our knowledge, even though these conditions may not be optimal, they are the best conditions obtained so far to guarantee $L^p$-boundedness, $p \in (1,2)$, for general superprocesses. 
For the case of a super-Brownian motion, similar conditions were found in \cite{KypMur13}. Condition~\eqref{alpha_cond1} appeared as the main moment assumption in \cite{EngTur02} and \cite{EngWin06} to establish the convergence \eqref{eq:SLLN2} in distribution and in probability, respectively. 
The two articles that study almost sure convergence in the product $L^1$-critical regime (i.e.\ under Assumption~\ref{as:criticality}) are by Chen et al.\ \cite{CheRenWan08} and Liu et al. \cite{LiuRenSon13}. 
In both papers, $\alpha$ and $\phi$ are bounded; hence, \eqref{alpha_cond1} holds.

The article \cite{CheRenWan08} is restricted to quadratic branching mechanisms, i.e.\ $\Pi\equiv 0$, and \eqref{as:PiLowerTail1}--\eqref{as:PiUpperTail1} are trivially satisfied. 
Liu et al.\ \cite{LiuRenSon13} do not require $\Pi$ to have a $p$-th moment. 
The authors show that under their assumptions ($D$ of finite Lebesgue measure and $(S_t)_{t \ge 0}$ intrinsically ultracontractive), the martingale limit $\mglX$ is nontrivial if and only if $\pint{\int_{(1,\infty)} y \log y \, \Pi(\cdot, y/\phi), \tphi}<\infty$, and they establish their result under this condition. In the alternative case, the martingale limit is zero almost surely and the stated convergence \eqref{eq:SLLN} holds trivially.

The fourth assumption, \eqref{phi_p_cond1} or \eqref{phi_p_cond}, is a technical condition. It is only used in Proposition~\ref{pro:BC_immi} to compare the immigration after a large time $t$, $\sum_{i=1}^{N_t} \pint{f,I_s^{i,t}}$, to its expectation $\sum_{i=1}^{N_t} \bbP_{\mu}[\pint{f,I_s^{i,t}}|\bbF_t]$. 
In the previous articles on the SLLN \cite{CheRenWan08,LiuRenSon13}, Assumption \eqref{phi_p_cond1} holds since $\phi$ is bounded.

The technical condition can be avoided using an $h$-transform.
The $h$-transform for measure-valued diffusions was introduced by Engl\"ander and Pinsky in \cite{EngPin99}.
For $h \in C^{2,\eta}(D)$, $h>0$, let
\begin{equation}\label{eq:hdef}
L_0^h= L+a \frac{\nabla h}{h} \cdot \nabla, \quad \beta^h(x)=\frac{(L+\beta)h(x)}{h(x)},\quad \psi_0^h(x,z)=\frac{\psi_0(x,h(x)z)}{h(x)}.
\end{equation}
If $\beta^h$, $\alpha h$ and $x \mapsto \int_{(0,\infty)} (y \land h(x) y^2) \, \Pi(x,dy)$ belong to $b(D)$, then $\psi_{\beta^h}^h(x,z):=-\beta^h(x) z+\psi_0^h(x,z)$ satisfies the assumptions from Section~\ref{sec:model}. We denote the space of such functions $h$ by $\mathbb{H}(\psi_{\beta})$. 
An $(L_0^h, \psi_{\beta^h}^h;D)$-superprocess $X^h$ started in $h(x) \mu(dx)$ can be obtained from an $(L,\psi;D)$-superprocess $X$ started in $\mu$ by setting $X_t^h(dx):=h(x)X_t(dx)$. 
The result follows immediately from a comparison of the Laplace transforms using the mild equation \eqref{eq:mild} and Corollary~4.1.2 in \cite{Pin95}; see \cite{EngPin99} for the computation in the quadratic case. In the following, we superscript all quantities derived from $X^h$ with an $h$.
Clearly, the $(L,\psi;D)$-superprocess can be recovered from the $(L_0^h,\psi_{\beta^h}^h;D)$-superprocess by a transform with $1/h$.

\begin{lemma}\label{lem:SLLN_h}
Let $h \in \mathbb{H}(\psi_{\beta})$ and $\mu \in \M_f^{\phi}(D)$.
\begin{itemize}
\item[{\rm (i)}] The operator $L_0^h +\beta^h$ satisfies Assumption~\ref{as:criticality} with $\phi^h=\phi/h$, $\tphi^h=\tphi h$ and $\lambda_c^h=\lambda_c$, and the process $(W_t^{\phi^h}(X^h)=e^{-\lambda_c^h t} \pint{\phi^h,X_t^h}\colon t\ge 0;P_{h \mu}^h)$ is a martingale with almost sure limit $W_{\infty}^{\phi^h}(X^h)$.

\item[{\rm (ii)}] Suppose \eqref{eq:SLLN2} holds $P_{\mu}$-almost surely for some $f \in p(D)$. Then
\begin{equation}\label{eq:SLLNXh}
\lim_{t \to \infty} e^{-\lambda_c^h t} \la f/h,X_t^h\ra =\la f/h,\tphi^h\ra W_{\infty}^{\phi^h}(X^h) \qquad P_{h\mu}^h\text{-almost surely.}
\end{equation}
If \eqref{eq:SLLN2} holds in $L^1(P_{\mu})$ instead, then \eqref{eq:SLLNXh} holds in $L^1(P_{h \mu}^h)$.
\end{itemize}
\end{lemma}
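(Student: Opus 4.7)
The plan is to view the $h$-transform as essentially tautological on the pairings involved: since $X_t^h(dx)=h(x)X_t(dx)$, we have $\la g/h,X_t^h\ra=\la g,X_t\ra$ for any measurable $g$, which immediately converts every statement about $X^h$ into one about $X$. The only real analytic content lies in verifying that the objects in Assumption~\ref{as:criticality} transform as claimed.

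For Part~(i), I would first verify by direct computation that $(L_0^h+\beta^h-\lambda_c)(\phi/h)=0$. The standard identity $L(hu)=uLh+hL_0^hu$ (a consequence of the symmetry of $a$) rearranges to $L_0^hu=L(hu)/h-u(Lh/h)$, so for any $u\in C^{2,\eta}(D)$,
\[
(L_0^h+\beta^h)(u/h)=\frac{L(u)}{h}-\frac{u}{h}\cdot\frac{Lh}{h}+\frac{u}{h}\cdot\frac{(L+\beta)h}{h}=\frac{(L+\beta)u}{h}.
\]
Taking $u=\phi$ gives $(L_0^h+\beta^h)(\phi/h)=\lambda_c(\phi/h)$, and the same identity provides a bijection $u\leftrightarrow u/h$ between positive classical solutions of $(L+\beta-\lambda)u=0$ and $(L_0^h+\beta^h-\lambda)v=0$, which gives $\lambda_c^h=\lambda_c$ and identifies $\phi^h=\phi/h$ as the ground state. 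The analogous adjoint identity (or equivalently the invariance $\pint{u,v}=\pint{u/h,vh}$) identifies $\tphi h$ as the dual ground state and transfers criticality via the corresponding bijection on positive adjoint solutions, citing Pinsky~\cite{Pin95}. Finally $\pint{\phi^h,\tphi^h}=\pint{\phi,\tphi}=1$, so product $L^1$-criticality holds with the stated normalisation. The martingale statement is then immediate: the computation
\[
W_t^{\phi^h}(X^h)=e^{-\lambda_c t}\la \phi/h,\,hX_t\ra=e^{-\lambda_c t}\la \phi,X_t\ra=W_t^\phi(X),
\]
together with the fact that the law of $X^h$ under $P_{h\mu}^h$ equals the law of $hX$ under $P_\mu$, allows the $P_{h\mu}^h$-martingale property and the a.s.\ existence of $W_\infty^{\phi^h}(X^h)=W_\infty^\phi(X)$ to be inherited directly from Corollary~\ref{cor:mg}.

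For Part~(ii), the same cancellation yields
\[
e^{-\lambda_c^h t}\la f/h,X_t^h\ra=e^{-\lambda_c t}\la f,X_t\ra,\qquad \la f/h,\tphi^h\ra W_\infty^{\phi^h}(X^h)=\la f,\tphi\ra W_\infty^\phi(X),
\]
so \eqref{eq:SLLNXh} under $P_{h\mu}^h$ is literally the statement \eqref{eq:SLLN2} under $P_\mu$, and both the a.s.\ and the $L^1$ modes of convergence transfer verbatim. The only genuinely non-trivial step in the whole argument is the preservation of criticality (uniqueness of the ground state and absence of Green's function) through the $h$-transform, which is classical in the spectral theory of second-order elliptic operators and reduces to citing~\cite{Pin95}; everything else collapses to the elementary identity $\la g/h,hX_t\ra=\la g,X_t\ra$.
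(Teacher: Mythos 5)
Your proposal is correct and follows essentially the same route as the paper: the paper's proof of Part~(i) simply cites Pinsky \cite{Pin95} for the transformation of the criticality data, and its proof of Part~(ii) is exactly your cancellation $\la g/h, hX_t\ra=\la g,X_t\ra$ together with $W_t^{\phi^h}(X^h)=W_t^{\phi}(X)$. The only difference is that you spell out the intertwining identity $(L_0^h+\beta^h)(u/h)=((L+\beta)u)/h$ underlying the cited result, which is a correct and welcome elaboration rather than a divergence.
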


\begin{proof}
The first part of the claim was proved by Pinsky \cite[Chapter 4]{Pin95}. Setting $X^h:=h X$, we immediately obtain $W_t^{\phi^h}(X^h)=e^{-\lambda_c^ht}\la \phi^h,X_t^h\ra= W_t^{\phi}(X)$ and, $P_{\mu}$-almost surely (in $L^1(P_{\mu})$, respectively),
\[
e^{-\lambda_c^h t}\la f/h, X_t^h\ra =e^{-\lambda_c t}\la f, X_t\ra \to \la f,\tphi\ra W_{\infty}^{\phi}(X)=\la f/h, \tphi^h\ra W_{\infty}^{\phi^h}(X^h) \qquad\text{as }t\to\infty.\qedhere
\]
\end{proof}

Lemma~\ref{lem:SLLN_h} states that Assumption~\ref{as:criticality} and our results are invariant under $h$-transforms. The same is true for Assumption~\ref{as:skeleton} and \ref{as:SLLN}.

\begin{lemma}\label{lem:invar}
Let $h \in \mathbb{H}(\psi_{\beta})$. The $(L_0^h,\psi_{\beta^h}^h;D)$-superprocess $X^h$ satisfies Assumption~\ref{as:skeleton} with martingale function $\smash{w^h=w/h}$ and the distribution of the skeleton $\smash{Z^h}$ under $\smash{\bbP_{h \mu}^h}$ agrees with the distribution of $Z$ under $\bbP_{\mu}$ for all $\mu \in \M_c(D)$. 
In particular, if $X$ satisfies Assumption~\ref{as:SLLN}, then $X^h$ satisfies Assumption~\ref{as:SLLN}. 
\end{lemma}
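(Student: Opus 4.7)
The plan is to verify Assumption~\ref{as:skeleton} for $X^h$ with candidate martingale function $w^h := w/h$, and then to match the three data that determine the skeleton law---spine motion, local branching mechanism, and initial Poisson intensity---between $X^h$ and $X$.

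Assumption~\ref{as:skeleton} is immediate: $w^h>0$, continuity, and local boundedness are inherited from the corresponding properties of $w$ (Lemma~\ref{lem:wcont} and~\eqref{eq:wlocbd}) together with $h \in C^{2,\eta}(D)$, $h>0$; and~\eqref{eq:mgFct} for $w^h$ reduces to that for $w$ via $\pint{w^h, X_t^h} = \pint{h w^h, X_t} = \pint{w, X_t}$, giving
\[
P_{h\mu}^h\big[e^{-\pint{w^h, X_t^h}}\big] = P_\mu\big[e^{-\pint{w, X_t}}\big] = e^{-\pint{w, \mu}} = e^{-\pint{w^h, h\mu}}
\]
for $\mu \in \M_c(D)$.

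The crucial step is to show that the skeleton motion for $X^h$, obtained by applying~\eqref{eq:Pwdef} to the data $(L_0^h, \psi_{\beta^h}^h, w^h)$, coincides with $\P^w$. Writing $\P^{L_0^h}$ for the law of the $L_0^h$-diffusion, Doob's $h$-transform gives
\[
\frac{d\P_x^{L_0^h}}{d\P_x}\bigg|_{\F_t \cap \{t<\tau_D\}} = \frac{h(\xi_t)}{h(x)} \exp\Big(-\int_0^t \frac{Lh(\xi_s)}{h(\xi_s)}\, ds\Big),
\]
while direct substitution of $\psi_0^h(x,z) = \psi_0(x, h(x)z)/h(x)$, $\beta^h = (L+\beta)h/h$, and $w^h = w/h$ yields
\[
\frac{\psi_{\beta^h}^h(x, w^h(x))}{w^h(x)} = \frac{\psi_\beta(x, w(x))}{w(x)} - \frac{Lh(x)}{h(x)}, \qquad \frac{w^h(\xi_t)}{w^h(x)} = \frac{w(\xi_t)}{w(x)} \cdot \frac{h(x)}{h(\xi_t)}.
\]
Composing~\eqref{eq:Pwdef} for the data $(L_0^h, \psi_{\beta^h}^h, w^h)$ over $\P^{L_0^h}$ with the Doob transform above, the factors $h(\xi_t)/h(x)$ and $h(x)/h(\xi_t)$ cancel, as do the two $\int_0^t Lh(\xi_s)/h(\xi_s)\, ds$ Feynman--Kac exponents, leaving exactly the Radon--Nikodym derivative defining $\P^w$. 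For the branching data, plugging $w^h = w/h$ and $\psi_0^h(x,z) = \psi_0(x, h(x)z)/h(x)$ into~\eqref{eq:Generator} produces $G^h \equiv G$, so $q^h = q$ and $p_k^h = p_k$ for every $k \ge 2$. Finally, under $\bbP_{h\mu}^h$ the initial configuration $Z_0^h$ is a Poisson random measure with intensity $w^h(x)(h\mu)(dx) = w(x)\mu(dx)$, matching the law of $Z_0$ under $\bbP_\mu$.

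By uniqueness of the law of a branching diffusion from these three data, $(Z^h; \bbP_{h\mu}^h)$ and $(Z; \bbP_\mu)$ have the same distribution. The last assertion follows at once, since $w^h/\phi^h = w/\phi$ (so the admissible test-function class $\{f : fw^h/\phi^h \text{ bounded}\}$ is unchanged), $\pint{f, w^h \tphi^h} = \pint{f, w\tphi}$, $\lambda_c^h = \lambda_c$, and $W_\infty^{\phi^h/w^h}(Z^h)$ has the same law as $W_\infty^{\phi/w}(Z)$. The only real technical content is the Radon--Nikodym cancellation establishing equality of the spine measures; everything else is algebraic substitution using the $h$-transform identities.
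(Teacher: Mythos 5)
Your proof is correct and is essentially the paper's argument spelled out in full: the paper's own proof consists of the single line ``The claim follows immediately from the definitions,'' and your verification of the martingale property of $w^h$, the cancellation in the Radon--Nikodym derivatives identifying the spine motions, the identity $G^h\equiv G$, and the matching Poisson intensities is exactly the definitional check being invoked. No gaps.
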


\begin{proof}
The claim follows immediately from the definitions.
\end{proof}

Exploiting the invariance under $h$-transforms, we can prove our main results under the following moment assumption.

\begin{customassump}{3''}\label{as:moment_h}
There exists $p\in (1,2]$ such that Conditions~\eqref{alpha_cond1}--\eqref{as:PiUpperTail1} and \eqref{as:lat_con_Upper} for $j_2=1$, $\varphi_2=\mathbf{1}$ hold and
\begin{align}
\sup_{ x\in D} \int_{(1/\phi(x),\infty)} y \, \Pi(x,dy) <\infty. \label{as:h_trans} 
\end{align}
\end{customassump}

Crucially, Assumption~\ref{as:moment_h} does not require $\pint{\phi^p,\tphi}<\infty$. In the case of a quadratic branching mechanism, only boundedness of $\phi\alpha$ is needed. 
Even though our main assumptions and results are invariant under $h$-transforms, our setup is not, and Condition~\eqref{as:h_trans} is needed to guarantee $\phi \in \mathbb{H}(\psi_{\beta})$.

\begin{theorem}\label{thm:SLLN_h}
Suppose Assumptions~\ref{as:skeleton}, \ref{as:criticality}, \eqref{alpha_cond1}--\eqref{as:PiUpperTail1} and \eqref{as:h_trans} hold and $\mu \in \M_f^{\phi}(D)$. 
\begin{itemize}
\item[{\rm (i)}]
For all $f \in p(D)$ with $f/\phi$ bounded, the convergence in \eqref{eq:SLLN2} holds in $L^1(P_{\mu})$.
\item[{\rm (ii)}]
If, in addition, Assumptions~\ref{as:moment_h} and \ref{as:SLLN} hold, then there exists a measurable set $\Omega_0$ with $P_{\mu}(\Omega_0)=1$ and, on $\Omega_0$, for all $\ell$-almost everywhere continuous functions $f \in p(D)$ with $f/\phi$ bounded, the convergence in \eqref{eq:SLLN2} holds.
\end{itemize}
\end{theorem}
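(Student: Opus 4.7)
The plan is to apply the $h$-transform with $h=\phi$ and then invoke Theorem~\ref{thm:SLLN_gen} for the transformed process $X^\phi$. The key observation is that after this transform the ground state becomes $\phi^\phi=\mathbf{1}$, so that the product $L^1$-criticality normalisation $\pint{\phi,\tphi}=1$ collapses the technical condition $\pint{\phi^{p-1},\phi\tphi}<\infty$ of \eqref{phi_p_cond1} into the trivial identity $\pint{\mathbf{1},\phi\tphi}=1$. Assumptions~\ref{as:skeleton}, \ref{as:criticality} and \ref{as:SLLN} transfer between $X$ and $X^\phi$ via Lemmas~\ref{lem:invar} and \ref{lem:SLLN_h}(i), so the main work reduces to verifying $\phi\in\mathbb{H}(\psi_\beta)$ and translating the remaining moment conditions; the pointwise identity $X^\phi_t(dx)=\phi(x)X_t(dx)$ then transports the conclusion back to $X$.

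First I would check $\phi\in\mathbb{H}(\psi_\beta)$. The function $\beta^\phi=(L+\beta)\phi/\phi=\lambda_c$ is constant and $\alpha\phi$ is bounded by \eqref{alpha_cond1}. For the remaining requirement that $x\mapsto\int (y\wedge \phi(x) y^2)\,\Pi(x,dy)$ is bounded, I would split at $y=1/\phi(x)$: on $(0,1/\phi(x)]$ the integrand equals $\phi(x)y^2$ and decomposes into a part on $(0,1]$ (bounded by \eqref{as:PiLowerTail1}) and, when $\phi(x)<1$, a part on $(1,1/\phi(x)]$ on which $y^2\le\phi(x)^{p-2}y^p$, so \eqref{as:PiUpperTail1} provides the bound; on $(1/\phi(x),\infty)$ the integrand equals $y$ and boundedness is exactly \eqref{as:h_trans}.

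Next I would translate the moment conditions for $X^\phi$, using $\alpha^\phi=\alpha\phi$, $w^\phi=w/\phi$, $\tphi^\phi=\phi\tphi$ and the push-forward formula $\int g(\tilde y)\,\Pi^\phi(x,d\tilde y)=\phi(x)^{-1}\int g(\phi(x)y)\,\Pi(x,dy)$. With the spatially dependent cutoffs $\varphi_1=\varphi_2=\phi$ and parameters $\sigma_1=\sigma_2=2$, $\sigma_3=p$, the change of variables $\tilde y=\phi(x)y$ shows that conditions \eqref{alpha_cond}--\eqref{as:PiUpperTail} for $X^\phi$ coincide with \eqref{alpha_cond1}--\eqref{as:PiUpperTail1} for $X$; condition \eqref{phi_p_cond} for $X^\phi$ is the trivial identity above; and \eqref{as:lat_con_Upper} for $X^\phi$ reduces to
\[
\Bigpint{\phi\int_{(1,\infty)} y^2 e^{-wy}\,\Pi(\cdot,dy),\phi\tphi}<\infty,
\]
which is precisely the form of \eqref{as:lat_con_Upper} included in Assumption~\ref{as:moment_h}. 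Finally, \eqref{as:lat_con_Lower} for $X^\phi$ becomes $\pint{\phi\int_{(0,1]}y^2 e^{-wy}\,\Pi(\cdot,dy),\phi\tphi}$, which is finite because \eqref{as:PiLowerTail1} together with the model assumption that $\int_{(0,1]}y^2\,\Pi(\cdot,dy)\in b(D)$ jointly bound the integrand uniformly in $x$, while $\pint{\phi,\tphi}=1$.

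Putting the pieces together, Theorem~\ref{thm:SLLN_gen}(ii) applies to $X^\phi$ under the hypotheses of part~(i), and Theorem~\ref{thm:SLLN_gen}(iii) applies under those of part~(ii). Applied to the test function $g=f/\phi$ (which is bounded whenever $f/\phi$ is, as required by Theorem~\ref{thm:SLLN_gen} since $g/\phi^\phi=g$), the identities $\la g,X^\phi_t\ra=\la f,X_t\ra$, $\la g,\tphi^\phi\ra=\la f,\tphi\ra$ and $W_\infty^{\phi^\phi}(X^\phi)=W_\infty^\phi(X)=\mglX$ transport the convergence back to $X$, yielding \eqref{eq:SLLN2} in $L^1(P_\mu)$ in part~(i) and, in part~(ii), on a single set $\Omega_0$ of full $P_\mu$-measure (the correspondence between $\ell$-a.e.\ continuous $g$ and $\ell$-a.e.\ continuous $f$ is preserved because $\phi\in C^{2,\eta}(D)$ is strictly positive). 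The main obstacle I anticipate is the bookkeeping in the moment-condition translation---specifically, the realisation that the spatially dependent choice $\varphi_2=\phi$ in Assumption~\ref{as:moment} for $X^\phi$ is what forces it to collapse to the particular form of \eqref{as:lat_con_Upper} required by Assumption~\ref{as:moment_h}. The conceptual content lies entirely in the choice $h=\phi$.
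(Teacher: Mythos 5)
Your proposal is correct and follows essentially the same route as the paper: an $h$-transform with $h=\phi$, verification that $X^{\phi}$ satisfies the hypotheses of Theorem~\ref{thm:SLLN_gen} (with $\varphi_1=\varphi_2=\phi$, $\sigma_1=\sigma_2=2$, $\sigma_3=p$, the key point being that $\phi^{\phi}=\mathbf{1}$ trivialises \eqref{phi_p_cond}), and transfer back via Lemmas~\ref{lem:SLLN_h} and \ref{lem:invar}, with the common null set handled by Lemma~\ref{lem:omega0}. Your verification of $\phi\in\mathbb{H}(\psi_{\beta})$ and the change-of-variables bookkeeping for $\Pi^{\phi}$ are in fact spelled out in more detail than in the paper, and they are accurate.
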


\begin{proof}
Part~(i): since $\beta^{\phi}=\lambda_c$, $\alpha^{\phi}=\phi \alpha$ and $\Pi^{\phi}(x,dy)=\frac{1}{\phi(x)}\Pi(x,dy/\phi(x))$, Conditions~\eqref{alpha_cond1}, \eqref{as:PiLowerTail1}, \eqref{as:h_trans} and the model assumptions in Section~\ref{sec:model} guarantee that $\phi \in \mathbb{H}(\psi_{\beta})$. 
By Lemma~\ref{lem:SLLN_h}(i), $X^{\phi}$ satisfies Assumption~\ref{as:criticality} and $\phi^{\phi}=\mathbf{1}$. 
Thus, \eqref{alpha_cond1}--\eqref{as:PiUpperTail1} imply that $X^{\phi}$ satisfies~\eqref{alpha_cond}--\eqref{phi_p_cond} with $\varphi_1=\phi$, $\sigma_2=2$, $\sigma_3=p$ and $\sigma_1 \in [p,2]$ arbitrary. 
Using Lemma~\ref{lem:invar}, we deduce that Theorem~\ref{thm:SLLN_gen}(i) applies to $X^{\phi}$ and the claim follows from Lemma~\ref{lem:SLLN_h}(ii).

Part~(ii): $X^{\phi}$ satisfies \eqref{as:lat_con_Lower}--\eqref{as:lat_con_Upper} with $\varphi_2=\phi$ and arbitrary $j_1,j_2 \in \{0,1\}$, and Assumption~\ref{as:SLLN} by Lemma~\ref{lem:invar}. 
Hence, Theorem~\ref{thm:SLLN_gen}(iii) applies to $X^{\phi}$ and Lemma~\ref{lem:SLLN_h}(ii) completes the proof for fixed functions $f$. 
The existence of a common set $\Omega_0$ will be proved in Lemma~\ref{lem:omega0} below.
\end{proof}

Engl\"ander and Winter \cite{EngWin06} proved the convergence \eqref{eq:SLLN2} in probability under the assumption of a quadratic branching mechanisms and \eqref{alpha_cond1}. 
Their argument can easily be extended to general branching mechanisms. 
Since the proof relies on an $h$-transform with $h=\phi$ and second moment estimates, the additional conditions needed for this generalisation are \eqref{as:PiLowerTail1}, \eqref{as:PiUpperTail1} and \eqref{as:h_trans} with $p=2$. 

The freedom to choose $p\in (1,2]$ allows us to analyse processes where $(W_t^{\phi}(X))_{t \ge 0}$ is bounded in $L^p$ for $p \in (1,2)$ but not in $L^2$. 
Examples of such processes are given in Section~\ref{sec:examples}. 
In these cases, not only our almost sure convergence result is new but also the implied convergence in probability result.
The main tool to deal with non-integer moments is a spine decomposition presented in Section~\ref{sec:spine} and we are not aware of any other way to obtain these conditions.

The final conditions \eqref{as:lat_con_Lower}--\eqref{as:lat_con_Upper} simplify to \eqref{cond:lat_cont1} in the case $j_1=j_2=0$, $\varphi_2=\mathbf{1}$. 
These assumptions guarantee that the process $X^*$ from the skeleton decomposition has finite second moments \eqref{eq:variance_star}, a fact which is only used in the transition from lattice to continuous times. 
In particular, the SLLN along lattice times in Theorem~\ref{SLLN_lattice} holds without it.
If $w$ is bounded away from zero, for instance when the branching mechanism is spatially independent and the motion is conservative (see Section~\ref{sec:nonspatial}), then \eqref{cond:lat_cont1} holds automatically.
Since Chen et al.~\cite{CheRenWan08} consider a quadratic branching mechanism, the conditions automatically hold in their article. In contrast, Liu et al.~\cite{LiuRenSon13} have no conditions of this type.
\smallskip

In summary, our moment conditions are weaker than those used in \cite{CheRenWan08} but compared to \cite{LiuRenSon13}, we imposed stricter assumptions on the L\'{e}vy measure $\Pi$, yet allow a much larger class of underlying motions $\xi$ and domains $D$.

\subsubsection{The Strong Law of Large Numbers for the skeleton}\label{sec:SLLN_as}

Throughout this section, we suppose that Assumptions~\ref{as:skeleton},~\ref{as:criticality} and~\ref{as:moment} hold.
Assumption~\ref{as:SLLN} may look like a strong assumption on first glance. However, we argue that this is not so. 
The skeleton decomposition shows that the large-time behaviour of the superprocess is guided by the skeleton. 
This suggests that the total mass the superprocess assigns to a compact ball, will be asymptotically well-behaved if and only if the skeleton carrying the superprocess has asymptotically a well-behaved number of particles in that ball. 
We write $\B_0(D):=\{B \in \B(D)\colon \ell(\partial B)=0\}$. 
To show that Assumption~\ref{as:SLLN}  holds, it suffices to prove that for all $\mu \in \M_c(D)$, $B \in \B_0(D)$,
\[
\liminf_{n \to \infty} e^{-\lambda_c n\delta} \Bigpint{\frac{\phi}{w}\mathbbm{1}_{B},Z_{n\delta}} \ge \pint{\phi \mathbbm{1}_B,\tphi} \mglZ \qquad \bbP_{\mu}\text{-almost surely}
\]
as we will see in Lemma~\ref{lem:reduc}(ii) below. Often it is a much easier task to prove the convergence along lattice times than along continuous times.
\smallskip

There are good results in the literature proving SLLNs for branching diffusions. Some of the relevant literature was reviewed in Section~\ref{sec:literature}. 
A nice argument to obtain almost sure asymptotics for spatial branching particle processes from related asymptotic behaviour of the spine was found recently by Harris and Roberts \cite{HarRob13pre}. 
However, they assume a convergence for the spine which usually does not hold in our setup. 
The theorem we use to verify several examples in Section~\ref{sec:examples} is based on a result from Engl\"ander et al.\ \cite{EngHarKyp10}. 
The authors prove the convergence for strictly dyadic branching diffusions along continuous times. We require only convergence along lattice times but a more general branching generator. 
The following theorem is a version of their result as our proof reveals. 

\begin{theorem}\label{thm:SLLN_skeleton}
Let $\mu \in \M_c(D)$ and assume that for every $x$ in the support of $\mu$ the following conditions hold:
\begin{itemize}
\item[{\rm (i)}] There is a family of sets $D_t \in \B(D)$, $t \ge 0$, such that for all $\delta >0$,
\[
\bbP_{\bullet, \delta_x}(\exists n_0 \in \N\colon{\rm{supp}}(Z_{n \delta}) \subseteq D_{n \delta} \text{ for all }n\ge n_0 )=1.
\]
\item[{\rm (ii)}] For every $B \subset \subset D$, there exists a constant $K>0$ such that
\begin{equation}\label{eq:ErgodicSpeed}
\sup_{y \in D_t}\big|\P_y^{\phi}[\mathbbm{1}_B(\xi_{Kt})]-\pint{\phi\mathbbm{1}_B,\tphi}\big| \to 0 \qquad \text{as }t \to \infty.
\end{equation}
\end{itemize}
Then, for all $\delta>0$, $f \in p(D)$ with $fw/\phi$ bounded,
\[
\lim_{n\to \infty} e^{-\lambda_c n\delta} \pint{ f ,Z_{n \delta}} = \pint{f,w\tphi} \mglZ \qquad \bbP_{\mu}\text{-almost surely.}
\]
\end{theorem}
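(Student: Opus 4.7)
The plan is to implement the Asmussen--Hering scheme along lattice times, using the skeleton ergodic structure (the motion $(\xi,\P^\phi)$) via conditions (i)--(ii), following the route of \cite{EngHarKyp10} but adapted from strictly dyadic continuous-time branching to our lattice-time, general branching setting. First I would exploit the branching property, plus the fact that under $\bbP_\mu$ the initial configuration is a Poisson random measure with intensity $w(x)\mu(dx)$, to reduce to $Z_0=\delta_x$ for each $x\in\mathrm{supp}(\mu)$ and sum the finitely many independent subtree contributions (so $\mglZ$ appears on the right as the sum of independent subtree martingale limits). Set $g:=fw/\phi\in b(D)$, so that $\pint{f,w\tphi}=\pint{g,\phi\tphi}$ and the target reads $e^{-\lambda_c n\delta}\pint{(\phi/w)g,Z_{n\delta}}\to\pint{g,\phi\tphi}\mglZ$. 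Fix the $K>0$ given by (ii) (first for $f=\mathbbm{1}_B$, $B\subset\subset D$, then extend by approximation), and pick lattice times $t_n\to\infty$, $s_n:=n\delta-t_n$ with $s_n/t_n\to K$. Write
\[
e^{-\lambda_c n\delta}\bigpint{(\phi/w)g,Z_{n\delta}} = A_n+B_n,\qquad A_n:=\bbP_{\bullet,\delta_x}\!\big[e^{-\lambda_c n\delta}\pint{(\phi/w)g,Z_{n\delta}}\,\big|\,\bbF_{t_n}\big].
\]

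For $A_n$ the Markov property combined with the many-to-one identity \eqref{eq:oppZ} applied to each subtree rooted at time $t_n$ gives
\[
A_n = e^{-\lambda_c t_n}\Bigpint{\P^\phi_\cdot[g(\xi_{s_n})],(\phi/w)Z_{t_n}} = W^{\phi/w}_{t_n}(Z)\cdot\bigpint{\P^\phi_\cdot[g(\xi_{s_n})],\pi_{t_n}},
\]
where $\pi_{t_n}:=(\phi/w)Z_{t_n}/\pint{\phi/w,Z_{t_n}}$ is a random probability measure (the extinction event is handled trivially, both sides vanishing eventually). By (i) there is an a.s.\ event on which $\mathrm{supp}(\pi_{t_n})\subseteq D_{t_n}$ for all large $n$. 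Condition (ii) then gives uniform convergence on $D_{t_n}$ of $\P^\phi_y[\mathbbm{1}_B(\xi_{s_n})]$ to $\pint{\phi\mathbbm{1}_B,\tphi}$; a standard monotone-class approximation (together with the transition-density bound \eqref{eq:density_bd} applied to control the escape probability $\P^\phi_y[\xi_{s_n}\notin B_m]$ uniformly on $D_{t_n}$ for $B_m\uparrow D$, $B_m\subset\subset D$) upgrades this to $\sup_{y\in D_{t_n}}|\P^\phi_y[g(\xi_{s_n})]-\pint{g,\phi\tphi}|\to 0$ for every bounded measurable $g$. Combined with the martingale convergence $W^{\phi/w}_{t_n}(Z)\to\mglZ$ a.s.\ (valid by Corollary~\ref{cor:mg} together with the $L^p$-boundedness supplied by Theorem~\ref{thm:SLLN_gen}(i) under Assumption~\ref{as:moment}), one obtains $A_n\to\pint{g,\phi\tphi}\mglZ$ almost surely.

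For the fluctuation $B_n$, conditional independence of the subtrees descending from $Z_{t_n}$ gives $\bbP_{\bullet,\delta_x}[B_n^2\mid\bbF_{t_n}]=\sum_{i=1}^{N_{t_n}}\mathrm{Var}_{\bbP_{\bullet,\delta_{\xi_i(t_n)}}}(e^{-\lambda_c n\delta}\pint{(\phi/w)g,Z_{s_n}})$. I would then apply the many-to-two/variance formula for the branching diffusion $Z$, using that by \eqref{eq:Generator} the branching intensity and offspring variance satisfy $qm_2=w(2\alpha+\int y^2\Pi(\cdot,dy))$, and a Feynman--Kac change of measure onto the spine $\P^\phi$. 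Invoking the moment bounds in Assumption~\ref{as:moment} (in particular \eqref{alpha_cond}--\eqref{as:PiUpperTail} and \eqref{phi_p_cond}, which control the weighted branching variance $\phi\cdot qm_2$ against $\phi\tphi$) one obtains $\mathrm{Var}_{\bbP_{\bullet,\delta_y}}(e^{-\lambda_c s_n}\pint{(\phi/w)g,Z_{s_n}})\le C\supnorm{g}^2 e^{-\lambda_c s_n}(\phi/w)(y)$, and then $\bbP_{\bullet,\delta_x}[B_n^2]\le C'(x)\supnorm{g}^2 e^{-\lambda_c s_n}$ via Lemma~\ref{lem:mto}. Since $s_n\asymp n$, this is summable; Chebyshev and Borel--Cantelli give $B_n\to 0$ a.s.\ along the lattice. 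The main obstacle in the whole argument is exactly this variance bound: the branching variance $w(2\alpha+\int y^2\Pi(\cdot,dy))$ is not a priori bounded on $D$, so the weighting by $\phi/w$ inherited from the choice of test function is essential, and the finiteness of the resulting integrals must be traced back carefully through the $p$-th moment conditions of Assumption~\ref{as:moment}; a secondary technical point is the uniform-in-$D_{t_n}$ extension of the ergodic limit from indicators to bounded measurable $g$ sketched above.
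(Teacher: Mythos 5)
Your overall architecture is the same as the paper's: reduce to a single ancestor via the Poisson initial configuration and the branching property, then run the Asmussen--Hering split into a conditional expectation $A_n$ and a fluctuation $B_n$ along lattice times, treating $A_n$ with the many-to-one formula, conditions (i)--(ii) and the martingale convergence of $W^{\phi/w}(Z)$. The paper carries this out by pointing to Engl\"ander et al.\ \cite{EngHarKyp10} and explaining which of their hypotheses are replaced by (i) and (ii); your handling of $A_n$ matches that and is fine (the extension from indicators to bounded measurable $g$ via \eqref{eq:density_bd} is a legitimate refinement the paper also invokes implicitly through the relaxation of Condition~(iv) of \cite{EngHarKyp10}).

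There is, however, a genuine gap in your treatment of $B_n$. You propose a variance computation, and you yourself identify the relevant branching second moment as $q m_2 = w\bigl(2\alpha+\int_{(0,\infty)} y^2\,\Pi(\cdot,dy)\bigr)$. But under Assumption~\ref{as:moment} the measure $\Pi$ is only required to have a $p$-th moment at infinity for some $p\in(1,2]$ (condition \eqref{as:PiUpperTail} with $\sigma_3$ possibly $<2$), so $\int_{(1,\infty)}y^2\,\Pi(x,dy)$ --- and hence the offspring variance of the skeleton, read off from $\partial_s^2 G(x,s)|_{s=1}=w(x)\,\psi_0''(x,0)$ in \eqref{eq:Generator} --- can be infinite. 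In that regime $\bbP_{\bullet,\delta_x}[B_n^2]=\infty$ and the Chebyshev/Borel--Cantelli step collapses; this is not a technicality one can trace through \eqref{phi_p_cond}, since that is a $p$-th moment condition, not a second moment condition, and the paper explicitly advertises examples where $W^{\phi/w}(Z)$ is $L^p$- but not $L^2$-bounded. The correct route, which is the one the paper (following \cite{EngHarKyp10} and mirroring its own Proposition~\ref{pro:Lpsum}) takes, is to bound the $p$-th moment of the fluctuation: apply Biggins' inequality $\bbP[|\sum_i(Y_i-\bbP[Y_i|\bbF_{t_n}])|^p\mid\bbF_{t_n}]\le 2^p\sum_i\bbP[|Y_i-\bbP[Y_i|\bbF_{t_n}]|^p\mid\bbF_{t_n}]$ to the conditionally independent subtree contributions, control each term by the $L^p$-boundedness of $W^{\phi/w}(Z)$ supplied by Theorem~\ref{thm:SLLN_gen}(i), and then use Markov's inequality with exponent $p$ so that the resulting series $\sum_n e^{-\lambda_c(p-1)s_n}$ converges. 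With that replacement your argument goes through.
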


\begin{proof}
Using Notation~\ref{not:Z}, we have $Z=\sum_{i=1}^{N_0} Z^{i,0}$, where given $\bbF_0$, $(Z^{i,0}\colon i =1,\ldots,N_0)$ are independent and $(Z^{i,0};\bbP_{\mu}(\cdot|\bbF_0))$ is equal in distribution to $(Z; \bbP_{\bullet, \delta_{\xi_i(0)}})$. 
In particular, $\mglZ=\sum_{i=1}^{N_0} W_{\infty}^{\phi/w}(Z^{i,0})$ and
\begin{align*}
\bbP_{\mu}\Big(\lim_{n  \to \infty} e^{-\lambda_c n \delta} \la f,Z_{n \delta} \ra =\pint{f,w\tphi} &\mglZ\Big)\ge \bbP_{\mu}\Big(\bigcap_{i=1}^{N_0} \Big\{ \lim_{n \to \infty} e^{-\lambda_c n\delta } \la f,Z_{n \delta}^{i,0} \ra =\pint{f,w\tphi} W_{\infty}^{\phi/w}(Z^{i,0}) \Big\}\Big)\\
&\phantom{space}= \bbP_{\mu}\Big[\prod_{i=1}^{N_0} \bbP_{\bullet, \delta_{\xi_i(0)}}\Big(\lim_{n \to \infty } e^{-\lambda_c n\delta} \la f,Z_{n \delta} \ra = \pint{f,w\tphi}\mglZ \Big)\Big].
\end{align*}
It remains to argue that under the stated assumptions, $\bbP_{\bullet,\delta_x}(\lim_{n \to \infty} e^{-\lambda_c n\delta} \la f,Z_{n\delta} \ra = \pint{f,w\tphi}\mglZ )$ equals $1$ for every $x$ in the support of $\mu$. 
Engl\"ander et al.\ \cite{EngHarKyp10} give a proof of this result for strictly dyadic branching diffusions in two steps. 
The argument can be generalised as follows. 
The first step is to show that with $(s_n)_{n \ge 0}$ nonnegative and non-decreasing, and $U_n=e^{-\lambda_c (s_n +\delta n) } \pint{f,Z_{s_n +\delta n}}$, the sequence $\text{D}_f(s_n,\delta n)=|U_n-\bbP_{\bullet, \delta_x}[U_n|\sigma(Z_r\colon r\le n\delta)]|$ converges to zero. 
The key to this result is an upper bound on the $p$-th moment of $W_t^{\phi/w}(Z)$ and is obtained via a spine decomposition of the branching diffusion. 
This would be possible even in our more general setup but is not needed since the required bound follows easily from Theorem~\ref{thm:SLLN_gen}(i).
The second step is to show the convergence of $\text{CE}_f(s_n,\delta n)=\bbP_{\bullet ,\delta_x}[U_n|\sigma(Z_r\colon r \le n\delta)]$ for $s_n=K \delta n$ to $\pint{f,w\tphi} \mglZ$. 
Their assumptions for this convergence are Condition~(iii) and (iv) in their Definition~4. Condition~(iii) is our Condition~(i) in Theorem~\ref{thm:SLLN_skeleton}. 
From the proof in \cite{EngHarKyp10} it is easy to see that their Condition~(iv) in Definition~4 can be relaxed to our Condition~(ii), a fact that has also been used in the verification of some examples in \cite{EngHarKyp10}.
\end{proof}

The following lemma is useful in the verification of the conditions of Theorem~\ref{thm:SLLN_skeleton} and has been proved by Engl\"ander et al. \cite{EngHarKyp10}. We give the main argument for completeness. We denote by $\twonorm{\cdot}$ the $\ell^2$-norm on $\R^d$.

\begin{lemma}
Suppose for $x\in D$ there are a continuous function $a\colon [0,\infty) \to [0,\infty)$ and some $\eps>0$ such that 
\begin{equation}
\P_x^{\phi}\big[\mathbbm{1}_{\{\twonorm{\xi_t}\ge a(t)\}} w(\xi_t)/\phi(\xi_t)] \le e^{-(\lambda_c +\eps)t} \qquad \text{for all $t$ sufficiently large.}\label{eq:SupVeri}
\end{equation}
Then Condition~(i) in Theorem~\ref{thm:SLLN_skeleton} holds with $D_t=\{y\in D\colon \twonorm{y}< a(t)\}$. If, in addition, for every $B\subset \subset D$, there is $K>0$ such that
\begin{equation}
\sup_{\twonorm{y_1}< a(t), y_2 \in B} \Big|\frac{p^{\phi}(y_1,y_2,Kt)}{\phi(y_2)\tphi(y_2)}-1\Big| \to 0 \qquad \text{as }t \to \infty,\label{eq:ErgoVeri}
\end{equation}
where $p^{\phi}$ denotes the transition density of $(\xi;\P^{\phi})$, then Condition~(ii) in Theorem~\ref{thm:SLLN_skeleton} is satisfied.
\end{lemma}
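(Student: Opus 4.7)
The plan is to verify Conditions (i) and (ii) of Theorem~\ref{thm:SLLN_skeleton} separately, using the many-to-one formula \eqref{eq:oppZ} for the skeleton for part one, and direct estimates on the transition density for part two.

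For Condition~(i), I would start with a first-moment Borel--Cantelli argument. Fix $\delta>0$. Since $\{\text{supp}(Z_{n\delta})\not\subseteq D_{n\delta}\}=\{\pint{\mathbbm{1}_{D_{n\delta}^c},Z_{n\delta}}\geq 1\}$, Markov's inequality gives
\[
\bbP_{\bullet,\delta_x}\big(\text{supp}(Z_{n\delta})\not\subseteq D_{n\delta}\big)\le \bbP_{\bullet,\delta_x}\big[\pint{\mathbbm{1}_{D_{n\delta}^c},Z_{n\delta}}\big].
\]
Writing $\mathbbm{1}_{D_t^c}=(\phi/w)\cdot(w/\phi)\mathbbm{1}_{D_t^c}$ and applying the many-to-one formula \eqref{eq:oppZ} with $g=(w/\phi)\mathbbm{1}_{D_t^c}$, the right-hand side equals
\[
e^{\lambda_c n\delta}\,\frac{\phi(x)}{w(x)}\,\P_x^{\phi}\!\big[\mathbbm{1}_{\{\twonorm{\xi_{n\delta}}\geq a(n\delta)\}}\,w(\xi_{n\delta})/\phi(\xi_{n\delta})\big].
\]
By the hypothesis \eqref{eq:SupVeri}, this is at most $(\phi(x)/w(x))\,e^{-\eps n\delta}$ for $n$ large, which is summable, so by Borel--Cantelli $\bbP_{\bullet,\delta_x}$-a.s.\ $\text{supp}(Z_{n\delta})\subseteq D_{n\delta}$ eventually, giving Condition~(i).

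For Condition~(ii), fix $B\subset\subset D$ and the constant $K>0$ from hypothesis \eqref{eq:ErgoVeri}. Since $\P_y^{\phi}[\mathbbm{1}_B(\xi_{Kt})]=\int_B p^{\phi}(y,z,Kt)\,dz$ and, by \eqref{eq:Sphi} combined with the fact that $\phi\tphi$ is the stationary density of the ergodic motion (see the discussion around \eqref{eq:statDistrSphi}), $\pint{\phi\mathbbm{1}_B,\tphi}=\int_B \phi(z)\tphi(z)\,dz$, I would factor the integrand as $p^{\phi}(y,z,Kt)-\phi(z)\tphi(z)=\phi(z)\tphi(z)\bigl(p^{\phi}(y,z,Kt)/(\phi(z)\tphi(z))-1\bigr)$ and estimate
\[
\sup_{y\in D_t}\big|\P_y^{\phi}[\mathbbm{1}_B(\xi_{Kt})]-\pint{\phi\mathbbm{1}_B,\tphi}\big|\le \Bigl(\sup_{\twonorm{y}<a(t),\,z\in B}\Big|\frac{p^{\phi}(y,z,Kt)}{\phi(z)\tphi(z)}-1\Big|\Bigr)\int_B\phi(z)\tphi(z)\,dz.
\]
The prefactor tends to zero by \eqref{eq:ErgoVeri}, and $\int_B \phi\tphi\,dz\le \pint{\phi,\tphi}=1$ is finite, so the right-hand side vanishes as $t\to\infty$, yielding Condition~(ii).

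Neither step presents a real obstacle: the whole lemma is a routine translation of the two analytic inputs \eqref{eq:SupVeri} and \eqref{eq:ErgoVeri} into the probabilistic conditions of Theorem~\ref{thm:SLLN_skeleton}. The slightly delicate point is the bookkeeping in the first step: one has to insert the $\phi/w$ factor precisely so that the many-to-one formula \eqref{eq:oppZ} applies and the integrand matches the expression appearing in \eqref{eq:SupVeri}, thereby converting the spectral decay assumption into an exponentially summable tail probability for the skeleton.
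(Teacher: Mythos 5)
Your proof is correct and follows essentially the same route as the paper: Markov's inequality plus the many-to-one formula \eqref{eq:oppZ} and Borel--Cantelli for Condition~(i), and the bound $\int_B\phi\tphi\,d\ell\le\pint{\phi,\tphi}<\infty$ to convert \eqref{eq:ErgoVeri} into Condition~(ii). The paper's proof is merely a terser version of the same two steps.
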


\begin{proof}
Markov's inequality and \eqref{eq:oppZ} yield for all $t \ge 0$,
\begin{align*}
\bbP_{\bullet, \delta_x}(\mbox{\rm supp}\big(Z_t) \not\subseteq D_t\big) \le \bbP_{\bullet,\delta_x}\big[\la \mathbbm{1}_{D_t^c},Z_t\ra\big] = e^{\lambda_c t} \frac{\phi(x)}{w(x)}\P_x^{\phi}\big[\mathbbm{1}_{\{\twonorm{\xi_t}\ge a(t)\}} w(\xi_t)/\phi(\xi_t)].
\end{align*}
The Borel-Cantelli Lemma yields the first part of the lemma. The second part follows immediately from $\pint{\phi,\tphi}<\infty$.
\end{proof}

We will see in Section~\ref{sec:examples} that for many of the main examples of superdiffusions the SLLN for the skeleton already follows from Theorem~\ref{thm:SLLN_skeleton}. 
For those processes where Assumption~\ref{as:SLLN} has not been proved yet, we believe that the particle nature of the skeleton will make it easier to obtain the SLLN for the skeleton than to derive further convergence statements in the superprocess setup. 
This article will then allow to carry results for the branching diffusion over to the superdiffusion. 
We emphasize that the SLLN for the skeleton is only needed along lattice times and for compactly supported starting measures. 

\subsection{Spine decomposition}\label{sec:spine}

In this section, we use a spine decomposition of $X$ to identify $(W_t^{\phi}(X))_{t\ge 0}$ as an $L^p$-bounded martingale,
where $p \in (1,2]$ is determined by Assumption~\ref{as:moment}. 
A similar decomposition has been used for other purposes by
Engl\"ander and Kyprianou \cite{EngKyp04} on bounded subdomains for quadratic branching mechanisms and by Liu et al.\ \cite{LiuRenSon09} in the case $\alpha=\mathbf{0}$. 
For the one-dimensional super-Brownian motion the spine decomposition was used by Kyprianou et al.\ \cite{KypLiuMurRen12,KypMur13} to establish $L^p$-boundedness of martingales closely related to $(W_t^{\phi}(X))_{t\ge 0}$. 
Similar arguments have been used in the setup of branching diffusions in \cite{HarHar09,EngHarKyp10}. 
See \cite{EngKyp04} for an overview of the history of spine decompositions for branching processes.
Throughout this section, 
we suppose that Assumption~\ref{as:criticality} holds. Further conditions used are stated explicitly. 
Recall that $\M_f^{\phi}(D)=\{\mu \in \M_f(D)\colon\la \phi,\mu\ra<\infty\}$.

\begin{theorem}\label{thm:Lp}
Suppose Assumptions \eqref{alpha_cond}--\eqref{as:PiUpperTail} hold. For all $\mu \in \M_f^{\phi}(D)$, $((W_t^{\phi}(X))_{t \ge 0};P_{\mu})$ is an $L^p$-bounded martingale. In particular, $((W_t^{\phi}(X))_{t \ge 0};P_{\mu})$ converges in $L^p(P_{\mu})$.
\end{theorem}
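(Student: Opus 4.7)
The plan is to prove $L^p$-boundedness via a spine decomposition under a change of measure, then exploit subadditivity of $x\mapsto x^{p-1}$ for $p\in(1,2]$. Corollary~\ref{cor:mg} already gives the martingale property and $P_{\mu}[W_t^{\phi}(X)]=\la\phi,\mu\ra$, so the task reduces to showing $\sup_{t\ge 0}P_{\mu}[(W_t^{\phi}(X))^p]<\infty$; $L^p$-convergence is then automatic from martingale convergence.

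First I would tilt $P_{\mu}$ by the $L^1$-bounded nonnegative martingale $W_t^{\phi}(X)/\la\phi,\mu\ra$ to obtain a probability measure $\widetilde{P}_{\mu}^{\phi}$ on $\bigvee_t\F_t$ under which $X$ admits a spine decomposition: there is an immortal spine $\eta$ with law $\P^{\phi}_x$ (the ergodic motion of \eqref{eq:Sphi}) started from a point sampled proportionally to $\phi\mu$, and conditionally on $\eta$ the process $X$ is the sum of (i)~an independent copy of $(X,P_{\mu})$, (ii)~a Poissonian stream of continuous-state immigrations issued by the Kuznetsov measure $\N_{\eta_s}$ at rate $2\alpha(\eta_s)\,ds$, and (iii)~a size-biased stream of discrete immigrations at rate $y\,\Pi(\eta_s,dy)\,ds$, each with initial configuration $y\delta_{\eta_s}$ and evolving afterwards as an independent $(X,P_{y\delta_{\eta_s}})$. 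The construction parallels those in \cite{EngKyp04,LiuRenSon09,KypLiuMurRen12,KypMur13}; its adaptation to our setting is routine because only Assumption~\ref{as:criticality} (providing $\phi$) is needed here. The key identity delivered by this change of measure is
\[
P_{\mu}\big[(W_t^{\phi}(X))^p\big]=\la\phi,\mu\ra\,\widetilde{P}_{\mu}^{\phi}\big[(W_t^{\phi}(X))^{p-1}\big],
\]
so it suffices to bound the right-hand side uniformly in $t$.

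Decomposing $W_t^{\phi}(X)$ under $\widetilde{P}_{\mu}^{\phi}$ into the background copy plus the three streams of immigrants and using subadditivity of $x^{p-1}$ on $[0,\infty)$ (valid because $p-1\in(0,1]$), the $(p-1)$-st moment is dominated by a sum of $(p-1)$-st moments of the pieces. For each large-jump immigrant ($y>\varphi_1(\eta_s)$), a conditional Jensen step together with the first-moment identity $P_{y\delta_z}[W_t^{\phi}(X)]=y\phi(z)e^{-\lambda_c s}$ gives the bound $(y\phi(\eta_s))^{p-1}e^{-(p-1)\lambda_c s}$, and integrating against the size-biased rate $y\,\Pi(\eta_s,dy)\,ds$ brings in $\phi(\eta_s)^{p-1}\int y^p\Pi(\eta_s,dy)$, which is bounded by \eqref{as:PiUpperTail} with $\sigma_3=p$. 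For the continuous-state piece and the small-jump piece ($y\le\varphi_1(\eta_s)$) a crude first-moment Jensen is not sharp enough, so I would instead apply Jensen with exponent $\sigma_i-1$ (using that $(p-1)/(\sigma_i-1)\in(0,1]$) and invoke the sharper second-moment formula \eqref{eq:variance_star} for the $\N_{\eta_s}$-excursions, calling on \eqref{alpha_cond} and \eqref{as:PiLowerTail} to bound the resulting spine-integrands.

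After taking expectation over the spine and integrating in $s$, the bound collapses to $C_1\la\phi,\mu\ra^{p-1}+C_2\int_0^{\infty}\widetilde{P}_{\mu}^{\phi}[F(\eta_s)]\,e^{-c s}\,ds$ with $c>0$ and $F$ bounded by \eqref{alpha_cond}--\eqref{as:PiUpperTail}; this is trivially finite and gives the $t$-uniform bound. The main obstacle I anticipate is not the subadditivity/Jensen step but organising the jump split at $y=\varphi_1(\eta_s)$ and the two-regime estimate for the continuous-state/small-jump contribution so that the exponents $\sigma_1,\sigma_2,\sigma_3,p$ and the functions $\varphi_1,\varphi_2$ line up with the precise form of the moment hypotheses; the spine decomposition itself and the change-of-measure identity are by now standard, so once this bookkeeping is set up the remaining analysis is immediate from the ergodicity of $(\xi,\P^{\phi}_x)$ and $\lambda_c>0$.
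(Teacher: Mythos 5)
Your overall route is the paper's: the change of measure $dQ_{\mu}/dP_{\mu}|_{\sigma(X_s:s\le t)}=W_t^{\phi}(X)/\la\phi,\mu\ra$, the spine decomposition of $(X;Q_{\mu})$ into a background copy plus Poissonian continuous and discontinuous immigration along the ergodic motion $(\xi;\P^{\phi})$ (Proposition~\ref{pro:spine}), the identity $P_{\mu}[W_t^{\phi}(X)^p]=\la\phi,\mu\ra\,Q_{\mu}[W_t^{\phi}(X)^{p-1}]$, and concavity/subadditivity of $x\mapsto x^{p-1}$. The gap lies in how you then estimate the $(p-1)$-st moment. The paper first conditions on $\mathcal{G}_t$ — the spine together with the Poisson immigration data, but \emph{not} the subsequent evolution of the immigrants — and applies Jensen once to the whole mass, $P_{\mu,\phi}[W_t^{\phi}(\Gamma)^{p-1}|\mathcal{G}_t]\le P_{\mu,\phi}[W_t^{\phi}(\Gamma)|\mathcal{G}_t]^{p-1}$; by \eqref{eq:Nmean} this conditional expectation equals $\la\phi,\mu\ra+\sum_{s\in\D_t^{\n}}e^{-\lambda_c s}\phi(\xi_s)+\sum_{s\in\D_t^{\m}}e^{-\lambda_c s}\mathcal{I}_s^{\m}\phi(\xi_s)$ (Lemma~\ref{lem:spine}). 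Every immigrant is thus replaced by its \emph{first} moment before any splitting occurs; only afterwards does one use $x^{p-1}\le 1+x^{\sigma_i-1}$, subadditivity of $x^{\sigma_i-1}$ and Campbell's formula, so that \eqref{alpha_cond}--\eqref{as:PiUpperTail} enter solely through the Poisson intensities $2\alpha(\xi_s)\,ds$ and $y\,\Pi(\xi_s,dy)\,ds$.

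You instead split the sum first and then try to bound each stream separately, and for the continuous-state and small-jump pieces you propose to invoke "the sharper second-moment formula \eqref{eq:variance_star}". That step fails. Formula \eqref{eq:variance_star} is the variance of the auxiliary process $X^*$ from the skeleton decomposition, not of $X$ or of the $\N_x$-excursions appearing in the spine decomposition; and even the correct formula \eqref{eq:variance} involves $\int_{(0,\infty)}y^2\,\Pi(x,dy)$, which is not finite under \eqref{as:PiLowerTail}--\eqref{as:PiUpperTail} when $\sigma_3<2$ — the whole point of allowing $p<2$ is that second moments may be infinite. A per-excursion estimate under $\N_x$ is also problematic because $\N_x$ is an infinite measure and $x^{p-1}$ dominates $x$ near $0$, so $\N_x[\la\phi,X_u\ra^{p-1}]$ need not be finite even though $\N_x[\la\phi,X_u\ra]$ is. Your correct observation that a naive per-immigrant first-moment bound is not sharp enough for the small jumps (it would require $\int_{(0,\varphi_1]}y^{p}\,\Pi(x,dy)<\infty$, which \eqref{as:PiLowerTail} does not provide when $\sigma_2>p$) is resolved not by higher moments but by the elementary inequality $x^{p-1}\le 1+x^{\sigma_2-1}$ applied to the aggregate sum \emph{after} the conditional Jensen step: subadditivity of $x^{\sigma_2-1}$ and Campbell's formula then produce exactly $\phi^{\sigma_2-1}\int_{(0,\varphi_1]}y^{\sigma_2}\,\Pi(\cdot,dy)$. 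The same device removes your restriction to $\sigma_3=p$ for the large jumps, which the theorem does not impose.
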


Let $\mu \in \M_f^{\phi}(D)$, $\mu \not\equiv 0$. We already showed in Corollary~\ref{cor:mg} that $(W_t^{\phi}(X))_{t\ge 0}$ is a martingale. 
Hence, it suffices to prove $L^p$-boundedness and we can define a new probability measure $Q_{\mu}$ by
\begin{equation*}%\label{def:Q}
\frac{dQ_{\mu}}{dP_{\mu}}\bigg|_{\sigma(X_s\colon s \in [0, t])}= \frac{W_t^{\phi}(X)}{\la \phi, \mu\ra}\quad \text{for all }t\ge 0.
\end{equation*}
Recall from \eqref{eq:Pphimu} that $(\xi=(\xi_t)_{t \ge 0}; \P_{\phi \mu}^{\phi})$ is the ergodic motion with randomised start point and use \eqref{eq:Sphi} to obtain
\begin{equation}\label{eq:mto_Pphi}
\P_{\phi \mu}^{\phi}(A)=\frac{e^{-\lambda_c t}}{\la \phi,\mu\ra} \Big\la \P_{\cdot}\big[e^{\int_0^t \beta(\xi_s) \, ds } \phi(\xi_t) \mathbbm{1}_A\big],\mu\Big\ra \qquad \text{for all } A \in \sigma(\xi_s \colon 0 \le s \le t).
\end{equation}

\begin{lemma}\label{lem:LaplaceQ}
For all $\mu \in \M_f^{\phi}(D)$, $\mu \not\equiv 0$, $f,g \in bp(D)$, $t \ge 0$,
\begin{equation}\label{eq:LaplaceQ}
Q_{\mu}\Big[e^{-\la f,X_t\ra}\frac{\la \phi g ,X_t\ra}{\la \phi,X_t\ra}\Big]=P_{\mu}\big[e^{-\la f,X_t\ra}\big] \P_{\phi \mu}^{\phi}\Big[g(\xi_t)\exp\Big(-\int_0^t \partial_z\psi_0(\xi_s, u_f(\xi_s,t-s)) \, ds\Big)\Big].
\end{equation}
\end{lemma}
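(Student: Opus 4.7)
The plan is to expand the left-hand side via the Radon-Nikodym derivative defining $Q_\mu$, and then relate the resulting quantity to a derivative of the Laplace functional. Directly,
\[
Q_{\mu}\Big[e^{-\la f,X_t\ra}\frac{\la \phi g ,X_t\ra}{\la \phi,X_t\ra}\Big]
=\frac{e^{-\lambda_c t}}{\la\phi,\mu\ra}\, P_\mu\big[e^{-\la f,X_t\ra}\la \phi g,X_t\ra\big],
\]
(with the convention $0/0=0$, which is harmless since the factor $\la\phi,X_t\ra$ is cancelled by $W_t^\phi(X)$). So it suffices to show
\[
P_\mu\big[e^{-\la f,X_t\ra}\la \phi g,X_t\ra\big]
=P_\mu\big[e^{-\la f,X_t\ra}\big]\,\la v_f(\cdot,t),\mu\ra,
\]
where $v_f(x,t):=\partial_\theta u_{f+\theta\phi g}(x,t)\big|_{\theta=0}$, and then to identify $\la v_f(\cdot,t),\mu\ra$ with $e^{\lambda_c t}\la\phi,\mu\ra$ times the $\P^\phi_{\phi\mu}$-expectation appearing on the right-hand side.

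First I would justify the differentiability in $\theta$ of $\theta\mapsto P_\mu[e^{-\la f+\theta\phi g,X_t\ra}]$ at $\theta=0$ (for $f,g\in bp(D)$, this is a routine dominated convergence argument; note that $\phi g\in bp(D)$ because $\phi$ is continuous on $D$, hence bounded on the compact support of $\mu$ after a minor truncation, and on the superprocess side $\la\phi g,X_t\ra$ has finite mean by \eqref{eq:opp} and Corollary~\ref{cor:mg}). Taking the derivative of both sides of \eqref{eq:logLaplace} then yields
\[
P_\mu\big[e^{-\la f,X_t\ra}\la \phi g,X_t\ra\big]=P_\mu\big[e^{-\la f,X_t\ra}\big]\,\la v_f(\cdot,t),\mu\ra,
\]
provided $v_f$ exists and the derivative can be passed through $\mu$, which again follows from dominated convergence once $v_f$ is shown to be locally bounded.

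Next, differentiating the mild equation \eqref{eq:mild} for $u_{f+\theta\phi g}$ at $\theta=0$ produces the linear inhomogeneous integral equation
\[
v_f(x,t)=S_t[\phi g](x)-\int_0^t S_s\!\left[\partial_z\psi_0(\cdot,u_f(\cdot,t-s))\,v_f(\cdot,t-s)\right]\!(x)\,ds.
\]
This is precisely the Feynman-Kac-type equation handled by Lemma~\ref{lem:semi}(i) in the appendix (with killing potential $V(x,s):=\partial_z\psi_0(x,u_f(x,t-s))$, which is nonnegative since $\psi_0(x,\cdot)$ is convex and $\psi_0(x,0)=0$). Its unique solution is
\[
v_f(x,t)=\P_x\!\left[e^{\int_0^t \beta(\xi_r)\,dr}\,\phi(\xi_t)g(\xi_t)\exp\!\Big(-\int_0^t \partial_z\psi_0(\xi_s,u_f(\xi_s,t-s))\,ds\Big)\right].
\]

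Finally, integrating against $\mu$ and invoking the identity \eqref{eq:mto_Pphi} (which rewrites the $\beta$-weighted $\P_x$-expectation of $\phi(\xi_t)$ times a $\sigma(\xi_s\colon s\le t)$-measurable functional as $e^{\lambda_c t}\la\phi,\mu\ra$ times a $\P_{\phi\mu}^\phi$-expectation) yields
\[
\la v_f(\cdot,t),\mu\ra=e^{\lambda_c t}\la\phi,\mu\ra\,\P_{\phi\mu}^\phi\!\left[g(\xi_t)\exp\!\Big(-\int_0^t \partial_z\psi_0(\xi_s,u_f(\xi_s,t-s))\,ds\Big)\right].
\]
Combining this with the identity from step two and dividing by $e^{\lambda_c t}\la\phi,\mu\ra$ gives \eqref{eq:LaplaceQ}. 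The main technical point is the Feynman-Kac step: one must verify that $v_f$ really coincides with the probabilistic representation above, which relies on the nonnegativity and local boundedness of the potential $V$ (both of which follow from $f,g\in bp(D)$ and $u_f\in bp(D\times[0,\infty))$). The $\theta$-differentiation and integration-differentiation exchanges are comparatively routine.
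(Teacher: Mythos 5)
Your proposal follows essentially the same route as the paper: differentiate the Laplace functional in $\theta$ at $\theta=0$, derive the linearized mild equation for $v_f=\partial_\theta u_{f+\theta\phi g}|_{\theta=0}$, solve it by the Feynman--Kac identity of Lemma~\ref{lem:semi}, and convert to a $\P^{\phi}_{\phi\mu}$-expectation via \eqref{eq:mto_Pphi}. Two caveats. First, your justification that $\phi g\in bp(D)$ is wrong as stated: $\phi$ need not be bounded on $D$, and boundedness of $\phi$ on the support of $\mu$ is irrelevant because $X_t$ is not supported there. The correct repair is the paper's: prove \eqref{eq:LaplaceQ} first for compactly supported $g$ (so $\phi g$ is bounded by continuity of $\phi$) and then pass to general $g\in bp(D)$ by monotone convergence. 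Second, the step you dismiss as "comparatively routine" --- existence of $\partial_\theta u_{f+\theta\phi g}|_{\theta=0}$ and the interchange of this derivative with the nonlinear term $\psi_0$ and the time integral --- is where the paper's proof does its real work: the pointwise existence of the limit of the difference quotients is not a domination issue at all but comes from concavity of the Laplace exponent $\theta\mapsto u_{f+\theta\phi g}(x,t)$ (the quotients decrease monotonically and are bounded by $S_t[\phi g]$), after which a Taylor expansion of $\psi_0$ combined with monotone and dominated convergence yields the linearized equation \eqref{eq:uDeriv}. With these two points filled in, your argument coincides with the paper's; the choice of part (i) versus part (ii) of Lemma~\ref{lem:semi} is immaterial here, since once $g$ has compact support all the relevant coefficients are bounded.
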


Notice that by definition, $\la \phi,X_t\ra>0$, $Q_{\mu}$-almost surely.

\begin{proof}[Proof of Lemma~\ref{lem:LaplaceQ}]
We prove \eqref{eq:LaplaceQ} only for $g$ compactly supported since the general case then follows from the monotone convergence theorem.
The continuity of $\phi$ implies that $f+\theta \phi g\in bp(D)$ for all $\theta\ge 0$. 
We use the definition of $Q_{\mu}$ and interchange differentiation and integration using the dominated convergence theorem to obtain
\begin{align*}
Q_{\mu}\Big[e^{-\la f,X_t\ra}\frac{\la \phi g,X_t\ra}{\la \phi,X_t\ra}\Big]=-\frac{e^{-\lambda_c t}}{\la \phi,\mu \ra}P_{\mu}\big[\partial_{\theta}\big|_{\theta=0} e^{-\la f+\theta \phi g,X_t\ra}\big]=\frac{e^{-\lambda_c t}}{\pint{ \phi,\mu }}e^{-\la u_{f}(\cdot, t),\mu\ra} \partial_{\theta}\big|_{\theta=0}\pint{ u_{f+\theta \phi g}(\cdot, t),\mu}.
\end{align*}
By \eqref{eq:logLaplace}, the definition of $\psi_{\beta}$, and \eqref{eq:mto_Pphi} the claim follows when we have shown that
\begin{equation}\label{ts:dutheta}
h_{f,g}(x,t):=\partial_{\theta}\big|_{\theta=0}u_{f+\theta \phi g}(x, t)=\P_x\Big[\phi(\xi_t)g(\xi_t)\exp\Big(-\int_0^t \partial_z\psi_{\beta}(\xi_s, u_f(\xi_s,t-s)) \, ds\Big)\Big]
\end{equation}
since integration with respect to $\mu$ and differentiation can be interchanged using the dominated convergence theorem. By \eqref{eq:mild}, for any $\theta>0$,
\[
\frac{u_{f+\theta \phi g}(x,t)-u_f(x,t)}{\theta}=S_t[\phi g](x)-\int_0^t S_s\Big[ \frac{\psi_0(\cdot, u_{f+\theta \phi g}(\cdot, t-s))-\psi_0(\cdot, u_f(\cdot, t-s))}{\theta}\Big](x) \, ds.
\]
The Laplace exponent $\theta \mapsto v(\theta):=u_{f+\theta \phi g}(x,t)=-\log P_{\delta_x}[e^{-\la f+\theta \phi g,X_t\ra}]$ is increasing, concave and nonnegative. In particular, $\frac{v(\theta)-v(0)}{\theta}$ is decreasing in $\theta$. 
Moreover, $z \mapsto\psi_0(x,z)$ is increasing, convex, and nonnegative. 
Hence, for all $(x,t) \in D\times [0,\infty)$,
\[
0 \le \frac{v(\theta)-v(0)}{\theta}=\frac{u_{f+\theta \phi g}(x,t)-u_f(x,t)}{\theta} \le S_t[\phi g](x) \le \supnorm{\phi g} e^{\bbeta t},
\]
where $\bbeta=\sup_{x \in D} \beta(x)$ and $\supnorm{\cdot}$ denotes the supremum norm.
A Taylor expansion of $\psi_0$ yields for every $(x,t) \in D \times [0,\infty)$ some $\tilde{\theta} \in (0,\theta)$ such that
\[
\frac{\psi_0(x, v(\theta))-\psi_0(x, v(0))}{\theta}=\partial_z\psi_0(x, v(0)) \frac{v(\theta)-v(0)}{\theta} +\Big(\partial_z\psi_0(x, v(\tilde{\theta}))-\partial_z\psi_0(x, v(0))\Big) \frac{v(\theta)-v(0)}{\theta}.
\]
The first summand on the right-hand side is nonnegative and increases as $\theta \downarrow 0$, the second term is dominated and tends to zero. Hence,
\begin{equation}\label{eq:uDeriv}
h_{f,g}(x,t)=S_t[\phi g](x)-\int_0^t S_s\big[\partial_z\psi_0(\cdot, u_f(\cdot,t-s))h_{f,g}(\cdot,t-s)\big](x)\, ds.
\end{equation}
Lemma~\ref{lem:semi}(ii) below applied to the functions $g_1(x,t)=-\partial_z\psi_{\beta}(x,u_f(x,t))$, $g_2(x,t)=\partial_z \psi_0(x,u_f(x,t))$, $f_1=\phi g$ and $f_2(x,t)=-\partial_z\psi_0(x,u_f(x,t))h_{f,g}(x,t)$ shows that the unique solution to \eqref{eq:uDeriv} is given by
the right-hand side of \eqref{ts:dutheta}. 
\end{proof}

Recall the definition of Dynkin and Kuznetsov's $\N_x$-measures from \eqref{eq:Nmeasure_def} and let $\mu \in \M_f^{\phi}(D)$, $\mu \not\equiv 0$. 
On a suitable probability space with measure $P_{\mu,\phi}$, we define the following processes:

\begin{itemize}
\item[(i)] $(\xi=(\xi_t)_{t \ge 0}; P_{\mu,\phi})$ is equal in distribution to $(\xi_t\colon t \ge 0; \P_{\phi \mu}^{\phi})$, that is an ergodic diffusion. 
We refer to this process as the \emph{spine}.

\item[(ii)] {\bf{Continuous immigration:}} $(\n;P_{\mu,\phi})$ a random measure such that, given $\xi$, $\n$ is a Poisson random measure which issues $\M_f(D)$-valued processes $X^{\n,t}=(X_s^{\n,t})_{s \ge 0}$ at space-time point $(\xi_t,t)$ with rate $2 \alpha(\xi_t)\, dt \times d\N_{\xi_t}$. 
The almost surely countable set of immigration times is denoted by $\D^{\n}$; $\D_t^{\n}:=\D^{\n}\cap(0,t]$. 
Given $\xi$, the processes $(X^{\n,t}\colon t \in \D^{\n})$ are independent.

\item[(iii)] {\bf{Discontinuous immigration:}} $(\m;P_{\mu,\phi})$ a random measure such that, given $\xi$, $\m$ is a Poisson random measure which issues $\M_f(D)$-valued processes $X^{\m,t}$ at space-time point $(\xi_t,t)$ with rate $dt \times \int_{(0,\infty)}\Pi(\xi_t, dy)\,y \times d P_{y \delta_{\xi_t}}$. 
The almost surely countable set of immigration times is denoted by $\D^{\m}$; $\D_t^{\m}=\D^{\m}\cap (0,t]$. 
Given $\xi$, the processes $(X^{\m,t}\colon t \in \D^{\m})$ are independent and independent of $\n$ and $(X^{\n,t}\colon t \in \D^{\n})$.

\item[(iv)] $(X=(X_t)_{t \ge 0};P_{\mu,\phi})$ is equal in distribution to $(X=(X_t)_{t \ge 0};P_{\mu})$, a copy of the original process. 
Moreover, $X$ is independent of $\xi, \n,\m$ and all immigration processes.
\end{itemize}
We denote by 
\[
X_t^{\n}=\sum_{s \in \D_t^{\n}}X_{t-s}^{\n,s}, \qquad \text{and} \qquad X_t^{\m}=\sum_{s \in \D_t^{\m}}X_{t-s}^{\m,s}, 
\]
the continuous and discontinuous immigration processes, respectively. 
We write $\Gamma_t:=X_t+X_t^{\n}+X_t^{\m}$ for all $t \ge 0$ and $\overset{d}{=}$ denotes distributional equality.

\begin{proposition}[{\bf{Spine decomposition}}]\label{pro:spine} For all $\mu \in \M_f^{\phi}(D)$, $\mu \not\equiv 0$,
\[
(X_t\colon t \ge 0; Q_{\mu})\overset{d}{=}(\Gamma_t=X_t+X_t^{\n}+X_t^{\m}\colon t \ge 0;P_{\mu,\phi}).
\]
\end{proposition}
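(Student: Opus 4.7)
The plan is to match the one-dimensional Laplace functionals of both sides for arbitrary $f \in bp(D)$, and then lift this to equality in distribution of the processes using the Markov structure on each side. Fix $\mu \in \M_f^{\phi}(D)$, $\mu \not\equiv 0$, $f \in bp(D)$, and $t \ge 0$. Lemma~\ref{lem:LaplaceQ} applied with $g \equiv \mathbf{1}$ provides the target identity
\[
Q_{\mu}[e^{-\la f, X_t\ra}] = e^{-\la u_f(\cdot, t), \mu\ra}\, \P_{\phi\mu}^{\phi}\Big[\exp\Big(-\int_0^t \partial_z \psi_0(\xi_s, u_f(\xi_s, t-s))\, ds\Big)\Big],
\]
so the task is to recover the right-hand side starting from $P_{\mu,\phi}[e^{-\la f, \Gamma_t\ra}]$.

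Conditioning on the spine $\xi$ and exploiting the independence of $X$, $\n$, $\m$ built into $P_{\mu,\phi}$,
\[
P_{\mu,\phi}[e^{-\la f, \Gamma_t\ra} \mid \xi] = P_{\mu}[e^{-\la f, X_t\ra}] \cdot P_{\mu,\phi}[e^{-\la f, X_t^{\n}\ra} \mid \xi] \cdot P_{\mu,\phi}[e^{-\la f, X_t^{\m}\ra} \mid \xi].
\]
The first factor equals $e^{-\la u_f(\cdot, t), \mu\ra}$ by \eqref{eq:logLaplace}. For the two immigration factors, the exponential formula for Poisson random measures combined with \eqref{eq:Nmeasure_def} and the identity $P_{y\delta_x}[e^{-\la f, X_{t-s}\ra}] = e^{-y\, u_f(x, t-s)}$ give
\begin{align*}
P_{\mu,\phi}[e^{-\la f, X_t^{\n}\ra} \mid \xi] &= \exp\Big(-\int_0^t 2\alpha(\xi_s)\, \N_{\xi_s}\big[1 - e^{-\la f, X_{t-s}\ra}\big]\, ds\Big) = \exp\Big(-\int_0^t 2\alpha(\xi_s)\, u_f(\xi_s, t-s)\, ds\Big),\\
P_{\mu,\phi}[e^{-\la f, X_t^{\m}\ra} \mid \xi] &= \exp\Big(-\int_0^t\! \int_{(0,\infty)} y\, \big(1 - e^{-y\, u_f(\xi_s, t-s)}\big)\, \Pi(\xi_s, dy)\, ds\Big).
\end{align*}
Differentiating the L\'evy-Khintchine form \eqref{eq:psiDef} in $z$ yields
\[
\partial_z \psi_0(x, z) = 2\alpha(x) z + \int_{(0,\infty)} y\big(1 - e^{-yz}\big)\, \Pi(x, dy),
\]
so the two immigration exponents combine into $\int_0^t \partial_z \psi_0(\xi_s, u_f(\xi_s, t-s))\, ds$. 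Taking expectation over $\xi$, which under $P_{\mu,\phi}$ is distributed as $(\xi; \P_{\phi\mu}^{\phi})$, reproduces the right-hand side of the Lemma~\ref{lem:LaplaceQ} identity.

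To upgrade this single-time match to equality in distribution of the processes, I would iterate the same calculation for multi-time Laplace functionals $P_{\mu,\phi}[\exp(-\sum_{i=1}^n \la f_i, \Gamma_{t_i}\ra)]$ at times $0 \le t_1 < \cdots < t_n$. Under $P_\mu$ the process $X$ is Markov, and since $W_t^{\phi}(X)$ is a functional of $X_t$ alone and $S_s\phi = e^{\lambda_c s}\phi$, an elementary change-of-measure computation shows $X$ under $Q_\mu$ is again Markov with $Q_\nu$ as its transition when $X_s = \nu$; a parallel Markov statement for $\Gamma$ under $P_{\mu,\phi}$ follows from the Markov property of $\xi$ together with the conditional independence of the immigration decorations after $\xi$. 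Either route reduces the claim to the single-time identity above. The main obstacle is conceptual rather than computational: recognising that the sum of intensities $2\alpha(x)\, d\N_x$ (continuous immigration) and $\int_{(0,\infty)} y\, \Pi(x, dy)\, dP_{y\delta_x}^{\phantom{*}}$ (discontinuous immigration) reproduces exactly $\partial_z \psi_0(x, u_f(x, \cdot))$, which is what makes the spine decomposition the correct probabilistic companion to the analytic identity from Lemma~\ref{lem:LaplaceQ}.
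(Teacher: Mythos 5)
Your single-time computation is correct and is exactly the "marginal distributions agree" step that the paper dispatches with one sentence via Campbell's formula; spelling out how $2\alpha(x)\,d\N_x$ and $\int y\,\Pi(x,dy)\,dP_{y\delta_x}$ assemble into $\partial_z\psi_0(x,u_f(x,\cdot))$ is the right calculation. The gap is in the upgrade to process-level equality. You assert that a "parallel Markov statement for $\Gamma$ under $P_{\mu,\phi}$ follows from the Markov property of $\xi$ together with the conditional independence of the immigration decorations after $\xi$." What actually follows from those facts is that the \emph{pair} $(\Gamma_t,\xi_t)_{t\ge 0}$ is Markov; the process $\Gamma$ alone is not obviously Markov, because its future evolution depends on the current spine location $\xi_t$, which is extra information not recoverable from $\Gamma_t$. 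And even granting Markovianity of $\Gamma$, matching one-dimensional marginals of two Markov processes does not give equality in law — you must match transition kernels, and the transition of $\Gamma$ out of time $t$ is governed by the joint law of $(\Gamma_t,\xi_t)$.

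This is precisely what the paper's proof supplies and your proposal omits: one must show that
\[
P_{\mu,\phi}(\xi_t \in dx \mid \Gamma_t)=\frac{\phi(x)\,\Gamma_t(dx)}{\la \phi,\Gamma_t\ra},
\]
i.e.\ that conditionally on $\Gamma_t$ the spine sits at a $\phi$-biased point of $\Gamma_t$. This both makes $\Gamma$ Markov (its transition is then a function of $\Gamma_t$ alone, by the argument of \cite{KypLiuMurRen12}) and identifies that transition with the $Q$-transition. Establishing it requires the \emph{general-$g$} form of Lemma~\ref{lem:LaplaceQ} — one checks $P_{\mu,\phi}[e^{-\la f,\Gamma_t\ra} P_{\mu,\phi}[g(\xi_t)\mid\Gamma_t]]=P_{\mu,\phi}[e^{-\la f,\Gamma_t\ra}\la\phi g,\Gamma_t\ra/\la\phi,\Gamma_t\ra]$ by running your Campbell computation with the weight $g(\xi_t)$ inserted and comparing with the lemma — whereas you only invoke the case $g\equiv\mathbf{1}$. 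Your alternative route of directly iterating multi-time Laplace functionals would also need a multi-time extension of Lemma~\ref{lem:LaplaceQ} that you have not established, so as written neither route closes the argument.
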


The proof of Proposition~\ref{pro:spine} is very similar to the proof of Theorem 5.2 in \cite{KypLiuMurRen12} and we omit long computations.

\begin{proof}[Proof of Proposition~\ref{pro:spine}]
Using the definitions and Campbell's formula for Poisson random measures, one easily checks that the marginal distributions agree. 
By definition, $( (\Gamma_t,\xi_t)_{t \ge 0};P_{\mu,\phi})$ is a time-homogeneous Markov process and when we show that
\[
P_{\mu,\phi}(\xi_t \in \, dx\,|\,\Gamma_t)=\frac{1}{\la \phi,\Gamma_t\ra} \phi(x) \Gamma_t(dx) \quad \text{ for all } t \ge 0,
\]
then $((\Gamma_t)_{t \ge 0};P_{\mu,\phi})$ is a time-homogeneous Markov process (by the argument given on page 21 of \cite{KypLiuMurRen12}). 
Using the definition, $(\Gamma_t;P_{\mu,\phi})\overset{d}{=}(X_t;Q_{\mu})$ and Lemma~\ref{lem:LaplaceQ}, we find that for all $f,g \in bp(D)$,
\[
P_{\mu,\phi}\big[e^{-\la f,\Gamma_t\ra} P_{\mu,\phi}[g(\xi_t)\,|\,\Gamma_t]\big]=P_{\mu,\phi}\Big[e^{-\la f,\Gamma_t\ra} \frac{\la \phi g,\Gamma_t\ra}{\la \phi,\Gamma_t\ra}\Big],
\]
and the claim follows.
\end{proof}

For all $t \ge 0$, let $\mathcal{G}_t$ be the $\sigma$-algebra generated by $\xi$ up to time $t$ and by $\n$ and $\m$ restricted in the time component to $[0,t]$.

\begin{lemma}\label{lem:spine} For all $\mu \in \M_f^{\phi}(D)$, $\mu\not\equiv 0$, and $t \ge 0$,
\[
P_{\mu,\phi}\big[e^{-\lambda_c t} \la \phi,\Gamma_t\ra|\mathcal{G}_t\big]=\la \phi, \mu \ra +\sum_{s \in \D_t^{\n}} e^{-\lambda_c s} \phi(\xi_s)+\sum_{s \in \D_t^{\m}} e^{-\lambda_c s} \mathcal{I}_s^{\m} \phi(\xi_s) \qquad P_{\mu,\phi}\text{-almost surely},
\]
where $(\mathcal{I}_t^{\m}:=\pint{ \mathbf{1},X_0^{\m,t}}\colon t\ge 0;P_{\mu,\phi})$ is, given $\xi$, a Poisson point process with intensity measure $dt \times \int_{(0,\infty)} \Pi(\xi_t,dy)\, y$.
\end{lemma}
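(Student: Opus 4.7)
My plan is to compute each of the three conditional expectations arising from the decomposition $\la\phi,\Gamma_t\ra=\la\phi,X_t\ra+\la\phi,X_t^{\n}\ra+\la\phi,X_t^{\m}\ra$, exploiting the conditional independence structure of the spine construction and the eigenfunction identity $S_t\phi=e^{\lambda_c t}\phi$ from~\eqref{eq:Sphi}. For the $X$-term, construction~(iv) states that the independent copy $X$ is independent of $\xi,\n,\m$ and of every immigration process, hence of $\mathcal{G}_t$; combining this with the first-moment formula~\eqref{eq:opp} yields $P_{\mu,\phi}[e^{-\lambda_c t}\la\phi,X_t\ra\mid\mathcal{G}_t]=e^{-\lambda_c t}\la S_t\phi,\mu\ra=\la\phi,\mu\ra$, which is the first summand in the claim.

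For the continuous immigration I write $\la\phi,X_t^{\n}\ra=\sum_{s\in\D_t^{\n}}\la\phi,X_{t-s}^{\n,s}\ra$ and observe that the time set $\D_t^{\n}$ together with the birth locations $\xi_s$ is $\mathcal{G}_t$-measurable, while conditionally on $\mathcal{G}_t$ the paths $(X^{\n,s})_{s\in\D_t^{\n}}$ are independent, with $X^{\n,s}$ distributed under $\N_{\xi_s}$. The requisite first-moment identity $\N_x[\la f,X_u\ra]=S_u f(x)$ is obtained by differentiating~\eqref{eq:Nmeasure_def} in its test function at zero: substituting $\theta f$ and letting $\theta\downarrow 0$, the left-hand side tends to $P_{\delta_x}[\la f,X_u\ra]=S_u f(x)$ by~\eqref{eq:opp}, while the right-hand side tends to $\N_x[\la f,X_u\ra]$ by monotone convergence using that $(1-e^{-\theta\la f,X_u\ra})/\theta\uparrow\la f,X_u\ra$ as $\theta\downarrow 0$. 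Choosing $f=\phi$, $u=t-s$, and invoking $S_{t-s}\phi=e^{\lambda_c(t-s)}\phi$ gives $P_{\mu,\phi}[\la\phi,X_{t-s}^{\n,s}\ra\mid\mathcal{G}_t]=e^{\lambda_c(t-s)}\phi(\xi_s)$, so multiplying by $e^{-\lambda_c t}$ and summing produces the $\n$-sum. The discontinuous immigration is handled identically: the initial mass $\mathcal{I}_s^{\m}$ is $\mathcal{G}_t$-measurable as part of the atom of $\m$ at time $s\in\D_t^{\m}$, and conditionally on $\mathcal{G}_t$ the path $X^{\m,s}$ has law $P_{\mathcal{I}_s^{\m}\delta_{\xi_s}}$, so~\eqref{eq:opp} gives $P_{\mu,\phi}[\la\phi,X_{t-s}^{\m,s}\ra\mid\mathcal{G}_t]=\mathcal{I}_s^{\m}e^{\lambda_c(t-s)}\phi(\xi_s)$, producing the $\m$-sum.

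The final assertion that $(\mathcal{I}_s^{\m})_{s\ge 0}$ is, given $\xi$, a Poisson point process with the stated intensity is an immediate consequence of the mapping theorem for Poisson random measures applied to $\m$: projecting its atoms onto the pair (time, initial mass) reads off the intensity $dt\times y\Pi(\xi_t,dy)$. The only step requiring care is the interchange of conditional expectation with the (possibly countably infinite) sum over $\D_t^{\n}$, since $\N_x$ is $\sigma$-finite with infinite total mass; by nonnegativity of the summands this is a standard monotone approximation, truncating $\n$ to atoms whose paths carry mass at least $\eps$ at the birth epoch, applying Campbell's formula to the finite truncated sum, and letting $\eps\downarrow 0$, while $\la\phi,X_t^{\n}\ra<\infty$ almost surely follows from Proposition~\ref{pro:spine} and the $L^p$-bound of Theorem~\ref{thm:Lp}.
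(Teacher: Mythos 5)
Your proposal is correct and follows essentially the same route as the paper: decompose $\la\phi,\Gamma_t\ra$ into the three contributions, use independence and the martingale/eigenfunction identity $e^{-\lambda_c u}S_u\phi=\phi$ for the initial copy, the first-moment identity $\N_x[\la\phi,X_u\ra]=P_{\delta_x}[\la\phi,X_u\ra]$ for the continuous immigration, and the linearity of first moments in the initial mass for the discontinuous immigration. The only cosmetic difference is that you re-derive the $\N$-measure moment identity from \eqref{eq:Nmeasure_def} by differentiating at $\theta=0$, whereas the paper cites it directly from Dynkin and Kuznetsov (Proposition~1.1 of \cite{DynKuz04}); your extra care with the countable sums and the Poisson point process structure of $\mathcal{I}^{\m}$ is consistent with, though not spelled out in, the paper's argument.
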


\begin{proof} 
Proposition 1.1 of \cite{DynKuz04} states that, for $f \in p(D)$ with $P_{\delta_x}[\la f,X_t\ra]<\infty$,
\begin{equation} \label{eq:Nmean}
\N_x[\la f,X_t\ra]=P_{\delta_x}[\la f,X_t\ra].
\end{equation}
Using first the definition of $\Gamma_t$, and then \eqref{eq:Nmean} and the branching property of $X$ we obtain,
\begin{align*}
P_{\mu,\phi}[e^{-\lambda_c t}\la \phi,\Gamma_t\ra|\mathcal{G}_t] &= P_{\mu}\big[W_t^{\phi}(X)\big]+\sum_{s \in \D_t^{\n}} e^{-\lambda_c t} \N_{\xi_s}[\la \phi, X_{t-s}\ra] +\sum_{s \in \D_t^{\m}} e^{-\lambda_c t} P_{\mathcal{I}_s^{\m} \delta_{\xi_s}}[\la \phi,X_{t-s}\ra]\\
&= P_{\mu}\big[W_t^{\phi}(X)\big]+\sum_{s \in \D_t^{\n}} e^{-\lambda_c t} P_{\delta_{\xi_s}}[\la \phi, X_{t-s}\ra] +\sum_{s \in \D_t^{\m}} e^{-\lambda_c t} \mathcal{I}_s^{\m} P_{\delta_{\xi_s}}[\la \phi,X_{t-s}\ra].
\end{align*}
Since $W_t^{\phi}(X)=e^{-\lambda_c t}\la \phi,X_t\ra$, $t\ge 0$, is a $P_{\mu}$- and $P_{\delta_x}$-martingale for all $x\in D$, the claim follows.
\end{proof}

Finally, everything is prepared for the proof of Theorem~\ref{thm:Lp}. 
Throughout the article, we use the letters $c$ and $C$ for generic constants in $(0,\infty)$ and their value can change from line to line. 
Important constants are marked by an index indicating the order in which they occur.

\begin{proof}[Proof of Theorem~\ref{thm:Lp}] The martingale property was proved in Corollary~\ref{cor:mg}. 
We have to show the $L^p$-boundedness. If $\mu \equiv 0$, then $X_t(D)=0$ for all $t \ge 0$ and the statement is trivially true. 
Let $\mu \in \M_f^{\phi}(D)$, $\mu \not \equiv 0$. 
We write $W_t^{\phi}(\Gamma)=e^{-\lambda_c t} \pint{\phi,\Gamma_t}$ and $\bar{p}=p-1 \in (0,1]$. 
Then, $x \mapsto x^{\bar{p}}$ is concave and $(x+y)^{\bar{p}} \le x^{\bar{p}}+y^{\bar{p}}$ for $x,y\ge 0$. 
Hence, the definition of $Q_{\mu}$, Proposition~\ref{pro:spine}, Jensen's inequality and Lemma~\ref{lem:spine}, yield
\begin{align*}
\frac{P_{\mu}[W_t^{\phi}(X)^p]}{\la \phi, \mu \ra}&=Q_{\mu}\big[W_t^{\phi}(X)^{\bar{p}}\big]=P_{\mu,\phi}\big[P_{\mu,\phi}[W_t^{\phi}(\Gamma)^{\bar{p}}|\mathcal{G}_t]\big]\le P_{\mu,\phi}[P_{\mu,\phi}[W_t^{\phi}(\Gamma)|\mathcal{G}_t]^{\bar{p}}]\\
&\le P_{\mu,\phi}\Big[\la \phi, \mu \ra^{\bar{p}} +\Big(\sum_{s \in \D_t^{\n}} e^{-\lambda_c s} \phi(\xi_s)\Big)^{\bar{p}}\Big]\\
&\phantom{space}+P_{\mu,\phi}\Big[\Big(\sum_{s \in \D_t^{\m}, \mathcal{I}_s^{\m} \le \varphi_1(\xi_s)} e^{-\lambda_c s} \mathcal{I}_s^{\m} \phi(\xi_s)\Big)^{\bar{p}}+\Big(\sum_{s \in \D_t^{\m}, \mathcal{I}_s^{\m} > \varphi_1(\xi_s)} e^{-\lambda_c s} \mathcal{I}_s^{\m} \phi(\xi_s)\Big)^{\bar{p}}\Big],
\end{align*}
where $\varphi_1$ is determined by Assumption~\ref{as:moment}.
The first summand is deterministic. For the remaining three summands we first use that $x^{\bar{p}} \le 1+x^{\bar{\sigma}}$ for all $\bar{\sigma} \ge \bar{p}$, then $(x+y)^{\bar{\sigma}} \le x^{\bar{\sigma}}+y^{\bar{\sigma}}$ for all $\bar{\sigma} \in [0,1]$, and finally apply Campbell's formula to obtain
\begin{align*}
&P_{\mu,\phi}\Big[\Big(\sum_{s \in \D_t^{\n}} e^{-\lambda_c s} \phi(\xi_s)\Big)^{\bar{p}}\Big]\le 1+ \int_0^t 2e^{-\lambda_c \bar{\sigma}_1 s} \P_{\phi \mu}^{\phi}[\phi(\xi_s)^{\bar{\sigma}_1} \alpha(\xi_s)]\, ds,\\
&P_{\mu,\phi}\Big[\Big(\sum_{s \in \D_t^{\m}, \mathcal{I}_s^{\m} \le \varphi_1(\xi_s)} e^{-\lambda_c s} \mathcal{I}_s^{\m} \phi(\xi_s)\Big)^{\bar{p}}\Big]
\le 1+ \int_0^t e^{-\lambda_c \bar{\sigma}_2 s} \P_{\phi \mu}^{\phi}\Big[\int_{(0,\varphi_1(\xi_s)]} \phi(\xi_s)^{\bar{\sigma}_2} y^{\bar{\sigma}_2+1}\, \Pi(\xi_s, dy)\Big]\, ds,\\
&P_{\mu,\phi}\Big[\Big(\sum_{s \in \D_t^{\m}, \mathcal{I}_s^{\m} >\varphi_1(\xi_s)} e^{-\lambda_c s} \mathcal{I}_s^{\m} \phi(\xi_s)\Big)^{\bar{p}}\Big]
\le 1+ \int_0^t e^{-\lambda_c \bar{\sigma}_3 s} \P_{\phi \mu}^{\phi}\Big[\int_{(\varphi_1(\xi_s),\infty)} \phi(\xi_s)^{\bar{\sigma}_3} y^{\bar{\sigma}_3+1}\, \Pi(\xi_s, dy)\Big]\, ds,
\end{align*}
where $\bar{\sigma}_i=\sigma_i-1 \in [\bar{p},1]$ with $\sigma_i$ defined in Assumption~\ref{as:moment}, $i \in \{1,2,3\}$.
We conclude that if Assumptions~\eqref{alpha_cond}--\eqref{as:PiUpperTail} hold, then there exists a constant $C_1 \in (0,\infty)$ independent of $\mu$ and $t$ such that
\begin{equation}\label{eq:EWtoq}
\frac{P_{\mu}[W_t^{\phi}(X)^p]}{\la \phi,\mu\ra} \le \la \phi,\mu\ra^{\bar{p}} + C_1 \qquad \text{for all }t \ge 0,
\end{equation}
and $((W_t^{\phi}(X))_{t \ge 0};P_{\mu})$ is an $L^p$-bounded martingale. Doob's inequality yields the stated $L^p$-convergence.
\end{proof}

In Section~\ref{sec:L1Conv} the following lemma will be used in the comparison between immigration process and its conditional expectation.

\begin{lemma}\label{lem:mg_moments}
Suppose Assumptions \eqref{alpha_cond}--\eqref{phi_p_cond} hold. For every $\mu \in \M_c(D)$, $\mu \not \equiv 0$, there exists a time $T>0$ and a constant $C_2\in (0,\infty)$ such that
\[
\P_{\phi\mu}^{\phi}\big[\phi(\xi_t)^{-1} P_{\delta_{\xi_t}}[W_s^{\phi}(X)^p]\big] \le C_2 \qquad \text{for all }s\ge 0, t\ge T.
\]
\end{lemma}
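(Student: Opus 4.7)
The plan is to leverage the uniform moment bound \eqref{eq:EWtoq} derived in the proof of Theorem~\ref{thm:Lp} together with the ergodic property \eqref{eq:ergodic} of the spine. The key observation is that \eqref{eq:EWtoq} was established without invoking \eqref{phi_p_cond}; it gives, for any $\nu \in \M_f^{\phi}(D)$ with $\nu \not\equiv 0$,
\[
P_{\nu}\big[W_s^{\phi}(X)^p\big] \le \la \phi,\nu\ra^{p} + C_1 \la \phi,\nu\ra \qquad \text{for all }s \ge 0.
\]
Specialising to $\nu=\delta_x$, for which $\la \phi,\nu\ra = \phi(x)$, and dividing by $\phi(x)$, one obtains the pointwise bound
\[
\phi(x)^{-1} P_{\delta_x}\big[W_s^{\phi}(X)^p\big] \le \phi(x)^{p-1} + C_1 \qquad \text{for all }x\in D,\, s\ge 0.
\]
Evaluating at $x=\xi_t$ and taking $\P_{\phi\mu}^{\phi}$-expectation, it thus suffices to show that $\P_{\phi\mu}^{\phi}[\phi(\xi_t)^{p-1}]$ is bounded uniformly in $t$ for $t$ large enough.

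For this step I would apply the ergodic convergence \eqref{eq:ergodic} with test function $g=\phi^{p-1}$ and initial probability measure $\pi(dx):=\phi(x)\mu(dx)/\la\phi,\mu\ra$. Since $\mu \in \M_c(D)$, the measure $\pi$ is compactly supported in $D$, and assumption \eqref{phi_p_cond} guarantees $\la \phi^{p-1},\phi\tphi\ra<\infty$, which is precisely the integrability condition under which \eqref{eq:ergodic} extends to unbounded $g$ (via the density bound \eqref{eq:density_bd} and dominated convergence, as explained in the paragraph following \eqref{eq:ergodic}). By the definition of $\P_{\phi\mu}^{\phi}$ in \eqref{eq:Pphimu},
\[
\P_{\phi\mu}^{\phi}\big[\phi(\xi_t)^{p-1}\big] = \la \P_{\cdot}^{\phi}[\phi(\xi_t)^{p-1}],\pi\ra \longrightarrow \la \phi^{p-1},\phi\tphi\ra \qquad \text{as }t\to\infty.
\]

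Combining the two displays, for $t\ge T$ with $T=T(\mu)$ chosen so that $\P_{\phi\mu}^{\phi}[\phi(\xi_t)^{p-1}] \le \la\phi^{p-1},\phi\tphi\ra+1$, one arrives at
\[
\P_{\phi\mu}^{\phi}\big[\phi(\xi_t)^{-1}P_{\delta_{\xi_t}}[W_s^{\phi}(X)^p]\big] \le \la\phi^{p-1},\phi\tphi\ra + 1 + C_1 =: C_2
\]
uniformly in $s\ge 0$, as required. The only delicate point is justifying the applicability of \eqref{eq:ergodic} to the possibly unbounded test function $\phi^{p-1}$, which is why the hypothesis $\mu \in \M_c(D)$ (giving compactly supported $\pi$) together with \eqref{phi_p_cond} is essential; no other obstacle is anticipated.
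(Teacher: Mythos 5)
Your proposal is correct and matches the paper's own argument essentially verbatim: the paper also applies the uniform bound \eqref{eq:EWtoq} with $\nu=\delta_{\xi_t}$ to reduce the claim to bounding $\P_{\phi\mu}^{\phi}[\phi(\xi_t)^{p-1}]+C_1$, and then invokes \eqref{eq:ergodic} with $g=\phi^{p-1}$, using $\mu\in\M_c(D)$ and \eqref{phi_p_cond} exactly as you do. Nothing further is needed.
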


\begin{proof}
According to \eqref{eq:EWtoq}, for all $s,t\ge 0$,
\[
\P_{\phi\mu}^{\phi}\big[\phi(\xi_t)^{-1} P_{\delta_{\xi_t}}[W_s^{\phi}(X)^p]\big]  \le \P_{\phi\mu}^{\phi}[\phi(\xi_t)^{p-1}] + C_1.
\]
Since $\mu \in \M_c(D)$ and $\pint{\phi^{p-1},\phi \tphi}<\infty$ by Assumption~\eqref{phi_p_cond}, $\P_{\phi \mu}^{\phi}[\phi(\xi_t)^{p-1}]$ converges to $\pint{\phi^{p-1},\phi \tphi}$ by \eqref{eq:ergodic} and we obtain the required time $T$ and constant $C_2$.\end{proof}

\section{Proofs of the main results}\label{sec:main}

\subsection{Reduction to a core statement}\label{sec:reduction}

In this section, we work under Assumption~\ref{as:criticality}.
We first show that it suffices to consider test functions $f=\phi\mathbbm{1}_B=:\phi|_B$ for Borel sets $B \in \B_0(D)=\{B \in \B(D)\colon \ell(\partial B)=0\}$ and 
that we only have to prove that $\liminf_{t \to \infty} e^{-\lambda_c t} \la f, X_t\ra \ge \la f,\tphi \ra \mglX$ instead of the full convergence. 
The proof is based on standard approximation theory combined with an idea that appeared in Lemma 9 of \cite{AsmHer76}. 
We denote by $C_{\ell}^+(D)$ the space of nonnegative, measurable, $\ell$-almost everywhere continuous functions on $D$.

\begin{lemma}\label{lem:reduc}
Let $\mu \in \M_f^{\phi}(D)$ and either $\T=[0,\infty)$ or $\T=\delta \N$ for some $\delta>0$. 
In addition, let either $\mathcal{A}=\B_0(D)$ and $\mathcal{A}^{\phi}=\{f \in C_{\ell}^+(D) \colon f/\phi \in b(D)\}$, or $\mathcal{A}=\B(D)$ and $\mathcal{A}^{\phi}=\{f \in p(D) \colon f/\phi \in b(D)\}$. 
We define $\mathcal{A}^{\phi/w}$ like $\mathcal{A}^{\phi}$ where $\phi$ is replaced by $\phi/w$.
\begin{itemize}
\item[{\rm (i)}] 
If for all $B \in \mathcal{A}$,
\begin{equation}\label{liminf_general}
\liminf_{\T \ni t \to \infty} e^{-\lambda_ct}\la \phi|_B ,X_t\ra\ge \la \phi|_B,\tphi\ra \mglX \qquad P_{\mu}\text{-almost surely},
\end{equation}
then $\lim_{\T \ni t \to \infty} e^{-\lambda_ct}\la f ,X_t\ra=\la f,\tphi\ra \mglX$  $P_{\mu}$-almost surely for all $f \in \mathcal{A}^{\phi}$.
\item[{\rm (ii)}] If for all $B \in \mathcal{A}$, $\liminf_{\T\ni t \to \infty} e^{-\lambda_ct}\la \frac{\phi}{w} \mathbbm{1}_B ,Z_t\ra\ge \la \phi|_B,\tphi\ra \mglZ$ $\bbP_{\mu}$-almost surely,
then, for all $f \in \mathcal{A}^{\phi/w}$, $\lim_{\T \ni t \to \infty} e^{-\lambda_ct}\la f ,Z_t\ra=\la f,w\tphi\ra \mglZ$ $\bbP_{\mu}$-almost surely.
\end{itemize}
\end{lemma}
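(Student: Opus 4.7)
The plan is to prove part~(i) in two steps: first, upgrade the one-sided $\liminf$ hypothesis to a full limit for indicator test functions $\phi|_B$ with $B\in\mathcal{A}$, and then extend to general $f\in\mathcal{A}^{\phi}$ by sandwiching $f$ between simple functions built from such indicators. Part~(ii) will follow by exactly the same argument after replacing $\phi$ by $\phi/w$, $\tphi$ by $w\tphi$, $X$ by $Z$, and $P_{\mu}$ by $\bbP_{\mu}$, using Corollary~\ref{cor:mg} to ensure that $(W_t^{\phi/w}(Z);\bbP_{\mu})$ converges $\bbP_{\mu}$-a.s.\ to $\mglZ$.

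For the first step, I observe that $\mathcal{A}$ is closed under complements in $D$: in the case $\mathcal{A}=\B_0(D)$ this is because $\partial(D\setminus B)=\partial B$. Fix $B\in\mathcal{A}$ and decompose $\phi=\phi|_B+\phi|_{D\setminus B}$. By Corollary~\ref{cor:mg}, $(W_t^{\phi}(X);P_{\mu})$ is a nonnegative martingale, hence converges $P_{\mu}$-a.s.\ to $\mglX$. Applying the elementary identity $\limsup_t a_t\le\limsup_t(a_t+b_t)-\liminf_t b_t$ with $a_t=e^{-\lambda_ct}\la\phi|_B,X_t\ra$ and $b_t=e^{-\lambda_ct}\la\phi|_{D\setminus B},X_t\ra$, together with \eqref{liminf_general} for $D\setminus B$ and the normalisation $\la\phi,\tphi\ra=1$, gives
$$\limsup_{\T\ni t\to\infty}e^{-\lambda_ct}\la\phi|_B,X_t\ra\le\mglX-(1-\la\phi|_B,\tphi\ra)\mglX=\la\phi|_B,\tphi\ra\mglX\qquad P_{\mu}\text{-a.s.}$$
Combined with the hypothesis $\liminf_{\T\ni t\to\infty}e^{-\lambda_ct}\la\phi|_B,X_t\ra\ge\la\phi|_B,\tphi\ra\mglX$, this yields the full limit $\lim_{\T\ni t\to\infty}e^{-\lambda_ct}\la\phi|_B,X_t\ra=\la\phi|_B,\tphi\ra\mglX$ $P_{\mu}$-a.s.

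For the second step, let $f\in\mathcal{A}^{\phi}$ and set $g:=f/\phi\in b(D)$, so $0\le g\le M$ for some $M$. For each $n\in\N$ I will choose levels $0=a_0^{(n)}<a_1^{(n)}<\cdots<a_n^{(n)}=M$ with vanishing mesh, set $B_{n,i}:=g^{-1}([a_i^{(n)},a_{i+1}^{(n)}))$, and define
$$f_n:=\sum_{i=0}^{n-1}a_i^{(n)}\phi|_{B_{n,i}}\le f\le\sum_{i=0}^{n-1}a_{i+1}^{(n)}\phi|_{B_{n,i}}=:F_n,$$
so that $F_n-f_n\le\phi\cdot\max_i(a_{i+1}^{(n)}-a_i^{(n)})$. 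Applying the first step to each term and using linearity of $\la\cdot,\cdot\ra$, the $\liminf$ and $\limsup$ of $e^{-\lambda_ct}\la f,X_t\ra$ are sandwiched between $\la f_n,\tphi\ra\mglX$ and $\la F_n,\tphi\ra\mglX$ on a set of full $P_{\mu}$-measure (the intersection over the countably many indicators $\phi|_{B_{n,i}}$ used). Since $f_n,F_n\le M\phi$ pointwise and $\la\phi,\tphi\ra=1<\infty$, dominated convergence gives $\la f_n,\tphi\ra,\la F_n,\tphi\ra\to\la f,\tphi\ra$ as $n\to\infty$, closing the sandwich.

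The main subtlety will be verifying $B_{n,i}\in\mathcal{A}$ in the case $\mathcal{A}=\B_0(D)$. There, $g=f/\phi$ is $\ell$-a.e.\ continuous (since $f\in C_{\ell}^+(D)$ and $\phi\in C^{2,\eta}(D)$), while at most countably many level sets $\{g=c\}$ carry positive Lebesgue measure; the $a_i^{(n)}$ can therefore be chosen outside this countable exceptional set, making
$$\partial B_{n,i}\subseteq\{g=a_i^{(n)}\}\cup\{g=a_{i+1}^{(n)}\}\cup\{\text{discontinuities of }g\}$$
$\ell$-null, as required.
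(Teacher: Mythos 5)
Your proof is correct, and it rests on exactly the two ingredients the paper's own proof uses: approximation of $f$ by $\phi$-weighted simple functions over sets in $\mathcal{A}$, and the identity $e^{-\lambda_c t}\la f,X_t\ra = c\,W_t^{\phi}(X)-e^{-\lambda_c t}\la c\phi-f,X_t\ra$ combined with the a.s.\ convergence of the nonnegative martingale $W_t^{\phi}(X)$ to upgrade the one-sided hypothesis to a two-sided limit. The only real difference is the order of operations. The paper applies the complement trick once, to the general test function with $c=\sup_x f(x)/\phi(x)$, so that it only ever needs a \emph{monotone} approximation $f_k\uparrow f$ from below (and the monotone convergence theorem); the $\limsup$ bound then falls out of the $\liminf$ bound applied to $c\phi-f$. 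You apply the complement trick at the level of indicators, $\phi=\phi|_B+\phi|_{D\setminus B}$, to get the full limit for each $\phi|_B$ first, and then need a two-sided sandwich $f_n\le f\le F_n$ with vanishing mesh; this is what forces you into the explicit level-set construction and the verification that the levels can be chosen with $B_{n,i}\in\B_0(D)$. That verification ($\ell$-a.e.\ continuity of $g=f/\phi$ plus countability of the levels $c$ with $\ell(\{g=c\})>0$) is the genuinely delicate point of the lemma in the case $\mathcal{A}=\B_0(D)$; the paper needs the same fact for its increasing approximation $f_k\in\simplefct$ and leaves it implicit, so your spelling it out is welcome. Two small repairs: the top cell of your partition must cover $\{g=a_n^{(n)}\}$ (take $a_n^{(n)}$ to be a strict upper bound for $g$, or close the last interval), since otherwise $f\le F_n$ fails on that set; and in the inequality $\limsup_t a_t\le\limsup_t(a_t+b_t)-\liminf_t b_t$ one should note that the right-hand side is well defined because $\mglX$ is a.s.\ finite.
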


\begin{proof}
We show only Part~(i); the proof of Part~(ii) is similar. Let $\simplefct=\{\sum_{i=1}^k c_i \phi|_{B_i}\colon k \in \N, c_i \in [0,\infty), B_i \in \mathcal{A}\}$ and $f \in \mathcal{A}^{\phi}$. 
There exists a sequence of functions $f_k\in \simplefct$ such that $0\le f_k \le f$ and $f_k\uparrow f$ pointwise. 
Using \eqref{liminf_general} and the monotone convergence theorem, we deduce that $P_{\mu}$-almost surely,
\[
\liminf_{\T \ni t \to \infty} e^{-\lambda_c t} \la f,X_t\ra \ge \sup_{k \in \N}\liminf_{\T \ni t \to \infty} e^{-\lambda_c t} \la f_k,X_t\ra\ge \sup_{k \in \N} \la f_k, \tphi \ra \mglX=\la f,\tphi\ra \mglX.
\]
Let $c=\sup_{x \in D} f(x)/\phi(x)$. Since $0 \le c \phi-f \le c\phi$, the same argument can be applied to $c\phi-f$ and we conclude that $P_{\mu}$-almost surely
\begin{align*}
\limsup_{\T \ni t \to \infty} e^{-\lambda_c t} \la f,X_t\ra&=\limsup_{\T \ni t \to \infty}\Big(cW_t^{\phi}(X)- e^{-\lambda_c t} \la c\phi-f,X_t\ra\Big)\le c\mglX- \liminf_{\T \ni t \to \infty}e^{-\lambda_c t} \la c\phi-f,X_t\ra\\
&\le c \la \phi,\tphi\ra \mglX- \la c\phi-f, \tphi \ra \mglX=\la f,\tphi\ra \mglX.\qedhere
\end{align*}
\end{proof}

In the next step, we use the branching property of the superprocess to restrict ourselves to compactly supported starting measures.

\begin{lemma}\label{lem:muComp}
Let $\T=[0,\infty)$ or $\T=\delta \N_0$ for some $\delta>0$, and let $\mathcal{A}^{\phi}=\{f \in C_{\ell}^+(D)\colon f/\phi \in b(D)\}$ or $\mathcal{A}^{\phi}=\{f \in p(D)\colon f/\phi \in b(D)\}$.
\begin{itemize}
\item[\rm{(i)}]
If for all $\mu \in \M_c(D)$ and $f \in \mathcal{A}^{\phi}$,
\begin{equation}\label{eq:muComp}
\lim_{\T \ni t \to \infty} e^{-\lambda_c t} \la f,X_t\ra = \la f,\tphi\ra \mglX \qquad P_{\mu}\text{-almost surely,}
\end{equation}
then \eqref{eq:muComp} holds for all $\mu \in \M_f^{\phi}(D)$. 
\item[\rm{(ii)}] If convergence \eqref{eq:muComp} holds in $L^1(P_{\mu})$ for all $\mu \in \M_c(D)$, then it holds for all $\mu \in \M_f^{\phi}(D)$.
\end{itemize}
\end{lemma}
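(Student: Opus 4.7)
The plan is to reduce the general case to the compactly supported case via the branching property of the superprocess. Fix $\mu \in \M_f^\phi(D)$ and choose an increasing sequence $B_n \subset\subset D$ with $\bigcup_n B_n = D$; for example $B_n=\{x\in D: d(x,\partial D)>1/n\}\cap B(0,n)$. Set $\mu^{(n)} := \mu|_{B_n} \in \M_c(D)$ and $\mu'^{(n)} := \mu - \mu^{(n)}\in \M_f^\phi(D)$. By dominated convergence (with majorant $\phi \in L^1(\mu)$), $\la \phi,\mu'^{(n)}\ra \downarrow 0$. By the branching property, on a suitable probability space we can construct $(X;P_\mu)$ as $X = X^{(n)} + X'^{(n)}$ where $X^{(n)},X'^{(n)}$ are independent, distributed as $(X;P_{\mu^{(n)}})$ and $(X;P_{\mu'^{(n)}})$ respectively. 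Correspondingly $W_\infty^{\phi}(X) = W_\infty^{\phi}(X^{(n)}) + W_\infty^{\phi}(X'^{(n)})$, using that each summand is a nonnegative martingale with a.s.\ limit.

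For part (ii), write
\[
P_\mu\bigl[|e^{-\lambda_c t}\la f,X_t\ra - \la f,\tphi\ra W_\infty^\phi(X)|\bigr] \le A_n(t) + B_n(t) + C_n,
\]
where $A_n(t) := P_{\mu^{(n)}}[|e^{-\lambda_c t}\la f,X_t\ra - \la f,\tphi\ra W_\infty^\phi(X)|]$, $B_n(t):= e^{-\lambda_c t}P_{\mu'^{(n)}}[\la f,X_t\ra]$, and $C_n := \la f,\tphi\ra\, P_{\mu'^{(n)}}[W_\infty^\phi(X)]$. The hypothesis gives $A_n(t)\to 0$ as $t\to\infty$ for each fixed $n$. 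Using $e^{-\lambda_c t}S_t\phi = \phi$ (so $S_tf \le \supnorm{f/\phi}\,e^{\lambda_c t}\phi$) together with \eqref{eq:opp}, we bound $B_n(t)\le \supnorm{f/\phi}\la\phi,\mu'^{(n)}\ra$, and by Fatou applied to the martingale $W_t^\phi(X)$, $C_n \le \supnorm{f/\phi}\la \phi,\tphi\ra \la\phi,\mu'^{(n)}\ra = \supnorm{f/\phi}\la\phi,\mu'^{(n)}\ra$. Hence $\limsup_{t\to\infty}P_\mu[\cdots] \le 2\supnorm{f/\phi}\la\phi,\mu'^{(n)}\ra$, and letting $n\to\infty$ gives $L^1$-convergence.

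For part (i), apply the hypothesis to get, for each fixed $n$, a $P_\mu$-null event off which $e^{-\lambda_c t}\la f,X_t^{(n)}\ra \to \la f,\tphi\ra W_\infty^\phi(X^{(n)})$ as $\T\ni t\to\infty$, and the nonnegative martingale $W_t^\phi(X'^{(n)})$ converges a.s.\ to $W_\infty^\phi(X'^{(n)})$. Since $0\le e^{-\lambda_c t}\la f,X_t'^{(n)}\ra \le \supnorm{f/\phi}W_t^\phi(X'^{(n)})$, on the intersection (over $n\in\N$) of these full-probability sets we have, for every $n$,
\[
\la f,\tphi\ra W_\infty^\phi(X^{(n)}) \le \liminf_{\T\ni t\to\infty}e^{-\lambda_c t}\la f,X_t\ra \le \limsup_{\T\ni t\to\infty}e^{-\lambda_c t}\la f,X_t\ra \le \la f,\tphi\ra W_\infty^\phi(X^{(n)}) + \supnorm{f/\phi}W_\infty^\phi(X'^{(n)}).
\]
Now $P_\mu[W_\infty^\phi(X'^{(n)})]\le \la\phi,\mu'^{(n)}\ra \to 0$ by Fatou and the martingale property, so $W_\infty^\phi(X'^{(n)})\to 0$ in $L^1$; extracting a subsequence $n_k$ along which $W_\infty^\phi(X'^{(n_k)})\to 0$ a.s., we also get $W_\infty^\phi(X^{(n_k)})\to W_\infty^\phi(X)$ a.s., and sandwiching as $k\to\infty$ yields the desired a.s.\ convergence.

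The only subtle point is bookkeeping: we need the a.s.\ convergence for the compactly supported starting measures $\mu^{(n)}$ to hold on a \emph{common} full-measure event for all $n\in\N$ simultaneously, which is fine since countable intersections of full-measure events have full measure. Passing to a subsequence is harmless because the upper and lower bounds in the sandwich are independent of $t$ and monotone enough in $n$ to let us take a limit.
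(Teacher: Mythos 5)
Your proof is correct and follows essentially the same route as the paper's: the same decomposition of $\mu$ into a compactly supported part and a small remainder via the branching property, the same Fatou/martingale estimate showing $W_{\infty}^{\phi}(X'^{(n)})\to 0$, and the same three-term bound for the $L^1$ statement. The only cosmetic difference is in part (i), where the paper invokes Lemma~\ref{lem:reduc}(i) to reduce to a one-sided $\liminf$ bound (and uses monotonicity in $n$ to avoid passing to a subsequence), while you run a two-sided sandwich directly; both work.
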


\begin{proof}
Let $\mu \in \M_f^{\phi}(D)$ and take a sequence of domains $B_k \subset\subset D$, $B_k \subseteq B_{k+1}$, with $D=\bigcup_{k=1}^{\infty} B_k$; $\hat{B}_k:=B_k\setminus B_{k-1}$, where $B_0:=\emptyset$. 
On a suitable probability space, let $\smash{X^{\hat{B}_k}}$, $k \in \N$, be independent $(L,\psi_{\beta};D)$-superprocesses, where $X^{\hat{B}_k}$ is started in $\mathbbm{1}_{\hat{B}_k} \mu$. 
By the branching property, $\smash{X^{B_k}:=\sum_{l=1}^k X^{\hat{B}_l}}$, $\smash{X^{D\setminus B_k}:=\sum_{l=k+1}^{\infty} X^{\hat{B}_l}}$ and 
$X:=X^{B_k}+X^{D \setminus B_k}$ are $(L,\psi_{\beta};D)$-superprocesses with starting measures $\mathbbm{1}_{B_k}\mu$, $\mathbbm{1}_{D \setminus B_k}\mu$ and $\mu$, respectively. In particular,
\[
W_t^{\phi}(X)=e^{-\lambda_c t} \la \phi,X_t^{B_k}+X_t^{D\setminus B_k}\ra = W_t^{\phi}(X^{B_k})+W_t^{\phi}(X^{D\setminus B_k}),
\]
and the martingale limits $W_{\infty}^{\phi}(X^{D \setminus B_k}):=\liminf_{t \to \infty}W_t^{\phi}(X^{D\setminus B_k})$, $k \in \N$, are decreasing in $k$. Fatou's Lemma yields
\[
P_{\mu}\big[W_{\infty}^{\phi}(X^{D \setminus B_k})\big]=P_{\mu}\big[\lim_{t \to \infty}W_{t}^{\phi}(X^{D \setminus B_k})\big]\le \liminf_{t \to \infty}P_{\mu}\big[W_{t}^{\phi}(X^{D \setminus B_k})\big]=\la \phi, \mathbbm{1}_{D \setminus B_k} \mu\ra.
\]
In particular, $\la \phi,\mu\ra <\infty$ implies that $(W_{\infty}^{\phi}(X^{D \setminus B_k})\colon k \in \N)$ converges to zero in $L^1(P_{\mu})$ as $k \to \infty$ and since the sequence is monotonically decreasing, almost sure convergence follows.
We conclude that $\lim_{k \to \infty} W_{\infty}^{\phi}(X^{B_k}) =\mglX$ almost surely and in $L^1(P_{\mu})$. 

By Lemma~\ref{lem:reduc}(i), for Part~(i) it suffices to show that for all $f \in \mathcal{A}^{\phi}$,
\[
\liminf_{\T \ni t \to \infty} e^{-\lambda_c t} \la f,X_t \ra \ge \la f,\tphi \ra \mglX \qquad P_{\mu}\text{-almost surely.}
\]
Since $\mathbbm{1}_{B_k} \mu \in \M_c(D)$, the assumption implies that for all $k \in \N$,
\begin{equation*}%\label{eq:DnPart}
\liminf_{t \to \infty} e^{-\lambda_c t} \la f,X_t \ra \ge \liminf_{t \to \infty} e^{-\lambda_c t} \la f,X_t^{B_k} \ra\ge \la f,\tphi\ra W_{\infty}^{\phi}(X^{B_k}) \qquad P_{\mu}\text{-almost surely,}
\end{equation*}
and taking $k\to \infty$ yields the claim.

To show Part~(ii), let $c=\sup_{x \in D}f(x)/\phi(x)$ and estimate for fixed $k \in \N$,
\begin{align*}
P_{\mu}\big[\big|e^{-\lambda_c t}& \pint{f,X_t}- \pint{f,\tphi} \mglX\big|\big] \\
&\le c P_{\mu}\big[e^{-\lambda_c t} \pint{\phi,X_t^{D\setminus B_k}}\big] + P_{\mu}\big[\big|e^{-\lambda_c t} \pint{f,X_t^{B_k}}- \pint{f,\tphi} W_{\infty}^{\phi}(X^{B_k})\big|\big]+ \pint{f,\tphi} P_{\mu}\big[W_{\infty}^{\phi}(X^{D \setminus B_k})\big].
\end{align*}
The second term on the right-hand tends to zero as $t \to \infty$  by assumption. The first term is equal to $c \pint{\phi,\mathbbm{1}_{D\setminus D_k} \mu}$ and, therefore, tends to zero as $k \to \infty$, and so does the third term.
\end{proof}
Let $\M(D)$ be the set of all $\sigma$-finite measures on $D$.

\begin{remark}\label{rem:sigmaFinMu}
The superprocess $X$ can be defined for starting measures $\mu \in \M(D)$ via the branching property (see also Section~1.4.4.1 in \cite{Dyn02}).
 The proof of Lemma~\ref{lem:muComp} then shows that \eqref{eq:muComp} for all $\mu \in \M_c(D)$ implies \eqref{eq:muComp} for all $\mu \in \M(D)$ with $\pint{\phi,\mu}<\infty$.
\end{remark}

Finally, we show that it suffices to consider fixed test functions. The argument
is borrowed from Chen and Shiozawa \cite[Theorem 3.7]{CheShi07}.

\begin{lemma}[{\bf Chen and Shiozawa \cite{CheShi07}}]\label{lem:omega0}
Let $\mu \in \M_f^{\phi}(D)$. If for every $B\in \B_0(D)$, $P_{\mu}$-almost surely, $\lim_{t\to \infty}e^{-\lambda_c t}\pint{\phi|_B,X_t}=\pint{\phi|_B,\tphi}\mglX$, then there exists a  measurable set $\Omega_0$ such that $P_{\mu}(\Omega_0)=1$ and, on $\Omega_0$, the convergence in \eqref{eq:SLLN2} holds for all $f \in C_{\ell}^+(D)$ with $f/\phi$ bounded.
\end{lemma}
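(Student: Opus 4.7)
The difficulty is that the hypothesis provides, a priori, an $f$-dependent null set. The plan is to choose a countable convergence-determining family of sets, intersect the corresponding null sets to form a universal $\Omega_0$, and then upgrade to arbitrary admissible $f$ via a Portmanteau-style argument.

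Fix a countable family $\mathcal{C}$ of open sets $B\subset\subset D$ with $\ell(\partial B)=0$ that forms a basis for the topology on $D$, is closed under finite unions, and contains an increasing exhaustion $U_k\uparrow D$; concretely, take all open balls with rational centres and rational radii whose closures lie in $D$, together with their finite unions. By hypothesis, for each $B\in\mathcal{C}$ there is a $P_\mu$-null set $N_B$ outside of which $e^{-\lambda_c t}\la\phi|_B,X_t\ra\to\la\phi|_B,\tphi\ra\mglX$; let also $N_W$ be a null set outside of which $W_t^\phi(X)\to\mglX$. Setting
\[
\Omega_0:=\Omega\setminus\Big(N_W\cup\bigcup_{B\in\mathcal{C}}N_B\Big),
\]
countability of $\mathcal{C}$ gives $P_\mu(\Omega_0)=1$.

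Fix $\omega\in\Omega_0$ and consider the finite Borel measures $\nu_t(dx):=e^{-\lambda_c t}\phi(x)X_t(dx)$ and $\nu_\infty(dx):=\mglX\,\phi(x)\tphi(x)\ell(dx)$ on $D$, so that $e^{-\lambda_c t}\la f,X_t\ra=\la f/\phi,\nu_t\ra$ for any nonnegative Borel $f$. By construction, $\nu_t(D)\to\nu_\infty(D)=\mglX$ (using $\la\phi,\tphi\ra=1$) and $\nu_t(B)\to\nu_\infty(B)$ for every $B\in\mathcal{C}$. Tightness of $(\nu_t)$ follows from $\nu_t(D\setminus U_k)=\nu_t(D)-\nu_t(U_k)\to\nu_\infty(D\setminus U_k)$, which is arbitrarily small for $k$ large. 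Vague convergence $\nu_t(g)\to\nu_\infty(g)$ for continuous compactly supported $g$ is then obtained by a Riemann-sum approximation of $g$ by simple functions built from sets in $\mathcal{C}$, exploiting the uniform continuity of $g$ on its compact support. Combining vague convergence with tightness gives $\nu_t\to\nu_\infty$ weakly.

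For any $f\in C_\ell^+(D)$ with $f/\phi$ bounded, set $g:=f/\phi$; then $g$ is bounded and $\ell$-a.e.\ continuous, hence $\nu_\infty$-a.e.\ continuous since $\nu_\infty\ll\ell$. The extended Portmanteau theorem applied to $\nu_t\to\nu_\infty$ then delivers $\nu_t(g)\to\nu_\infty(g)$, i.e.\ $e^{-\lambda_c t}\la f,X_t\ra\to\la f,\tphi\ra\mglX$ on $\Omega_0$, for every such $f$ simultaneously. The main technical obstacle is the upgrade from $\mathcal{C}$-convergence to weak convergence of $\nu_t$: the key inputs are tightness from total-mass convergence together with the exhaustion $U_k\uparrow D$, and the density of $\mathcal{C}$-simple functions in $C_c(D)$; any atoms that $X_t$ might carry on the $\ell$-null discontinuity set of $f$ are absorbed by enclosing that set in an open $\mathcal{C}$-set of arbitrarily small $\nu_\infty$-mass, using regularity of $\nu_\infty$ and the Portmanteau bound $\limsup_t\nu_t(\bar U)\le\nu_\infty(\bar U)$.
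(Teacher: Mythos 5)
Your proof is correct and follows essentially the same route as the paper's: a countable base of $\B_0(D)$ closed under finite unions yields a universal full-measure set $\Omega_0$, convergence on that base plus convergence of total masses upgrades to weak convergence of the rescaled measures, and the extended Portmanteau theorem (using $\nu_\infty\ll\ell$) handles bounded, $\ell$-a.e.\ continuous $g=f/\phi$. The only cosmetic differences are that the paper normalises by $\mglX$ on $\{\mglX>0\}$ and treats $\{\mglX=0\}$ separately, whereas you work with the unnormalised finite measures and absorb that case implicitly.
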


\begin{proof}
Take a countable base $(B_k)_{k \in \N}$ of $\B_0(D)$ which is closed under finite unions and let
\[
\Omega_0=\Big\{\lim_{t \to \infty} e^{-\lambda_c t} \la \phi|_{B_k} ,X_t \ra= \la \phi|_{B_k},\tphi\ra \mglX \; \text{for all } k \in \N\Big\}.
\] 
Then $P_{\mu}(\Omega_0)=1$ by assumption. On $\{\mglX=0\}$, convergence \eqref{eq:SLLN} trivially holds for all $f \in p(D)$ with $f/\phi$ bounded. On $\{\mglX>0\}\cap\Omega_0$, we define
\[
\chi_t(A):= e^{-\lambda_c t} \frac{\la \phi|_A,X_t\ra}{\mglX} \qquad\text{and} \qquad \chi(A)=\la\phi|_A,\tphi\ra, \qquad \text{for all } A \in \B(D). 
\]
Then $\chi_t$ converges vaguely to $\chi$ as $t \to \infty$ and, since $\lim_{t\to \infty}\chi_t(D)=\chi(D)=1$, the convergence holds also in the weak sense (cf.\ Theorem 13.35 in \cite{Kl08}). 
For every $f\in C_{\ell}^+(D)$ with $f/\phi$ bounded, $g:=f/\phi \in bp(D)$ is $\ell$-almost everywhere continuous and, since $\chi $ is absolutely continuous with respect to $\ell$, $\lim_{t \to \infty} \pint{g,\chi_t}=\pint{g,\chi}$, which is equivalent to
\[
\lim_{t \to \infty} e^{-\lambda_c t} \la f,X_t\ra=\mglX \la f,\tphi\ra \qquad \text{on } \Omega_0 \cap \{\mglX>0\}.\qedhere
\]
\end{proof}

\subsection{Martingale limits}\label{mg_limits}

In this section, we prove Proposition~\ref{pro:mglim}, that is, we show that the martingale limits for the superprocess and its skeleton agree almost surely.
We assume only Assumptions~\ref{as:skeleton} and~\ref{as:criticality} throughout this section.
Recall from \eqref{eq:opp_star} that $(S_t^*)_{t\ge 0}$ denotes the expectation semigroup of $X^*$

\begin{lemma}\label{lem:ext_lim}
Let $f \in p(D)$ with $f/\phi$ bounded. For all $x \in D$,
\[
\theta_t^*(x):=e^{-\lambda_c t} S_t^*f(x)/\phi(x) \to 0 \qquad \text{as }t \to \infty,
\]
and for $t>0$, the function $x\mapsto \theta_t^*(x)$ is continuous. Moreover, $\theta_t^*(x)$ is uniformly bounded in $t$ and $x$, and, if $f=\phi$, $\theta_t^*(x)$ is non-increasing in $t$.
\end{lemma}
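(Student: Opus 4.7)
The strategy is to rewrite $\theta_t^*$ as a Feynman--Kac expression under the spine measure $\P_\cdot^\phi$ from Section~\ref{sec:ProL1}, from which all four claims will follow. Define
\[
g(x) := 2\alpha(x) w(x) + \int_{(0,\infty)} (1-e^{-w(x) y})\, y\, \Pi(x,dy) \ge 0,
\]
so that \eqref{eq:defSstar} gives $\beta^* = \beta - g$ and hence $S_t^* f(x) = \P_x[f(\xi_t) \exp(\int_0^t \beta(\xi_s)\,ds - \int_0^t g(\xi_s)\,ds)]$. Combining this with the change-of-measure formula \eqref{eq:Sphi} defining $\P_x^\phi$, applied to the path functional $(f/\phi)(\xi_t) \exp(-\int_0^t g(\xi_s)\,ds)$, will yield the key identity
\begin{equation*}
\theta_t^*(x) = \P_x^\phi\Big[\frac{f(\xi_t)}{\phi(\xi_t)} \exp\Big(-\int_0^t g(\xi_s)\,ds\Big)\Big].
\end{equation*}

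Three of the four claims then follow immediately. First, since $g \ge 0$ and $f/\phi$ is bounded, $\theta_t^*(x) \le \supnorm{f/\phi}$ uniformly in $t$ and $x$. Second, for $f = \phi$ the integrand $\exp(-\int_0^t g(\xi_s)\,ds)$ is pathwise non-increasing in $t$, whence $\theta_t^*$ is non-increasing. Third, continuity of $x \mapsto \theta_t^*(x)$ for $t > 0$ will be deduced from standard parabolic regularity: $\beta^*$ is locally bounded by \eqref{eq:wlocbd}, the coefficients of $L$ lie in $C^{1,\eta}$, so the Feynman--Kac semigroup $S_t^*$ is strong Feller and $S_t^* f$ is continuous in $x$ for any bounded measurable $f$; dividing by the strictly positive $\phi \in C^{2,\eta}$ preserves continuity.

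The main point is the convergence $\theta_t^*(x) \to 0$. By the representation and bounded convergence (the integrand is bounded by $\supnorm{f/\phi}$), it suffices to show that $\int_0^t g(\xi_s)\,ds \to \infty$ $\P_x^\phi$-almost surely. By Remark~\ref{rem:detCase}, Assumption~\ref{as:skeleton} rules out the degenerate case $\ell(\{\alpha + \Pi(\cdot,(0,\infty))>0\})=0$; combined with $w > 0$ on $D$, this forces $g > 0$ on a set of positive Lebesgue measure. Since $\phi,\tphi > 0$, there exists $K > 0$ with $\pint{g \wedge K,\phi\tphi}>0$. Since $(\xi;\P_x^\phi)$ is positive recurrent with stationary density $\phi\tphi$, the individual ergodic theorem yields
\[
\frac{1}{t}\int_0^t (g \wedge K)(\xi_s)\,ds \to \pint{g \wedge K,\phi\tphi} > 0 \qquad \P_x^\phi\text{-a.s.,}
\]
and monotonicity $g \ge g \wedge K$ then gives $\int_0^t g(\xi_s)\,ds \to \infty$ as required. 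The main obstacle is precisely the positivity of the ergodic limit $\pint{g,\phi\tphi}$, which is secured by the non-degeneracy coming through Remark~\ref{rem:detCase}; everything else is routine Feynman--Kac manipulation and ergodic theory for the spine.
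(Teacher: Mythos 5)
Your proposal is correct and follows essentially the same route as the paper: the identity $\theta_t^*(x)=\P_x^{\phi}[(f/\phi)(\xi_t)\exp(-\int_0^t(\beta-\beta^*)(\xi_s)\,ds)]$, continuity via regularity of the Feynman--Kac semigroup, and convergence to zero via the ergodic theorem for the spine applied to a truncation of $\beta-\beta^*$, with positivity of the ergodic average secured by Remark~\ref{rem:detCase}. The only cosmetic difference is that the paper first reduces to $f=\phi$ (where monotonicity gives the limit by dominated convergence) while you apply bounded convergence directly for general $f$; both are fine.
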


\begin{proof}
Let $c=\sup_{x\in D}f(x)/\phi(x)$. By \eqref{eq:defSstar} and \eqref{eq:Sphi}, for all $(x,t) \in D\times [0, \infty)$,
\[
0 \le \theta_t^*(x) = \P_x^{\phi}\Big[ e^{\int_0^t [\beta^*(\xi_s)-\beta(\xi_s)]\, ds} f(\xi_t)/\phi(\xi_t)\Big]\le c e^{-\lambda_c t}S_t^*\phi(x)/\phi(x).
\]
Since $\beta^*-\beta \le \mathbf{0}$ pointwise, Theorem~7.2.4 in \cite{StrVar79} (see also Theorem~4.9.7 in \cite{Pin95}) implies that $\theta_t^*$ is continuous for $t>0$, and $e^{-\lambda_c t}S_t^*\phi(x)/\phi(x)$ is non-increasing in $t$. 
The dominated convergence theorem yields
\[
\lim_{t \to \infty}e^{-\lambda_c t}S_t^*\phi(x)/\phi(x)= \P_x^{\phi}\Big[\exp\Big(\int_0^{\infty} [\beta^*(\xi_s)-\beta(\xi_s)] \, ds\Big)\Big].
\]
By Assumption~\ref{as:criticality}, the diffusion $(\xi=(\xi_t)_{t \ge 0};\P_x^{\phi})$ is positive recurrent and Theorem 4.9.5(ii) in \cite{Pin95} yields
\[
\lim_{t \to \infty} \frac{1}{t} \int_0^t \min\{\beta(\xi_s)-\beta^*(\xi_s),1\} \, ds= \bigpint{\hspace{-0.08cm}\min\{\beta-\beta^*,1\}, \phi\tphi} >0\qquad \P_x^{\phi}\text{-almost surely,}
\]
where the limit is positive since $\ell(\{x \in D\colon \alpha(x)+\Pi(x,(0,\infty))>0\})>0$ by Remark~\ref{rem:detCase}. 
Hence, $\int_0^{\infty} [\beta^*(\xi_s)-\beta(\xi_s)] \, ds =-\infty$ $\P_x^{\phi}$-almost surely  and the claim is established.
\end{proof}

\begin{lemma}\label{lem:bb_meas}
For ${\rm p}\ge 1$, $f \in p(D)$, $x \in D$ and $t \ge 0$,
\[
\bbP_{\bullet,\delta_x}\big[\la f,I_t\ra^{\rm p}\big]\le w(x)^{-1}\bbP_{\delta_x}\big[\la f,I_t\ra^{\rm p}\big],
\]
where the inequality is an equality in the case ${\rm p}=1$.
\end{lemma}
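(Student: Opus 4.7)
The idea is to exploit two structural facts about the law of $I_t$ that are already encoded in Theorem~\ref{thm:KPR}. First, under $\bbP_{\delta_x}$ the initial configuration of the skeleton $Z_0$ is a Poisson random measure with intensity $w(x)\delta_x$, so it is of the form $N_0\delta_x$ with $N_0$ a Poisson random variable of mean $w(x)$. Second, property~(iii)(a) of Theorem~\ref{thm:KPR} says that the Laplace functional of $I_t$ under $\bbP_{\bullet,\sum_i\delta_{x_i}}$ factorises over the initial particles, which pins down the distribution: if $Y_1,Y_2,\ldots$ denote i.i.d.\ copies of the random variable $\la f,I_t\ra$ under $\bbP_{\bullet,\delta_x}$, then
\[
\big(\la f,I_t\ra;\bbP_{\delta_x}\big)\;\overset{d}{=}\;\sum_{i=1}^{N_0}Y_i,
\]
with $N_0\sim\mathrm{Poisson}(w(x))$ independent of $(Y_i)_{i\ge 1}$.

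With this compound-Poisson representation in hand, the proof reduces to a simple moment estimate. Conditioning on $N_0$ gives
\[
\bbP_{\delta_x}\big[\la f,I_t\ra^{\rm p}\big]=\sum_{n=0}^{\infty}\frac{e^{-w(x)}w(x)^n}{n!}\;\E\Big[\Big(\sum_{i=1}^n Y_i\Big)^{\rm p}\Big].
\]
Since $Y_i\ge 0$ and ${\rm p}\ge 1$, the elementary inequality $(a+b)^{\rm p}\ge a^{\rm p}+b^{\rm p}$ (which follows from $(1+x)^{\rm p}\ge 1+x^{\rm p}$ for $x\ge 0$) iterates to give $(\sum_{i=1}^nY_i)^{\rm p}\ge\sum_{i=1}^n Y_i^{\rm p}$, so $\E[(\sum_{i=1}^nY_i)^{\rm p}]\ge n\,\E[Y_1^{\rm p}]$. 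Substituting and using $\E[N_0]=w(x)$ yields
\[
\bbP_{\delta_x}\big[\la f,I_t\ra^{\rm p}\big]\ge\E[Y_1^{\rm p}]\sum_{n=0}^{\infty}n\,\frac{e^{-w(x)}w(x)^n}{n!}=w(x)\,\bbP_{\bullet,\delta_x}\big[\la f,I_t\ra^{\rm p}\big],
\]
which is the claimed inequality after dividing by $w(x)$.

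For ${\rm p}=1$ the superadditivity step becomes an equality by linearity of expectation (Wald's identity), so the inequality is saturated and we obtain $\bbP_{\delta_x}[\la f,I_t\ra]=w(x)\,\bbP_{\bullet,\delta_x}[\la f,I_t\ra]$. There is no real obstacle here: the only point that deserves a careful sentence is the passage from the Laplace-transform identity~(iii)(a) to the distributional identity used above, because one needs to know the Laplace transforms determine the law on $[0,\infty]$-valued random variables (which is standard, applied to the possibly unbounded test function $f$ after a truncation argument together with monotone convergence, mirroring the extension of \eqref{eq:logLapExit} to general $f\in p(D)$ that was already carried out in Section~\ref{sec:skeleton}).
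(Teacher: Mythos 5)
Your proof is correct and is essentially the paper's own argument: the paper decomposes $I_t=\sum_{i=1}^{N_0}I_t^{i,0}$ pathwise via Notation~\ref{not:I} (with $N_0\sim\mathrm{Poisson}(w(x))$ and the $I^{i,0}$ conditionally i.i.d.\ copies of $(I;\bbP_{\bullet,\delta_x})$), applies the same superadditivity $(\sum_i a_i)^{\rm p}\ge\sum_i a_i^{\rm p}$ for nonnegative summands, and concludes with $\bbP_{\delta_x}[N_0]=w(x)$, with equality for ${\rm p}=1$ by Wald. The only cosmetic difference is that you re-derive the compound-Poisson representation from the Laplace-functional factorisation in Theorem~\ref{thm:KPR}(iii)(a) rather than citing the pathwise subtree decomposition directly.
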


\begin{proof}
Using Notation~\ref{not:I}, $I_t=\sum_{i=1}^{N_0} I_t^{i,0}$, where under $\bbP_{\delta_x}$, $N_0$ is Poisson distributed with mean $w(x)$, $\bbF_0$-measurable, and $(I_t^{i,0};\bbP_{\delta_x}(\cdot|\bbF_0))$ is equal in distribution to $(I_t;\bbP_{\bullet, \delta_x})$. Using the monotonicity of the $\ell^{\rm p}$-norm, we derive
\begin{align*}
\bbP_{\delta_x}[\pint{f,I_t}^{\rm p}] = \bbP_{\delta_x}\Big[\Big(\sum_{i=1}^{N_0} \pint{f,I_t^{i,0}}\Big)^{\rm p}\Big] \ge \bbP_{\delta_x}\Big[\sum_{i=1}^{N_0} \pint{f,I_t^{i,0}}^{\rm p}\Big] =\bbP_{\delta_x}\big[N_0 \bbP_{\bullet, \delta_x}[\pint{f,I_t}^{\rm p}]\big]= w(x) \bbP_{\bullet,\delta_x}[\pint{f,I_t}^{\rm p}].
\end{align*}
For ${\rm p}=1$ the inequality is an equality. Rearranging terms concludes the proof.
\end{proof}

We now come to the main part of this section. First, we employ the skeleton decomposition to compute the conditional expectation of $\pint{ f,X_{s+t}}$. 

\begin{proposition}\label{pro:bb_mean}
For all $\mu \in \M_f^{\phi}(D)$, $f \in p(D)$ with $f/\phi$ bounded, and $s,t\ge 0$,
\[
\bbP_{\mu}\big[\la f,X_{s+t}\ra|\bbF_t\big]=\la S_s^* f,X_t\ra+\Big\la \frac{S_sf}{w},Z_t\Big\ra -\Big\la \frac{S_s^*f}{w},Z_t \Big\ra \qquad \bbP_{\mu}\text{-almost surely.}
\]
\end{proposition}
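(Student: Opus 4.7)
The plan is to decompose $\pint{f,X_{s+t}}$ via the skeleton decomposition \eqref{eq:bb} and exploit the conditional independence structure described in Notation~\ref{not:I}. Writing
\[
X_{s+t} = X_{s+t}^{*} + I_s^{*,t} + \sum_{i=1}^{N_t} I_s^{i,t},
\]
and recalling that, conditional on $\bbF_t$, the process $(X_{s+t}^{*} + I_s^{*,t})$ is distributed as $X_s$ under $\bbP_{X_t,\bullet}$, while the $I_s^{i,t}$ are conditionally independent with $I_s^{i,t}$ distributed as $I_s$ under $\bbP_{\bullet,\delta_{\xi_i(t)}}$, taking $\bbP_{\mu}[\,\cdot\,|\bbF_t]$ yields
\[
\bbP_{\mu}[\pint{f,X_{s+t}}|\bbF_t] = \bbP_{X_t,\bullet}[\pint{f,X_s^{*}}] + \sum_{i=1}^{N_t} \bbP_{\bullet,\delta_{\xi_i(t)}}[\pint{f,I_s}] \qquad \bbP_{\mu}\text{-a.s.}
\]

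Next I would identify each term on the right. The first equals $\pint{S_s^{*}f, X_t}$ by the first moment formula \eqref{eq:opp_star}, extended from $bp(D)$ to $f \in p(D)$ by monotone convergence. For the second, Lemma~\ref{lem:bb_meas} with $\mathrm{p}=1$ gives the equality $\bbP_{\bullet,\delta_x}[\pint{f,I_s}] = w(x)^{-1}\bbP_{\delta_x}[\pint{f,I_s}]$. Under $\bbP_{\delta_x}$ the identity $X_s = X_s^{*} + I_s$ together with Theorem~\ref{thm:KPR}(iii)(c) gives
\[
\bbP_{\delta_x}[\pint{f,I_s}] = \bbP_{\delta_x}[\pint{f,X_s}] - \bbP_{\delta_x}[\pint{f,X_s^{*}}] = S_s f(x) - S_s^{*}f(x),
\]
where the second equality uses \eqref{eq:opp} and \eqref{eq:opp_star} (again extended by monotone convergence) together with $(X;\bbP_{\delta_x})\overset{d}{=}(X;P_{\delta_x})$. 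Summing over $i=1,\dots,N_t$ produces $\pint{(S_s f - S_s^{*}f)/w, Z_t}$, and adding this to the first term gives exactly the claimed formula.

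The only substantive point to verify is integrability, so that the conditional expectations make sense and the rearrangement is legitimate. Since $f/\phi$ is bounded by some constant $c$, the invariance relation $e^{-\lambda_c t}S_t\phi = \phi$ (Section~\ref{sec:ProL1}) yields $S_s f \le c e^{\lambda_c s}\phi$, and as $\beta^{*} \le \beta$ also $S_s^{*}f \le c e^{\lambda_c s}\phi$. Hence $(S_s f - S_s^{*}f)/w \le c e^{\lambda_c s}\phi/w$, and Corollary~\ref{cor:mg} gives $\bbP_{\mu}[\pint{\phi/w, Z_t}] = e^{\lambda_c t}\pint{\phi,\mu} < \infty$ for $\mu \in \M_f^{\phi}(D)$, so the inner product against $Z_t$ is finite $\bbP_{\mu}$-a.s. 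Similarly $\pint{S_s^{*}f,X_t}$ is controlled by $c e^{\lambda_c s}\pint{\phi,X_t}$, which is finite $\bbP_{\mu}$-a.s. by the martingale property of $W_t^{\phi}(X)$. No further obstacle is anticipated: the proof is essentially a bookkeeping exercise on top of the skeleton decomposition and the moment identities already established.
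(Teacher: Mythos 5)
Your proposal is correct and follows essentially the same route as the paper's proof: decompose $X_{s+t}$ via \eqref{eq:bb}, use the conditional distributional identities of Notation~\ref{not:I}, apply Lemma~\ref{lem:bb_meas} with ${\rm p}=1$ together with $X_s=X_s^*+I_s$, and invoke the first moment formulas \eqref{eq:opp} and \eqref{eq:opp_star}. Your integrability check via $S_sf\le ce^{\lambda_c s}\phi$ is a slightly more explicit version of the paper's brief remark that $\bbP_{\delta_{\xi_i(t)}}[\la f,X_s\ra]$ is finite.
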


\begin{proof}
Using Notation~\ref{not:I} and \eqref{eq:opp_star}, we have $\pint{f,X_{s+t}}=\pint{f,X_{s+t}^*+I_s^{*,t}}+\sum_{i=1}^{N_t} \pint{f,I_s^{i,t}}$, where $(X_{s+t}^*+I_s^{*,t};\bbP_{\mu}(\cdot|\bbF_t))$ is equal in distribution to $(X_s^*;\bbP_{X_t})$ and $(I_s^{i,t};\bbP_{\mu}(\cdot|\bbF_t))$ to $(I_s;\bbP_{\bullet,\delta_{\xi_i(t)}})$, $i=1,\ldots,N_t$. Hence, $\bbP_{\mu}$-almost surely,
\begin{equation}\label{eq:bb_cond}
\bbP_{\mu}[\la f,X_{s+t}\ra|\bbF_t]=\bbP_{\mu}\Big[\pint{f,X_{s+t}^*+I_s^{*,t}}+\sum_{i=1}^{N_t} \pint{f,I_s^{i,t}} \Big|\bbF_t\Big]=\bbP_{X_t}[\la f,X_s^*\ra]+\sum_{i=1}^{N_t}\bbP_{\bullet,\delta_{\xi_i(t)}}[\la f,I_{s}\ra].
\end{equation}
The first summand on the right can be rewritten using \eqref{eq:opp_star} to obtain $\bbP_{X_t}[\la f,X_s^*\ra]=\la S_s^*f, X_t\ra$. 
For the second summand, we use Lemma~\ref{lem:bb_meas} and Theorem~\ref{thm:KPR}~(d) to derive
\[
\sum_{i=1}^{N_t} \bbP_{\bullet,\delta_{\xi_i(t)}}[\la f,I_s\ra]=\sum_{i=1}^{N_t} w(\xi_i(t))^{-1} \bbP_{\delta_{\xi_i(t)}}[\la f,X_s-X_s^*\ra].
\]
Since $f/\phi$ is bounded and $\mu \in \M_f^{\phi}(D)$, $\bbP_{\delta_{\xi_i(t)}}[\la f,X_s\ra]$ is finite, and \eqref{eq:opp} and \eqref{eq:opp_star} yield
\begin{equation}\label{eq:Eimmi}
\bbP_{\mu}\Big[\sum_{i=1}^{N_t} \pint{f,I_s^{i,t}} \Big|\bbF_t\Big]=\sum_{i=1}^{N_t} \bbP_{\bullet,\delta_{\xi_i(t)}}[\la f,I_s\ra] = \Big\la \frac{S_sf}{w},Z_t\Big\ra- \Big\la \frac{S_s^*f}{w},Z_t\Big\ra \quad \bbP_{\mu}\text{-almost surely}
\end{equation}
as required.\end{proof}

%Now everything is prepared for the proof of Proposition~\ref{pro:mglim}.

\begin{proof}[Proof of Proposition~\ref{pro:mglim} and Theorem~\ref{thm:SLLN_gen}(i)] 
Proposition~\ref{pro:bb_mean} yields, $\bbP_{\mu}$-almost surely,
\begin{equation}\label{eq:CondExpMgX}
\bbP_{\mu}\big[W_{s+t}^{\phi}(X)|\bbF_t\big]=e^{-\lambda_c t}\la e^{-\lambda_c s} S_s^{*}\phi,X_t\ra+e^{-\lambda_c t}\Big\la e^{-\lambda_c s}\frac{S_s\phi}{w},Z_t\Big\ra -e^{-\lambda_ct}\Big\la e^{-\lambda_c s} \frac{S_s^*\phi}{w},Z_t \Big\ra,
\end{equation}
and we are interested in the limit as $s \to \infty$. The first and last summand tend to zero $\bbP_{\mu}$-almost surely by Lemma~\ref{lem:ext_lim} and the dominated convergence theorem. 
The second summand is independent of $s$ since $e^{-\lambda_c s} S_s\phi=\phi$. Hence, the right-hand side of \eqref{eq:CondExpMgX} converges to $W_t^{\phi/w}(Z)$. 
By Theorem~\ref{thm:Lp}, $((W_t^{\phi}(X))_{t \ge 0}; {\bf{P_{\mu}}})$ is an $L^p$-bounded martingale and we can interchange on the left-hand side of \eqref{eq:CondExpMgX} the limit $s \to \infty$ with the integration to obtain
\begin{equation}\label{eq:ZasCondExp}
\bbP_\mu\big[\mglX|\bbF_t\big]=\lim_{s \to \infty} \bbP_\mu\big[W_{s+t}^{\phi}(X)|\bbF_t\big]=W_t^{\phi/w}(Z) \qquad \bbP_{\mu}\text{-almost surely.}
\end{equation}
Since $\mglX$ is measurable with respect to $\bbF_{\infty}=\sigma(\bigcup_{t \ge 0} \bbF_t)$, \eqref{eq:mglimAgree} follows by taking $t \to \infty$ in \eqref{eq:ZasCondExp}. 
Moreover, \eqref{eq:ZasCondExp} shows that $(W_t^{\phi/w}(Z))_{t\ge 0}$ is a uniformly integrable martingale and sine $\mglX=\mglZ$ is in $L^p(\bbP_{\mu})$, $L^p$-boundedness of $(W_t^{\phi/w}(Z))_{t\ge 0}$ follows by Jensen's inequality.
\end{proof}

\subsection{Convergence in $L^1(P_{\mu})$}\label{sec:L1Conv}

In this section, we prove the WLLN in the form of Theorem~\ref{thm:L1}. We suppose that Assumptions~\ref{as:skeleton}, \ref{as:criticality} and \eqref{alpha_cond}--\eqref{phi_p_cond} hold and begin with an $L^p$-estimate for the immigration that occurred after a large time $t$. Recall Notations~\ref{not:Z} and \ref{not:I}.

\begin{proposition}\label{pro:Lpsum}
For every $\mu \in \M_c(D)$ and $f \in p(D)$ with $f/\phi$ bounded, there exists a time $T>0$ and a constant $C_3\in (0,\infty)$ such that for all $s \ge 0$, $t\ge T$,
\[
e^{-\lambda_c  p(s+t) }\bbP_{\mu}\Big[\Big|\sum_{i=1}^{N_{t}} \Big(\la f,I_{s}^{i,t}\ra-\bbP_{\bullet,\delta_{\xi_i(t)}}[\la f,I_{s}\ra]\Big)\Big|^p\Big] \le C_3 e^{-\lambda_c  (p-1)t}.
\]
\end{proposition}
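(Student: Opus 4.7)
The plan is to condition on $\bbF_t$ and exploit the conditional independence of the immigration subtrees. By Notation~\ref{not:I}, given $\bbF_t$ the measures $(I_s^{i,t})_{i=1}^{N_t}$ are independent with $(I_s^{i,t}; \bbP_\mu(\cdot \mid \bbF_t))$ equal in distribution to $(I_s; \bbP_{\bullet,\delta_{\xi_i(t)}})$. Setting $Y_i := \la f, I_s^{i,t}\ra - \bbP_{\bullet,\delta_{\xi_i(t)}}[\la f,I_s\ra]$, these are conditionally independent and centred. Since $p \in (1,2]$, the von~Bahr--Esseen inequality yields
\[
\bbP_\mu\Big[\Big|\sum_{i=1}^{N_t} Y_i\Big|^p \,\Big|\, \bbF_t\Big] \le 2 \sum_{i=1}^{N_t} \bbP_{\bullet,\delta_{\xi_i(t)}}\big[|\la f, I_s\ra - \bbP_{\bullet,\delta_{\xi_i(t)}}[\la f, I_s\ra]|^p\big].
\]

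Each summand must now be bounded in terms of moments of $X$. For $p \ge 1$ and any random variable $Y \ge 0$, the convexity estimate $(a+b)^p \le 2^{p-1}(a^p + b^p)$ and Jensen's inequality give $\E[|Y-\E Y|^p] \le 2^p \E[|Y|^p]$, so the inner expectation above is at most $2^p \bbP_{\bullet,\delta_x}[\la f,I_s\ra^p]$ evaluated at $x = \xi_i(t)$. Lemma~\ref{lem:bb_meas} then yields $\bbP_{\bullet,\delta_x}[\la f,I_s\ra^p] \le w(x)^{-1} \bbP_{\delta_x}[\la f,I_s\ra^p]$. Because $X_s = X_s^* + I_s$ with $X_s^*, I_s$ nonnegative measures and $f \ge 0$, we have $\la f,I_s\ra \le \la f,X_s\ra \le c \la \phi,X_s\ra$ with $c := \sup_{x\in D} f(x)/\phi(x) < \infty$, hence $\bbP_{\delta_x}[\la f,I_s\ra^p] \le c^p e^{\lambda_c p s} P_{\delta_x}[W_s^\phi(X)^p]$.

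Combining and taking the outer expectation gives
\[
\bbP_\mu\Big[\Big|\sum_{i=1}^{N_t} Y_i\Big|^p\Big] \le 2^{p+1} c^p e^{\lambda_c p s}\, \bbP_\mu\Big[\Big\la \frac{P_{\delta_{\cdot}}[W_s^\phi(X)^p]}{w}, Z_t\Big\ra\Big].
\]
I would apply the Many-to-One Lemma \eqref{eq:mtoZ} with $g(x) := \phi(x)^{-1} P_{\delta_x}[W_s^\phi(X)^p]$ (so that $(\phi/w)g$ is the integrand above) and then rewrite using the measure $\P_{\phi\mu}^\phi$ from \eqref{eq:Pphimu} to obtain
\[
\bbP_\mu\Big[\Big\la \frac{P_{\delta_{\cdot}}[W_s^\phi(X)^p]}{w}, Z_t\Big\ra\Big] = e^{\lambda_c t} \la \phi,\mu\ra\, \P_{\phi\mu}^\phi\big[\phi(\xi_t)^{-1} P_{\delta_{\xi_t}}[W_s^\phi(X)^p]\big].
\]
Lemma~\ref{lem:mg_moments} furnishes $T > 0$ and $C_2$ bounding the right-hand factor by $C_2$ for all $t \ge T$, uniformly in $s \ge 0$. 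Multiplying through by $e^{-\lambda_c p(s+t)}$ collapses the $s$-dependence and produces the factor $e^{-\lambda_c(p-1)t}$ with $C_3 := 2^{p+1} c^p C_2 \la \phi,\mu\ra$.

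The delicate part is the uniformity in $s$: the individual $p$-th moments $P_{\delta_x}[W_s^\phi(X)^p]$ are only controlled because $(W_s^\phi(X))_{s\ge 0}$ is $L^p$-bounded (Theorem~\ref{thm:Lp}, relying on \eqref{alpha_cond}--\eqref{as:PiUpperTail}), and the integrability of $\phi(\xi_t)^{p-1}$ against the stationary distribution via \eqref{eq:ergodic} requires exactly the technical assumption \eqref{phi_p_cond}. The use of the von~Bahr--Esseen inequality rather than a second-moment expansion is essential because the exponent $p$ is in general non-integer, precluding any direct exploitation of independence via a variance computation.
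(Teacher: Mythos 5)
Your proposal is correct and follows essentially the same route as the paper: conditioning on $\bbF_t$, a von Bahr--Esseen/Biggins-type $p$-th moment inequality for the conditionally independent centred terms, Lemma~\ref{lem:bb_meas} together with $I_s\le X_s$ and $f\le c\phi$, the Many-to-One Lemma \eqref{eq:mtoZ}, and finally Lemma~\ref{lem:mg_moments}. The only cosmetic difference is in how the centring is absorbed (and hence the constant), which is immaterial.
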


\begin{proof}
For $\mu\equiv 0$ the claim is trivial. Let $\mu \not\equiv 0$. It was noted in \cite[Lemma 1]{Big92} that for $p \in [1,2]$, $n \in \N$ and $(Y_i\colon i\in \{1,\ldots,n\})$ independent, centered random variables (or martingale differences)
\[
P\Big[\Big|\sum_{i=1}^n Y_i\Big|^p\Big]\le 2^p \sum_{i=1}^n P\big[|Y_i|^p\big].
\]
For $s,t \ge 0$, we first apply this inequality to $\bbP_{\mu}[\cdot|\bbF_t]$, $n=N_t$ and $Y_i= \la f,I_{s}^{i,t} \ra-\bbP_{\bullet,\delta_{\xi_i(t)}}[\la f,I_{s} \ra]$, 
and then use $|x-y|^p \le x^p+y^p$ for $x,y \ge 0$, $(I_s^{i,t};\bbP_{\mu}(\cdot|\bbF_t))\overset{d}{=}(I_s;\bbP_{\bullet,\delta_{\xi_i(t)}})$ and Jensen's inequality to obtain
\begin{align*}
\bbP_{\mu}\Big[\Big|\sum_{i=1}^{N_{t}} \Big(\la f,I_{s}^{i,t}\ra-\bbP_{\bullet,\delta_{\xi_i(t)}}[\la f,I_{s}\ra]\Big)\Big|^p\Big|\bbF_t\Big]&\le 2^p\sum_{i=1}^{N_{t}}  \bbP_{\mu}\big[\big|\la f,I_{s}^{i,t}\ra-\bbP_{\bullet,\delta_{\xi_i(t)}}[\la f,I_{s}\ra]\big|^p\big|\bbF_t\big]\\
&\le 2^p\sum_{i=1}^{N_{t}} \Big( \bbP_{\bullet,\delta_{\xi_i(t)}}[\la f,I_{s}\ra^p]+\bbP_{\bullet,\delta_{\xi_i(t)}}[\la f,I_{s}\ra]^p\Big)\\
&\le 2^{p+1} \sum_{i=1}^{N_{t}}  \bbP_{\bullet,\delta_{\xi_i(t)}}[\la f,I_{s}\ra^p].
\end{align*}
Lemma~\ref{lem:bb_meas}, the identity $X_s=X_s^*+I_s$ under $\bbP_{\delta_{\xi_i(t)}}$ and the monotonicity of $x \mapsto x^p$ on $[0,\infty)$ yield
\begin{align*}
\bbP_{\mu}\Big[\Big|\sum_{i=1}^{N_{t}} \Big(\la f,I_{s}^{i,t}\ra-\bbP_{\bullet,\delta_{\xi_i(t)}}[\la f,I_{s}\ra]\Big)\Big|^p\Big|\bbF_t\Big] &\le 2^{p+1} \sum_{i=1}^{N_t} \frac{\bbP_{\delta_{\xi_i(t)}}[\la f,X_s \ra^p]}{w(\xi_i(t))}=2^{p+1} \Big\la \frac{\bbP_{\delta_{\cdot}}[\la f,X_{s} \ra^p]}{w},Z_t\Big\ra.
\end{align*}
Writing $c=\sup_{x \in D}f(x)/\phi(x)<\infty$, the Many-to-One Lemma for $Z$, i.e.\ \eqref{eq:mtoZ}, yields
\begin{align*}
e^{-\lambda_c  p(s+t) }\bbP_{\mu}\Big[\Big|\sum_{i=1}^{N_{t}} \Big(\la f,I_{s}^{i,t}\ra-\bbP_{\bullet,\delta_{\xi_i(t)}}[\la f,I_{s}\ra]\Big)\Big|^p\Big] 
& \le 2^{p+1} c^p e^{-\lambda_c pt}\bbP_{\mu}\Big[\Big\la \frac{\bbP_{\delta_{\cdot}}[W_s^{\phi}(X)^p]}{w},Z_t\Big\ra\Big]\\
&=2^{p+1} c^p e^{-\lambda_c  (p-1)t} \la \phi, \mu \ra \P_{\phi \mu}^{\phi}\big[\phi(\xi_t)^{-1}\bbP_{\delta_{\xi_t}}[W_s^{\phi}(X)^p]\big].
\end{align*}
Since $\mu \in \M_c(D)$, Lemma~\ref{lem:mg_moments} yields a time $T>0$ and a constant $C_2\in (0,\infty)$ such that the right-hand side is bounded by 
$2^{p+1}c^p\pint{\phi,\mu} C_2 e^{-\lambda_c (p-1)t}$ for all $t \ge T$ and the proof is complete.
\end{proof}

We are now in the position to prove Theorems~\ref{thm:L1} and \ref{thm:SLLN_gen}(ii).

\begin{proof}[Proof of Theorems~\ref{thm:L1} and \ref{thm:SLLN_gen}(ii)]
By Lemma~\ref{lem:muComp}(ii) it suffices to consider $\mu \in \M_c(D)$ and, without loss of generality, we work on the skeleton space. Using the skeleton decomposition in the form of \eqref{eq:bb}, we write for $s,t \ge 0$,
\begin{align*}
e^{-\lambda_c (s+t)} \pint{f,&X_{s+t}} - \pint{f,\tphi} \mglX\\
&=e^{-\lambda_c (s+t)} \pint{f,X_{s+t}^*+I_s^{*,t}}+ e^{-\lambda_c (s+t)} \sum_{i=1}^{N_t} \Big( \pint{f,I_s^{i,t}} - \bbP_{\bullet, \delta_{\xi_i(t)}}[\pint{f,I_s}] \Big)\\
&+ \Big( e^{-\lambda_c (s+t)} \sum_{i=1}^{N_t} \bbP_{\bullet, \delta_{\xi_i(t)}}[\pint{f,I_s}]- \pint{f,\tphi} W_t^{\phi/w}(Z)\Big)
+ \Big(\pint{f,\tphi} W_t^{\phi/w}(Z) - \pint{f,\tphi} \mglX\Big)\\
&=: \Xi_1(s,t)+\Xi_2(s,t)+ \Xi_3(s,t) + \Xi_4(s,t).
\end{align*}
It suffices to show that
\begin{equation}\label{eq:L1thm4partsTS}
\limsup_{s\to \infty}\limsup_{t\to \infty}\bbP_{\mu}[|\Xi_i(s,t)|]=0 \qquad \text{for all } i\in \{1,\ldots,4\}. 
\end{equation}
We begin with $\Xi_1$: Since $(X_{s+t}^*+I_s^{*,t};\bbP_{\mu}(\cdot|\bbF_t))\overset{d}{=}(X_s^*;\bbP_{X_t})$, the first moment formulas \eqref{eq:opp_star}, \eqref{eq:opp} and \eqref{eq:Sphi} yield
\[
\bbP_{\mu}[|\Xi_1(s,t)|]= e^{-\lambda_c (s+t)} \bbP_{\mu}[\bbP_{X_t}[\pint{f,X_s^*}]] = e^{-\lambda_c (s+t)} \pint{S_t S_s^*f, \mu} = \pint{\P_{\cdot}^{\phi}[\theta_s^*(\xi_t)], \phi \mu},
\]
where $\theta_s^*(x)=e^{-\lambda_c s} S_s^*f(x)/\phi(x)$. By Lemma~\ref{lem:ext_lim}, $\theta_s^*(x)$ is uniformly bounded in $s$ and $x$ and converges to zero as $s \to \infty$. 
Using the ergodicity of $(\xi;\P^{\phi})$, cf.\ \eqref{eq:ergodic}, and the dominated convergence theorem, we conclude
\[
\lim_{s \to \infty} \lim_{t \to \infty} \bbP_{\mu}[|\Xi_1(s,t)|]= \lim_{s\to \infty} \pint{\theta_s^*,\phi\tphi} \pint{\phi,\mu}=0.
\]
Proposition~\ref{pro:Lpsum} implies that $\Xi_2(s,t)$ converges to zero in $L^p(\bbP_{\mu})$ as $t\to \infty$ for every fixed $s>0$. 
By monotonicity of norms, \eqref{eq:L1thm4partsTS} for $i=2$ follows.

For $\Xi_3$, we use \eqref{eq:Eimmi} and \eqref{eq:Sphi} to rewrite
\[
e^{-\lambda_c (s+t)} \sum_{i=1}^{N_t} \bbP_{\bullet, \delta_{\xi_i(t)}}[\pint{f,I_s}] = e^{-\lambda_c (s+t)} \Bigpint{\frac{S_sf-S_s^*f}{w},Z_t} =e^{-\lambda_c t}  \Bigpint{ \P_{\cdot}^{\phi}\big[f(\xi_s)/\phi(\xi_s)\big]- \theta_s^*,\frac{\phi}{w} Z_t}.
\]
Let $\Upsilon_s(x):=\P_x^{\phi}[f(\xi_s)/\phi(\xi_s)]-\theta_s^*(x)$.  Since $f/\phi$ is bounded, $\Upsilon$ is uniformly bounded in $s$ and $x$, 
and by \eqref{eq:ergodic} and Lemma~\ref{lem:ext_lim}, $\lim_{s\to \infty}\Upsilon_s(x)= \pint{f,\tphi}$. 
Moreover, $\Xi_3(s,t)= e^{-\lambda_c t} \pint{\Upsilon_s-\pint{f,\tphi},\frac{\phi}{w} Z_t}$ by the definition of $W_t^{\phi/w}(Z)$
and the Many-to-One Lemma for $Z$, i.e.\ \eqref{eq:mtoZ}, yields
\[
\bbP_{\mu}[|\Xi_3(s,t)|]\le e^{-\lambda_c t} \bbP_{\mu}\Big[ \Bigpint{\big|\Upsilon_s-\pint{f,\tphi}\big| ,\frac{\phi}{w}  Z_t}\Big]= \bigpint{\P_{\cdot}^{\phi}\big[\big|\Upsilon_s(\xi_t)-\pint{f,\tphi}\big|\big],\phi \mu}.
\]
Since $\Upsilon_s$ is bounded and $\phi(x) \mu(dx)$ is a finite measure, \eqref{eq:ergodic} implies that the right-hand side converges to $\pint{|\Upsilon_s-\pint{f,\tphi}|,\phi \tphi} \pint{\phi,\mu}$ as $t\to \infty$ 
and this expression converges to zero by the dominated convergence theorem as $s \to \infty$.

Finally, since $(W_t^{\phi/w}(Z))_{t \ge 0}$ is an $L^p(\bbP_{\mu})$-bounded martingale by Theorem~\ref{thm:SLLN_gen}(i), it converges to $\mglZ=\mglX$ in $L^1(\bbP_{\mu})$. Hence, \eqref{eq:L1thm4partsTS} for $i=4$ holds and the proof is complete.
\end{proof}

\subsection{Asymptotics for the immigration process and the SLLN along lattice times}\label{sec:cond_exp}

In this section, we analyse the asymptotic behaviour of the immigration process $I$. 
According to Lemma~\ref{lem:reduc}~(i), the process $X$ in $e^{-\lambda_c t} \la f,X_t\ra$ can be replaced by the immigration process when $e^{-\lambda_c t} \la f,I_t\ra$ converges to the correct limit. 
To show this, we begin with the conditional expectation of the immigration after a large time $t$ studied in \eqref{eq:Eimmi}. 
We now work under Assumptions~\ref{as:skeleton}, \ref{as:criticality}, \ref{as:moment} and \ref{as:SLLN}.

\begin{lemma}\label{expectU_limitt}
For all $\mu \in \M_c(D)$, $s,\delta>0$ and all $f \in p(D)$ with $f/\phi$ bounded,
\[
\lim_{\delta \N \ni t \to \infty} e^{-\lambda_c t} \sum_{i=1}^{N_t} \bbP_{\bullet,\delta_{\xi_i(t)}}[\la f,I_s\ra] = e^{\lambda_c s} \Big(\la f,\tphi\ra -\la e^{-\lambda_c s} S_s^*f,\tphi\ra\Big) \mglX \qquad \bbP_{\mu}\text{-almost surely.}
\]
\end{lemma}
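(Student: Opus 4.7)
By the conditional expectation identity \eqref{eq:Eimmi},
\[
e^{-\lambda_c t}\sum_{i=1}^{N_t}\bbP_{\bullet,\delta_{\xi_i(t)}}[\la f,I_s\ra]
= e^{-\lambda_c t}\Bigpint{\frac{S_s f}{w},Z_t} - e^{-\lambda_c t}\Bigpint{\frac{S_s^* f}{w},Z_t},
\]
so the strategy is to apply the SLLN for the skeleton (Assumption~\ref{as:SLLN}) separately to the test functions $g_1:=S_s f/w$ and $g_2:=S_s^* f/w$, then identify the two limiting integrals against $w\tphi$.

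To invoke Assumption~\ref{as:SLLN} for $g_i$, I need $g_i$ continuous and $g_i w/\phi=S_s^{(*)}f/\phi$ bounded. For $g_2$: Lemma~\ref{lem:ext_lim} shows that $\theta_s^*=e^{-\lambda_c s}S_s^* f/\phi$ is bounded and continuous for $s>0$, so $g_2 w/\phi$ is bounded and $g_2$ is continuous (as $\phi\in C^{2,\eta}$ is continuous and strictly positive). For $g_1$: writing $c=\sup_{x\in D} f(x)/\phi(x)<\infty$, the invariance $S_s\phi=e^{\lambda_c s}\phi$ and monotonicity of $S_s$ give $S_s f\le c e^{\lambda_c s}\phi$, hence $g_1 w/\phi\le c e^{\lambda_c s}$. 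Continuity of $S_s f$ for $s>0$ follows as in the proof of Lemma~\ref{lem:ext_lim}: by \eqref{eq:Sphi}, $e^{-\lambda_c s}S_s f(x)/\phi(x)=\P_x^\phi[f(\xi_s)/\phi(\xi_s)]$ with $f/\phi$ bounded measurable, and the spine diffusion has a sufficiently smooth (in fact $C^{1,\eta}$) generator, so the strong Feller property gives continuity in $x$.

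Applying Assumption~\ref{as:SLLN} to $g_1$ and $g_2$ yields, $\bbP_\mu$-almost surely along $\delta\N$,
\[
e^{-\lambda_c t}\Bigpint{\frac{S_s f}{w},Z_t}\to \la S_s f,\tphi\ra\,\mglZ,
\qquad
e^{-\lambda_c t}\Bigpint{\frac{S_s^* f}{w},Z_t}\to \la S_s^* f,\tphi\ra\,\mglZ.
\]
It remains to identify $\la S_s f,\tphi\ra = e^{\lambda_c s}\la f,\tphi\ra$. Setting $g:=f/\phi$, the representation \eqref{eq:Sphi} gives $\P_{\cdot}^\phi[g(\xi_s)]=e^{-\lambda_c s}\phi^{-1}S_s f$, so the stationarity identity \eqref{eq:statDistrSphi} yields
\[
e^{-\lambda_c s}\la S_s f,\tphi\ra = \la \P_{\cdot}^\phi[g(\xi_s)],\phi\tphi\ra = \la g,\phi\tphi\ra = \la f,\tphi\ra.
\]
Combining the two limits and using $\mglZ=\mglX$ from Proposition~\ref{pro:mglim} gives the desired identity.

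The only nonroutine point is the continuity of $S_s f$ (and $S_s^* f$) when $f$ is merely measurable: this is a strong-Feller smoothing step, but since the paper's standing regularity ($L$ has $C^{1,\eta}$ coefficients, $\beta\in C^{\eta}$, $\phi\in C^{2,\eta}$) already gave such smoothing in Lemma~\ref{lem:ext_lim}, the same argument carries through unchanged. Everything else is direct bookkeeping: rewriting the conditional expectation via \eqref{eq:Eimmi}, applying the skeleton SLLN, and identifying the limits using the adjoint eigenfunction relation.
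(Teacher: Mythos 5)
Your proposal is correct and follows essentially the same route as the paper's proof: rewrite the conditional expectation via \eqref{eq:Eimmi}, apply Assumption~\ref{as:SLLN} to the two test functions $S_sf/w$ and $S_s^*f/w$ (with continuity supplied by Lemma~\ref{lem:ext_lim} and the strong-Feller smoothing, for which the paper cites Theorem~4.9.7 of \cite{Pin95}), and identify $\pint{S_sf,\tphi}=e^{\lambda_c s}\pint{f,\tphi}$ via \eqref{eq:Sphi} and \eqref{eq:statDistrSphi} before substituting $\mglZ=\mglX$.
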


\begin{proof}
We apply the SLLN for the skeleton (Assumption~\ref{as:SLLN}) to the functions $f_1,f_2$ given by
\begin{align*}
f_1(x):=\frac{S_sf(x)}{w(x)}= e^{\lambda_c s} \P_x^{\phi}\big[f(\xi_s)/\phi(\xi_s)\big]\frac{\phi(x)}{w(x)}, \qquad 
f_2(x):=\frac{S_s^*f(x)}{w(x)}= e^{\lambda_c s} \theta_s^*(x)\frac{\phi(x)}{w(x)}.
\end{align*}
By Lemmas~\ref{lem:wcont} and~\ref{lem:ext_lim} and Theorem~4.9.7 in \cite{Pin95}, $f_1$ and $f_2$ are continuous. Hence, \eqref{eq:Eimmi} and Assumption~\ref{as:SLLN} yield
\[
\lim_{\delta \N \ni t \to \infty} e^{-\lambda_c t} \sum_{i=1}^{N_t} \bbP_{\bullet,\delta_{\xi_i(t)}}[\la f,I_s\ra] = \Big\la \frac{S_sf}{w},w \tphi\Big\ra W_{\infty}^{\phi/w}(Z)-\Big \la \frac{S_s^*f}{w},w\tphi\Big \ra W_{\infty}^{\phi/w}(Z) \qquad \bbP_{\mu}\text{-almost surely.}
\]
By \eqref{eq:Sphi} and \eqref{eq:statDistrSphi}, $\pint{S_sf,\tphi}=e^{\lambda_c s} \pint{f,\tphi}$ and Theorem~\ref{thm:SLLN_gen}(i) completes the proof.
\end{proof}

\begin{proposition}\label{pro:lim_Eimmi}
For all $\mu \in \M_c(D)$, $\delta>0$ and all $f \in p(D)$ with $f/\phi$ bounded,
\[
\lim_{\delta \N \ni s\to \infty} \lim_{\delta \N \ni t \to \infty} e^{-\lambda_c (s+t)} \sum_{i=1}^{N_t} \bbP_{\bullet,\delta_{\xi_i(t)}}[\la f,I_s\ra] = \la f,\tphi\ra W_{\infty}^{\phi}(X)\qquad \bbP_{\mu}\text{-almost surely.}
\]
\end{proposition}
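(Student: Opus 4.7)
The plan is to combine Lemma~\ref{expectU_limitt} (which handles the inner limit $t\to\infty$) with Lemma~\ref{lem:ext_lim} (which provides vanishing of the correction term as $s\to\infty$). The statement essentially reassembles these two ingredients, with dominated convergence as the glue.

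First, I would fix $s \in \delta\N$ and apply Lemma~\ref{expectU_limitt} to the inner limit. Multiplying both sides by $e^{-\lambda_c s}$, this yields, $\bbP_{\mu}$-almost surely,
\begin{equation*}
\lim_{\delta \N \ni t \to \infty} e^{-\lambda_c (s+t)} \sum_{i=1}^{N_t} \bbP_{\bullet,\delta_{\xi_i(t)}}[\la f,I_s\ra] = \Big(\la f,\tphi\ra - \la e^{-\lambda_c s} S_s^* f,\tphi\ra\Big) \mglX.
\end{equation*}
Since the exceptional null set in Lemma~\ref{expectU_limitt} is allowed to depend on $s$, we take a countable union over $s \in \delta\N$ and obtain a single $\bbP_{\mu}$-null set outside of which the above identity holds for every $s \in \delta\N$ simultaneously. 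By Proposition~\ref{pro:mglim}, $\mglX=\mglZ$ is $\bbP_{\mu}$-almost surely finite, so it suffices to show that
\begin{equation*}
\lim_{s\to\infty}\la e^{-\lambda_c s} S_s^* f,\tphi\ra = 0.
\end{equation*}

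Rewriting the integrand with $\theta_s^*(x) = e^{-\lambda_c s} S_s^* f(x)/\phi(x)$ from Lemma~\ref{lem:ext_lim}, this integral equals $\la \theta_s^*, \phi\tphi\ra$. Lemma~\ref{lem:ext_lim} gives that $\theta_s^*(x) \to 0$ pointwise and that $\theta_s^*$ is uniformly bounded in $s$ and $x$. The product $L^1$-criticality from Assumption~\ref{as:criticality} ensures $\la \phi,\tphi\ra = 1 < \infty$, so the dominated convergence theorem yields $\la \theta_s^*,\phi\tphi\ra \to 0$ as $s\to\infty$.

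Combining these two steps delivers
\begin{equation*}
\lim_{\delta\N\ni s\to\infty}\lim_{\delta\N\ni t\to\infty} e^{-\lambda_c(s+t)}\sum_{i=1}^{N_t}\bbP_{\bullet,\delta_{\xi_i(t)}}[\la f,I_s\ra] = \la f,\tphi\ra W_\infty^{\phi}(X) \qquad \bbP_{\mu}\text{-a.s.}
\end{equation*}
There is no real obstacle here, beyond being careful that the almost-sure convergence in Lemma~\ref{expectU_limitt} holds along a fixed exceptional set that works for every $s \in \delta\N$ at once; this is trivial since $\delta\N$ is countable.
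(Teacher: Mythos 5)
Your proposal is correct and follows exactly the paper's route: the paper's proof states that the claim "follows immediately from Lemmas~\ref{expectU_limitt} and \ref{lem:ext_lim} and the dominated convergence theorem," which is precisely the combination you spell out. Your additional care about collecting the exceptional null sets over the countably many $s\in\delta\N$ is a standard point the paper leaves implicit, and your use of $\la\theta_s^*,\phi\tphi\ra\to 0$ via the uniform bound from Lemma~\ref{lem:ext_lim} and $\la\phi,\tphi\ra=1$ is the intended application of dominated convergence.
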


\begin{proof} The claim follows immediately from Lemmas~\ref{expectU_limitt} and \ref{lem:ext_lim} and the dominated convergence theorem.
\end{proof}

Recall from Notation~\ref{not:I} that, given $\bbF_t$, $I^{i,t}$ denotes the immigration occurring along the skeleton descending from particle $i$ at time $t$.

\begin{proposition}\label{pro:BC_immi}
For all $\mu \in \M_c(D)$, $s,\delta>0$ and all $f \in p(D)$ with $f/\phi$ bounded,
\[
\lim_{\delta \N \ni t \to \infty} e^{-\lambda_c (s+t)} \Big|\sum_{i=1}^{N_t} \Big(\la f,I_s^{i,t}\ra-\bbP_{\bullet,\delta_{\xi_i(t)}}[\la f,I_s\ra]\Big)\Big|=0 \qquad \bbP_{\mu}\text{-almost surely.}
\]
\end{proposition}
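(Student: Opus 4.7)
The plan is a direct Borel--Cantelli argument fuelled by the $L^p$-bound established in Proposition~\ref{pro:Lpsum}. Fix $\mu \in \M_c(D)$, $s,\delta>0$ and $f \in p(D)$ with $f/\phi$ bounded, and abbreviate
\[
R_t := e^{-\lambda_c(s+t)} \Big|\sum_{i=1}^{N_t} \big(\la f,I_s^{i,t}\ra - \bbP_{\bullet,\delta_{\xi_i(t)}}[\la f,I_s\ra]\big)\Big|, \qquad t\ge 0.
\]

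First, I would apply Markov's inequality at exponent $p\in(1,2]$ (the exponent fixed by Assumption~\ref{as:moment}) to obtain $\bbP_{\mu}(R_t>\eps)\le \eps^{-p}\bbP_{\mu}[R_t^p]$ for every $\eps>0$. Proposition~\ref{pro:Lpsum} then supplies a threshold $T>0$ and a constant $C_3\in(0,\infty)$ (both depending only on $\mu$ and $f$) with
\[
\bbP_{\mu}(R_t > \eps) \le \eps^{-p} C_3\, e^{-\lambda_c(p-1)t} \qquad \text{for all } t\ge T.
\]

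Second, restricting to the lattice $t = n\delta$, $n\in\N$, and using that $\lambda_c>0$ together with $p>1$, the geometric tail yields
\[
\sum_{n\ge 1}\bbP_{\mu}(R_{n\delta}>\eps) \le \eps^{-p} C_3 \sum_{n\ge 1} e^{-\lambda_c(p-1)n\delta} <\infty.
\]
The first Borel--Cantelli lemma therefore gives $\bbP_{\mu}\bigl(R_{n\delta}>\eps \text{ infinitely often}\bigr)=0$, hence $\limsup_{n\to\infty} R_{n\delta}\le \eps$ on a set of full $\bbP_{\mu}$-measure. Taking a countable sequence $\eps_k\downarrow 0$ and intersecting the resulting full-measure sets delivers the claim $R_{n\delta}\to 0$ $\bbP_{\mu}$-almost surely.

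There is no genuine obstacle at this stage: the analytic work was already absorbed into Proposition~\ref{pro:Lpsum}, which in turn rests on the spine-based $L^p$-boundedness of $W^{\phi}_t(X)$ from Theorem~\ref{thm:Lp} and the many-to-one identity \eqref{eq:mtoZ}. The two features that make the argument work are structural and worth emphasising: the restriction to lattice times $t\in\delta\N$ makes the tail probabilities summable, and the strict inequality $p>1$ is what produces the decay factor $e^{-\lambda_c(p-1)t}$. For $p=1$ the bound would degenerate and Borel--Cantelli would fail; this explains why Assumption~\ref{as:moment} demands $L^p$-control for some $p>1$ rather than merely $L^1$-control.
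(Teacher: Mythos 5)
Your proposal is correct and is essentially identical to the paper's own proof: Markov's inequality at exponent $p$, the bound from Proposition~\ref{pro:Lpsum}, summability of $e^{-\lambda_c(p-1)n\delta}$ over the lattice, and the first Borel--Cantelli lemma. The only cosmetic difference is that the paper phrases the conclusion via the summability criterion directly rather than intersecting full-measure sets over a sequence $\eps_k\downarrow 0$, which amounts to the same thing.
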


\begin{proof}
By the Borel-Cantelli Lemma, it is sufficient to show that for all $\eps>0$, there is a $n_0 \in \N$ such that
\begin{equation}\label{eq:BC_immi1}
\sum_{n=n_0}^{\infty} \bbP_{\mu}\Big(e^{-\lambda_c(s+n\delta)}\Big|\sum_{i=1}^{N_{n\delta}} \Big(\la f,I_s^{i,n\delta}\ra-\bbP_{\bullet,\delta_{\xi_i(n\delta)}}[\la f,I_s\ra]\Big)\Big| > \eps\Big)<\infty.
\end{equation}
Proposition~\ref{pro:Lpsum} yields a time $T>0$ and a constant $C_3 \in (0,\infty)$ such that for $n_0 \ge T$, the left-hand side in \eqref{eq:BC_immi1} is bounded by
\[
\eps^{-p} \sum_{n =n_0}^{\infty} e^{-\lambda_c p(s+n\delta)} \bbP_{\mu}\Big[\Big|\sum_{i=1}^{N_{n\delta}} \Big(\la f,I_s^{i,n\delta}\ra-\bbP_{\bullet,\delta_{\xi_i(n\delta)}}[\la f,I_s\ra]\Big)\Big|^p\Big] \le \eps^{-p} C_3 \sum_{n=n_0}^{\infty} e^{-\lambda_c (p-1) n \delta}<\infty,
\]
where we used Markov's inequality in the first estimate.
\end{proof}

Combining Propositions~\ref{pro:lim_Eimmi} and \ref{pro:BC_immi}, we conclude that the immigration process alone has the asymptotic behaviour which we expect from the superprocess.  
\begin{corollary}\label{cor:immiSLLN}
For all $\mu \in \M_c(D)$, $\delta>0$ and all $f \in p(D)$ with $f/\phi$ bounded,
\[
\lim_{\delta \N \ni s \to \infty}\lim_{\delta \N \ni t \to \infty} e^{-\lambda_c (s+t)} \sum_{i=1}^{N_t} \la f,I_s^{i,t}\ra =\la f,\tphi\ra \mglX \qquad \bbP_{\mu}\text{-almost surely.}
\]
\end{corollary}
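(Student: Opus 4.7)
The plan is to add and subtract the conditional expectation inside the sum, writing
\[
e^{-\lambda_c (s+t)} \sum_{i=1}^{N_t} \la f,I_s^{i,t}\ra
= e^{-\lambda_c (s+t)} \sum_{i=1}^{N_t}\Bigl(\la f,I_s^{i,t}\ra - \bbP_{\bullet,\delta_{\xi_i(t)}}[\la f,I_s\ra]\Bigr)
+ e^{-\lambda_c (s+t)} \sum_{i=1}^{N_t} \bbP_{\bullet,\delta_{\xi_i(t)}}[\la f,I_s\ra].
\]
Since the parameter $s$ is fixed when the inner limit $t\to\infty$ along $\delta\N$ is taken, this decomposition is legitimate at the level of the double limit in the statement.

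For the first term, Proposition~\ref{pro:BC_immi} directly gives that, $\bbP_\mu$-almost surely, it vanishes as $\delta\N\ni t\to\infty$ for each fixed $s>0$, so it contributes $0$ to the iterated limit in $s$ and $t$. For the second term, Proposition~\ref{pro:lim_Eimmi} states exactly that, $\bbP_\mu$-almost surely,
\[
\lim_{\delta\N\ni s\to\infty}\lim_{\delta\N\ni t\to\infty} e^{-\lambda_c(s+t)} \sum_{i=1}^{N_t}\bbP_{\bullet,\delta_{\xi_i(t)}}[\la f,I_s\ra]
= \la f,\tphi\ra\, \mglX.
\]
Adding these two contributions yields the claim. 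There is no substantive obstacle here; both propositions are tailored so that the desired decomposition works, and the only care needed is to check that the exceptional null sets from Propositions~\ref{pro:BC_immi} and~\ref{pro:lim_Eimmi} can be combined into a single $\bbP_\mu$-null set, which is immediate since $s$ and $t$ range over the countable set $\delta\N$. This concludes the proof of Corollary~\ref{cor:immiSLLN}.
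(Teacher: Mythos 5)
Your proposal is correct and is exactly the argument the paper intends: the corollary is stated as an immediate consequence of combining Proposition~\ref{pro:BC_immi} (the centred sum vanishes as $t\to\infty$ for each fixed $s$) with Proposition~\ref{pro:lim_Eimmi} (the iterated limit of the conditional expectations). Your remark about intersecting the countably many null sets over $s\in\delta\N$ is the only point of care, and you handle it correctly.
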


Now we are in the position to prove the SLLN along lattice times.

\begin{theorem}\label{SLLN_lattice}
For all $\mu \in \M_f^{\phi}(D)$, $\delta>0$ and all $f \in p(D)$ with $f/\phi$ bounded,
\[
\lim_{n \to \infty} e^{-\lambda_c n\delta} \la f,X_{n\delta}\ra =\la f,\tphi\ra \mglX \qquad P_{\mu}\text{-almost surely.}
\]
\end{theorem}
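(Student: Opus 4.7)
The plan is to assemble the theorem from the reduction lemmas of Section~\ref{sec:reduction} together with Corollary~\ref{cor:immiSLLN}, which already captures essentially all of the work. First I would apply Lemma~\ref{lem:muComp}(i) with $\T = \delta \N$ and $\mathcal{A}^{\phi} = \{f \in p(D)\colon f/\phi \in b(D)\}$ to reduce to starting measures $\mu \in \M_c(D)$. Next, Lemma~\ref{lem:reduc}(i) with $\T=\delta\N$ and $\mathcal{A}=\B(D)$ shows that it suffices to prove, for every fixed $B \in \B(D)$,
\[
\liminf_{n \to \infty} e^{-\lambda_c n\delta}\pint{\phi|_B,X_{n\delta}} \ge \pint{\phi|_B,\tphi}\mglX \qquad P_{\mu}\text{-a.s.}
\]

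Since $(X;P_{\mu})$ and $(X;\bbP_{\mu})$ agree in distribution by Theorem~\ref{thm:KPR}(iii.c), I would work on the skeleton space. The skeleton decomposition \eqref{eq:bb} expresses $X_{s+t}$ as a sum of three nonnegative measures, so for all $s,t\ge 0$,
\[
\pint{\phi|_B,X_{s+t}} \ge \sum_{i=1}^{N_t}\pint{\phi|_B,I_s^{i,t}}.
\]
Fixing $s \in \delta\N$ and noting that as $t$ ranges over $\delta\N$ the index $s+t$ traces out a cofinal subsequence of $\delta\N$, this pointwise inequality gives
\[
\liminf_{n \to \infty} e^{-\lambda_c n\delta}\pint{\phi|_B,X_{n\delta}} \ge \liminf_{\delta\N\ni t\to\infty} e^{-\lambda_c(s+t)}\sum_{i=1}^{N_t}\pint{\phi|_B,I_s^{i,t}}.
\]

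Now Corollary~\ref{cor:immiSLLN} applies to $f=\phi|_B$, whose ratio $f/\phi=\mathbbm{1}_B$ is bounded, and identifies the iterated limit (first $t\to\infty$, then $s\to\infty$ in $\delta\N$) of the right-hand side as $\pint{\phi|_B,\tphi}\mglX$, $\bbP_{\mu}$-almost surely. Because $\delta\N$ is countable, I can intersect the corresponding null sets over $s$ and over the inner limits to obtain one $\bbP_{\mu}$-a.s. event on which the displayed inequality holds for every $s \in \delta\N$. Letting $s\to\infty$ along $\delta\N$ on that event yields the required liminf inequality, and Proposition~\ref{pro:mglim} identifies $\mglZ$ with $\mglX$ so everything takes place under $P_{\mu}$ after translating back through Theorem~\ref{thm:KPR}(iii.c).

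The serious obstacle in this argument is not in the reduction itself but in Corollary~\ref{cor:immiSLLN}: its proof requires combining the SLLN assumption for the skeleton (through Lemma~\ref{expectU_limitt} and Proposition~\ref{pro:lim_Eimmi}) with the Borel--Cantelli estimate of Proposition~\ref{pro:BC_immi}, which in turn rests on the $L^p$-moment bound from Proposition~\ref{pro:Lpsum} obtained via the spine decomposition. Once those ingredients are in hand, the proof of Theorem~\ref{SLLN_lattice} is essentially the short reduction described above: exploit the fact that the immigration process is a submeasure of $X$, use time shifts to convert a statement about $X_{n\delta}$ into one about $\sum_i I_s^{i,t}$, and then take the iterated limit.
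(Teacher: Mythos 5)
Your proposal is correct and follows essentially the same route as the paper: reduce to $\mu\in\M_c(D)$ via Lemma~\ref{lem:muComp}(i), pass to the skeleton space, bound $\la f,X_{s+t}\ra$ from below by the immigration $\sum_{i=1}^{N_t}\la f,I_s^{i,t}\ra$, invoke Corollary~\ref{cor:immiSLLN} for the iterated limit, and finish with Lemma~\ref{lem:reduc}(i). The only cosmetic difference is that you apply Lemma~\ref{lem:reduc}(i) at the start (restricting to $f=\phi|_B$) while the paper runs the liminf argument for general admissible $f$ and applies the reduction at the end; both orderings are fine.
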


\begin{proof}
By Lemma~\ref{lem:muComp}(i), it suffices to consider $\mu \in\M_c(D)$. Moreover, without loss of generality, we work on the skeleton space from Theorem~\ref{thm:KPR}. Corollary~\ref{cor:immiSLLN} yields $\bbP_{\mu}$-almost everywhere,
\begin{align*}
\liminf_{\delta \N \ni t \to \infty} e^{-\lambda_c t} \la f,X_t\ra&= \liminf_{\delta \N \ni s \to \infty} \liminf_{\delta \N \ni t \to \infty} e^{-\lambda_c (s+t)} \la f,X_{s+t}\ra\\
&\ge \liminf_{\delta \N \ni s \to \infty} \liminf_{\delta \N \ni t \to \infty} e^{-\lambda_c (s+t)} \sum_{i=1}^{N_t} \la f,I_s^{i,t}\ra= \la f,\tphi\ra \mglX .
\end{align*}
Lemma~\ref{lem:reduc}(i) yields the claim.
\end{proof}

\subsection{Transition from lattice to continuous times}\label{lattice_to_cont_sec}

In this section, we extend the convergence along lattice times in Theorem~\ref{SLLN_lattice} to convergence along continuous times and conclude our main results. We work under Assumptions~\ref{as:skeleton}, \ref{as:criticality},~\ref{as:moment} and~\ref{as:SLLN}. For $\res>0$, let $U^{\res}$ be the resolvent operator in integral form, that is,
\[
U^{\res}f(x):=\int_{0}^{\infty} e^{-\res t} \P_x^{\phi}[f(\xi_t)] \, dt \qquad \text{for all }f \in bp(D), x \in D.
\]
The argument for the transition from lattice to continuous times proceeds in two steps. First we use the resolvent operator to bring the semigroup of $(\xi;(\P_x^{\phi})_{x \in D})$ into the argument. The semigroup property gives us a martingale which, combined with Doob's $L^p$-inequality, enables us to control the behaviour between time $n\delta$ and $(n+1)\delta$.
Second, we remove the resolvent operator by taking $\res \to \infty$ in $\res U^{\res}f(x)$. It is an analysis of hitting times for diffusion processes which allows us to control the convergences in this step. 
The main idea for the proof is borrowed from \cite{LiuRenSon13} but we employ the skeleton decomposition to replace the stochastic analysis and the martingale measures used there.

\begin{proposition}\label{SLLN_resolvent}
For all $\mu \in \M_c(D)$, $\res>0$ and $f \in bp(D)$,
\[
\lim_{t \to \infty} e^{-\lambda_c t} \la \phi \res U^{\res}f,X_{t}\ra =\la\phi f, \tphi \ra \mglX \qquad P_{\mu}\text{-almost surely.}
\]
\end{proposition}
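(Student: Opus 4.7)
The plan is to deduce this continuous-time convergence from the lattice-time SLLN (Theorem~\ref{SLLN_lattice}) applied to the smoothed function $\phi R_\kappa f$, where $R_\kappa f := \kappa U^\kappa f$. Since $R_\kappa f \in bp(D)$ with $\|R_\kappa f\|_\infty \le \|f\|_\infty$, the function $\phi R_\kappa f$ satisfies $(\phi R_\kappa f)/\phi$ bounded, so Theorem~\ref{SLLN_lattice} yields $e^{-\lambda_c n\delta}\la \phi R_\kappa f, X_{n\delta}\ra \to \la \phi R_\kappa f, \tphi\ra\mglX$ almost surely for any $\delta>0$. A Fubini/stationarity computation using \eqref{eq:statDistrSphi} identifies the limit as $\la \phi f, \tphi\ra\mglX$, since $\la \P_\cdot^\phi[f(\xi_s)],\phi\tphi\ra = \la f,\phi\tphi\ra$ for every $s$.

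The bridge from lattice to continuous times rests on the resolvent identity $A^\phi R_\kappa f = \kappa(R_\kappa f - f)$, which transfers (via $\phi A^\phi g = (L+\beta-\lambda_c)(\phi g)$) to $(L+\beta-\lambda_c-\kappa)(\phi R_\kappa f) = -\kappa\phi f$. By Feynman--Kac this gives, for every $x \in D$ and $r \ge 0$,
\[
e^{-(\lambda_c+\kappa)r}S_r(\phi R_\kappa f)(x) + \int_0^r \kappa e^{-(\lambda_c+\kappa)u}S_u(\phi f)(x)\,du = \phi(x)R_\kappa f(x),
\]
and integrating against $X$ and using the Markov property shows that $M_t := e^{-(\lambda_c+\kappa)t}\la \phi R_\kappa f,X_t\ra + \int_0^t \kappa e^{-(\lambda_c+\kappa)u}\la \phi f,X_u\ra du$ is a nonnegative $P_\mu$-martingale.

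Writing $\tilde{M}_t := e^{-\lambda_c t}\la \phi R_\kappa f,X_t\ra$ and $g_u(x) := \phi(x)\P_x^\phi[f(\xi_u)]$, the identity above yields
\[
P_\mu[\tilde{M}_{n\delta+r}|\F_{n\delta}] = e^{\kappa r}\tilde{M}_{n\delta} - \kappa e^{\kappa r}\int_0^r e^{-\kappa u}\,e^{-\lambda_c n\delta}\la g_u,X_{n\delta}\ra\,du \quad\text{for } r \in [0,\delta].
\]
By Fubini the integral equals $e^{-\lambda_c n\delta}\la G_r, X_{n\delta}\ra$ where $G_r := \int_0^r e^{-\kappa u} g_u\,du$ satisfies $G_r/\phi \le \|f\|_\infty/\kappa$. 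Applying Theorem~\ref{SLLN_lattice} to both $\phi R_\kappa f$ and $G_r$ (with $\la G_r,\tphi\ra = \la\phi f,\tphi\ra(1-e^{-\kappa r})/\kappa$), the algebra collapses and for each fixed $r$ the conditional expectation converges a.s.\ to $\la\phi f,\tphi\ra\mglX$; uniformity in $r \in [0,\delta]$ is then obtained by a Dini-type argument since both sides are monotone in $r$ and the limit is continuous.

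The main obstacle is then controlling the fluctuation $F_n(r) := \tilde{M}_{n\delta+r} - P_\mu[\tilde{M}_{n\delta+r}|\F_{n\delta}]$ uniformly over $r \in [0,\delta]$. I would work on the skeleton space and decompose $X_{n\delta+r}$ via Theorem~\ref{thm:KPR}, splitting $F_n$ into (a) a centered contribution from the extinction-type component $X_{n\delta+r}^* + I_r^{*,n\delta}$, whose $L^p$-norm is controlled via the subcriticality of $X^*$ and Lemma~\ref{lem:ext_lim}, and (b) the immigration fluctuation $e^{-\lambda_c(n\delta+r)}\sum_i(\la \phi R_\kappa f, I_r^{i,n\delta}\ra - \bbP_{\bullet,\delta_{\xi_i(n\delta)}}[\la \phi R_\kappa f,I_r\ra])$ bounded in $L^p$ via Proposition~\ref{pro:Lpsum} by $C e^{-\lambda_c(p-1)n\delta}$. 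For fixed $r$ a Borel--Cantelli argument gives $F_n(r) \to 0$ a.s.; to get uniformity in $r$ I would discretize $r$ on a grid of mesh tending to zero and exploit the right-continuity of $r \mapsto \la f, X_{n\delta+r}\ra$ for continuous $f$ together with monotonicity of the increasing process $I_t$, bounding the oscillation between grid points by auxiliary $L^p$ quantities. Combining this with the uniform convergence of the conditional expectation concludes $\tilde{M}_t \to \la \phi f,\tphi\ra\mglX$ as $t \to \infty$, $P_\mu$-almost surely.
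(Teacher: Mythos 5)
Your architecture is sound up to the last step, and the route is genuinely different from the paper's: you build an exact martingale $M_t=e^{-(\lambda_c+\res)t}\la \phi \res U^{\res}f,X_t\ra+\int_0^t\res e^{-(\lambda_c+\res)u}\la\phi f,X_u\ra\,du$ from the resolvent equation, whereas the paper first reduces (via the Lemma~\ref{lem:reduc} trick, using $\res U^{\res}\mathbf{1}=\mathbf{1}$) to a $\liminf$ lower bound with a compactly supported minorant $g\le \res U^{\res}f$, controls $\sup_{t\in[n\delta,(n+1)\delta]}$ of the deterministic discrepancy $|\res U^{\res}f-\P^{\phi}_\cdot[\res U^{\res}f(\xi_{(n+1)\delta-t})]|\le 2(1-e^{-\res\delta})\supnorm{f}$ directly, and then applies Doob's $L^p$-inequality to the genuine martingale $t\mapsto e^{-\lambda_c(n+1)\delta}\la S_{(n+1)\delta-t}[\phi g],X_t\ra$ on $[n\delta,(n+1)\delta]$. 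Your identification of the limit, the collapse of the conditional-expectation algebra, and the Dini argument for its uniformity in $r$ are all fine.

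The genuine gap is the uniform-in-$r$ control of the fluctuation $F_n(r)=\tilde M_{n\delta+r}-P_\mu[\tilde M_{n\delta+r}|\F_{n\delta}]$. Your proposed mechanism --- discretising $r$ and bounding oscillations between grid points using ``right-continuity of $r\mapsto\la f,X_{n\delta+r}\ra$ together with monotonicity of the increasing process $I_t$'' --- does not work: $t\mapsto I_t$ is \emph{not} monotone (the immigrated mass present at time $t$ is the current state of subcritical superprocesses and decays), and right-continuity gives no quantitative modulus of continuity, so the oscillation terms are not summable over a refining grid and Borel--Cantelli cannot close. This supremum over a continuum of times is precisely the crux of any lattice-to-continuous transition, and it must be handled by a maximal inequality. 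The fix is already latent in your own construction: since $r\mapsto M_{n\delta+r}-M_{n\delta}$ \emph{is} a martingale, $\sup_{r\le\delta}|M_{n\delta+r}-M_{n\delta}|$ is controlled by Doob's $L^p$-inequality at the endpoint, and the remaining term $\int_{n\delta}^{n\delta+r}\res e^{-(\lambda_c+\res)u}(\la\phi f,X_u\ra-P_\mu[\la\phi f,X_u\ra|\F_{n\delta}])\,du$ has its supremum over $r$ bounded by the integral of absolute fluctuations, whose $L^p$-norm is estimated via Minkowski's integral inequality and the skeleton bounds \eqref{eq:ExtPartSummable} and Proposition~\ref{pro:Lpsum}, giving a summable sequence. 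Two further caveats if you pursue this: the endpoint $L^p$-estimate for the $X^*$-part requires $\phi\res U^{\res}f$ and $\phi f$ bounded (this is why the paper works with compactly supported $g$ and only a $\liminf$; for unbounded $\phi$ you need the same reduction), and it uses Conditions~\eqref{as:lat_con_Lower}--\eqref{as:lat_con_Upper}, which you should cite explicitly rather than only ``subcriticality of $X^*$ and Lemma~\ref{lem:ext_lim}''.
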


\begin{proof}
Without loss of generality, we assume that $\mu\not\equiv 0$ and work on the skeleton space. Since $\res U^{\res}$ is linear with $\res U^{\res}\mathbf{1}= \mathbf{1}$ the same argument that led to Lemma~\ref{lem:reduc} shows that it suffices to prove that for all $f \in bp(D)$,
\begin{equation}\label{eq:resLB}
\liminf_{t \to \infty} e^{-\lambda_c t} \la \phi \res U^{\res} f,X_t\ra \ge\la \phi f,\tphi \ra \mglX  \qquad \bbP_{\mu}\text{-almost surely.}
\end{equation}
Let $f,g \in bp(D)$ with $\res U^{\res}f \ge g$, $\delta, t >0$ and let $n$ be such that $n\delta \le t <(n+1)\delta$. Then
\begin{equation}\label{eq:resDecomp}
\begin{split}
e^{-\lambda_c t} \la \phi \res U^{\res} f,X_t\ra&\ge  \Big(e^{-\lambda_c t} \la \phi \res U^{\res}f,X_t\ra - e^{-\lambda_c t} \la \phi \P_{\cdot}^{\phi}\big[\res U^{\res}f(\xi_{(n+1)\delta -t})\big] ,X_t\ra\Big)\\
&\phantom{\ge a} +\Big( e^{-\lambda_c t} \la \phi \P_{\cdot}^{\phi}[ g(\xi_{(n+1)\delta -t})],X_t\ra- e^{-\lambda_c n \delta} \la \phi \P_{\cdot}^{\phi}[g(\xi_{\delta})], X_{n \delta}\ra\Big)\\
&\phantom{\ge a} +\Big( e^{-\lambda_c n \delta} \la \phi \P_{\cdot}^{\phi}[g(\xi_{\delta})], X_{n \delta}\ra - \la \phi g,\tphi\ra \mglX\Big) + \la \phi g,\tphi\ra \mglX\\
&=: \Theta_{1, \res U^{\res}f}(n,\delta,t)+\Theta_{2, g}(n,\delta,t)+\Theta_{3, g}(n,\delta)+\la \phi g,\tphi\ra \mglX.
\end{split}
\end{equation}
If we show, for all $f,g \in bp(D)$, $g$ of compact support, that $\bbP_{\mu}$-almost surely,
\begin{align}
& \limsup_{\delta \to 0} \limsup_{n \to \infty} \sup_{t \in [n \delta, (n+1)\delta]} |\Theta_{1, \res U^{\res}f}(n,\delta,t)| =0, \label{eq:Theta1}\\
& \limsup_{n \to \infty} \sup_{t \in [n \delta, (n+1)\delta]} |\Theta_{2, g}(n,\delta,t)| =0 \qquad \text{for all }\delta>0, \label{eq:Theta2}\\
& \limsup_{n \to \infty} |\Theta_{3, g}(n,\delta)| =0 \qquad \qquad \qquad\qquad \text{for all }\delta>0, \label{eq:Theta3}
\end{align}
then we can choose $g=\mathbbm{1}_B \res U^{\res}f$ for $B\subset \subset D$ in \eqref{eq:resDecomp} to obtain $\liminf_{t \to \infty} e^{-\lambda_c t} \la \phi \res U^{\res} f,X_t\ra \ge \la \phi \mathbbm{1}_B \res U^{\res}f,\tphi\ra \mglX$ $\bbP_{\mu}$-almost surely. Choosing a sequence $B_j \subset \subset D$, $B_j \subseteq B_{j+1}$, $D=\bigcup_{j=1}^{\infty} B_j$, the factor $\pint{ \phi \mathbbm{1}_{B_j} \res U^{\res}f,\tphi}$ increases, as $j \to \infty$, to
\[
\la \phi \res U^{\res}f, \tphi\ra =\int_0^{\infty} \res e^{-\res t} \la \phi \P_{\cdot}^{\phi}[f(\xi_t)], \tphi\ra \, dt\overset{\eqref{eq:statDistrSphi}}{=}\int_0^{\infty} \res e^{-\res t} \la \phi f,\tphi\ra \, dt=\la \phi f,\tphi \ra,
\]
and \eqref{eq:resLB} follows.
We begin with the proof of \eqref{eq:Theta1}. Fubini's theorem and the Markov property of $(\xi;\P^{\phi})$ yield for all $x \in D$ and $s>0$,
\begin{align*}
\big|\res U^{\res}f(x)-\P_{x}^{\phi}[\res U^{\res}f(\xi_s)]\big|& =\Big|\int_{0}^{\infty} \res e^{-\res t} \P_x^{\phi}[f(\xi_t)] \, dt-\int_{0}^{\infty} \res e^{-\res t} \P_x^{\phi}[f(\xi_{t+s})]\, dt\Big|
\le 2 (1-e^{-\res s})\supnorm{f}.
\end{align*}
Using the linearity of integration and the definition of $W_t^{\phi}(X)$, we obtain
\[
\sup_{t \in [n\delta,(n+1)\delta]} \Theta_{1, \res U^{\res}f}(n,\delta,t) \le 2(1-e^{-\res\delta})\supnorm{f}  \sup_{t \in [n\delta,(n+1)\delta]} W_t^{\phi}(X).
\]
Since the martingale $(W_t^{\phi}(X))_{ t\ge 0}$ has a finite limit, \eqref{eq:Theta1} is established. For the proof of \eqref{eq:Theta2}, let $g \in bp(D)$ be compactly supported. By \eqref{eq:Sphi},
\[
\Theta_{2,g}(n,\delta,t)= e^{-\lambda_c (n+1)\delta} \Big(\la S_{(n+1)\delta -t}[\phi g],X_t\ra- \la S_{\delta}[\phi g],X_{n\delta}\ra \Big) \qquad \text{for all }t\in [0,(n+1)\delta],
\]
and, the Markov property of $X$ and \eqref{eq:opp} imply that $(\Theta_{2,g}(n,\delta,t)\colon t \in [n\delta,(n+1)\delta]; \bbP_{\mu})$ is a martingale. Hence, \eqref{eq:Theta2} follows from the Borel-Cantelli Lemma, Doob's $L^p$-inequality (cf.\ Theorem II.1.7 in \cite{RY99}) and $\bbP_{\mu}[\la \phi g, X_{(n+1)\delta}\ra|\bbF_{n\delta}]=\la S_{\delta}[\phi g],X_{n \delta}\ra$ when we prove that for sufficiently large $n_0$
\begin{equation}\label{BC_to_show}
\sum_{n=n_0}^{\infty} e^{-\lambda_c p(n+1)\delta} \bbP_{\mu}\big[ \big|\la \phi g,X_{(n+1)\delta}\ra-\bbP_{\mu}[\la \phi g,X_{(n+1)\delta} \ra|\bbF_{n\delta}]\big|^p\big] <\infty.
\end{equation}
By \eqref{eq:bb} and \eqref{eq:bb_cond}, we can use the skeleton decomposition to obtain, for all $s,t>0$, $\bbP_{\mu}$-almost surely,
\begin{equation}\label{eq:latContBB}
\begin{split}
\la \phi g,X_{s+t}\ra-&\bbP_{\mu}[\la \phi g,X_{s+t} \ra|\bbF_t]\\
&=\pint{\phi g,X_{s+t}^*+I_s^{*,t}}-\bbP_{X_t}[\pint{\phi g,X_s^*}] + \sum_{i=1}^{N_t} \Big(\pint{\phi g, I_s^{i,t}}-\bbP_{\bullet, \delta_{\xi_i(t)}}[\pint{\phi g,I_s}]\Big).
\end{split}
\end{equation}
The monotonicity of $L^p$-norms and $(X_{s+t}^*+I_s^{*,t};\bbP_{\mu}(\cdot|\bbF_t))\overset{d}{=} (X_s^*;\bbP_{X_t})$ imply 
\begin{equation}\label{eq:pvsVar}
\bbP_{\mu}\big[\big| \la  \phi g,X_{s+t}^*+I_s^{*,t}\ra-\bbP_{X_t}[\la  \phi g ,X_s^*\ra] \big|^p\big]\le \bbP_{\mu}\big[{ \rm{\bf{Var}}}_{X_t}(\la  \phi g, X_s^*\ra) \big]^{p/2}.
\end{equation} 
Denote $c_1^*(x)= 2 \alpha(x)$, $c_2^*=\int_{(0,\varphi_2(x)]} y^2\, \Pi^*(x,dy)$, $c_3^*(x)= \int_{(\varphi_2(x),\infty)} y^2\, \Pi^*(x,dy)$, $c^*(x)=\sum_{i=1}^3c_i^*(x)$ for all $x\in D$, where $\varphi_2$ is determined by Assumption~\ref{as:moment}. We notice that $\beta^* \le \beta$ pointwise implies $S_t^*f \le S_tf$ pointwise for all $f \in p(D)$. Using \eqref{eq:variance_star}, \eqref{eq:opp}, and the semigroup property of $S$, we obtain
\[
\bbP_{\mu}\big[{ \rm{\bf{Var}}}_{X_t}(\la  \phi g, X_s^*\ra) \big] \le \int_0^s \pint{S_tS_r^*\big[c^* (S_{s-r}^*[\phi g])^2\big],\mu}\, dr \le \int_0^s \pint{S_{t+r}\big[c^* (S_{s-r}[\phi g])^2\big],\mu}\, dr.
\]
Recall the definition of $\P_{\phi \mu}^{\phi}$ from \eqref{eq:Pphimu} and use \eqref{eq:Sphi} to deduce
\begin{equation}\label{eq:VarXst2ndbd}
\bbP_{\mu}\big[{ \rm{\bf{Var}}}_{X_t}(\la  \phi g, X_s^*\ra) \big] \le \pint{\phi,\mu} \supnorm{g} \int_0^s e^{\lambda_c (s+t)} \P_{\phi \mu}^{\phi}\big[c^*(\xi_{t+r}) S_{s-r}[\phi g](\xi_{t+r}) \big]\, dr .
\end{equation}
Writing $\bar{\beta}=\sup_{x\in D} \beta(x)$ and $M=\max\{e^{\bbeta s},1\}$, we notice that
$S_{s-r}[\phi g](x)\le M \supnorm{\phi g}$. Moreover, \eqref{eq:Sphi} implies $S_{s-r}[\phi g](x)\le e^{\lambda_c s} \supnorm{g} \phi(x)$. Hence, for all $i \in \{1,2,3\}$,
\begin{equation}\label{eq:cibd}
\P_{\phi \mu}^{\phi}\big[c_i^*(\xi_{t+r}) S_{s-r}[\phi g](\xi_{t+r})\big] \le \min\Big\{ M \supnorm{\phi g} \P_{\phi \mu}^{\phi}[c_i^*(\xi_{t+r})], e^{\lambda_c s} \supnorm{g} \P_{\phi \mu}^{\phi}\big[c_i^*(\xi_{t+r}) \phi(\xi_{t+r})\big]\Big\}.
\end{equation}
Since $\alpha$ is bounded, $c_1^*$ is bounded. For $c_2^*$ and $c_3^*$, the right-hand side of \eqref{eq:cibd} is bounded for large $t$ according to \eqref{eq:ergodic} and 
Conditions~\eqref{as:lat_con_Lower} and \eqref{as:lat_con_Upper}, respectively.
Combining \eqref{eq:pvsVar}, \eqref{eq:VarXst2ndbd} and \eqref{eq:cibd}, we obtain a time $T>0$ and a 
constant $C \in (0,\infty)$, which may depend on $s, g$ and $\mu$, such that
\begin{equation}\label{eq:ExtPartSummable}
e^{-\lambda_c p(s+t)} \bbP_{\mu}\big[\big| \la  \phi g,X_{s+t}^*+I_s^{*,t}\ra-\bbP_{X_t}[\la  \phi g ,X_s^*\ra] \big|^p\big] \le C e^{-\lambda_c p t/2} \qquad \text{for all }t\ge T.
\end{equation}
Since $|x+y|^p \le 2^p (|x|^p+|y|^p)$ for all $x,y \in \R$, \eqref{eq:latContBB}, \eqref{eq:ExtPartSummable} and Proposition~\ref{pro:Lpsum} yield \eqref{BC_to_show}.

It remains to prove \eqref{eq:Theta3}. To this end, note that for $g \in bp(D)$ and $\delta>0$, $x \mapsto \phi(x) \P_x^{\phi}[g(\xi_{\delta})]$ is continuous by Theorem~4.9.7 in \cite{Pin95}. 
Moreover, $\pint{\phi \P_{\cdot}^{\phi}[g(\xi_{\delta})],\tphi } =\la \phi g,\tphi \ra$ by \eqref{eq:statDistrSphi} and \eqref{eq:Theta3} follows from Theorem~\ref{SLLN_lattice}.
\end{proof}

In the second step we remove the resolvent operator from Proposition~\ref{SLLN_resolvent}. 
The proof is essentially borrowed from the lower bound in Theorem 2.2 in \cite{LiuRenSon13}. 
We present the argument here for completeness. Recall that $\B_0(D)=\{B \in \B(D)\colon \ell(\partial B)=0\}$ and $\phi|_B=\phi \mathbbm{1}_B$.

\begin{proposition}\label{pro:SLLN_indic}
For all $\mu \in \M_c(D)$ and $B \in \B_0(D)$,
\begin{equation}\label{eq:inf_SLLN_indic}
\liminf_{t \to \infty} e^{-\lambda_c t} \la \phi|_B ,X_t\ra \ge \la \phi|_B,\tphi\ra W_{\infty}^{\phi}(X)\qquad P_{\mu}\text{-almost surely.}
\end{equation}
\end{proposition}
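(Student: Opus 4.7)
Plan.

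By Lemma~\ref{lem:muComp}(i), it suffices to prove \eqref{eq:inf_SLLN_indic} for $\mu\in\M_c(D)$; accordingly, I fix such a $\mu$ and work on the skeleton space. The strategy is to feed $f=\mathbbm{1}_B$ into Proposition~\ref{SLLN_resolvent} and then dispose of the resolvent smoothing by sending the resolvent parameter $\kappa\to\infty$, using a hitting-time analysis of the ergodic motion $(\xi;\P^{\phi})$ to absorb the resulting error.

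For each fixed $\kappa>0$, Proposition~\ref{SLLN_resolvent} applied with $f=\mathbbm{1}_B$ gives
\[
\lim_{t\to\infty}e^{-\lambda_c t}\big\langle\phi\,\kappa U^{\kappa}\mathbbm{1}_B,X_t\big\rangle \;=\; \langle\phi\mathbbm{1}_B,\tphi\rangle\,W_{\infty}^{\phi}(X)\qquad P_\mu\text{-a.s.}
\]
Since $0\le\kappa U^{\kappa}\mathbbm{1}_B\le 1$, the elementary pointwise inequality
\[
\kappa U^{\kappa}\mathbbm{1}_B \;\le\; \mathbbm{1}_B \;+\; \mathbbm{1}_{B^c}\,\kappa U^{\kappa}\mathbbm{1}_B
\]
combined with the previous display yields, on a $P_\mu$-probability-one event,
\[
\liminf_{t\to\infty}e^{-\lambda_c t}\langle\phi\mathbbm{1}_B,X_t\rangle \;\ge\; \langle\phi\mathbbm{1}_B,\tphi\rangle\,W_{\infty}^{\phi}(X) \;-\; \limsup_{t\to\infty}e^{-\lambda_c t}\langle\phi\,G_{\kappa},X_t\rangle,
\]
where $G_{\kappa}(x):=\mathbbm{1}_{B^c}(x)\,\kappa U^{\kappa}\mathbbm{1}_B(x)$. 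The proof then reduces to showing that the error $\limsup_t e^{-\lambda_c t}\langle\phi G_{\kappa},X_t\rangle$ goes to $0$ as $\kappa\to\infty$, $P_\mu$-almost surely.

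The deterministic side of this convergence follows from a hitting-time bound. For $x\in B^c$, the strong Markov property of $(\xi;\P^{\phi})$ at $\tau_B:=\inf\{s\ge 0:\xi_s\in B\}$ gives
\[
\kappa U^{\kappa}\mathbbm{1}_B(x) \;\le\; \E_x^{\phi}[e^{-\kappa\tau_B}] \;\xrightarrow[\kappa\to\infty]{}\; \P_x^{\phi}(\tau_B=0),
\]
and path-continuity of $\xi$ forces $\tau_B>0$ $\P_x^{\phi}$-a.s.\ for every $x\in\mathrm{int}(B^c)$. Together with $\ell(\partial B)=0$ and the domination $G_{\kappa}\le 1$, dominated convergence yields $\langle\phi G_{\kappa},\tphi\rangle\to 0$ as $\kappa\to\infty$.

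The main obstacle is upgrading this deterministic $\tphi$-convergence to a $P_\mu$-a.s.\ statement about $e^{-\lambda_c t}\langle\phi G_{\kappa},X_t\rangle$ that is uniform in large $t$. My plan is to apply Proposition~\ref{SLLN_resolvent} a second time with the bounded measurable function $G_{\kappa}$ in the role of $f$ (with a fresh parameter $\kappa'>0$), giving $\lim_t e^{-\lambda_c t}\langle\phi\,\kappa' U^{\kappa'}G_{\kappa},X_t\rangle=\langle\phi G_{\kappa},\tphi\rangle W_{\infty}^{\phi}(X)$, and to control the residual $\langle\phi(G_{\kappa}-\kappa' U^{\kappa'}G_{\kappa}),X_t\rangle$ via the same splitting trick, now with $G_{\kappa}$ in place of $\mathbbm{1}_B$. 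The pointwise domination $G_{\kappa}\le\kappa U^{\kappa}\mathbbm{1}_B$ (whose $X_t$-asymptotics are already known) together with the $L^p$-boundedness of $(W_t^{\phi}(X))_{t\ge 0}$ from Theorem~\ref{thm:Lp} allows the various probability-one sets to be intersected compatibly; passing to the limit first in $t$, then in $\kappa'$, and finally in $\kappa$ drives the error to zero and delivers \eqref{eq:inf_SLLN_indic}.
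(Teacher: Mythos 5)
Your reduction to $\mu\in\M_c(D)$, the first application of Proposition~\ref{SLLN_resolvent}, and the splitting $\kappa U^{\kappa}\mathbbm{1}_B\le\mathbbm{1}_B+\mathbbm{1}_{B^c}\kappa U^{\kappa}\mathbbm{1}_B$ all match the paper's strategy, and your observation that $\la \phi G_{\kappa},\tphi\ra\to 0$ as $\kappa\to\infty$ is correct but is only the easy, deterministic half of the problem. The gap is in the final step: you must show $\limsup_{t\to\infty}e^{-\lambda_c t}\la\phi G_{\kappa},X_t\ra\to 0$ almost surely, and neither device you propose produces an upper bound that shrinks with $\kappa$. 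The domination $G_{\kappa}\le\kappa U^{\kappa}\mathbbm{1}_B$ only yields $\limsup_t e^{-\lambda_c t}\la\phi G_{\kappa},X_t\ra\le\la\phi|_B,\tphi\ra W_{\infty}^{\phi}(X)$, a bound independent of $\kappa$ and hence useless. The ``splitting trick'' is intrinsically one-sided: it bounds a resolvent-smoothed indicator from above by the function plus an error, i.e.\ it produces \emph{lower} bounds on the unsmoothed quantity; applying it to $G_{\kappa}$ (which is not an indicator) just spawns a new error term entering with a minus sign, so the regress never closes. Nowhere do you supply a pointwise inequality of the form $G_{\kappa}\le C(\kappa,\eps)\cdot(\text{resolvent of an indicator})$ with a constant you can control.

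That inequality is exactly what the paper constructs, and it is the genuinely hard part of the proof. One first shrinks $B$ to $B_{\eps}=\{x\in B\colon\text{dist}(x,\partial B)\ge\eps\}$, so that the error function $\mathbbm{1}_{B^c}\P_{\cdot}^{\phi}[e^{-\kappa\sigma_{B_{\eps}}}]$ involves a target set at distance $\ge\eps$ from $B^c$, and introduces the shell $B_{\eps}'=\{x\in B\colon\text{dist}(x,\partial B_{\eps})\le\eps/2\}$. The strong Markov property at $\sigma_{B_{\eps}}$ then gives the genuine upper bound
\[
\mathbbm{1}_{B^c}(x)\,\P_x^{\phi}\big[e^{-\kappa\sigma_{B_{\eps}}}\big]\;\le\;\frac{\kappa U^{\kappa}(x,B_{\eps}')}{\Phi(\kappa,\eps)},\qquad \Phi(\kappa,\eps):=\inf_{y\in\partial B_{\eps}}\kappa U^{\kappa}(y,B_{\eps}'),
\]
whose numerator Proposition~\ref{SLLN_resolvent} handles, producing the factor $\la\phi|_{B_{\eps}'},\tphi\ra W_{\infty}^{\phi}(X)$ that vanishes as $\eps\to 0$. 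The real analysis is in the denominator: one must prove $\liminf_{\kappa\to\infty}\Phi(\kappa,\eps)>0$, which the paper does via the diffusion hitting estimate of Theorem~2.2.2 in \cite{Pin95} --- a path started at $y\in\partial B_{\eps}$ stays within $\eps/2$ of $y$, hence inside $B_{\eps}'$, over a short time window with probability close to one. This uniform lower bound is precisely the quantitative ingredient your plan lacks, and it is unavailable in your setup because you never separated $B$ from $B^c$; the $\eps$-dependence of the bound is also what forces the order of limits ($\kappa\to\infty$ first, then $\eps\to 0$).
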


\begin{proof}
The claim is trivial when $\ell(B)=0$ and when \eqref{eq:inf_SLLN_indic} is proved $B \in \B_0(D)$ with $B\subset \subset D$, then, for arbitrary $B\in \B_0(D)$, we choose a sequence of sets $B_k \in \B_0(D)$, with $B_k \subset \subset D$, $B_k \subseteq B_{k+1}$ and $B=\bigcup_{k \in \N}B_k$ and the monotone convergence theorem yields
\[
\liminf_{t \to \infty} e^{-\lambda_c t} \la \phi|_{B},X_t\ra \ge \sup_{k \in \N} \liminf_{t \to \infty} e^{-\lambda_c t} \la \phi|_{B_k},X_t\ra \ge  \la \phi|_B,\tphi\ra\mglX \qquad P_{\mu}\text{-almost surely}.
\]
Hence, let $B \in \B_0(D)$, $B \subset \subset D$, contain a non-empty, open ball. For small $\eps>0$, let $B_{\eps}=\{x \in B\colon \text{dist}(x,\partial B)\ge \eps\}\not=\emptyset$ and denote by $\sigma_{B_{\eps}}=\inf\{t>0\colon \xi_t \in B_{\eps}\}$ the hitting time of $B_{\eps}$. 
We write $U^{\res}(x,A)=U^{\res}\mathbbm{1}_A(x)$ for all $A\in \B(D)$. 
Since $\{\xi_t \in B_{\eps}\} \subseteq \{\sigma_{B_{\eps}} \le t\}$, for all $x \in D$,
\begin{align*}
\res U^{\res}(x,B_{\eps})\le \int_0^{\infty} \res e^{-\res t} \P_x^{\phi}(\sigma_{B_{\eps}} \le t) \, dt=\P_{x}^{\phi}[e^{-\res \sigma_{B_{\eps}}}] \le \mathbbm{1}_B(x) + \mathbbm{1}_{B^c}(x) \P_{x}^{\phi}[e^{-\res \sigma_{B_{\eps}}}],
\end{align*}
where $B^c:=D\setminus B$. In particular,
\[
e^{-\lambda_c t} \la \phi|_B,X_t \ra \ge e^{-\lambda_c t} \la \phi \res U^{\res}\mathbbm{1}_{B_{\eps}},X_t\ra-e^{-\lambda_c t} \la \phi|_{B^c}\P_{\cdot}^{\phi}[e^{-\res \sigma_{B_{\eps}}}],X_t\ra,
\]
and Proposition~\ref{SLLN_resolvent} yields, $P_{\mu}$-almost surely,
\begin{equation}\label{eq:1stBound}
\liminf_{t \to \infty} e^{-\lambda_c t} \la \phi|_B,X_t \ra \ge \la \phi|_{B_{\eps}},\tphi\ra\mglX -\limsup_{t \to \infty} e^{-\lambda_c t} \la \phi|_{B^c}\P^{\phi}_{\cdot}[e^{-\res \sigma_{B_{\eps}}}],X_t\ra.
\end{equation}
The first term on the right converges to $\la \phi|_B,\tphi \ra \mglX$ as $\eps \to 0$. Thus, we have to show that the second summand vanishes as $\eps \to 0$. 
Heuristically, if the SLLN holds, then the $\limsup$ is a limit with value
\[
\la \phi|_{B^c}\P^{\phi}_{\cdot}[e^{-\res \sigma_{B_{\eps}}}],\tphi\ra \mglX.
\]
Since $B_{\eps}$ has positive distance to $B^c$, this value converges to zero for $\res \to \infty$. 
Hence, we first take $\res \to \infty$ and then $\eps \to 0$. Of course, we do not know the SLLN, yet. 
Thus, we artificially reintroduce the resolvent operator in order to apply Proposition~\ref{SLLN_resolvent}.

Continuing rigorously, let $B_{\eps}':=\{ x \in B\colon \text{dist}(x,\partial B_{\eps}) \le \eps/2\}$. The situation is sketched in Figure 1. 

\begin{center}
\begin{tikzpicture}
[inner sep=0pt]
\colorlet{BlueSLLNa}{blue!20}
\colorlet{BlueSLLNb}{blue!65}
\draw[thick] (0,0) circle (1.5cm); %the big circle - >B
\node at (-1.65,1.2) {$B$};
\draw[->, >=stealth] (-1.45,1.2) -- (-1.05,1.05); %arrow for B
\node at (0,0) [circle,draw=black,fill=BlueSLLNa,minimum size=2.1cm]{};%the outer boundary of B_eps' was 2.6
\node at (0,0) [circle,draw=black,fill=white,minimum size=1.1cm]{};%the inner boundary of B_eps'
\draw (0,0) circle (0.8cm); %the boundary of B_eps
\node at (0,0) [circle,draw=black,minimum size=1.6cm,pattern=north west lines]{};%hatched area = B_eps
\node at (-1.6,-0.9) {$B_{\eps}$};
\draw[->, >=stealth] (-1.4,-0.85) -- (-0.63,-0.50); %arrow for B_eps
\node at (1.55,1.2) {\textcolor{BlueSLLNb}{$B_{\eps}'$}};
\draw[->, >=stealth] (1.30,1.2) -- (0.73,0.73); %arrow for B_eps'
\pgfmathsetseed{2};
\draw[thin](4,0.5)
\foreach \x in {1,...,400}
{-- ++(-0.01,rand*0.1+0.0)};
\node at (4,0.5) [circle,draw=black,fill=black,minimum size=0.07cm]{}; 
\node at (4.3,0.5) {$x$};
\node at (0.7071067*0.8,-0.7071067*0.8) [circle,draw=black,fill=black,minimum size=0.07cm]{}; 
\node at (0.8,-0.80) {$y$};

\end{tikzpicture}\\
{\small{{\bf{Figure 1:}} The big ball with thick boundary is $B$, the small, hatched ball is $B_{\eps}$ and the shaded area denotes $B_{\eps}'$. The diffusion is started in $x \in B^c$.}}
\end{center}
When $\xi$ starts outside $B$, then $\xi_{\sigma_{B_\eps}} \in \partial B_{\eps}$ and we obtain for all $x \in B^c$,
\begin{equation}\label{hitting_transform}
\P_x^{\phi}[e^{-\res \sigma_{B_{\eps}}}]=\P_x^{\phi}\Big[e^{-\res \sigma_{B_{\eps}}} \frac{U^{\res}(\xi_{\sigma_{B_{\eps}}},B_{\eps}')}{U^{\res}(\xi_{\sigma_{B_{\eps}}},B_{\eps}')}\Big]\le \frac{1}{\inf_{y \in \partial B_{\eps}}U^{\res}(y,B_{\eps}')} \P_x^{\phi}\big[e^{-\res \sigma_{B_{\eps}}} U^{\res}(\xi_{\sigma_{B_{\eps}}},B_{\eps}')\big].
\end{equation}
For $t \ge 0$, let $\mathcal{H}_t:=\sigma(\xi_s: 0 \le s \le t)$. By the Markov property of $\xi$, the second factor on the right-hand side of \eqref{hitting_transform} can be estimated by
\begin{equation}\label{hitting_transform2} 
\begin{split}
\P_x^{\phi}\big[e^{-\res \sigma_{B_{\eps}}} U^{\res}(\xi_{\sigma_{B_{\eps}}},B_{\eps}')\big]&=\P_x^{\phi}\Big[e^{-\res \sigma_{B_{\eps}}} \P_x^{\phi}\Big[\int_0^{\infty} e^{-\res t} \mathbbm{1}\{\xi_{t+\sigma_{B_{\eps}}} \in B_{\eps}'\} \, dt\Big|\mathcal{H}_{\sigma_{B_{\eps}}}\Big]\Big]\\
&=\P_x^{\phi}\Big[\int_{\sigma_{B_{\eps}}}^{\infty} e^{-\res t} \mathbbm{1}\{\xi_{t} \in B_{\eps}'\} \, dt\Big]\le U^{\res}(x,B_{\eps}').
\end{split}
\end{equation}
Writing $\Phi(\res,\eps):=\inf_{y \in \partial B_{\eps}} \res U^{\res}(y,B_{\eps}')$, \eqref{hitting_transform} and \eqref{hitting_transform2} yield $\mathbbm{1}_{B^c}(x)\P_x^{\phi}[e^{-\res \sigma_{B_{\eps}}}] \le \res U^{\res}(x,B_{\eps}')/\Phi(\res,\eps)$ and Proposition~\ref{SLLN_resolvent} entails, $P_{\mu}$-almost surely,
\begin{equation}\label{eq:2ndBound}
\limsup_{t \to \infty} e^{-\lambda_c t} \la \phi|_{B^c}\P^{\phi}_{\cdot}[e^{-\res \sigma_{B_{\eps}}}],X_t\ra \le \frac{1}{\Phi(\res,\eps)} \la \phi|_{B_{\eps}'},\tphi\ra\mglX.
\end{equation}
Clearly, $\la \phi|_{B_{\eps}'},\tphi\ra\mglX$ converges to zero as $\eps\to 0$. 
Thus, it remains to bound $\res U^{\res}(y,B_{\eps}')$ for $y \in \partial B_{\eps}$, and therefore $\Phi(\res,\eps)$, away from zero. 
We write $b_0(x)$ for the vector whose $j$-th component is given by $b_j(x)+\sum_{i=1}^d \partial_{x_i}a_{i,j}(x)$, $j \in \{1,\ldots,d\}$, $x \in D$. Since $B \subset \subset D$, 
\[
c(B,\phi):=\frac{\inf_{x \in B} \phi(x)}{\sup_{x \in B} \phi(x)}, \quad \tilde{\beta}:=\sup_{x \in B} |\beta(x)|, \quad \tilde{b}_0:=\sup_{x \in B}|b_0(x)|, \quad \tilde{a}:=\sup_{x \in B}\sup_{|v|=1} \la v,a(x)v\ra,
\]
satisfy $c(B,\phi),\tilde{a} \in (0,\infty)$ and $\tilde{\beta},\tilde{b}_0 \in [0,\infty)$.
For all $T>0$, we have
\begin{equation}\label{lbound_resolvent}
\res U^{\res}(y,B_{\eps}')= \int_0^{\infty} \res e^{-\res t} \P_{y}^{\phi}(\xi_t \in B_{\eps}')\, dt = \int_0^{\infty} e^{-t} \P_y^{\phi}(\xi_{t/\res} \in B_{\eps}')\, dt\ge\int_0^{T} e^{-t} \P_y^{\phi}(\xi_{t/\res} \in B_{\eps}')\, dt,
\end{equation}
and for $y \in \partial B_{\eps}$, we use the definition of $B_{\eps}'$ and \eqref{eq:Sphi} to estimate
\begin{equation}\label{useBeps_prob}
\P_y^{\phi}(\xi_{t/\res} \in B_{\eps}') \ge \P_y^{\phi}\Big(\sup_{ 0\le s \le t/\res}|\xi_{s}-y| \le \eps/2\Big)\ge c(B,\phi) e^{-(\lambda_c+\tilde{\beta}) t/\res} \P_y\Big(\sup_{0 \le s \le t/\res} |\xi_s-y|\le \eps/2\Big).
\end{equation}
To estimate the probability on the right-hand side, we use Theorem 2.2.2 in \cite{Pin95}. 
Since this theorem is stated for a diffusion generator in non-divergence form, we introduced the function $b_0$. In particular, for $\res$ so large that 
$t\tilde{b}_0/\res \le \eps/4$ and for all $y \in \partial B_{\eps}$, we deduce
\begin{equation}\label{hitting_estimate}
\P_{y}\Big(\sup_{0 \le s\le t/\res}|\xi_s-y| \le \eps/2\Big)\ge 1- 2d \exp\Big(-\frac{\eps^2\res}{32 \tilde{a} td}\Big).
\end{equation}
Combining \eqref{lbound_resolvent}--\eqref{hitting_estimate}, we obtain for all $\eps >0$,
\[
\liminf_{\res \to \infty} \Phi(\res,\eps) \ge \int_0^{T} e^{-t} c(B,\phi)\, dt>0.
\]
Since the right-hand side does not depend on $\eps$, taking first $\res \to \infty$ and then $\eps \to 0$ in \eqref{eq:2ndBound} and \eqref{eq:1stBound} concludes the proof.
\end{proof}

We are now in the position to conclude our main results.

\begin{proof}[Proof of Theorem~\ref{thm:SLLN_gen}(iii)]
The $P_{\mu}$-almost sure convergence in \eqref{eq:SLLN2} for every given $\ell$-almost everywhere continuous $f \in p(D)$ with $f/\phi$ bounded 
follows from Proposition~\ref{pro:SLLN_indic} and Lemmas~\ref{lem:reduc}(i) and \ref{lem:muComp}(i). The existence of a common set $\Omega_0$ for all such test functions follows from Lemma~\ref{lem:omega0}.
\end{proof}

\begin{proof}[Proof of Theorem~\ref{thm:SLLN}]
The $L^1(P_{\mu})$-convergence in \eqref{eq:SLLN} was proved in Theorem~\ref{thm:L1}, the remainder follows from Theorem~\ref{thm:SLLN_gen}(iii).
\end{proof}

\begin{proof}[Proof of Corollary~\ref{cor:SLLN}]
The claim follows immediately from Theorem~\ref{thm:SLLN}, \eqref{eq:mtoX} and \eqref{eq:ergodic}.
\end{proof}

\section{Examples}\label{sec:examples}

In this section, we explore our assumptions by verifying them for many classical examples of superdiffusions from the literature. 
Moreover, we give several examples to illustrate the implications and boundaries of the SLLN. 
For all examples considered, this article proves the SLLN, and for some even the WLLN, for the first time. 
%First, we study general spatially independent branching mechanisms, then consider the classical situation of a quadratic branching mechanism and finally discuss bounded domains.

\subsection{Spatially independent branching mechanisms}\label{sec:nonspatial}

In this section, we consider superdiffusions with a conservative motion and a spatially independent branching mechanism and write $\psi(z)=\psi_{\beta}(x,z)$ to simplify notation. 
Under these conditions, the total mass process $(\pint{\mathbf{1},X_t}\colon t\ge 0)$ is a continuous state branching process (CSBP) with branching mechanism $\psi$, cf.\ \cite{She97,BerKypMur11}. 
We exclude the trivial case of a linear branching mechanism $\psi(z)=-\beta z$ (see Remark~\ref{rem:detCase} for the result in this situation).
Since $\psi$ is strictly convex, $\psi(\infty):=\lim_{z \to \infty}\psi(z)$ exists in $[-\infty,0)\cup \{\infty\}$. Writing $z^*=\sup\{z\ge 0\colon \psi(z) \le 0\}$, we have $z^* \in (0,\infty)$ if and only if $\beta>0$ and $\psi(\infty)=\infty$ and in that case (cf.\ Proposition~1.1 in \cite{She97})
\[
P_{\mu}\big[e^{-z^*\pint{\mathbf{1},X_t}}\big]=e^{-z^* \pint{\mathbf{1},\mu}} \qquad \text{for all }\mu \in \M_f(D), t \ge 0.
\]
In particular, Assumption~\ref{as:skeleton} is satisfied with $w(x)=z^*$ for all $x \in D$. In this CSBP context, the skeleton decomposition was proved by Berestycki et al.\ in \cite{BerKypMur11} a few years before \cite{KypPerRen13pre}. 
The martingale function $z^*$ is related to the event of weak extinction $\extinguishing=\{\lim_{t \to \infty} \pint{\mathbf{1},X_t}=0\}$ by the identity $P_{\delta_x}(\extinguishing)=e^{-z^*}$ which holds even if $\beta \le 0$ or $\psi(\infty)<0$. 
To compare the martingale function $w(x)=-\log P_{\delta_x}(\extinguishing)$ to the classical choice $w(x)=-\log P_{\delta_x}(\extinction)$, where $\extinction$ denotes the event of extinction after finite time, notice that $\extinction \subseteq \extinguishing$, and for all $\mu \in \M_f(D)$,
\begin{equation}\label{eq:wExting}
P_{\mu}(\extinction)=P_{\mu}(\extinguishing)=e^{-z^*\pint{\mathbf{1},\mu}}>0  \qquad \text{if }\psi(\infty)=\infty \text{ and }\int^{\infty} \frac{1}{\psi(z)} \, dz<\infty.
\end{equation}
 Othwerwise, $P_{\mu}(\extinction)=0$ and on $\extinguishing$ the total mass of $X$ drifts to zero while staying positive at all finite times, cf.\ \cite{Gre74,She97}.

From now on, assume $\beta>0$, $\psi(\infty)=\infty$ and $w(x)=z^*$. In this case, Assumption~\ref{as:moment1} simplifies to
\[
\phi \text{ bounded } \quad \text{and} \quad \int_{(1,\infty)} y^p \, \Pi(dy)<\infty \quad\text{for some }p \in (1,2].
\]

In the following, we present two families of superprocesses for which the SLLN is proved by Theorem~\ref{thm:SLLN}. 
As far as we know, these results are new. Apart from the intrinsic interest, the results are very useful since the analysed processes are frequently employed to obtain further examples of superprocesses with interesting properties via $h$-transform. 
For those examples the SLLN follows from Lemma~\ref{lem:SLLN_h}.

We begin with the inward Ornstein-Uhlenbeck process (OU-process) which has attracted a wide interest in the literature. Specifically, its asymptotic behaviour is the subject of recent research articles \cite{Mil13pre,RenSonZha13pre}.

\begin{example}[Inward OU-process] \label{ex:inOU} Let $d \ge 1$, $D=\R^d$, $L=\frac{1}{2}\Delta -\OUp x \cdot \nabla$ with $\OUp>0$, $\psi$ spatially independent with $\beta \in (0,\infty)$, $\psi(\infty)=\infty$ and $\int_{(1,\infty)} y^p \, \Pi(dy)<\infty$ for some $p \in (1,2]$. 
Then Theorem~\ref{thm:SLLN} applies with $\phi=\mathbf{1}$, $\tphi(x)=(\OUp/\pi)^{d/2} e^{-\OUp \twonorm{x}^2}$ and $\lambda_c=\beta$.
\smallskip

The generator $L$ corresponds to the positive recurrent inward OU-process with transition density
\begin{equation}\label{eq:inOUdensity}
p_{{\sss \text{in-OU}}}(x,y,t)= \Big(\frac{\OUp}{\pi (1-e^{-2\OUp t})}\Big)^{d/2} \exp\Big(-\frac{\OUp}{1-e^{-2\OUp t}} \twonorm{y-e^{-\OUp t}x}^2\Big)  \qquad \text{for all }x,y \in\R^d, t>0.
\end{equation}
Hence, $\lambda_c=\lambda_c(L+\beta)=\beta >0$, $L$ is product $L^1$-critical, $\phi=\mathbf{1}$ and $\tphi(x)=(\OUp/\pi)^{d/2} e^{-\OUp \twonorm{x}^2}$ (cf.\ Chapter 4 in \cite{Pin95} or Example 3 in \cite{Pin96}). Thus, Assumptions~\ref{as:skeleton}--\ref{as:moment1} are satisfied. 
Using the estimate for  $p^{\phi}=p_{{\sss \text{in-OU}}}$ in \eqref{eq:density_bd}, we obtain that Condition~\eqref{eq:SupVeri} holds for $a(t)= \sqrt{(\lambda_c/\OUp+\delta) t}$ with $\delta>0$ (cf.\ Example~10 in \cite{EngHarKyp10}) and using \eqref{eq:inOUdensity}, we deduce that \eqref{eq:ErgoVeri} holds with $K=1$.
\end{example}

\begin{example}[Outward OU-process]\label{ex:outOU} Let $d \ge 1$, $D=\R^d$, $L=\frac{1}{2}\Delta +\OUp x \cdot \nabla$ with $\OUp>0$, $\psi$ spatially independent with $\beta \in (\OUp d,\infty)$, $\psi(\infty)=\infty$ and $\int_{(1,\infty)} y^p \, \Pi(dy)<\infty$ for some $p \in (1,2]$. 
Then Theorem~\ref{thm:SLLN} applies with $\phi(x)=(\OUp/\pi)^{d/2} e^{-\OUp \twonorm{x}^2}$, $\tphi=\mathbf{1}$ and $\lambda_c = \beta - \OUp d$.
\smallskip

The generator $L$ corresponds to the conservative, transient outward OU-process. The operator $L_1:=L+\OUp d$ is the formal adjoint of the inward OU-process with parameter $\OUp$. 
Hence, $L_1$ is critical with ground states $\phi_1(x)=(\OUp/\pi)^{d/2} e^{-\OUp \twonorm{x}^2}$ and $\tphi_1=\mathbf{1}$ by Example~\ref{ex:inOU} (see Theorem 4.3.3 in \cite{Pin95} or Example~2 in \cite{Pin96}). 
Writing $L_1=L+\beta - (\beta -\OUp d)$, we deduce that Assumptions~\ref{as:skeleton}--\ref{as:moment1} hold and $\phi,\tphi$ and $\lambda_c$ have been correctly identified. The corresponding ergodic motion is the inward OU-process with parameter $\OUp$. 
Thus, Conditions~\eqref{eq:SupVeri} and \eqref{eq:ErgoVeri} can be verified using \eqref{eq:inOUdensity}, \eqref{eq:density_bd}, $a(t)=e^{\OUp (1+\delta) t}$ for some $\delta>0$ and $K >1+\delta$.
\end{example}

The SLLN describes the asymptotic behaviour of the mass in compact sets. In general one cannot draw conclusions for the scaling of the total mass from the local behaviour, cf.\ \cite{EngKyp04,EngRenSon13pre}. 
Example~\ref{ex:outOU} illustrates this fact. Since the total mass process is a CSBP with branching mechanism $\psi$, $Y_t=e^{-\beta t} \pint{\mathbf{1},X_t}$ converges to a finite random variable $Y_{\infty}$ with $P_{\mu}(Y_{\infty}=0)=P_{\mu}(\extinguishing)$ if $\beta>0$ and $\int_{(1,\infty)}y \log y \, \Pi(dy)<\infty$, cf.\ \cite{Gre74}. 
In particular, in Example~\ref{ex:outOU}, the local growth rate $\lambda_c=\beta-\OUp d$ is strictly smaller than the global growth rate $\beta$. The reason is the transient nature of the underlying diffusion which allows mass to leave compact sets permanently and is reflected in the decay of $\phi$ at infinity. 
In particular, the function $\mathbf{1}$ is not an allowed test function in Theorem~\ref{thm:SLLN} but the focus is on functions of the form $\mathbbm{1}_B$ for $B$ a compact set.
\medskip

We call the diffusion corresponding to the generator $L=\frac{1}{2} \nabla \cdot a \nabla +b \cdot \nabla$ \emph{symmetric}, if $b=a\nabla Q$ for some $Q \in C^{2,\eta}(D)$.
The inward and outward OU-processes constitute examples of symmetric diffusions with 
$Q(x)=-\frac{\OUp}{2}\twonorm{x}^2$ and $Q(x)=\frac{\OUp}{2}\twonorm{x}^2$, respectively.
Chen et al.\ \cite{CheRenWan08} studied superdiffusions with a symmetric motion but insisted that  $Q$ is bounded. Hence, their results are not applicable to Examples~\ref{ex:inOU} and \ref{ex:outOU}. 
The result from Liu et al.\ \cite{LiuRenSon13} is not applicable since the domain is not of finite Lebesgue measure.

Engl\"ander and Winter \cite{EngWin06} proved convergence in probability in \eqref{eq:SLLN} for the situation of a quadratic branching mechanism. 
It is straightforward to extend their argument to general branching mechanisms but the method requires second moments. 
Hence, if $\int_{(1,\infty)} y^p \, \Pi(dy)<\infty$ for some $p\in (1,2)$ but not for $p=2$, then even the convergence in probability in Examples~\ref{ex:inOU} and \ref{ex:outOU} is new.

\subsection{Quadratic branching mechanisms}\label{sec:quadratic}

In this section, we consider the classical situation of a quadratic branching mechanism studied by Engl\"ander, Pinsky and Winter \cite{EngPin99,EngWin06} and Chen, Ren and Wang \cite{CheRenWan08}. 
Our assumptions on the branching mechanism in this section are $\alpha,\beta \in C^{\eta}(D)$, $\alpha(x)>0$ for all $ x \in D$, $\lambda_c:=\lambda_c(L+\beta)<\infty$ and $\Pi \equiv 0$. 
We write $\psi(x,z)=-\beta(x)z+\alpha(x) z^2$ and call $\psi$ a \emph{generalised quadratic branching mechanism} (GQBM). 
In Section~\ref{sec:model} we insisted that $\alpha$ and $\beta$ are bounded. This assumption can be relaxed as follows.
First suppose that $\beta$ is bounded from above but not necessarily from below. Engl\"ander and Pinsky \cite{EngPin99} showed that there is a unique $\M_f(D)$-valued Markov process $X=(X_t)_{t \ge 0}$ such that 
\[
P_{\mu}[e^{-\la f,X_t\ra}]=e^{-\la u_f(\cdot,t),\mu\ra} \qquad \text{for all continuous } f \in bp(D)\text{ and all } \mu \in \M_f(D),
\]
where $u_f$ is the minimal, nonnegative solution $u\in C(D\times [0,\infty))$, $(x,t) \mapsto u(x,t)$ twice continuously differentiable in $x\in D$ and once in $t \in (0,\infty)$, to
\begin{equation}\label{eq:pdeQuad}
\begin{split}
\partial_t u(x,t)&=Lu(x,t)-\psi(x,u(x,t))\qquad\text{for all } (x,t) \in D \times (0,\infty),\\
u(x,0)&=f(x) \hspace{3.47cm}\text{for all } x \in D.
\end{split}
\end{equation}
If $\alpha$ and $\beta$ are bounded, the minimal solution of \eqref{eq:pdeQuad} equals the unique solution to \eqref{eq:mild} by Lemma~A1 in \cite{EngPin99}. Hence, the two definitions are consistent.

Now let $\beta \in C^{\eta}(D)$ with $\lambda_c=\lambda_c(L+\beta)<\infty$ be not necessarily bounded from above.  
By definition~\eqref{eq:lambdacdef}, there exists $\lambda \in \R$ and $h \in C^{2,\eta}(D)$, $h>0$, such that $(L+\beta)h=\lambda h$. 
Recall the definition of $h$-transforms from Section~\ref{sec:moments}. An $(L,\psi;D)$-superprocess
can be defined by $X=\frac{1}{h}X^h$, where $X^h$ is the $(L_0^h,\psi^h;D)$ superprocess with $\beta^h=\lambda$ and $\alpha^h=\alpha h$, cf.\ \cite{EngPin99}.
Since $h$ is not necessarily bounded from below, the process $X$ may take values in the space of $\sigma$-finite measures $\M(D)$. 
While we have considered mainly finite measure-valued processes in this article, it is natural to consider also processes with values in the space $\M(D)$ via the branching property, and, as noted in Remark~\ref{rem:sigmaFinMu}, in our results the space of starting measures $\M_f^{\phi}(D)$ can be enlarged to the space of all $\mu \in \M(D)$ with $\pint{\phi,\mu}<\infty$.
\smallskip

Engl\"ander and Pinsky~\cite{EngPin99} proved the skeleton decomposition for supercritical superdiffusions with GQBMs long before \cite{KypPerRen13pre}. We only record the existence of a martingale function in the following lemma.
Recall the notation from Section~\ref{sec:skeleton} and that $\extinction$ denotes the event of extinction after a finite time. 

\begin{lemma}[{\bf{Engl\"ander and Pinsky \cite{EngPin99}}}]\label{lem:mgFctQuad}
Let $\psi$ be a GQBM and $\lambda_c>0$. The function $x\mapsto w(x):=-\log P_{\delta_x}(\extinction)$ is strictly positive, belongs to $C^{2,\eta}(D)$ and satisfies \eqref{eq:mgFct}.
\end{lemma}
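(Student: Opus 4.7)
The plan is to verify the three claims (strict positivity, $C^{2,\eta}$ regularity, and the identity~\eqref{eq:mgFct}) separately, with the semilinear parabolic PDE~\eqref{eq:pdeQuad} satisfied by log-Laplace functionals in the quadratic regime serving as the main tool.

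First I would show $w(x) \in (0,\infty)$ for every $x \in D$. Finiteness of $w(x)$ amounts to $P_{\delta_x}(\extinction) > 0$ and is a local fact: since $\alpha > 0$ on $D$ and $\beta$ is locally bounded, comparison of the log-Laplace functional $u_n(x,t) := -\log P_{\delta_x}[e^{-n X_t(D)}]$ with the explicit solution of the spatially-frozen ODE $v' = \beta(x) v - \alpha(x) v^2$ on a small ball around $x$ yields $\sup_n u_n(x, t_0) < \infty$ for some $t_0 > 0$, hence $P_{\delta_x}(X_{t_0}(D) = 0) > 0$. Strict positivity $w(x) > 0$ follows from supercriticality $\lambda_c > 0$: fixing a positive eigenfunction $h \in C^{2,\eta}(D)$ of $L+\beta$ with eigenvalue $\lambda_c$, the first moment $e^{-\lambda_c t} P_{\delta_x}[\la h, X_t\ra] = h(x) > 0$ is preserved in $t$, and a standard second-moment argument carried out on a compactly embedded subdomain (where $h\alpha$ is bounded) shows that the corresponding nonnegative martingale has a non-degenerate limit with positive probability, so $P_{\delta_x}(\extinction^c) > 0$.

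For the regularity claim, let $v_n(x,t)$ be the minimal nonnegative classical solution of~\eqref{eq:pdeQuad} with initial data $n\mathbf{1}$; by \cite{EngPin99}, $v_n(x,t) = -\log P_{\delta_x}[e^{-n X_t(D)}]$. Monotone convergence in $n$ produces a classical solution $v_\infty(x,t) = -\log P_{\delta_x}(X_t(D) = 0)$ of the same PDE on $D \times (0, \infty)$, and nesting of the events $\{X_t(D) = 0\}$ in $t$ forces $v_\infty(\cdot, t) \downarrow w$ pointwise. Local uniform upper bounds on $v_\infty(\cdot, t)$ for large $t$, obtained by comparison with a suitable supersolution of the corresponding elliptic problem on each compactly embedded subdomain, combined with interior Schauder estimates, show that $w$ is a classical solution of the stationary semilinear elliptic equation $Lw = \psi_\beta(\cdot, w)$ and hence $w \in C^{2,\eta}(D)$.

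For~\eqref{eq:mgFct}, I would first establish the auxiliary identity $P_\nu(\extinction) = e^{-\la w, \nu\ra}$ for every $\nu \in \M_f(D)$, with the convention $e^{-\infty} = 0$. This follows from~\eqref{eq:logLaplace} applied to $f = n\mathbf{1}$: letting $n \to \infty$ by monotone convergence gives $P_\nu(X_t(D) = 0) = e^{-\la v_\infty(\cdot, t), \nu\ra}$, and letting $t \to \infty$ (using $v_\infty(\cdot, t) \downarrow w$) yields the claim. The Markov property applied to the tail event $\extinction$ then gives, for any $\mu \in \M_c(D)$,
\[
e^{-\la w, \mu\ra} = P_\mu(\extinction) = P_\mu\bigl[P_{X_t}(\extinction)\bigr] = P_\mu\bigl[e^{-\la w, X_t\ra}\bigr],
\]
which is precisely~\eqref{eq:mgFct}. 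The main obstacle is the local uniform control of $v_\infty(\cdot, t)$ that enables the elliptic regularity step; the remaining ingredients are direct consequences of~\eqref{eq:logLaplace}, the Markov property, and the branching property.
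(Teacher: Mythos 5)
Your treatment of positivity and $C^{2,\eta}$-regularity is a reasonable reconstruction of what the paper simply cites from Engl\"ander and Pinsky (Theorem~3.1 and Corollary~4.2 of \cite{EngPin99}), and your reduction of \eqref{eq:mgFct} to the identity $P_\nu(\extinction)=e^{-\la w,\nu\ra}$ plus the Markov property is the natural first idea. The gap is in the scope of that identity and in how you use it. In the step $P_\mu(\extinction)=P_\mu[P_{X_t}(\extinction)]=P_\mu[e^{-\la w,X_t\ra}]$ you need $P_\nu(\extinction)=e^{-\la w,\nu\ra}$ for $P_\mu$-almost every realisation $\nu=X_t$; these measures are in general neither compactly supported nor, in the unbounded-$\beta$ GQBM setting, even finite (the paper explicitly allows $X$ to take values in the space of $\sigma$-finite measures). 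Your derivation of the auxiliary identity ``for every $\nu\in\M_f(D)$'' does not actually cover this: passing from $P_\nu(X_t(D)=0)=e^{-\la v_\infty(\cdot,t),\nu\ra}$ to the limit $t\to\infty$ requires $\la v_\infty(\cdot,t),\nu\ra\to\la w,\nu\ra$, and since $v_\infty(\cdot,t)\downarrow w$ this needs $\la v_\infty(\cdot,t_0),\nu\ra<\infty$ for some finite $t_0$. That is automatic for compactly supported $\nu$, but not for a general finite $\nu$, because $w$ (hence $v_\infty$) is typically unbounded in this setting --- e.g.\ $w(x)=(\beta+\OUp d)e^{\OUp\twonorm{x}^2}$ in Example~\ref{ex:unb_w} --- and one cannot a priori rule out $\la v_\infty(\cdot,t),\nu\ra=\infty$ for all $t$ while $\la w,\nu\ra<\infty$.

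The paper avoids exactly this by never conditioning on $X_t$ directly. It applies the tail-event/Markov-property argument to the exit measures $\widetilde X_t^B$ for $B\subset\subset D$, which are almost surely supported on the boundary of $B\times[0,t)$ and hence compactly supported in the space variable, giving $P_\mu[e^{-\pint{\wtw,\widetilde X_t^B}}]=e^{-\pint{w,\mu}}$; it then recovers $P_\mu[e^{-\pint{w,X_t}}]$ by letting $B\uparrow D$, which rests on the monotonicity of $\widetilde u_{w_j}^{B_k}$ in both the test function ($w_j\uparrow w$) and the domain ($B_k\uparrow D$), supplied by Lemma~\ref{lem:mild_exit_mono} together with Lemma~A1 of \cite{EngPin99}, and on an exchange of the two iterated limits. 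To repair your argument you would either need to carry out this exit-measure limiting step, or else prove the identity $P_\nu(\extinction)=e^{-\la w,\nu\ra}$ for the actual (non-compactly-supported, possibly $\sigma$-finite) laws of $X_t$; neither is addressed in your proposal.
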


\begin{proof}
By Theorem~3.1 and Corollary~4.2 in \cite{EngPin99}, $w \in C^{2,\eta}(D)$, $w(x)>0$ for all $x \in D$ and $P_{\mu}(\extinction)=e^{-\pint{w,\mu}}$ for all $\mu \in \M_c(D)$.
Let $B\subset \subset D$ be a domain and $\mu \in \M_f(D)$ with $\text{supp}(\mu) \subseteq B$. 
Then 
the exit measure 
$\widetilde{X}_t^B$ is $P_{\mu}$-almost surely supported on the boundary of $B\times [0,t)$ 
(recall the discussion around \eqref{eq:logLapExit}).
Since $\extinction$ is a tail event, the Markov property yields $P_{\mu}[e^{-\pint{\wtw, \widetilde{X}_t^B}}]= e^{-\pint{w,\mu}}$.
Choose a sequence of functions $w_j \in C_c^+(D)$ with $w_j \uparrow w$ pointwise and a sequence of domains $B_k \subset \subset D$, $B_k \subseteq B_{k+1}$, $D=\bigcup_{k=1}^{\infty}B_k$. 
By Lemma~\ref{lem:mild_exit_mono} and Lemma~A1 in \cite{EngPin99}, $\widetilde{u}_{w_j}^{B_k}$ is pointwise increasing in $j$ and $k$ with $\lim_{k \to \infty}\widetilde{u}_{w_j}^{B_k} = u_{w_j}$ pointwise and we obtain for all $\mu \in \M_c(D)$,
\[
P_{\mu}[e^{-\pint{w,X_t}}]= \lim_{j \to \infty} \lim_{k \to \infty} P_{\mu}[e^{-\pint{\wtw_j,\widetilde{X}_t^{B_k}}}] =\lim_{k \to \infty} \lim_{j \to \infty} P_{\mu}[e^{-\pint{\wtw_j,\widetilde{X}_t^{B_k}}}] = \lim_{k \to \infty} P_{\mu}[e^{-\pint{\wtw,\widetilde{X}_t^{B_k}}}] = e^{-\pint{w,\mu}}.\qedhere
\]
\end{proof}

In the remainder of this section, we choose $w$ to be the function $w(x)=-\log P_{\delta_x}(\extinction)$ and let $Z=(Z_t)_{t \ge 0}$ be a strictly dyadic branching particle diffusion, where the spatial motion is defined by \eqref{eq:Pwdef} and the branching rate is given by $q=\alpha w$ (in accordance with \eqref{eq:Generator}). 

One advantage of allowing unbounded $\alpha$ and $\beta$ is that the setup is now invariant under $h$-transforms: for any $h \in C^{2,\eta}(D)$, $h>0$, $\psi^h$ is a GQBM. Moreover,
\begin{equation}\label{eq:wh}
w^h(x):=-\log P_{\delta_x}^h(\exists t \ge 0\colon \pint{\mathbf{1},X_t}=0) =-\log P_{ h(x)^{-1}\delta_x}(\exists t \ge 0\colon \pint{h,X_t}=0)=w(x)/h(x),
\end{equation}
and Lemmas~\ref{lem:SLLN_h} and \ref{lem:invar} remain valid for GQBMs and $\mathbb{H}(\psi)=\{h \in C^{2,\eta}(D)\colon h>0\}$.
We record the following result.

\begin{theorem}\label{thm:Quad}
Let $\psi$ be a GQBM and suppose Assumption~\ref{as:criticality} holds and $\phi \alpha$ is bounded. Let $\mu \in \M_f^{\phi}(D)$.
\begin{itemize}
\item[{\rm (i)}] For all $f \in p(D)$ with $f/\phi$ bounded, the convergence in \eqref{eq:SLLN} holds in $L^1(P_{\mu})$
\item[{\rm (ii)}] If, in addition, Assumption~\ref{as:SLLN} holds, then there exists a measurable set $\Omega_0$ with $P_{\mu}(\Omega_0)=1$ and, on $\Omega_0$, the convergence in \eqref{eq:SLLN} holds for all $\ell$-almost everywhere continuous $f \in p(D)$ with $f/\phi$ bounded.
\end{itemize}
\end{theorem}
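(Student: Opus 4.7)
The plan is to perform an $h$-transform with $h=\phi$, reducing to the bounded-coefficient setting of Section~\ref{sec:model}, and then invoke Theorems~\ref{thm:L1} and~\ref{thm:SLLN} for the transformed process $X^{\phi}$. Since $\psi$ is a GQBM, the discussion immediately after Lemma~\ref{lem:mgFctQuad} enlarges $\mathbb{H}(\psi)$ to $\{h\in C^{2,\eta}(D)\colon h>0\}$, so $\phi\in\mathbb{H}(\psi)$. A direct computation from \eqref{eq:hdef} gives $\beta^{\phi}\equiv\lambda_c$, $\alpha^{\phi}=\phi\alpha$ (bounded by hypothesis) and $\Pi^{\phi}\equiv 0$, while $\phi\in C^{2,\eta}(D)$ with $\phi>0$ ensures that the drift coefficient of $L_0^{\phi}=L+a(\nabla\phi/\phi)\cdot\nabla$ lies in $C^{1,\eta}(D)$. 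Hence $X^{\phi}$ is a genuine $(L_0^{\phi},\psi_{\lambda_c}^{\phi};D)$-superdiffusion in the strict sense of Section~\ref{sec:model}.

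I would next verify Assumptions~1--4 for $X^{\phi}$. For Assumption~\ref{as:skeleton}, Lemma~\ref{lem:mgFctQuad} applied to $X^{\phi}$ combined with \eqref{eq:wh} shows that $w^{\phi}=w/\phi$ is a martingale function (continuity and local boundedness are automatic from $w^\phi\in C^{2,\eta}(D)$). For Assumption~\ref{as:criticality}, Lemma~\ref{lem:SLLN_h}(i), whose validity for GQBMs is recorded after Lemma~\ref{lem:mgFctQuad}, yields $\phi^{\phi}=\mathbf{1}$, $\tphi^{\phi}=\phi\tphi$, $\lambda_c^{\phi}=\lambda_c$, with $\pint{\phi^{\phi},\tphi^{\phi}}=\pint{\phi,\tphi}=1$. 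Assumption~\ref{as:moment1} with $p=2$ collapses to the hypothesis $\sup_x\phi\alpha<\infty$ in \eqref{alpha_cond1} and to the trivial bound $\pint{\mathbf{1},\phi\tphi}=1<\infty$ in \eqref{phi_p_cond1}; the three conditions on $\Pi$ are vacuous since $\Pi^{\phi}\equiv 0$. Finally, for part~(ii), Assumption~\ref{as:SLLN} for the skeleton of $X$ transfers verbatim to $X^{\phi}$ by Lemma~\ref{lem:invar}, since the two skeletons have identical laws under the $h$-transform.

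With these verifications in hand, I would apply Theorem~\ref{thm:L1} in part~(i) and Theorem~\ref{thm:SLLN} in part~(ii) to $X^{\phi}$ started from $\phi\mu\in\M_f^{\phi^{\phi}}(D)$ (noting $\pint{\phi^{\phi},\phi\mu}=\pint{\phi,\mu}<\infty$) and test function $g=f/\phi$, which is admissible because $g/\phi^{\phi}=g$ is bounded. The pathwise identity $X^{\phi}=\phi X$ yields $\pint{g,X_t^{\phi}}=\pint{f,X_t}$, while $\pint{g,\tphi^{\phi}}=\pint{f/\phi,\phi\tphi}=\pint{f,\tphi}$ and $W_{\infty}^{\phi^{\phi}}(X^{\phi})=W_{\infty}^{\phi}(X)$, so the conclusions for $X^{\phi}$ translate directly to parts~(i) and (ii) for $X$ (in $L^1(P_\mu)$ and $P_\mu$-almost surely, respectively). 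The common set $\Omega_0$ required in part~(ii) is the one produced for $X^{\phi}$ by Lemma~\ref{lem:omega0}, transported back along the same pathwise identification. No single step is a genuine obstacle; the whole argument is bookkeeping that exploits the invariance of the framework under $h$-transforms codified in Lemmas~\ref{lem:SLLN_h} and~\ref{lem:invar}, as extended to GQBMs.
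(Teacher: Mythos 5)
Your proposal is correct and follows essentially the same route as the paper: an $h$-transform with $h=\phi$ (legitimate for GQBMs by the discussion extending $\mathbb{H}(\psi)$), verification of Assumptions~1--4 for $X^{\phi}$ via Lemmas~\ref{lem:mgFctQuad}, \ref{lem:SLLN_h}(i) and \ref{lem:invar}, and translation back through the pathwise identity $X^{\phi}=\phi X$. The only cosmetic difference is that you invoke Theorems~\ref{thm:L1} and~\ref{thm:SLLN} for $X^{\phi}$ directly with $p=2$, whereas the paper cites Theorem~\ref{thm:SLLN_h}; since $\phi^{\phi}=\mathbf{1}$ and $\Pi^{\phi}\equiv 0$, these amount to the same thing.
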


\begin{proof}
Let $X^{\phi}$ be an $(L_0^{\phi},\psi^{\phi};D)$-superprocess. Since $\beta^{\phi}=\lambda_c$ and $\alpha^{\phi}=\phi \alpha$ are bounded, $X^{\phi}$ satisfies the assumptions of Section~\ref{sec:model}. 
Moreover, $X^{\phi}$ satisfies Assumption~\ref{as:skeleton} by Lemma~\ref{lem:mgFctQuad}, Assumption~\ref{as:criticality} with $\phi^{\phi}=\mathbf{1}$ by Lemma~\ref{lem:SLLN_h}(i) and Assumption~\ref{as:moment_h}. 
Hence, Theorem~\ref{thm:SLLN_h}(i) and Lemma~\ref{lem:SLLN_h}(ii) yield the first part of the claim. 
Lemmas~\ref{lem:invar}, \ref{lem:SLLN_h}(ii), \ref{lem:omega0} and Theorem~\ref{thm:SLLN_h}(ii) yield the second part.
\end{proof}

The $h$-transforms are one way to relate two superprocesses to each other, another is monotonicity. 
\begin{lemma}\label{lem:mono}
Let $\psi_{\beta}$ and $\hat{\psi}_{\hat{\beta}}$ be two branching mechanisms as defined in Section~\ref{sec:model} with $\smash{\psi_{\beta}\ge \hat{\psi}_{\hat{\beta}}}$ pointwise. 
Let $X$ and $\smash{\hat{X}}$ be $\smash{(L,\psi_{\beta};D)}$- and $\smash{(L,\hat{\psi}_{\hat{\beta}};D)}$-superprocesses, respectively.
\begin{itemize}
\item[{\rm (i)}]
For all $\mu \in \M_f(D)$, $f\in bp(D)$, $t\ge 0$, $P_{\mu}[e^{-\pint{f,X_t}}] \ge P_{\mu}[e^{-\pint{f,\hat{X}_t}}]$.
\item[{\rm (ii)}] Let $w(x)=-\log P_{\delta_x}(\exists t \ge 0 \colon \pint{\mathbf{1},X_t}=0)$ and $\hat{w}(x)= -\log P_{\delta_x}(\exists t \ge 0 \colon \pint{\mathbf{1},\hat{X}_t}=0)$ for all $x\in D$. Then $w\le \hat{w}$ pointwise.
\end{itemize}
\end{lemma}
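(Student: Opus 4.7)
The plan is to establish (i) by comparing the log-Laplace functionals of the two superprocesses and then to deduce (ii) by letting the test function tend to infinity. For (i), by \eqref{eq:logLaplace} it suffices to show $u_f \le \hat{u}_f$ pointwise on $D\times[0,\infty)$, where $u_f$ and $\hat{u}_f$ are the unique nonnegative solutions of the mild equation \eqref{eq:mild} for $\psi_{\beta}$ and $\hat{\psi}_{\hat{\beta}}$, respectively. By Lemma~\ref{lem:semi}(i), both solutions satisfy the equivalent form
\[
u_f(x,t)=\P_x[f(\xi_t)]-\P_x\Big[\int_0^t\psi_{\beta}(\xi_s,u_f(\xi_s,t-s))\,ds\Big],
\]
and analogously for $\hat{u}_f$ with $\hat{\psi}_{\hat{\beta}}$ in place of $\psi_{\beta}$, so that both are expressed in terms of the \emph{same} motion $\xi$.

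Fix $T>0$. Since $\psi_0\ge 0$, iterating the mild equation gives $u_f(x,t)\le S_tf(x)\le \supnorm{f}e^{\bbeta T}$ on $D\times[0,T]$, and analogously for $\hat{u}_f$. The mean value theorem then produces a bounded measurable function $K$ on $D\times[0,T]$ with $\psi_{\beta}(x,u_f(x,s))-\psi_{\beta}(x,\hat{u}_f(x,s))=K(x,s)\,v(x,s)$, where $v:=u_f-\hat{u}_f$. Setting $h(x,s):=\psi_{\beta}(x,\hat{u}_f(x,s))-\hat{\psi}_{\hat{\beta}}(x,\hat{u}_f(x,s))\ge 0$ by hypothesis, and subtracting the two mild equations, $v$ solves the linear integral equation
\[
v(x,t)=-\P_x\Big[\int_0^t\big(K(\xi_s,t-s)\,v(\xi_s,t-s)+h(\xi_s,t-s)\big)\,ds\Big],\qquad (x,t)\in D\times[0,T].
\]
By Lemma~\ref{lem:semi}(ii) (exactly as in the proof of Lemma~\ref{lem:LaplaceQ}), the unique bounded solution admits the Feynman--Kac representation
\[
v(x,t)=-\P_x\Big[\int_0^t e^{-\int_0^s K(\xi_r,t-r)\,dr}\,h(\xi_s,t-s)\,ds\Big]\le 0.
\]
Since $T$ was arbitrary, $u_f\le \hat{u}_f$ pointwise and (i) follows.

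For (ii), the branching property implies $\{\pint{\mathbf{1},X_s}=0\}\subseteq\{\pint{\mathbf{1},X_t}=0\}$ for all $t\ge s$, so
\[
e^{-w(x)}=P_{\delta_x}(\exists t\ge 0\colon\pint{\mathbf{1},X_t}=0)=\lim_{t\to\infty}\lim_{\theta\to\infty}P_{\delta_x}\big[e^{-\theta\pint{\mathbf{1},X_t}}\big],
\]
and analogously for $\hat{w}$. Applying (i) with $f=\theta\mathbf{1}$ and $\mu=\delta_x$ and passing to the limits yields $e^{-w(x)}\ge e^{-\hat{w}(x)}$, i.e.\ $w\le\hat{w}$. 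The main obstacle is the sign-preserving Feynman--Kac representation of $v$: it requires the uniform boundedness of $u_f,\hat{u}_f$ on compact time intervals (to linearize $\psi_{\beta}$ with a bounded kernel $K$), together with Lemma~\ref{lem:semi}(ii) accommodating a signed potential; the nonnegativity of the driving term $h$, which is precisely what the hypothesis $\psi_{\beta}\ge\hat{\psi}_{\hat{\beta}}$ supplies, then forces $v\le 0$.
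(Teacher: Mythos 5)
Your proof is correct, and part (ii) coincides with the paper's one-line argument (monotonicity of the extinction events in $t$ plus the double limit in $\theta$ and $t$, then part (i)). For part (i), however, you take a genuinely different route. The paper rewrites the mild equation for $\hat{u}_f$, via Lemma~\ref{lem:semi}(i) with $g_1=\beta$, $g_2=\hat{\beta}-\beta$, as an instance of the generalised equation \eqref{eq:mild2} driven by the \emph{same} semigroup $S_t$ and nonlinearity $\psi_0$ but with the nonnegative source $g(x,t)=\psi_{\beta}(x,\hat{u}_f(x,t))-\hat{\psi}_{\hat{\beta}}(x,\hat{u}_f(x,t))$, and then invokes the monotonicity of the Picard iterates in $(f,g)$ (Lemma~\ref{lem:mild_mono}); this reuses machinery the appendix needs anyway for $\beta^*$ unbounded below. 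You instead subtract the two mild equations, linearise $\psi_{\beta}$ with a bounded kernel $K$ (legitimate, since both solutions are bounded on $D\times[0,T]$ and $\partial_z\psi_{\beta}(x,\cdot)$ is bounded on bounded $z$-intervals by the model assumptions, cf.\ \eqref{eq:psi0_Lip}), and conclude $v\le 0$ from a sign-definite Feynman--Kac representation of the unique bounded solution of the resulting linear equation. That is a valid comparison-principle argument for the bounded-$\beta$ setting of the lemma; the only blemish is the citation: since your potential $g_2=-K$ is signed and your source $f_2=-h$ is bounded, the applicable part is Lemma~\ref{lem:semi}(i) (with $g_1=\mathbf{0}$), not (ii), combined with Gronwall uniqueness exactly as in the proof of Lemma~\ref{lem:LaplaceQ}.
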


\begin{proof}
Part~(i) is proved in Appendix~\ref{sec:beta_unb}. Part~(ii) follows from part~(i) and the identity $w(x)=\lim_{t \to \infty} \lim_{ \theta \to \infty}-\log P_{\delta_x}[e^{-\theta \pint{\mathbf{1},X_t}}]$.
\end{proof}

We saw in Example~\ref{ex:outOU} that the SLLN describes the asymptotics of the mass in compact sets, not necessarily the global growth. 
A second distinction between the local and global behaviour can be observed on the event $\{\mglX=0\} \setminus \extinction$. 
Engl\"ander and Turaev \cite[Problem~14]{EngTur02} raised the question whether this event can have positive probability. 
Suppose Assumption~\ref{as:criticality} holds. Engl\"ander \cite{Eng04} observed that if $\lim_{t \to \infty} e^{-\lambda_c t}\pint{f,X_t}=\pint{f,\tphi} \mglX$ in distribution for all $f \in C_c^+(D)$, $\mu \in \M_c(D)$, and if the support of $X$ is transient, then 
\begin{equation}\label{eq:mglZero}
P_{\mu}(\mglX=0)>P_{\mu}(\extinction) \qquad \text{for all }\mu \in \M_c(D), \mu \not\equiv 0.
\end{equation}
Here the support of $(X;P_{\mu})$ is \emph{recurrent} if 
\[
P_{\mu}(X_t(B)>0 \text{ for some }t\ge 0\,|\,\extinction^c)=1 \qquad \text{for every open }B \subseteq D, B \not= \emptyset,
\]
and \emph{transient} otherwise. See \cite{EngPin99} for a detailed discussion of recurrence and transience of the support of superdiffusions.
\medskip

We study three examples in this section. 
In the first example, $\alpha$ and $\beta$ are bounded but $w$ is unbounded. In the second example $\alpha$ is bounded, but $\beta$, $\phi$ and $w$ are unbounded. 
Both examples are based on a recurrent motion but while the support of the superprocess is recurrent in the second, it is transient in the first example.
The third example considers a large class of processes containing super-Brownian motion with compactly supported growth rate $\beta$ and instances of non-symmetric underlying motions.

The domain for all these examples is $D=\R^d$ and, therefore, none of them is covered in Liu et al.'s\ \cite{LiuRenSon13} article. 
Chen et al.'s\ \cite{CheRenWan08} article is not applicable to the first two examples since they are based on the inward-OU process as underlying motion (i.e., like in Section~\ref{sec:nonspatial}, $Q$ is unbounded) and not to the third because the motion is non-symmetric (for some processes in the considered class) and the variance parameter $\alpha$ is unbounded, whereas \cite{CheRenWan08} require $\alpha$ to be bounded.

The motivation for the first example comes from Example~5.1 in \cite{EngPin99}. 

\begin{example}\label{ex:unb_w} Let $d \ge 1$, $D=\R^d$, $L=\frac{1}{2}\Delta -\OUp x \cdot \nabla$ with $\OUp>0$, $\beta \in (0,\infty)$ constant, $\alpha(x)=e^{-\OUp \twonorm{x}^2}$, $\Pi\equiv 0$. 
Then Theorem~\ref{thm:SLLN} applies with $\phi=\mathbf{1}$, $\tphi(x)=(\OUp/\pi)^{d/2} e^{-\OUp \twonorm{x}^2}$ and $\lambda_c=\beta$. Moreover, $w(x)=(\beta +\OUp d) e^{\OUp \twonorm{x}^2}$, the support of $X$ is transient and \eqref{eq:mglZero} holds.
\smallskip

There are two ways to prove \eqref{eq:SLLN} for this example. First, we perform an $h$-transform with $h(x)=(\OUp/\pi)^{-d/2} e^{\OUp \twonorm{x}^2}$ to obtain
\[
L_0^h=\frac{1}{2}\Delta +\OUp x \cdot \nabla, \quad \beta^h= \beta+\OUp d, \quad \alpha^h=(\pi/\OUp)^{d/2}.
\]
In Example~\ref{ex:outOU}, we showed that Theorem~\ref{thm:SLLN} applies to the $(L_0^h,\psi^h;\R^d)$-superprocess. 
The $(L,\psi;\R^d)$-superprocess can be recovered by an $h$-transform with $h_2=1/h$ and Lemma~\ref{lem:SLLN_h} yields that Assumption~\ref{as:criticality} is satisfied with the stated $\phi$, $\tphi$ and $\lambda_c$, and that \eqref{eq:SLLN} holds. 
Alternatively, we can deduce \eqref{eq:SLLN} by a direct application of Theorems~\ref{thm:SLLN} and \ref{thm:SLLN_skeleton}. 
Assumption~\ref{as:skeleton} holds by Lemma~\ref{lem:mgFctQuad}, and Assumption~\ref{as:moment1} holds since $\alpha$ and $\phi$ are bounded. 
To verify Assumption~\ref{as:SLLN}, 
we notice that $w^h(x)=\beta^h/\alpha^h=(\beta +\OUp d) (\OUp/\pi)^{d/2}$ by \eqref{eq:wExting}, and \eqref{eq:wh} yields 
\[
w(x)=w^h(x)h(x)=(\beta+\OUp d) e^{\OUp \twonorm{x}^2} \qquad \text{for all } x \in \R^d.
\]
Thus, $w$ is not bounded from above. The verification of \eqref{eq:SupVeri} and \eqref{eq:ErgoVeri} is the same as in Example~\ref{ex:outOU} since $w/\phi$ is of the same order and the ergodic motion is the same. 
Hence, the conditions hold with $a(t)=e^{\OUp (1+\delta) t}$ for some $\delta>0$ and $K >1+\delta$.

To see that the support of $X$ is transient notice that the support is invariant under $h$-transforms and the support of the $(L_0^h;\psi^h;\R^d)$-superprocess is transient by Theorem~4.6 in \cite{Pin95} and Example~2 in \cite{Pin96}.
\end{example}

Example~\ref{ex:unb_w} should be compared to Example~\ref{ex:outOU} for a quadratic branching mechanism. 
In both examples, the support of the superprocess is transient and the event $\{\mglX=0\}\setminus \extinction$ has positive probability. 
Hence, in both examples, mass can escape to infinity which is reflected in the SLLN by virtue of the fact that $\mglX=0$. 
However, the motion in Example~\ref{ex:unb_w} is recurrent and the SLLN captures not only the local but also the global growth of mass.
\smallskip

The unbounded $w$ in Example~\ref{ex:unb_w} can be interpreted as follows. Heuristically, since the local growth rate $\beta$ is bounded away from zero, on average a large population is generated everywhere in space. 
Risk for the branching process comes from areas of a relatively large variance for the total mass process. In contrast, when the variance parameter $\alpha$ is very small, then extinction is unlikely and $w$ becomes large. 
\smallskip

The motivation for the next example comes from Example~10 in \cite{EngHarKyp10}. For $B \in \B(D)$, $f_1,f_2 \in p(B)$, we write $f_1 \asymp f_2$ if there are constants $0<c\le C <\infty$ such that $c f_1(x) \le f_2(x) \le C f_1(x)$ for all $x \in B$.

\begin{example}\label{ex:betaUnbounded} Let $d \ge 1$, $D= \R^d$, $L=\frac{1}{2} \Delta -\OUp x \cdot \nabla$, $\beta(x)=c_1 \twonorm{x}^2+c_2$, where $\OUp, c_1,c_2 >0$, $\OUp >\sqrt{2c_1}$ and we let $\exPa:=\frac{1}{2} (\OUp -\sqrt{\OUp^2-2c_1})$. Then Assumption~\ref{as:criticality} holds with $\lambda_c=c_2+d \exPa$, $\phi(x)=e^{\exPa \twonorm{x}^2}$ and $\tphi(x)=c e^{(\exPa-\OUp) \twonorm{x}^2}$, where $c= (\frac{\OUp-2 \exPa}{\pi})^{d/2}$. Suppose that $\Pi\equiv 0$ and $\alpha \in C^{\eta}(D)$ with $\alpha \asymp 1/\phi$ on $\R^d$. Then Theorem~\ref{thm:Quad} applies, $w\asymp \phi$ and the support of $X$ is recurrent.
\smallskip

Let $h(x)=e^{\exPa \twonorm{x}^2}$. Using $-\OUp +2\exPa=-\sqrt{\OUp^2-2c_2}$ and $\exPa^2-\OUp \exPa +\frac{c_1}{2}=0$, we observe that
\[
L_0^h=\frac{1}{2} \Delta -\sqrt{\OUp^2-2c_1} x \cdot \nabla, \quad \beta^h=\exPa d +c_2, \quad \alpha^h(x)=h(x)\alpha(x) \asymp 1 \quad \text{on }\R^d.
\]
The $(L_0^h,\psi^h;\R^d)$-superprocess, denoted by $X^h$, satisfies Assumption~\ref{as:criticality} by 
Example~\ref{ex:inOU} with $\phi^h=\mathbf{1}$, $\tphi^h \asymp e^{(2\exPa-\OUp)\twonorm{x}^2}$ and $\lambda_c^h=\exPa d+c_2$. 
Hence, Lemma~\ref{lem:SLLN_h}(i) shows that $X$ satisfies Assumption~\ref{as:criticality}, $\phi$, $\tphi$ and $\lambda_c$ have been correctly identified and $\phi \alpha$ is bounded. 
When we have verified Assumption~\ref{as:SLLN} for $X^h$, then Lemma~\ref{lem:invar} will yield Assumption~\ref{as:SLLN} for $X$ and the claim is established

To this end, choose constants $c_3,c_4>0$ such that $c_3/h \le \alpha \le c_4/h$ pointwise. Let $\overline{\psi}(x,z):=-\beta^hz+c_3 z^2$ and $\underline{\psi}(x,z):=-\beta^hz +c_4 z^2$, and denote by $\overline{w}$ and $\underline{w}$ the martingale functions corresponding to the event of extinction after finite time for the $(L_0^h,\overline{\psi};\R^d)$ and $(L_0^h,\underline{\psi};\R^d)$-superprocesses, respectively. Since $\overline{\psi} \le \psi^h \le \underline{\psi}$ pointwise, Lemma~\ref{lem:mono}(ii) and \eqref{eq:wExting} imply
\[
\beta^h/c_3=\overline{w}(x) \ge w^h(x) \ge \underline{w}(x)=\beta^h/c_4 \qquad \text{for all }x\in D.
\]
Hence, $w^h \asymp \mathbf{1} = \phi^h$. Now the verification of \eqref{eq:SupVeri} and \eqref{eq:ErgoVeri} for $X^h$ is the same as in Example~\ref{ex:inOU} and we can choose $a(t)= \sqrt{(\lambda_c/\gamma+\delta) t}$ for some $\delta>0$ and $K=1$.

The support of $X$ is recurrent since the support is invariant under $h$-transforms and the support of the $(L_0^h, \psi^h;\R^d)$-superprocess is recurrent according to Theorem~4.4(b) in \cite{EngPin99}.
\end{example}

The next example covers a large class of processes. The underlying motion is a Brownian motion with or without a compactly supported drift term. 
Depending on the choice of that drift, the underlying motion can be symmetric or non-symmetric. 
For a choice of $b$ which makes $L$ non-symmetric see Example~13 in \cite{EngHarKyp10}. The article by Chen et al.\ \cite{CheRenWan08} excludes non-symmetric motions. 
The example is motivated by Example~22 in \cite{EngTur02} and Examples~12 and 13 in \cite{EngHarKyp10}.

\begin{example}\label{ex:nonsymm}
Let $d \in \{1,2\}$, $D=\R^d$, $L=\frac{1}{2}\Delta +b \cdot \nabla$ where all components of $b$ belong to $C^{1,\eta}(\R^d)$ for some $\eta \in (0,1]$ and are of compact support, $\beta_0 \in C^{\eta}(\R^d)$ nonnegative and of compact support, $\beta_0\not=\mathbf{0}$. There exists $\theta>0$ such that $\lambda_c(L+\theta \beta_0)>0$ and we let $\beta=\theta \beta_0$, $\lambda_c=\lambda_c(L+\beta)$. We further write 
\[
\varrho(x)=\twonorm{x}^{(1-d)/2} e^{-\sqrt{2 \lambda_c} \twonorm{x}}, \qquad \text{for all }x\in \R^d\setminus \{0\}.
\]
Let $\alpha \in C^{\eta}(D)$, $\alpha(x)>0$ for all $x \in \R^d$ and $\alpha \asymp 1/\varrho$ on $\R^d \setminus B$ for an open ball $B$ around the origin. Then Theorem~\ref{thm:Quad} applies with $\phi, \tphi,w \asymp \varrho$ on $\R^d\setminus B$ and the support of $X$ is recurrent.
\smallskip

The existence of $\theta$ is proven in Theorems~4.6.3 and 4.6.4 of Pinsky's book \cite{Pin95} and $L+\beta-\lambda_c$ is critical by Theorem~4.6.7 in the same bock. 
Denote by $G$ the Green's function corresponding to the operator $L-\lambda_c$. 
Then $\phi\asymp G(\cdot,0)$ on $\R^d\setminus B$ by Theorems~4.6.3 and 7.3.8 in \cite{Pin95}. 
Pinsky showed in Example~7.3.11 that the Green's function $G_1$ of $\frac{1}{2}\Delta - \lambda_c$ satisfies $G_1(\cdot,0) \asymp \varrho$ on $\R^d\setminus B$. 
Since $b$ is compactly supported, $G_1(\cdot,0)\asymp G(\cdot,0)$ on $\R^d \setminus B$ and the estimate for $\phi$ is established. 
The same argument yields the same estimate for $\tphi$ and Assumption \ref{as:criticality} holds. Moreover, $\phi\alpha$ is bounded.

To check Assumption~\ref{as:SLLN} we use Theorem~\ref{thm:SLLN_skeleton}. An $h$-transform of the $(L,\psi;\R^d)$-superprocess with $h=\phi$ gives an $(L_0^{\phi},\psi^{\phi};\R^d)$-superprocess, where $L_0^{\phi}$ corresponds to a conservative, positive recurrent motion and $\psi^{\phi}(x,z)=-\lambda_c z+ \phi(x)\alpha(x) z^2$. 
Since $\phi \alpha \asymp \mathbf{1}$, $w^{\phi} \asymp \mathbf{1}$ by the same argument as in Example~\ref{ex:betaUnbounded}. 
Hence, \eqref{eq:wh} implies $w/\phi \asymp \mathbf{1}$ and conditions~(i) and (ii) of Theorem~\ref{thm:SLLN_skeleton} have been verified in Examples~12 and 13 of \cite{EngHarKyp10} with $a(t)=\sqrt{2 \supnorm{\beta}} t$, $K > 1/\sqrt{2 \lambda_c}$. 
Since $w$ is bounded and the underlying diffusion is recurrent, Theorem~4.4 in \cite{EngPin99} shows that the support of $X$ is recurrent.
\end{example}

\subsection{Bounded domains}\label{sec:bd_domain}

In the situation that $D$ is a bounded Lipschitz domain and $L$ is a uniformly elliptic operator with smooth coefficients, Liu et al.~\cite{LiuRenSon13} prove the SLLN for a general branching mechanism with $\beta$ any bounded, measurable function on $D$ and $\alpha$ and $\Pi$ as in Section~\ref{sec:model}.  

However, the Wright-Fisher diffusion on domain $D=(0,1)$ is a diffusion process whose diffusion matrix $a(x)=x(1-x)$ is not uniformly elliptic. 
The process has attracted a wide interest in the literature (see for example \cite{GreKleWak01,FleSwa03,Ber09}). 
Fleischmann and Swart \cite{FleSwa03} studied the large-time behaviour of the corresponding superprocess with spatially independent, quadratic branching mechanism on $[0,1]$. 
They conjecture a SLLN for the process restricted to $D=(0,1)$ (see above (23) in \cite{FleSwa03}) but prove only convergence in $L^2$. 
The Wright-Fisher diffusion is not conservative, so the arguments in Section~\ref{sec:nonspatial} are not applicable. 
However, Theorem~\ref{thm:SLLN} is and the following theorem proves the conjecture for all $\ell$-almost everywhere continuous test functions $f \in p(D)$ with $f/\phi$ bounded (Fleischmann and Swart do not assume any continuity).

\begin{theorem}[Super-Wright-Fisher diffusion]
Let $D=(0,1)$, $\beta\in (1,\infty)$, $\alpha>0$, $\Pi\equiv 0$ and
\[
L=\frac{1}{2} x(1-x) \frac{d^2}{dx^2}=\frac{1}{2} \frac{d}{dx} x(1-x) \frac{d}{dx}+ \frac{2x-1}{2} \frac{d}{dx}.
\]
Then Theorem~\ref{thm:SLLN} applies with $\phi(x)=6 x (1-x)$, $\tphi=\mathbf{1}$ and $\lambda_c=\beta-1$.
\end{theorem}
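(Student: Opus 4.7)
The plan is to verify Assumptions 1--4 and then invoke Theorem~\ref{thm:SLLN}. Since $\Pi\equiv 0$, the branching mechanism is a generalised quadratic branching mechanism in the sense of Section~\ref{sec:quadratic}, so the appropriate vehicle is Theorem~\ref{thm:Quad}, which demands only that $\phi\alpha$ be bounded in addition to Assumptions~\ref{as:criticality},~\ref{as:skeleton} and~\ref{as:SLLN}.

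I first would verify Assumption~\ref{as:criticality}. A direct computation with $\phi(x)=6x(1-x)$ gives $\phi''(x)=-12$, hence $L\phi=-\phi$ and $(L+\beta)\phi=(\beta-1)\phi$. Writing $L$ in the canonical form $\tfrac{1}{2}\nabla\cdot a\nabla+b\cdot\nabla$ with $a(x)=x(1-x)$ forces $b(x)=(2x-1)/2$, and then $L^*\mathbf{1}=-b'=-\mathbf{1}$, so $\tphi=\mathbf{1}$ satisfies $(L^*+\beta)\tphi=(\beta-1)\tphi$. Both ground states are positive on $(0,1)$ and $\pint{\phi,\tphi}=\int_0^1 6x(1-x)\,dx=1$; by standard criticality theory (e.g.\ Theorem~4.3.3 in~\cite{Pin95}), this forces $\lambda_c=\beta-1$ and $L+\beta-\lambda_c$ is product $L^1$-critical and already normalised. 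The hypothesis $\beta>1$ then delivers $\lambda_c>0$. Moreover, $\phi$ is bounded by $3/2$ on $(0,1)$ and $\alpha$ is constant, so $\phi\alpha$ is bounded, and Assumption~\ref{as:skeleton} holds by Lemma~\ref{lem:mgFctQuad} with $w(x)=-\log P_{\delta_x}(\extinction)$. It remains to establish Assumption~\ref{as:SLLN}.

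For this I would apply Theorem~\ref{thm:SLLN_skeleton}. Since $D=(0,1)$ is bounded, Condition~(i) holds trivially with $D_t=D$. The spine has generator
\[
L_0^{\phi}=L+a\tfrac{\phi'}{\phi}\partial_x=\tfrac{1}{2}x(1-x)\partial_x^2+(1-2x)\partial_x,
\]
which is the Wright-Fisher diffusion with symmetric mutation rates $\mu_1=\mu_2=1$ and stationary density $\phi\tphi=6x(1-x)$, consistent with Assumption~\ref{as:criticality}. Condition~(ii) then asks for
\[
\sup_{y\in(0,1)}\bigl|\P_y^{\phi}[\mathbbm{1}_B(\xi_{Kt})]-\pint{\phi\mathbbm{1}_B,\tphi}\bigr|\to 0\quad\text{as }t\to\infty,
\]
for every $B\subset\subset(0,1)$ and some $K>0$. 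With $K=1$, the classical Jacobi-polynomial spectral decomposition of $L_0^\phi$---eigenvalues $n(n+1)/2$ for $n\ge 1$ with eigenfunctions of controlled boundary behaviour---allows us to expand $\mathbbm{1}_B\phi\tphi$ in this orthonormal basis and to obtain the required uniform decay.

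The main obstacle is precisely this uniformity as $y$ approaches $\{0,1\}$. The symmetric mutation rates $\mu_1=\mu_2=1$ make the boundary \emph{entrance} for $L_0^{\phi}$: the drift $1-2x$ dominates the vanishing diffusion near $\partial D$ and pushes trajectories into the interior of $(0,1)$. This both underpins the spectral argument and, more robustly, yields a Doeblin minorisation---for any $\eps\in(0,1/2)$ there exist $T,\delta>0$ with $\P_y^\phi(\xi_T\in[\eps,1-\eps])\ge\delta$ uniformly in $y\in(0,1)$---from which coupling gives uniform geometric ergodicity in total variation, strictly stronger than~\eqref{eq:ErgodicSpeed}. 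Once~\eqref{eq:ErgodicSpeed} is in hand, Theorem~\ref{thm:SLLN_skeleton} supplies Assumption~\ref{as:SLLN} and Theorem~\ref{thm:Quad} concludes.
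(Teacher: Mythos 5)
Your route is essentially the paper's: the paper also reduces everything to the ergodicity of the $\phi$-transformed spine $L_0^{\phi}=\frac12 x(1-x)\partial_x^2+(1-2x)\partial_x$, takes $D_t=D$ in Condition~(i) of Theorem~\ref{thm:SLLN_skeleton} and $K=1$ in \eqref{eq:ErgodicSpeed}, and gets Assumptions~\ref{as:skeleton} and~\ref{as:moment1} exactly as you do (your detour through Theorem~\ref{thm:Quad} rather than checking Assumption~\ref{as:moment1} directly is immaterial here, since $\alpha$ and $\phi$ are bounded). The one genuine difference is that the paper imports both the positive recurrence of the spine with invariant law $6x(1-x)\,dx$ \emph{and} the uniform convergence \eqref{eq:ErgodicSpeed} from Lemma~20 of Fleischmann--Swart \cite{FleSwa03}, whereas you propose to prove them from scratch (entrance boundaries, Doeblin minorisation, coupling); that is a legitimate and more self-contained alternative, though as written it remains a sketch.

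There is one step whose justification does not hold as stated: having exhibited positive solutions $\phi$ of $(L+\beta-\lambda)u=0$ and $\tphi$ of the adjoint equation with $\pint{\phi,\tphi}<\infty$ does \emph{not} force $\lambda=\lambda_c$ nor criticality. A subcritical operator on a bounded interval (e.g.\ $\frac12\frac{d^2}{dx^2}-\lambda$ on $(0,1)$ with $\lambda<-\pi^2/2$) admits positive bounded solutions of both the equation and its adjoint with finite product, so the finiteness of $\pint{\phi,\tphi}$ alone proves nothing. What Pinsky's criticality theory actually requires is the recurrence of the $h$-transformed diffusion $L_0^{\phi}$; only then is $L+\beta-(\beta-1)$ critical with ground state $\phi$, and product $L^1$-criticality then follows from positive recurrence with $\phi\tphi$ the invariant density. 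Your own later argument (entrance boundary, uniform ergodicity of $L_0^{\phi}$ with stationary density $6x(1-x)$) supplies exactly this ingredient, so the gap is one of logical order rather than substance: establish the positive recurrence of the spine \emph{first}, and deduce Assumption~\ref{as:criticality} from it, as the paper does via Lemma~\ref{lem:SLLN_h}(i). (A peripheral slip: the eigenvalues of $L_0^{\phi}$ are not $n(n+1)/2$; e.g.\ $x-1/2$ is an eigenfunction with eigenvalue $-2$, not $-1$. This does not affect the argument, since any positive spectral gap, or the Doeblin bound, suffices.)
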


\begin{proof} 
Let $h(x) = 6 x (1-x)$. Fleischmann and Swart proved in Lemma~20 of \cite{FleSwa03} that the generator
\[
L_0^h= \frac{1}{2} \frac{d}{dx} x(1-x) \frac{d}{dx} + \frac{1-2x}{2} \frac{d}{dx} 
\]
corresponds to an ergodic diffusion with invariant law $h(x) \ell(dx)$ on $D$. Using $\beta^h=\beta-1$, we deduce that $\lambda_c(L_0^h+\beta^h)=\beta-1$, $\phi^h =\mathbf{1}$, $\tphi^h=h$ and, using Lemma~\ref{lem:SLLN_h}, Assumption~\ref{as:criticality} for the $(L,\psi;D)$ superprocess as well as the stated identities for $\phi$, $\tphi$ and $\lambda_c$ are established. Assumption~\ref{as:skeleton} holds by Lemma~\ref{lem:mgFctQuad}; 
the boundedness of $\alpha$ and $\phi$ implies that Assumption~\ref{as:moment1} is satisfied. To verify Assumption~\ref{as:SLLN}, we notice that Condition~(i) of Theorem~\ref{thm:SLLN_skeleton} is trivially satisfied for $D_t=D$ and \eqref{eq:ErgodicSpeed} for $D_t=D$ and $K=1$ has been proved in Lemma~20 of \cite{FleSwa03}. 
Hence, Assumption~\ref{as:SLLN} follows from Theorem~\ref{thm:SLLN_skeleton} and Theorem~\ref{thm:SLLN} applies.
\end{proof}

\appendix
\section{Appendix}
\label{appendix}

\subsection{Feynman-Kac arguments}\label{sec:FeynKac}
In this section, we prove an integral identity that is used several times in this article. Versions of this result appeared in Lemma~A.I.1.5 of \cite{Dyn93} and Lemma 4.1.1 of \cite{Dyn02} but the format and assumptions are different. Like in the remainder of the article, $(\xi=(\xi_t)_{t\ge 0}\colon (\P_x)_{x\in D})$ is a diffusion as described in Section~\ref{sec:model}.

\begin{lemma}\label{lem:semi}
Let $T>0$ and either $B=D$ or $B\subset \subset D$ open. Write $\tau=\inf\{t \ge 0\colon \xi_t \not\in B\}$ and $A=D$ if $B=D$; $A=\overline{B}$ if $B \subset \subset D$.
\smallskip

\noindent
\begin{itemize}
\item[{\rm (i)}] 
Let $f_1\in b(A)$, $g_1\colon A \times [0,T] \to \R$ measurable and bounded from above and $f_2,g_2\in b(A\times [0,T])$. If for all $(x,t) \in A \times [0,T]$,
\begin{equation}\label{eq:semi}
v(x,t)=\P_x\Big[e^{\int_0^{t\land \tau} (g_1+g_2)(\xi_r,t-r)\, dr} f_1(\xi_{t\land \tau})\Big]+\P_x\Big[\int_0^{t\land \tau}  e^{\int_0^s (g_1+g_2)(\xi_r,t-r)\, dr} f_2(\xi_s, t-s)\, ds\Big],
\end{equation}
then, for all $(x,t) \in A \times [0,T]$,
\[
\hspace{-0.77cm}v(x,t)= \P_x\Big[e^{\int_0^{t\land \tau} g_1(\xi_r,t-r)\, dr} f_1(\xi_{t\land \tau})\Big]+\P_x\Big[\int_0^{t\land \tau} e^{\int_0^s g_1(\xi_r,t-r)\, dr} \Big(f_2(\xi_s, t-s)+g_2(\xi_s,t-s)v(\xi_s,t-s)\Big)\, ds\Big].
\]
\item[{\rm (ii)}]
The statement of (i) remains valid when $f_1 \in bp(A)$, $f_2,g_1,g_2 \colon A\times [0,T] \to \R$ measurable with $g_1$ bounded from above, $g_2$ nonnegative, $f_2$ nonpositive and $g_1+g_2$ bounded from above. Notice that in this case, $v$ might attain the value $-\infty$.
\end{itemize}
\end{lemma}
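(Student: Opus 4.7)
The plan is to reduce both parts of the lemma to a single deterministic identity for the exponential functionals, and then extract the statement by Fubini's theorem combined with the strong Markov property at an intermediate (deterministic) integration time. The key identity is
\[
e^{\int_0^u (g_1+g_2)(\xi_r,t-r)\,dr}=e^{\int_0^u g_1(\xi_r,t-r)\,dr}+\int_0^u g_2(\xi_s,t-s)\,e^{\int_0^s g_1(\xi_r,t-r)\,dr}\,e^{\int_s^u (g_1+g_2)(\xi_r,t-r)\,dr}\,ds,
\]
valid pathwise for every $u\ge 0$. This follows by differentiating $F(s):=e^{\int_0^s g_1(\xi_r,t-r)\,dr}e^{\int_s^u (g_1+g_2)(\xi_r,t-r)\,dr}$, noting that $F'(s)=-g_2(\xi_s,t-s)\,F(s)$, and integrating from $0$ to $u$.

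First I would apply this identity to the two exponentials inside the definition of $v(x,t)$: in the boundary term I take $u=t\wedge\tau$, and in the time-integrated term I take $u=s$ with $s$ being the original integration variable. The boundary contribution contributes $\P_x[e^{\int_0^{t\wedge\tau}g_1\,dr}f_1(\xi_{t\wedge\tau})]$ plus a $g_2$-correction; the time-integrated contribution contributes $\P_x[\int_0^{t\wedge\tau}e^{\int_0^sg_1\,dr}f_2(\xi_s,t-s)\,ds]$ plus a double-integral $g_2$-correction. An application of Fubini then rewrites the total $g_2$-correction as
\[
\P_x\Big[\int_0^{t\wedge\tau} e^{\int_0^u g_1(\xi_r,t-r)\,dr}\,g_2(\xi_u,t-u)\,\Psi_u\,du\Big],\qquad \Psi_u:=\int_u^{t\wedge\tau}\!\!e^{\int_u^s(g_1+g_2)\,dr}f_2(\xi_s,t-s)\,ds+e^{\int_u^{t\wedge\tau}(g_1+g_2)\,dr}f_1(\xi_{t\wedge\tau}).
\]

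Next I would use the strong Markov property at the deterministic time $u$. Making the substitution $s=u+\sigma$, $r=u+r'$, and observing that on $\{u<\tau\}$ the residual exit time $\tau-u$ equals the first exit time from $B$ of the shifted process $(\xi_{u+\cdot})$, one identifies $\P_x[\Psi_u\mid\mathcal{F}_u]\mathbbm{1}_{\{u<\tau\}}=v(\xi_u,t-u)\mathbbm{1}_{\{u<\tau\}}$; the factor $\mathbbm{1}_{\{u<\tau\}}$ is already enforced by the outer $\int_0^{t\wedge\tau}$. Substituting this in and renaming $u$ to $s$ gives exactly the conclusion of part~(i). The boundedness hypotheses in (i) ensure that all exponentials are bounded and that Fubini applies; this is routine and not where the difficulty lies.

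For part~(ii), since $g_1+g_2$ is bounded above the exponential weights are uniformly bounded; $f_1\ge 0$ and $g_2\ge 0$ contribute nonnegative integrands while $f_2\le 0$ contributes nonpositive ones, so the two halves of $v$ are individually well defined in $[-\infty,\infty)$. I would apply part~(i) to the truncated data $f_2^{(n)}:=\max(f_2,-n)$, $g_1^{(n)}:=\max(g_1,-n)$, $g_2^{(n)}:=\min(g_2,n)$ (each satisfying the bounded hypotheses once $g_1+g_2$ is replaced by its truncation from below), yielding the identity for the truncated $v^{(n)}$, and then let $n\to\infty$. Monotone convergence handles each term, and the ergodic-looking Markov step passes to the limit because the measurable versions of $\Psi_u$ converge monotonically to their natural limits. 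The main obstacle is precisely this limiting step: one must ensure that the exchange of limit, Fubini and conditional expectation is legitimate and that the limiting $v$ on the right coincides with the $v$ defined by the original formula, which it does by the same monotone convergence applied to the definition.
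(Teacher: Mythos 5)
Your part~(i) is correct and is, in substance, the paper's own argument read in the opposite direction: the paper starts from the right-hand side, expands $v(\xi_s,t-s)=\P_{\xi_s}[\mathcal{Y}_{t-s}+\mathcal{Z}_{t-s}]$ by the Markov property at the deterministic time $s$, and collapses the resulting double integrals with Fubini and the fundamental theorem of calculus; your pathwise Duhamel identity for $e^{\int_0^u(g_1+g_2)}$ is exactly the same FTC computation, and the Markov step identifying $\P_x[\Psi_u\mid\mathcal{F}_u]\mathbbm{1}_{\{u<\tau\}}=v(\xi_u,t-u)\mathbbm{1}_{\{u<\tau\}}$ is the same step run forwards. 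So for (i) there is nothing to object to.

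For part~(ii), however, your truncation scheme has a genuine gap. You set $g_1^{(n)}=\max(g_1,-n)\downarrow g_1$ and $g_2^{(n)}=\min(g_2,n)\uparrow g_2$, so the exponentials $e^{\int_0^u g_1^{(n)}}$ \emph{decrease} while $e^{\int_0^u(g_1^{(n)}+g_2^{(n)})}$ is not even monotone in $n$; worse, $v^{(n)}$ is the sum of an increasing term (from $f_1\ge 0$) and a decreasing term (from $f_2\le 0$), hence is neither monotone nor of fixed sign, and therefore the product $g_2^{(n)}v^{(n)}$ appearing on the right-hand side is not amenable to monotone convergence at all. The assertion that ``monotone convergence handles each term'' is exactly where the proof breaks. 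On top of this, passing to the limit in the right-hand side requires splitting it into pieces none of which is $+\infty$, which needs an argument (note $g_2$ is unbounded, so $\P_x[\int_0^{t\wedge\tau}e^{\int_0^s g_1}g_2(\xi_s,t-s)\,A(\xi_s,t-s)\,ds]$ with $A$ the bounded nonnegative part of $v$ is not obviously finite without using $g_2\le C-g_1$ and the sign structure). The paper avoids all of this: it runs the computation of (i) directly under the hypotheses of (ii), invoking Tonelli separately on the $f_1$-contribution (everywhere nonnegative, since $g_2\ge0$, $f_1\ge0$) and on the $f_2$-contribution (everywhere nonpositive), truncates only $g_2$ and only at the single FTC step --- where every other factor is nonnegative so the truncation really is monotone --- and then justifies the final use of linearity by checking that none of the summed integrals equals $+\infty$ (using that $f_1$ is bounded and $g_1$, $g_1+g_2$ are bounded above). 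Your approach could likely be repaired by not truncating $g_1$, decomposing $v^{(n)}$ into its two monotone signed parts, and carrying out the same sign bookkeeping, but as written the limiting step is not justified.
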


\begin{proof}
For all $t \ge 0$, write
\begin{align*}
\mathcal{Y}_t=e^{\int_0^{t \land \tau} (g_1+g_2)(\xi_r,t-r) \, dr} f_1(\xi_{t\land \tau}), \quad\text{and}\quad \mathcal{Z}_t=\int_0^{t \land \tau} e^{\int_0^s (g_1+g_2)(\xi_r,t-r)\, dr} f_2(\xi_s ,t-s)\, ds.
\end{align*}
By assumption, $v(x,t)=\P_x[\mathcal{Y}_t+\mathcal{Z}_t]$ for all $(x,t) \in A \times [0,T]$. The Markov property implies
\begin{align*}
\int_0^t \P_x\Big[\mathbbm{1}_{\{s <\tau\}}& e^{\int_0^s g_1(\xi_r,t-r) \, dr} g_2(\xi_s,t-s) \P_{\xi_s}[\mathcal{Y}_{t-s}]\Big] \, ds\\
&=\int_0^t \P_x\Big[\mathbbm{1}_{\{s <\tau\}} e^{\int_0^s g_1(\xi_r,t-r) \, dr} g_2(\xi_s,t-s) e^{\int_s^{t\land \tau} (g_1+g_2)(\xi_r,t-r) \, dr} f_1(\xi_{t\land \tau}) \Big]\, ds.
\end{align*}
If $g_2$ is bounded, then Fubini's theorem and the fundamental theorem of calculus (FTC) for Lebesgue integrals imply that the right-hand side equals
\begin{align*}
\P_x\Big[f_1(\xi_{t\land \tau}) e^{\int_0^{t\land \tau} g_1(\xi_r,t-r) \, dr} &\int_0^{t\land \tau}  g_2(\xi_s,t-s) e^{\int_s^{t\land \tau} g_2(\xi_r,t-r) \, dr} \, ds\Big]\\
&=\P_x\Big[f_1(\xi_{t\land \tau}) e^{\int_0^{t\land \tau} g_1(\xi_r,t-r) \, dr} \Big(e^{\int_0^{t\land \tau} g_2(\xi_r,t-r) \, dr} -1\Big)\Big].
\end{align*}
In the situation of (ii), the same identity can be obtained by truncating $g_2$ before the application of FTC and using the monotone convergence theorem afterwards.
The Markov property and Fubini's theorem (in case (ii) its application is justified by the nonpositivity of the integrand) yield
\begin{align*}
&\int_0^t \P_x\Big[\mathbbm{1}_{\{s <\tau\}} e^{\int_0^s g_1(\xi_r,t-r) \, dr} g_2(\xi_s,t-s) \P_{\xi_s}[\mathcal{Z}_{t-s}]\Big] \, ds\\
&=\int_0^t \P_x\Big[\mathbbm{1}_{\{s <\tau\}} e^{\int_0^s g_1(\xi_r,t-r) \, dr} g_2(\xi_s,t-s) \int_s^{t\land \tau} e^{\int_s^u (g_1+g_2)(\xi_r,t-r)\, dr} f_2(\xi_u ,t-u)\, du \Big] \, ds\\
&= \P_x\Big[\int_0^{t\land \tau} e^{\int_0^u g_1(\xi_r,t-r) \, dr} f_2(\xi_u ,t-u) \int_0^u g_2(\xi_s,t-s) e^{\int_s^u g_2(\xi_r,t-r)\, dr}  \, ds\, du\Big].
\end{align*}
As above, the FTC implies that the right-hand side equals
\[
\P_x\Big[\int_0^{t\land \tau} e^{\int_0^u g_1(\xi_r,t-r) \, dr} f_2(\xi_u ,t-u) \Big(e^{\int_0^u g_2(\xi_r,t-r) \, dr}-1\Big)\, du\Big].
\]
Since $v(x,t)=\P_x[\mathcal{Y}_t+\mathcal{Z}_t]$ for all $(x,t) \in A \times [0,T]$, we conclude that in the situation of (i),
\begin{align*}
&  \P_x\Big[\int_0^{t\land \tau} e^{\int_0^s g_1(\xi_r,t-r) \, dr} \Big(f_2(\xi_s,t-s)+g_2(\xi_s,t-s) v(\xi_s,t-s)\Big) \, ds \Big]\\
&=\P_x\Big[f_1(\xi_{t\land \tau}) e^{\int_0^{t\land \tau} g_1(\xi_r,t-r) \, dr} \Big(e^{\int_0^{t\land \tau} g_2(\xi_r,t-r) \, dr} -1\Big)+\int_0^{t\land \tau} e^{\int_0^s (g_1+g_2)(\xi_r,t-r) \, dr} f_2(\xi_s ,t-s) \,ds\Big].
\end{align*}
In the situation of (ii), this use of linearity is justified since none of the summed integrals can take the value $+\infty$. Since $f_1$ is bounded and $g_1,g_1+g_2$ are bounded from above, the first summand on the right can be written as the difference of two finite integrals and \eqref{eq:semi} yields the claim.
\end{proof}

\subsection{Integral equations}\label{sec:beta_unb}

In this section, we study a generalised version of the mild equation \eqref{eq:mild} and of the corresponding equation for exit measures \eqref{eq:mildExit}. 
We only assume that $\beta$ is bounded above, not necessarily from below. More specifically, the setup is as follows.

Let $\beta \colon D \to \R$ be measurable with $\bbeta=\sup_{x \in D} \beta(x)<\infty$, $\alpha \in bp(D)$, $\Pi$ a kernel from $D$ to $(0,\infty)$ such that $x \mapsto \int_{(0,\infty)} (y \land y^2) \,\Pi(x,dy)$ belongs to $bp(D)$, and let $(\xi;\P)$ be a diffusion as described in Section~\ref{sec:model}. 
We denote by $\locbdpos(D\times [0,\infty))$ the space of all functions $f \in p(D\times [0,\infty))$ with $\supTnorm{f}:=\sup_{t \in [0,T]} \supnorm{f(\cdot,t)}<\infty$ for all $T>0$.

For $f \in bp(D)$ and $g\in \locbdpos(D\times [0,\infty))$, we are interested in solutions to the integral equation
\begin{equation}\label{eq:mild2}
u(x,t)+\int_0^t S_s\big[\psi_0(\cdot,u(\cdot,t-s))\big](x) \, ds=S_tf(x)+\int_0^t S_s[g(\cdot,t-s)](x) \, ds, \quad (x,t) \in D\times [0,\infty),
\end{equation}
where $S_t\hat{g}(x)=\P_x[e^{\int_0^t \beta(\xi_s) \,ds} \hat{g}(\xi_t)]$ for $\hat{g} \in p(D)$.

In this section, we prove existence and uniqueness of the solution to \eqref{eq:mild2}. 
Choosing $g=\mathbf{0}$, this proves Lemma~\ref{lem:mildStar} but is also of intrinsic interest when one wants to extend the definition of the superprocess to branching mechanisms with $\beta$ only bounded from above. 
The more general right-hand side in \eqref{eq:mild2} allows us to obtain Lemma~\ref{lem:mono}. 
Taking advantage of this analysis, we prove monotonicity of the solution $\widetilde{u}_f^B$ of \eqref{eq:mildExit} in the domain $B$ when $f$ is compactly supported.
\medskip

Note that $z \mapsto \psi_0(x,z)$, defined in \eqref{eq:psiDef} is increasing, convex, and nonnegative. In particular, any nonnegative solution $u$ to \eqref{eq:mild2}, satisfies
\[
0 \le u(x,t) \le e^{\bbeta t}\supnorm{f}+ \int_0^t e^{\bbeta s} \supnorm{g(\cdot,t-s)} \, ds \quad \text{for all } (x,t) \in D\times [0,\infty).
\]
Hence, any nonnegative solution to \eqref{eq:mild2} is an element of $\locbdpos(D\times[0,\infty))$. 
Moreover, $\psi_0$ is locally Lipschitz continuous in the sense that for every fixed $c>0$ there exists $\mathcal{L}(c)\in [0,\infty)$ such that
\begin{equation}\label{eq:psi0_Lip}
|\psi_0(x,z_1)-\psi_0(x,z_2)| \le \mathcal{L}(c) |z_1-z_2| \qquad \text{for all } z_1,z_2 \in [0,c], x \in D.
\end{equation}
We use the following version of Gronwall's Lemma. For a proof see Theorem~A.5.1 in \cite{EthKur05}.

\begin{lemma}[{\bf Gronwall's Lemma}]\label{lem:Gron}
Let $T>0$, $C,\rho \ge 0$ and $h\in b([0,T])$. If
\[
h(t) \le C+\rho \int_0^t h(s) \, ds \qquad \text{for all } t \in [0,T],
\]
then $h(t) \le C e^{\rho t}$ for all $t \in [0,T]$.
\end{lemma}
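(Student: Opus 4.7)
The plan is to establish the classical iteration bound, which proceeds without any measure-theoretic subtleties since $h$ is assumed to be bounded and measurable on $[0,T]$. I would first dispose of the trivial case $\rho=0$, where the assumed inequality directly gives $h(t)\le C\le Ce^{\rho t}$. For $\rho>0$, I would substitute the inequality into itself iteratively and show by induction on $n\in\N$ that for every $t\in[0,T]$,
\[
h(t) \;\le\; C\sum_{k=0}^{n-1}\frac{(\rho t)^k}{k!} \;+\; \rho^n\int_0^t\int_0^{s_1}\cdots\int_0^{s_{n-1}} h(s_n)\,ds_n\cdots ds_1.
\]
The base case $n=1$ is the hypothesis. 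For the inductive step, plug the hypothesis into the innermost integral and use Fubini to collapse the nested integrals of the constant $C$ into $C(\rho t)^n/n!$.

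The remainder term is then estimated by $\rho^n\supnorm{h}_{[0,T]} \cdot t^n/n!$, where boundedness of $h$ on $[0,T]$ (which is the role of the assumption $h\in b([0,T])$) is crucial. As $n\to\infty$, the remainder vanishes uniformly in $t\in[0,T]$ while the partial sum converges to $Ce^{\rho t}$, yielding the claim.

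An equivalent and perhaps cleaner route, which I would present as an alternative, uses the integrating factor trick: set $H(t):=\int_0^t h(s)\,ds$, which is absolutely continuous with $H'=h$ almost everywhere. The hypothesis reads $H'(t)\le C+\rho H(t)$, so $(e^{-\rho t}H(t))'\le Ce^{-\rho t}$, and integration from $0$ to $t$ combined with $H(0)=0$ gives $H(t)\le C(e^{\rho t}-1)/\rho$. Reinserting into the hypothesis yields $h(t)\le C+\rho H(t)\le Ce^{\rho t}$. The only mild subtlety here is justifying the almost-everywhere differentiation of $H$, which is standard from the Lebesgue differentiation theorem for bounded measurable integrands; the iteration proof avoids this altogether and is therefore the version I would actually write out.

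Neither route presents a genuine obstacle: the result is classical and the authors themselves only cite \cite{EthKur05}. If I had to single out the one step requiring care, it would be verifying that the iterated integral remainder tends to zero, which relies on the uniform bound $\supnorm{h}_{[0,T]}<\infty$ provided by the hypothesis $h\in b([0,T])$.
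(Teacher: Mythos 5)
Your proof is correct. The paper does not actually prove this lemma --- it only cites Theorem~A.5.1 of \cite{EthKur05} --- and your iteration argument (base case equal to the hypothesis, inductive substitution into the innermost integral, remainder bounded by $\rho^n t^n \sup_{[0,T]}|h|/n! \to 0$) is precisely the standard proof given in that reference; the integrating-factor variant is also sound, with the a.e.\ differentiability of $H$ justified as you say. One small point worth noting: the lemma does not assume $h\ge 0$, but your iteration handles this automatically since monotonicity of the Lebesgue integral preserves the inequality at each substitution and the remainder estimate only uses $|h|\le\sup_{[0,T]}|h|<\infty$.
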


\begin{lemma}[{\bf Uniqueness}]\label{lem:mild_uni} Let $f, \hat{f} \in bp(D)$,  $g,\hat{g} \in \locbdpos(D\times [0,\infty))$, and suppose that $u$ and $\hat{u}$ are nonnegative solutions to \eqref{eq:mild2} for $(f,g)$ and $(\hat{f},\hat{g})$, respectively. 
Then there exist for every $T>0$, constants $C,\rho>0$ such that
\[
\supTnorm{u-\hat{u}} \le C \big(\supnorm{f-\hat{f}}+\supTnorm{g-\hat{g}}\big) e^{\rho T}.
\]
In particular, the solution to \eqref{eq:mild2} is unique.
\end{lemma}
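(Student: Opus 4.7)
The plan is to subtract the two integral equations for $u$ and $\hat u$, bound the increment $h(t)=\supnorm{u(\cdot,t)-\hat u(\cdot,t)}$, and close the argument with Gronwall's Lemma~\ref{lem:Gron}. The main ingredients are already at hand: the a priori sup-bound on nonnegative solutions noted just after \eqref{eq:mild2}, the local Lipschitz estimate \eqref{eq:psi0_Lip}, and the trivial bound $S_t\mathbf{1}(x)\le e^{\bbeta t}$ coming from $\beta\le\bbeta$.

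\smallskip

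Fix $T>0$. Using \eqref{eq:mild2} for both $(u,f,g)$ and $(\hat u,\hat f,\hat g)$ and subtracting yields, for all $(x,t)\in D\times[0,T]$,
\[
u(x,t)-\hat u(x,t)=S_t(f-\hat f)(x)+\int_0^t S_s[(g-\hat g)(\cdot,t-s)](x)\,ds-\int_0^t S_s\big[\psi_0(\cdot,u(\cdot,t-s))-\psi_0(\cdot,\hat u(\cdot,t-s))\big](x)\,ds.
\]
The a priori bound recalled after \eqref{eq:mild2} gives a constant
\[
c:=e^{\bbeta T}\big(\supnorm{f}\vee\supnorm{\hat f}\big)+Te^{\bbeta T}\big(\supTnorm{g}\vee\supTnorm{\hat g}\big)
\]
such that $u(x,t),\hat u(x,t)\in[0,c]$ for all $(x,t)\in D\times[0,T]$. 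Therefore \eqref{eq:psi0_Lip} applies with this $c$, so that
\[
\big|\psi_0(x,u(x,r))-\psi_0(x,\hat u(x,r))\big|\le \mathcal{L}(c)\, h(r)\qquad\text{for all }x\in D,\ r\in[0,T],
\]
where $h(r):=\supnorm{u(\cdot,r)-\hat u(\cdot,r)}$. Because $u,\hat u\in\locbdpos(D\times[0,\infty))$, the function $h$ is bounded on $[0,T]$.

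\smallskip

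Taking absolute values in the displayed identity, then the supremum over $x\in D$, and using $S_s\mathbf{1}\le e^{\bbeta s}\le e^{\bbeta T}$ together with the substitution $r=t-s$ in the last two integrals, we obtain for every $t\in[0,T]$
\[
h(t)\le e^{\bbeta T}\supnorm{f-\hat f}+e^{\bbeta T}\int_0^t \supnorm{(g-\hat g)(\cdot,r)}\,dr+\mathcal{L}(c)\,e^{\bbeta T}\int_0^t h(r)\,dr.
\]
Bounding the second integral by $T\,\supTnorm{g-\hat g}$ gives
\[
h(t)\le \underbrace{e^{\bbeta T}\big(\supnorm{f-\hat f}+T\supTnorm{g-\hat g}\big)}_{=:\,C_0}+\underbrace{\mathcal{L}(c)\,e^{\bbeta T}}_{=:\,\rho}\int_0^t h(r)\,dr.
\]
Since $h\in b([0,T])$, Gronwall's Lemma~\ref{lem:Gron} yields $h(t)\le C_0 e^{\rho t}$ for all $t\in[0,T]$. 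Taking the supremum over $t\in[0,T]$ produces the asserted estimate with $C:=e^{\bbeta T}\max\{1,T\}$ and the $\rho$ above. Uniqueness is the specialisation $f=\hat f$, $g=\hat g$, which forces $C_0=0$ and hence $h\equiv 0$ on $[0,T]$; letting $T\to\infty$ gives $u=\hat u$ on $D\times[0,\infty)$.

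\smallskip

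There is no serious obstacle; the only point demanding care is the passage to the supremum in $x$ inside the integrals, which is legitimate because the integrands are pointwise dominated by the stated bounds and $s\mapsto\supnorm{\cdot\,}$ is measurable thanks to $u,\hat u\in\locbdpos$. Everything else is a routine Picard-Gronwall argument tailored to the upper-bounded-but-not-lower-bounded $\beta$ regime.
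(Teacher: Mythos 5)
Your proof is correct and follows essentially the same route as the paper's: subtract the two integral equations, use the a priori sup-bound on nonnegative solutions to invoke the local Lipschitz estimate \eqref{eq:psi0_Lip}, and close with Gronwall's Lemma~\ref{lem:Gron}. One small slip: since $\bbeta=\sup_{x\in D}\beta(x)$ may be negative, the chain $S_s\mathbf{1}\le e^{\bbeta s}\le e^{\bbeta T}$ fails for $s<T$ in that case; replace $e^{\bbeta T}$ by $M=\max\{e^{\bbeta T},1\}$ throughout (as the paper does) and the argument goes through unchanged.
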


\begin{proof}
Fix $T>0$, and let $c\ge \max\{\supTnorm{u},\supTnorm{\hat{u}}\}$. Then \eqref{eq:psi0_Lip} yields
\[
|\psi_0(x,\hat{u}(x,t)) - \psi_0(x,u(x,t))| \le \mathcal{L}(c) |\hat{u}(x,t)-u(x,t)| \quad \text{for all } (x,t) \in D\times [0,T].
\]
Writing $h(x,t)=|u(x,t)-\hat{u}(x,t)|$ and $M=\max\{e^{\bbeta T},1\}$, we find for all $(x,t) \in D \times [0,T]$,
\begin{align*}
h(x,t)\le M\supnorm{f-\hat{f}}+MT \supTnorm{g-\hat{g}} + \int_0^t M\mathcal{L}(c) \supnorm{h(\cdot,s)} \, ds.
\end{align*}
Lemma~\ref{lem:Gron} yields the claim.
\end{proof}

The following lemma in the case $\beta=\mathbf{0}$ and $g=\mathbf{0}$ is Theorem~4.3.1 in \cite{Dyn02}.
\begin{lemma}[{\bf{Existence}}]\label{lem:mild_exist} Let $f \in bp(D)$ and $g \in \locbdpos(D\times [0,\infty))$. There exists a nonnegative solution $u\in \locbdpos(D\times [0,\infty))$ to \eqref{eq:mild2}.
\end{lemma}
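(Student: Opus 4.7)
The plan is to obtain a nonnegative solution by truncating $\psi_0$ into a globally Lipschitz nonlinearity, solving the truncated equation by Banach fixed-point in a weighted norm, and observing that the truncation is inactive once the cutoff exceeds a natural a priori upper bound for the solution. Fix $T>0$ and set
\[
u_0(x,t):=S_tf(x)+\int_0^t S_s[g(\cdot,t-s)](x)\,ds,
\]
which belongs to $\locbdpos(D\times[0,T])$ because $\bbeta:=\sup_{x\in D}\beta(x)<\infty$, $f\in bp(D)$ and $g\in\locbdpos(D\times[0,\infty))$. For $n\in\N$, define $\psi_0^{(n)}(x,z):=\psi_0(x,z\wedge n)$; each $\psi_0^{(n)}$ is nonnegative, nondecreasing in $z$, vanishes at $z=0$, is bounded uniformly in $x$ thanks to $x\mapsto \int(y\wedge y^2)\,\Pi(x,dy)\in bp(D)$, and is globally Lipschitz in $z$ uniformly in $x$ with some constant $\mathcal{L}(n)$ by \eqref{eq:psi0_Lip}.

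For each $n$ I would run a Picard iteration for
\[
\Phi^{(n)}(u)(x,t):=u_0(x,t)-\int_0^t S_s\big[\psi_0^{(n)}(\cdot,u(\cdot,t-s))\big](x)\,ds
\]
on the Banach space of bounded measurable functions on $D\times[0,T]$ equipped with the weighted norm $\|u\|_\lambda:=\sup e^{-\lambda t}|u(x,t)|$; the Volterra estimate $\int_0^t e^{\bbeta s}e^{\lambda(t-s)}\,ds\le e^{\lambda t}/(\lambda-\bbeta)$ shows that $\Phi^{(n)}$ has Lipschitz constant at most $\mathcal{L}(n)/(\lambda-\bbeta)$, so for $\lambda>\bbeta+\mathcal{L}(n)$ it is a contraction with a unique fixed point $u^{(n)}$. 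The hardest step will be to verify that $u^{(n)}\ge 0$, because $\Phi^{(n)}$ is anti-monotone in $u$ and so cannot be iterated monotonically inside the nonnegative cone. My approach is to linearise and invoke a Feynman--Kac representation. Setting
\[
k^{(n)}(x,s):=\psi_0^{(n)}(x,u^{(n)}(x,s))/u^{(n)}(x,s)\text{ if }u^{(n)}(x,s)\neq 0,\qquad k^{(n)}(x,s):=0\text{ otherwise},
\]
one has $0\le k^{(n)}\le\mathcal{L}(n)$ and $u^{(n)}$ satisfies the linear Volterra equation $u^{(n)}=u_0-\int_0^{\,\cdot\,} S_{\,\cdot\,}[k^{(n)}u^{(n)}]$. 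Applying Lemma~\ref{lem:semi}(i) with $B=D$, $g_1=\beta$, $g_2=-k^{(n)}$, $f_1=f$ and $f_2=g$ (whose hypotheses are all met, $\beta$ being bounded above and $k^{(n)}$, $g$ bounded) shows that the explicit expression
\[
v^{(n)}(x,t):=\P_x\Big[e^{\int_0^t(\beta-k^{(n)})(\xi_r,t-r)\,dr}f(\xi_t)\Big]+\int_0^t\P_x\Big[e^{\int_0^s(\beta-k^{(n)})(\xi_r,t-r)\,dr}g(\xi_s,t-s)\Big]\,ds
\]
solves the same linear Volterra equation; a Gronwall argument in the spirit of Lemma~\ref{lem:mild_uni} gives uniqueness for this linear equation, so $u^{(n)}=v^{(n)}\ge 0$ because $f,g\ge 0$.

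To conclude, $\psi_0^{(n)}\ge 0$ yields the a priori bound $0\le u^{(n)}\le u_0\le\supTnorm{u_0}=:M$, so for any $n\ge M$ one has $u^{(n)}\wedge n=u^{(n)}$ and hence $\psi_0^{(n)}(\cdot,u^{(n)})=\psi_0(\cdot,u^{(n)})$; thus $u^{(n)}$ already solves the original equation \eqref{eq:mild2} on $D\times[0,T]$. Since $T$ was arbitrary and Lemma~\ref{lem:mild_uni} forces any two nonnegative solutions to coincide on any common bounded time interval, gluing together the solutions built on $[0,k]$ for $k\in\N$ delivers the required $u\in\locbdpos(D\times[0,\infty))$.
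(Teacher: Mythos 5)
Your proof is correct in substance but takes a genuinely different route from the paper. The paper does not truncate $\psi_0$ or use a contraction argument; instead it rewrites the equation with an extra damping term, defining $F_ku(x,t)=e^{-kt}S_tf(x)+\int_0^te^{-ks}S_s[g(\cdot,t-s)]\,ds+\int_0^te^{-ks}S_s[ku(\cdot,t-s)-\psi_0(\cdot,u(\cdot,t-s))]\,ds$ with $k\ge\mathcal L(c)$ chosen so that $z\mapsto kz-\psi_0(x,z)$ is nondecreasing on $[0,c]$. This makes $F_k$ order-preserving on the order interval $[\mathbf 0,v]$, so the Picard iterates started from $\mathbf 0$ increase to a fixed point; nonnegativity and the a priori bound come for free from the monotone scheme, and Lemma~\ref{lem:semi}(i) converts the fixed-point identity back into \eqref{eq:mild2}. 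Your scheme buys a genuine contraction (hence directly a unique fixed point of the truncated problem) at the price of having to prove nonnegativity separately, which you do correctly via the linearisation $k^{(n)}=\psi_0^{(n)}(\cdot,u^{(n)})/u^{(n)}$, Lemma~\ref{lem:semi}(i) and Gronwall; the paper's monotone construction buys nonnegativity and, as a by-product, the comparison principle that is then quoted verbatim in the proofs of Lemmas~\ref{lem:mild_mono} and~\ref{lem:mild_exit_mono} -- your construction would not yield those monotonicity statements without further work.

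One technical point needs patching. Your Banach space contains sign-changing functions, but $\psi_0^{(n)}(x,z):=\psi_0(x,z\wedge n)$ is only controlled for $z\ge0$: the bound \eqref{eq:psi0_Lip} is stated on $[0,c]$, and for $z<0$ the integral $\int_{(0,\infty)}(e^{-zy}-1+zy)\,\Pi(x,dy)$ may even be infinite, so $\Phi^{(n)}$ as written is not defined on the whole space and its Lipschitz constant is not justified. Replace the truncation by $\psi_0^{(n)}(x,z):=\psi_0\big(x,(z\vee0)\wedge n\big)$; this is globally Lipschitz with constant $\mathcal L(n)$, vanishes for $z\le0$ (so $k^{(n)}=0$ wherever $u^{(n)}<0$ and the bound $0\le k^{(n)}\le\mathcal L(n)$ survives), and the rest of your argument, including the removal of the truncation once $n\ge\supTnorm{u_0}$ and the gluing over $T$, goes through unchanged.
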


\begin{lemma}[{\bf{Monotonicity in $f$}}]\label{lem:mild_mono}  Let $f,\hat{f} \in bp(D)$, $g,\hat{g} \in \locbdpos(D\times [0,\infty))$ with $f \le \hat{f}$ and $g\le \hat{g}$ pointwise and denote by $u$ and $\hat{u}$ the unique solutions to \eqref{eq:mild2} corresponding to $(f,g)$ and $(\hat{f},\hat{g})$, respectively. 
Then $u \le \hat{u}$ pointwise.
\end{lemma}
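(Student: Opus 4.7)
My plan is to reduce the claim to a linear integral equation for $v := u - \hat u$ and then to read off the sign of $v$ from a Feynman-Kac representation produced by Lemma~\ref{lem:semi}(i). Since $\psi_0(x,\cdot)$ is $C^1$ and nondecreasing, the integral form of the mean value theorem gives
\[
\psi_0(x,u(x,t)) - \psi_0(x,\hat u(x,t)) = K(x,t)\,v(x,t),\qquad K(x,t) := \int_0^1 \partial_z\psi_0\bigl(x,\hat u(x,t)+\theta v(x,t)\bigr)\,d\theta,
\]
with $K\ge 0$. Because $u,\hat u \in \locbdpos(D\times[0,\infty))$ and $\partial_z\psi_0(x,z) = 2\alpha(x)z + \int_{(0,\infty)} y(1-e^{-zy})\,\Pi(x,dy)$ is uniformly bounded in $x\in D$ for $z$ in any bounded interval (using $\alpha\in bp(D)$ and the hypothesis that $x\mapsto \int_{(0,\infty)} (y\land y^2)\,\Pi(x,dy)$ is bounded), $K$ is bounded on $D\times[0,T]$ for every $T>0$.

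Subtracting the two versions of \eqref{eq:mild2} satisfied by $u$ and $\hat u$, the function $v$ solves the linear integral equation
\[
v(x,t) + \int_0^t S_s\bigl[K(\cdot,t-s)\, v(\cdot,t-s)\bigr](x)\,ds = R(x,t), \qquad R(x,t) := S_t(f-\hat f)(x) + \int_0^t S_s\bigl[(g-\hat g)(\cdot,t-s)\bigr](x)\,ds,
\]
and $R\le 0$ on $D\times[0,\infty)$ by hypothesis.

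Next, define the candidate
\[
\tilde v(x,t) := \P_x\Bigl[e^{\int_0^t (\beta-K)(\xi_r,t-r)\,dr}(f-\hat f)(\xi_t)\Bigr] + \P_x\Bigl[\int_0^t e^{\int_0^s (\beta-K)(\xi_r,t-r)\,dr}\,(g-\hat g)(\xi_s,t-s)\,ds\Bigr],
\]
which is bounded on $D\times[0,T]$ since $\beta-K$ is bounded above and $f-\hat f, g-\hat g$ are bounded. Apply Lemma~\ref{lem:semi}(i) with $B=D$, $g_1=\beta$, $g_2=-K$, $f_1=f-\hat f$, $f_2=g-\hat g$; the hypotheses are met because $\beta$ is bounded above by $\bbeta$, and $K, f-\hat f, g-\hat g$ are bounded on the relevant strips. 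Since $g_1+g_2=\beta-K$, the lemma converts the Feynman-Kac expression defining $\tilde v$ (the ``$g_1+g_2$-form'') into the ``$g_1$-form''
\[
\tilde v(x,t) = S_t(f-\hat f)(x) + \int_0^t S_s\bigl[(g-\hat g)(\cdot,t-s) - K(\cdot,t-s)\tilde v(\cdot,t-s)\bigr](x)\,ds,
\]
which is precisely the linear integral equation satisfied by $v$.

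It remains to argue that the linear equation has a unique bounded solution on $D\times[0,T]$. The difference $w$ of two bounded solutions satisfies $w(x,t) = -\int_0^t S_s[K w(\cdot,t-s)](x)\,ds$, whence $\supnorm{w(\cdot,t)} \le e^{\bbeta T}\supTnorm{K}\int_0^t \supnorm{w(\cdot,s)}\,ds$ on $[0,T]$, and Lemma~\ref{lem:Gron} forces $w\equiv 0$. Hence $v=\tilde v$, and because $f-\hat f\le 0$, $g-\hat g\le 0$ and the exponentials in the definition of $\tilde v$ are nonnegative, we conclude $v\le 0$, i.e.\ $u\le\hat u$. The only substantive verification is the uniform boundedness of $K$ on strips $D\times[0,T]$; once this is in hand, the proof is a matter of choosing the right inputs to Lemma~\ref{lem:semi}(i) so that the Feynman-Kac representation materialises in the direction we need.
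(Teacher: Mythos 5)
Your argument is correct, but it is not the route the paper takes. The paper disposes of this lemma in one line: in the proof of Lemma~\ref{lem:mild_exist} the solution is built as the increasing limit of Picard iterates $u_0=\mathbf{0}$, $u_{n+1}=F_ku_n$, where $F_k$ is monotone in its argument (property (ii) there, valid once $k\ge\mathcal{L}(c)$) and manifestly monotone in the data $(f,g)$; an induction gives $u_n\le\hat u_n$ for all $n$, and Lemma~\ref{lem:mild_uni} identifies the limits with $u$ and $\hat u$. Your proof instead linearises the difference via the integral mean value theorem, represents the resulting linear equation by Feynman--Kac through Lemma~\ref{lem:semi}(i), and reads off the sign — a genuine comparison-principle argument. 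The checks you flag are the right ones: $K\ge0$ follows from monotonicity of $z\mapsto\psi_0(x,z)$ on $[0,\infty)$ and $(1-\theta)\hat u+\theta u\ge0$, and boundedness of $K$ on strips is exactly the local Lipschitz bound \eqref{eq:psi0_Lip}; also note Lemma~\ref{lem:semi}(i) does accept signed bounded $f_1,f_2$, so your application is legitimate. What each approach buys: the paper's is essentially free given the existence proof, but is tied to the particular construction; yours is independent of how $u,\hat u$ were obtained and is the same mechanism the paper itself deploys for Lemma~\ref{lem:mono}(i) — there, however, the paper absorbs the discrepancy between the two branching mechanisms into the inhomogeneity $g$ and then quotes Lemma~\ref{lem:mild_mono}, rather than linearising. (A cosmetic point only: for $\bbeta<0$ the operator norm bound should be $\max\{e^{\bbeta T},1\}$ rather than $e^{\bbeta T}$, as in the paper's $M$; Gronwall is unaffected.)
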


\begin{proof}[Proof of Lemma~\ref{lem:mild_exist}]
Fix $T>0$ and let $M=\max\{e^{\bbeta T},1\}$. For $k \in [0,\infty)$ and $u \in \locbdpos(D \times [0,T])$, i.e.\ $u \in p(D \times [0,T])$ with $\supTnorm{u}<\infty$, we define for all $(x,t) \in D\times [0,T]$,
\[
F_ku(x,t)=e^{-kt} S_tf(x)+ \int_0^t e^{-ks} S_s[g(\cdot,t-s)](x) \,ds + \int_0^t e^{-ks} S_s\big[ku(\cdot,t-s)-\psi_0(\cdot,u(\cdot,t-s))\big](x) \, ds.
\]
Let $c \ge M \supnorm{f} + MT \supTnorm{g}$ and $k \ge \mathcal{L}(c)$. Write $v(x,t)=e^{\bbeta t}\supnorm{f}+\int_0^t e^{\bbeta s}\, ds\supTnorm{g}$ for all $x\in D$, $t\in [0,T]$. We show the following:
\begin{itemize}
\item[(i)] $\mathbf{0} \le F_k\mathbf{0} \le v$ pointwise on $D \times [0,T]$.
\item[(ii)] If $\mathbf{0} \le u_1\le u_2 \le v$ pointwise on $D \times [0,T]$, then $F_ku_1 \le F_ku_2$ pointwise on $D \times [0,T]$.
\item[(iii)] $F_kv \le v$ pointwise on $D \times [0,T]$.
\end{itemize}
Indeed, $F_k\mathbf{0}(x,t)=e^{-kt} S_tf(x) +\int_0^t e^{-ks} S_s[g(\cdot,t-s)](x) \,ds  \in [0, v(x,t)]$ since $f$ and $g$ are nonnegative and $k \ge 0$. For (ii), we use that $v(x,t) \le c$ for all $(x,t) \in D \times [0,T]$ and \eqref{eq:psi0_Lip} to obtain 
\begin{align*}
F_ku_2(x,t)-F_ku_1(x,t)&=\int_0^t e^{-ks} S_s\big[k(u_2-u_1)(\cdot,t-s)-\big(\psi_0(\cdot,u_2(\cdot,t-s))-\psi_0(\cdot,u_1(\cdot,t-s))\big)\big](x) \, ds \\
&\ge \int_0^t e^{-ks} S_s\big[(k-\mathcal{L}(c))(u_2-u_1)(\cdot,t-s)\big](x) \, ds \ge 0.
\end{align*}
To show (iii), we use that $\psi_0$ is nonnegative, the definition of $v$ and Fubini's theorem to obtain
\begin{align*}
F_kv(x,t)&\le e^{-kt} e^{\bbeta t} \supnorm{f}+ \int_0^t e^{-ks} e^{\bbeta s} \supTnorm{g} \, ds+ \int_0^t e^{-ks} e^{\bbeta s}k \Big(e^{\bbeta(t-s)}\supnorm{f}+\int_0^{t-s} e^{\bbeta r} \,dr \supTnorm{g}\Big) \, ds\\
&= \Big(e^{-kt}+\int_0^t k e^{-ks} \,ds\Big)e^{\bbeta t} \supnorm{f} + \Big( \int_0^t e^{(\bbeta-k)s}\,ds+  \int_0^t k e^{-ks} \int_s^t e^{\bbeta r} \,dr \, ds\Big)\supTnorm{g}=  v(x,t).
\end{align*}
In the next step, we construct a solution to \eqref{eq:mild2} via a Picard iteration. Let $u_0=\mathbf{0}$ and $u_n=F_ku_{n-1}$ for all $n \in \N$. 
We show by induction that $\mathbf{0} \le u_{n-1} \le u_n \le v$ pointwise on $D\times [0,T]$ for all $n \in \N$. For $n=1$, this is statement~(i). 
The induction step follows from~(ii)--(iii). In particular, $(u_n)_{n \in \N_0}$ has a pointwise limit $u$ which is a fixed point of $F_k$ by the dominated convergence theorem. 
Lemma~\ref{lem:semi}(i) applied to $g_1=\beta$, which is bounded from above, and the bounded functions $g_2=-k$, $f_1=f$ and $f_2(x,t)=g(x,t)+ku(x,t)-\psi_0(x,u(x,t))$, shows that $u$ solves \eqref{eq:mild2}.
\end{proof}

\begin{proof}[Proof of Lemma~\ref{lem:mild_mono}] Since the solution is unique according to Lemma~\ref{lem:mild_uni}, the claim follows immediately from the construction of the solution via Picard iteration in the proof of Lemma~\ref{lem:mild_exist}.
\end{proof}

\begin{proof}[Proof of Lemma~\ref{lem:mono}(i)]
According to \eqref{eq:mild}, $u_f$ is the unique solution to \eqref{eq:mild2} with $g=\mathbf{0}$. Moreover, \eqref{eq:mild} for $\hat{u}_f$ and Lemma~\ref{lem:semi}(i) applied to $g_1=\beta$, $g_2=\hat{\beta}-\beta$, $f_1=f$ and $f_2(x,t)=\hat{\psi}_0(x,\hat{u}_f(x,t))$ implies that $\hat{u}_f$ satisfies
\begin{align*}
\hat{u}_f(x,t)&=S_tf(x)+\int_0^t S_s\big[-\hat{\psi}_0(\cdot,\hat{u}_f(\cdot,t-s))+(\hat{\beta}-\beta)\hat{u}_f(\cdot,t-s)\big](x) \, ds\\
&=S_tf(x)-\int_0^t S_s\big[\psi_0(\cdot,\hat{u}_f(\cdot,t-s))\big](x) \,ds + \int_0^t S_s\big[\psi_{\beta}(\cdot,\hat{u}_f(\cdot,t-s))-\hat{\psi}_{\hat{\beta}}(\cdot,\hat{u}_f(\cdot,t-s))\big](x) \, ds.
\end{align*}
In particular, $\hat{u}_f$ solves \eqref{eq:mild2} with $g(x,t)=\psi_{\beta}(x,\hat{u}_f(x,t))-\hat{\psi}_{\hat{\beta}}(x,\hat{u}_f(x,t)) \ge 0$. Now Lemma~\ref{lem:mild_mono} yields the claim.
\end{proof}

\begin{lemma}[{\bf{Monotonicity in $B$}}]\label{lem:mild_exit_mono}  Let $B \subset \subset D$ and $f \in bp(D)$ such that the support of $f$, $\text{supp}(f)$, is a subset of $B$. There exists a unique nonnegative solution $u_f^B \in \locbdpos(D\times [0,\infty))$ to 
\begin{equation}\label{eq:mild_stop}
u(x,t)=\P_x\Big[e^{\int_0^{t \land \tau_B} \beta(\xi_s) \, ds} f(\xi_{t \land \tau_B})\Big] -\P_x\Big[\int_0^{t \land \tau_B} e^{\int_0^s \beta(\xi_r) \, dr} \psi_0(\xi_s,u(\xi_s,t-s))\, ds \Big].
\end{equation}
Moreover, if $B_1$ and $B_2$ are domains with $\text{supp}(f) \subseteq B_1 \subseteq B_2$, then $u_f^{B_1} \le u_f^{B_2}$ pointwise.
\end{lemma}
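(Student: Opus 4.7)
First, the plan is to rewrite \eqref{eq:mild_stop} as an instance of \eqref{eq:mild2} for a ``killed'' motion and then apply the machinery of Appendix~\ref{sec:beta_unb}. Take $B$ to be open (all applications of the lemma in the main text are for domains) so that on $\{\tau_B<\infty\}$ one has $\xi_{\tau_B}\in\partial B\cup\{\cem\}$; since $\text{supp}(f)\subseteq B$, this gives $f(\xi_{\tau_B})=0$ on $\{\tau_B\le t\}$. Define the killed expectation semigroup
\[
\tilde S_t^B g(x):=\P_x\bigl[e^{\int_0^t\beta(\xi_s)\,ds}g(\xi_t)\mathbbm{1}_{\{t<\tau_B\}}\bigr],\qquad g\in p(D).
\]
A direct splitting of \eqref{eq:mild_stop} according to $\{t<\tau_B\}$ and $\{\tau_B\le t\}$, together with $f(\xi_{\tau_B})=0$ on the latter, shows that \eqref{eq:mild_stop} is equivalent to
\[
u(x,t)+\int_0^t\tilde S_s^B[\psi_0(\cdot,u(\cdot,t-s))](x)\,ds=\tilde S_t^Bf(x),\qquad (x,t)\in D\times[0,\infty),
\]
which is \eqref{eq:mild2} for the killed motion with $g\equiv\mathbf{0}$. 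Because $0\le\tilde S_t^B g\le e^{\bbeta t}\supnorm{g}$ for $g\ge 0$, the Picard iteration and Gronwall argument of Lemmas~\ref{lem:mild_uni} and~\ref{lem:mild_exist} go through verbatim with $(S_t)_{t\ge 0}$ replaced by $(\tilde S_t^B)_{t\ge 0}$, yielding a unique solution $u_f^B\in\locbdpos(D\times[0,\infty))$.

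For the monotonicity in $B$, I would revisit the Picard iteration of Lemma~\ref{lem:mild_exist}. Fix $T>0$, $B_1\subseteq B_2$ with $\text{supp}(f)\subseteq B_1$, set $M=\max\{e^{\bbeta T},1\}$, $c:=M\supnorm{f}$ and $k\ge\mathcal L(c)$, where $\mathcal L(c)$ is the Lipschitz constant from \eqref{eq:psi0_Lip}. For $i=1,2$, put $u_i^{(0)}:=\mathbf{0}$ and $u_i^{(n+1)}:=F_k^{B_i}u_i^{(n)}$, with
\[
F_k^{B}u(x,t):=e^{-kt}\tilde S_t^Bf(x)+\int_0^t e^{-ks}\tilde S_s^B\bigl[ku(\cdot,t-s)-\psi_0(\cdot,u(\cdot,t-s))\bigr](x)\,ds.
\]
The proof of Lemma~\ref{lem:mild_exist} (applied with the killed semigroup) shows that $(u_i^{(n)})_{n\in\N_0}$ increases on $D\times[0,T]$ to $u_f^{B_i}$, staying below $v(x,t):=e^{\bbeta t}\supnorm{f}\le c$. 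Since $\tau_{B_1}\le\tau_{B_2}$, the pointwise inequality $\tilde S_t^{B_1}g\le\tilde S_t^{B_2}g$ holds for every nonnegative $g\in p(D)$. Moreover, $z\mapsto kz-\psi_0(x,z)$ is nondecreasing and nonnegative on $[0,c]$ (nondecreasing because $k\ge\mathcal L(c)$ dominates the derivative of $\psi_0$; nonnegative because $\psi_0(x,0)=0$ and $\psi_0(x,z)\le\mathcal L(c)z$ by \eqref{eq:psi0_Lip}). Therefore, if $u_1^{(n)}\le u_2^{(n)}$, then
\[
F_k^{B_1}u_1^{(n)}\le F_k^{B_2}u_1^{(n)}\le F_k^{B_2}u_2^{(n)},
\]
and by induction $u_1^{(n)}\le u_2^{(n)}$ for all $n$; passing to the limit yields $u_f^{B_1}\le u_f^{B_2}$ on $D\times[0,T]$, and hence globally since $T$ was arbitrary.

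The only genuinely subtle step is the identification in the first paragraph: once the boundary contribution $\P_x[e^{\int_0^{\tau_B}\beta(\xi_s)\,ds}f(\xi_{\tau_B});\tau_B\le t]$ has been ruled out via $\text{supp}(f)\subseteq B$ (and $B$ open), equation~\eqref{eq:mild_stop} reduces to a mild equation driven by the killed semigroup $\tilde S^B$, and existence/uniqueness as well as the comparison $u_f^{B_1}\le u_f^{B_2}$ reduce to direct adaptations of the Picard-iteration/Gronwall tools already developed in Appendix~\ref{sec:beta_unb}; no new Feynman--Kac identities (such as Lemma~\ref{lem:semi}) or a priori estimates are required.
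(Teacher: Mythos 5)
Your proposal is correct and follows essentially the same route as the paper: the paper's proof also runs a Picard iteration with the operator $F_k u(x,t)=\P_x[e^{\int_0^t[\beta(\xi_s)-k]\,ds}f(\xi_t)\mathbbm{1}_{\{t<\tau_B\}}]+\int_0^t\P_x[e^{\int_0^s[\beta(\xi_r)-k]\,dr}(ku-\psi_0(\cdot,u))(\xi_s,t-s)\mathbbm{1}_{\{s\le\tau_B\}}]\,ds$ (your $F_k^B$ written with the killed semigroup), and deduces $u_n^{(1)}\le u_n^{(2)}$ by the same induction, combining monotonicity of $F_k$ in $u$ with monotonicity in $B$, the latter resting on $kz-\psi_0(x,z)\ge 0$ on $[0,c]$ for $k\ge\mathcal L(c)$. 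The only cosmetic difference is that you make explicit the reduction $f(\xi_{t\wedge\tau_B})=f(\xi_t)\mathbbm{1}_{\{t<\tau_B\}}$ via $\mathrm{supp}(f)\subseteq B$, which the paper uses implicitly.
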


\begin{proof}
Let $T>0$, $M=\max\{e^{\bbeta T},1\}$, $c\ge M \supnorm{f}$, $k\ge \mathcal{L}(c)$ and for $u\in \locbdpos(D \times [0,T])$, define
\begin{align*}
F_ku(x,t)&=\P_x\Big[e^{\int_0^t [\beta(\xi_s)-k] \, ds} f(\xi_t) \, \mathbbm{1}_{\{t <\tau_{B}\}}\Big]\\
&\phantom{spa}+ \int_0^t \P_x\Big[e^{\int_0^s [\beta(\xi_r)-k] \, dr}\big[ku(\xi_s,t-s)- \psi_0(\xi_s,u(\xi_s,t-s))\big] \mathbbm{1}_{\{s \le \tau_B\}}\Big] \, ds.
\end{align*}
Since $kz-\psi_0(x,z) \ge kz-\mathcal{L}(c) z \ge 0$ for all $z \in [0,c]$, $F_ku$ is pointwise increasing in $B$ for all $u$ with $u(x,t) \le e^{\bbeta t}\supnorm{f} =:v(x,t)$. 
As in Lemmas~\ref{lem:mild_uni} and~\ref{lem:mild_exist}, the unique solution to \eqref{eq:mild_stop} can be obtained as a pointwise limit of the increasing sequence $u_0=\mathbf{0}$, $u_{n+1}=F_ku_n$ with $u_n \le v$ pointwise for all $n$. 
Denote by $u_n^{\sss (1)}$ and $u_n^{\sss (2)}$ the iterates for the operators $F_k^{\sss (1)}$, $F_k^{\sss (2)}$ corresponding to $B_1$ and $B_2$, respectively. We show by induction that $u_n^{\sss (1)} \le u_n^{\sss (2)}$ pointwise for every $n$. 
For $n=0$ this is trivial. For the induction step we first use the induction hypothesis and monotonicity of $F_k^{\sss(1)}$ (see (ii) in the proof of Lemma~\ref{lem:mild_exist}) and then the monotonicity of $F_k$ in $B$ to deduce
\[
u_{n+1}^{\sss (1)}= F_k^{\sss(1)}u_n^{\sss(1)} \le F_k^{\sss (1)}u_n^{\sss (2)} \le F_k^{\sss (2)}u_n^{\sss (2)} =u_{n+1}^{\sss (2)}.\qedhere
\]
\end{proof}

{\bf{Acknowledgement:}} The research has been initiated at the workshop 'Branching random walks and searching in trees (10w5085)' at Banff International Research Station (BIRS). 
Further parts of this research have been completed at the Institute for Mathematical Research (FIM) -- ETH Zurich and at the Mathematisches Forschungsinstitut Oberwolfach (MFO). AEK and MW thank BIRS and ME and AEK thank FIM and MFO for their hospitality.

%\bibliographystyle{abbrv}
%\bibliography{SLLN}

\end{document}